\documentclass{cocv}
\usepackage{amsmath,amssymb,amsthm,mathtools}
\usepackage{booktabs}
\usepackage{aliascnt}
\usepackage{array}
\usepackage{graphicx}
\usepackage{placeins}
\usepackage{algorithm}
\usepackage{algpseudocode}
\usepackage{enumitem}
\usepackage{microtype}
\usepackage{hyperref}
\usepackage[nameinlink,capitalize,noabbrev]{cleveref}

\hypersetup{
	colorlinks=false,
	pdfborder={0 0 0},
	pdftitle={Gaussian-Restricted Barycenters for KL-Unbalanced Optimal Transport: Variational Theory and Fixed-Point Convergence},
	pdfauthor={Jiaping Yang and Yunxin Zhang},
	pdfkeywords={unbalanced optimal transport, Gaussian barycenter, calculus of variations, Bures-Wasserstein geometry, Fisher-Rao geometry}
}

\newtheorem{theorem}{Theorem}[section]
\newaliascnt{proposition}{theorem}
\newtheorem{proposition}[proposition]{Proposition}
\aliascntresetthe{proposition}
\newaliascnt{lemma}{theorem}
\newtheorem{lemma}[lemma]{Lemma}
\aliascntresetthe{lemma}
\newaliascnt{corollary}{theorem}
\newtheorem{corollary}[corollary]{Corollary}
\aliascntresetthe{corollary}
\algrenewcommand\algorithmicrequire{\textbf{Input:}}
\algrenewcommand\algorithmicensure{\textbf{Output:}}
\theoremstyle{remark}
\newaliascnt{remark}{theorem}
\newtheorem{remark}[remark]{Remark}
\aliascntresetthe{remark}

\newcommand{\R}{\mathbb{R}}
\newcommand{\Sym}{\mathbb{S}}
\newcommand{\Spp}{\mathbb{S}_{++}}
\newcommand{\N}{\mathcal{N}}
\newcommand{\KL}{\operatorname{KL}}
\newcommand{\tr}{\operatorname{tr}}
\newcommand{\BW}{\mathrm{BW}}
\newcommand{\FR}{\mathrm{FR}}
\newcommand{\GKU}{\mathsf{U}}
\newcommand{\Ashape}{\mathsf{A}}
\newcommand{\Fbar}{\mathsf{F}}
\newcommand{\Zfun}{\mathsf{Z}}
\newcommand{\rhoCher}{\rho_{\theta}}
\newcommand{\rBF}{\mathfrak r}
\newcommand{\norm}[1]{\left\lVert #1\right\rVert}

\newcommand{\AsymSlowRate}{0.947650}
\newcommand{\AsymFastRate}{0.518341}
\newcommand{\LargeGridDistA}{0.4236}
\newcommand{\LargeGridDistC}{0.0568}

\begin{document}
\title{Gaussian-Restricted Barycenters for KL-Unbalanced Optimal Transport: Variational Theory and Fixed-Point Convergence}
\author{Jiaping Yang$^{*,}$}
\address{School of Mathematical Sciences, Fudan University, Shanghai 200433, China.\\
${}^{*}$ Corresponding author: \href{mailto:jpyang22@m.fudan.edu.cn}{\nolinkurl{jpyang22@m.fudan.edu.cn}}}
\author{Yunxin Zhang}\sameaddress{1}
\runningtitle{Gaussian-Restricted KL-UOT Barycenters}
\runningauthors{J. Yang and Y. Zhang}
\begin{abstract} We study Gaussian-restricted barycenters for quadratic two-sided Kullback--Leibler unbalanced optimal transport with independent marginal penalties and no coupling entropy.  Exact profiling of the barycenter mass reduces the problem to a smooth Gaussian shape functional with endogenous Gibbs weights.  We establish global attainment, derive the stationary moment equations, and construct a reverse-KL majorization--minimization (MM) iteration whose full sequence converges from every nondegenerate Gaussian initialization to a stationary fixed point.  The diagonal second variation induces a parallel-sum tensor coupling the Bures--Wasserstein and Fisher--Rao metrics; its finite-mass extension admits a radial cone representation.  Under common penalty scaling, global minimizers converge to a Gaussian Wasserstein barycenter with effective weights; for sufficiently large penalties, the minimizer is unique and admits a first-order analytic expansion.  In the small-penalty regime, the distance of every global minimizer to the compact maximizer set of a weighted Chernoff affinity functional vanishes with respect to the mean--covariance parameter distance.  Numerical experiments illustrate MM descent, local contraction, and the two penalty limits.
 \end{abstract}
\subjclass{49Q22, 15A24, 65K10, 53B21}
\keywords{unbalanced optimal transport, Gaussian barycenter, calculus of variations, Bures--Wasserstein geometry, Fisher--Rao geometry}
\maketitle
\markboth{J. YANG AND Y. ZHANG}{GAUSSIAN-RESTRICTED KL-UOT BARYCENTERS}
\section{Introduction}

Wasserstein barycenters provide a variational notion of averaging based on mass displacement~\cite{AguehCarlier2011}.  In the balanced setting, the inputs have the same total mass and their differences are reconciled entirely through transport.  For Gaussian measures, the problem retains a non-Euclidean covariance geometry governed by the Bures--Wasserstein metric~\cite{Gelbrich1990,Takatsu2011,BhatiaJainLim2019}; representative fixed-point and convergence results are given in~\cite{AlvarezEsteban2016,ChewiEtAl2020,AltschulerEtAl2021}.  Recent extensions treat marginal-constrained Gaussian and mixture barycenters and balanced mixture-level geometries~\cite{DaleryDussonEhrlacher2026,DussonEhrlacherNouaime2026}.

When the input measures have unequal total masses, prior normalization suppresses part of the information carried by the data, whereas enforcing a common mass can confound displacement with creation or loss.  Unbalanced optimal transport (UOT) addresses this limitation by replacing hard marginal constraints with divergence penalties.  Foundational formulations include~\cite{LieroMielkeSavare2018,ChizatDynamic2018,ChizatScaling2018,ChizatFoCM2018}; barycentric developments in Hellinger--Kantorovich, multimarginal, and Kantorovich--Rubinstein settings include~\cite{FrieseckeMatthesSchmitzer2021,ChungPhung2021,BonafiniMinevichSchmitzer2023,Buze2025,BeierEtAl2023,HeinemannKlattMunk2023}.  Discrete entropic barycenter algorithms and related unbalanced computational methods are treated in~\cite{CuturiDoucet2014,SejourneEtAl2022,SejourneEtAl2023}, while Gaussian--Hellinger barycenters have also been used in ensemble forecasting~\cite{DucSawada2024}.

We consider a static quadratic UOT model with separate KL penalties on the source and target marginals.  The two penalties allow different strengths of marginal relaxation on the input and barycenter sides, and their asymmetry makes the orientation of the barycenter problem relevant.  We impose no entropy penalty on the coupling, so that the effects studied below arise from marginal relaxation rather than entropic smoothing of the transport plan.

The inputs and the candidate barycenter are restricted to finite nondegenerate Gaussian measures.  This moment-level model retains total mass, location, covariance, and marginal relaxation while keeping the barycenter interpretable through a finite set of parameters.  Finite dimensionality, however, does not make the variational analysis automatic: the positive-definite covariance cone is open and noncompact, minimizing sequences may approach its singular boundary, and both the adjusted endpoint marginals and their effective weights depend on the candidate barycenter.  Global attainment and convergence therefore require explicit control of escape and covariance collapse.

The penalty scale also changes the qualitative averaging mechanism.  At large penalties, marginal deviations are expensive and the problem approaches Wasserstein displacement averaging; at small penalties, the adjusted marginals coalesce and the limiting criterion favors Chernoff affinity overlap.  The diagonal second variation provides a local geometric counterpart to this transition by connecting transport and Fisher-type responses.

\subsection{Related work and positioning}

Janati et al.~\cite{JanatiEtAl2020} study entropically regularized transport between Gaussian measures and show that the unbalanced problem retains a scaled-Gaussian structure; their associated barycenter equations therefore describe an entropically smoothed coupling problem.  In contrast, the present model puts no entropy penalty on the coupling.  Marginal relaxation first adjusts the endpoint Gaussians, after which they are joined by ordinary quadratic Wasserstein transport, with the optimal coupling supported on an affine graph.  The equal-penalty endpoint without coupling entropy also appears in~\cite{JanatiThesis2021}.

Our earlier work~\cite{YangZhang2026} gives the exact asymmetric two-sided endpoint for a pair of finite Gaussians and supplies the pairwise interface used below.  Nakashima, Ganguly, and Kashima~\cite{NakashimaEtAl2026} independently establish a finite-dimensional Gaussian reduction, global pairwise solvability, and density control.  At the barycenter level, Nguyen et al.~\cite{NguyenEtAl2026} consider centered Gaussian probabilities with fixed mass in a semi-unbalanced formulation.  The present formulation moves from these pairwise or semi-unbalanced settings to a many-input problem in which the barycenter mass is optimized, means are retained, both marginals carry independent penalties, and the normalized input weights depend on the unknown barycenter.  Together, these features create a coupled common-shape optimization rather than a collection of independent pairwise problems.

The many-input Gaussian restriction changes both the variational and algorithmic analysis.  Hellinger--Kantorovich and multimarginal theories~\cite{FrieseckeMatthesSchmitzer2021,ChungPhung2021,BonafiniMinevichSchmitzer2023,BeierEtAl2023} establish existence or variational formulations in ambient measure spaces, whereas the present problem requires compactness directly on the open mean--covariance parameter domain: control of mass and moments alone does not exclude a covariance eigenvalue tending to zero.  The balanced entropic barycenter method of~\cite{CuturiDoucet2014} and the unbalanced computational schemes developed or reviewed in~\cite{SejourneEtAl2022,SejourneEtAl2023} use finite-dimensional parameterizations adapted to their respective settings.  Here exact mass profiling instead produces endogenous Gibbs weights and stationary mean--covariance equations, while MM convergence requires an orbitwise spectral floor.  Balanced Gaussian fixed-point analyses~\cite{AlvarezEsteban2016,ChewiEtAl2020,AltschulerEtAl2021} provide an important point of comparison; the dependence of both the adjusted endpoint marginals and their normalized weights on the current iterate calls for separate descent and noncollapse estimates.

Geometrically, the Gaussian-preserving Hellinger--Kantorovich--Boltzmann flow of Liero et al.~\cite{LieroMielkeTseZhu2026} yields an additive Bures--Wasserstein--Fisher--Rao structure.  The tensor obtained below has a different origin: it is the Schur-complement, or parallel-sum, response after optimizing both KL-relaxed endpoint marginals.  This distinction explains why the two penalty parameters enter directionally and why exchanging them changes the MM linearization even though the diagonal metric is symmetric in the two slots.

\subsection{Problem and contributions}

Let \(\alpha_i=a_i\mu_i\), where \(\mu_i=\N(m_i,\Sigma_i)\) with \(\Sigma_i\in\Spp^d\), and let \(\lambda_i>0\) satisfy \(\sum_{i=1}^n\lambda_i=1\).  For fixed penalties \(\tau_0,\tau_1>0\), we consider
\[
\Fbar(\beta)=\sum_{i=1}^n\lambda_i\,\GKU_{\tau_0,\tau_1}(\alpha_i,\beta),
\qquad
\beta=b\N(m,\Sigma)\in\mathcal G_d^+.
\]
Because the pairwise cost is generally asymmetric when \(\tau_0\neq\tau_1\), a minimizer is called a \emph{right Gaussian KL-UOT barycenter}, with the barycenter in the second argument.  Exchanging \(\tau_0\) and \(\tau_1\) gives the corresponding left formulation; we work with the right formulation throughout.

Two reductions organize the analysis: exact profiling eliminates the candidate mass, while the adjusted endpoint marginals generate endogenous Gibbs weights and a reverse-KL Gaussian centroid relation.  The main contributions are as follows.
\begin{enumerate}[label=(\roman*)]
	\item \emph{Variational structure and geometry.}  We profile the mass exactly, exclude escape and covariance degeneracy to prove global attainment, and derive the stationary moment equations.  The diagonal second variation yields a BW--FR parallel-sum tensor with a radial cone representation, while the Gibbs weights quantify attenuation of remote inputs.
	\item \emph{MM convergence.}  The stationary equations define a reverse-KL majorization--minimization iteration.  An orbitwise spectral floor supports the Kurdyka--\L{}ojasiewicz analysis: from every nondegenerate initialization, the full trajectory has finite parameter length and converges to a stationary fixed point; near a nondegenerate local minimizer, the iteration converges linearly with an intrinsic contraction factor.
	\item \emph{Penalty limits.}  Penalty-uniform localization shows that global barycenters approach a Gaussian Wasserstein barycenter at large penalty and a weighted Chernoff-affinity maximizer set at small penalty.  In the large-penalty regime, we also prove eventual uniqueness and a first-order analytic expansion.
\end{enumerate}

The remainder of the paper is organized as follows. Section~2 establishes the pairwise Gaussian interface, and Section~3 profiles the mass, proves attainment, and derives the stationary equations.  Section~4 develops the local geometry, while Section~5 constructs the MM iteration and proves its global and local convergence properties.  Section~6 combines endpoint expansions with penalty-uniform localization to obtain the two global penalty limits.  Section~7 illustrates the descent, contraction, robustness, and asymptotic mechanisms numerically.

\section{Gaussian reduction for KL-UOT}

\subsection{Finite Gaussian measures and KL divergence}

Let \(\mathcal{M}_{+}(E)\) denote the cone of finite nonnegative Borel measures on a measurable space \(E\).  For finite measures \(\rho,\eta\), define
\begin{equation}
	\KL(\rho\mid\eta)
	=
	\begin{cases}
		\displaystyle
		\int \left(r\log r-r+1\right)\,d\eta,
		& \rho\ll\eta,\quad r=\dfrac{d\rho}{d\eta},\\[0.7em]
		+\infty,&\text{otherwise}.
	\end{cases}
\end{equation}
For probability measures this is the usual relative entropy.  If \(p,\mu\) are probabilities and \(M,a>0\), then
\begin{equation}
	\KL(Mp\mid a\mu)
	=
	M\KL(p\mid\mu)+M\log\frac{M}{a}-M+a.
	\label{eq:scaledKL}
\end{equation}

We write \(\Sym^d=\{H\in\R^{d\times d}:H^\top=H\}\) for the real symmetric matrices, with \(\Spp^d\) and \(\mathbb{S}_{+}^d\) denoting their positive-definite and positive-semidefinite cones.  Set
\[
\mathcal G_d^1
:=
\left\{\N(m,\Sigma):m\in\R^d,\ \Sigma\in\Spp^d\right\}
\]
for the nondegenerate Gaussian probability manifold and
\[
\mathcal G_d^+
:=
\left\{a\nu:a>0,\ \nu\in\mathcal G_d^1\right\}
\]
for the open cone of finite Gaussian measures.

We equip the mean--covariance parameter space with the Euclidean mean--Frobenius structure
\begin{equation}
	\left\langle (h,H),(h',H')\right\rangle_{\rm par}
	:=h^\top h'+\tr(HH'),
	\qquad
	\norm{(h,H)}_{\rm par}^2=\norm{h}^2+\norm{H}_F^2,
	\label{eq:paraminner}
\end{equation}
and the associated parameter distance
\begin{equation}
	d_{\rm par}\!\left(\N(m,\Sigma),\N(m',\Sigma')\right)
	:=\left(\norm{m-m'}^2+\norm{\Sigma-\Sigma'}_F^2\right)^{1/2}.
\end{equation}
Whenever a smooth scalar functional is expressed in the global mean--covariance coordinates, \(\nabla_{\rm par}\) denotes its Euclidean gradient with respect to \eqref{eq:paraminner}.

For \(\alpha,\beta\in\mathcal{M}_{+}(\R^d)\), define the quadratic-cost two-sided KL-UOT functional
\begin{equation}
	\GKU_{\tau_0,\tau_1}(\alpha,\beta)
	=\inf_{\gamma\in\mathcal{M}_{+}(\R^d\times\R^d)}\Bigl\{\int\norm{x-y}^2\,d\gamma+\tau_0\KL(\gamma_0\mid\alpha)+\tau_1\KL(\gamma_1\mid\beta)\Bigr\}.
	\label{eq:UOT}
\end{equation}
There is no entropy penalty on the coupling \(\gamma\).  For \(\tau_0\neq\tau_1\), \eqref{eq:UOT} need not be symmetric.  If \(\gamma^\top\) denotes the transposed coupling, symmetry of the quadratic ground cost implies \(\GKU_{\tau_0,\tau_1}(\alpha,\beta)=\GKU_{\tau_1,\tau_0}(\beta,\alpha)\).  Hence left and right barycenter formulations are interchanged by \(\tau_0\leftrightarrow\tau_1\).

Set
\begin{equation}
	s=\tau_0+\tau_1,
	\qquad
	\theta=\frac{\tau_0}{s}\in(0,1),
	\qquad
	1-\theta=\frac{\tau_1}{s}.
\end{equation}

Unless stated otherwise, throughout the barycenter analysis we assume
\[
\tau_0,\tau_1>0,\qquad n\ge1,\qquad
\lambda_i>0,\quad a_i>0,\quad m_i\in\R^d,\quad \Sigma_i\in\Spp^d
\quad(i=1,\ldots,n),\qquad \sum_{i=1}^n\lambda_i=1.
\]
These standing assumptions are not repeated in every theorem.

\subsection{Normalized Gaussian endpoint problem}

Let \(\mathcal P(\R^d)\) denote the Borel probability measures on \(\R^d\), and let \(\mathcal P_2(\R^d)\) be the subset with finite second moment.  For \(p,q\in\mathcal P(\R^d)\), we use the extended-value convention
\(
W_2^2(p,q):=\inf_{\pi\in\Pi(p,q)}\int_{\R^d\times\R^d}\lvert x-y\rvert^2\,d\pi(x,y)\in[0,\infty].
\)
For \(\mu,\nu\in\mathcal G_d^1\), define
\begin{equation}
	\Ashape_{\tau_0,\tau_1}(\mu,\nu)
	=
	\inf_{p,q\in\mathcal P(\R^d)}
	\left\{
	W_2^2(p,q)
	+\tau_0\KL(p\mid\mu)
	+\tau_1\KL(q\mid\nu)
	\right\}.
	\label{eq:Adef}
\end{equation}
Every pair with finite objective in \eqref{eq:Adef} automatically belongs to \(\mathcal P_2(\R^d)^2\).  Indeed, choose \(c>0\) such that \(\int e^{c\lvert x\rvert^2}\,d\mu(x)<\infty\).  Applying the entropy inequality to \(c(\lvert x\rvert^2\wedge R)\) gives
\[
	c\int(\lvert x\rvert^2\wedge R)\,dp
	\leq \KL(p\mid\mu)+\log\int e^{c\lvert x\rvert^2}\,d\mu,
\]
and monotone convergence yields \(p\in\mathcal P_2(\R^d)\); the same argument with \((q,\nu)\) gives \(q\in\mathcal P_2(\R^d)\).  We may therefore use the following Gaussian reduction.  If a probability measure \(p\in\mathcal P_2(\R^d)\) has finite relative entropy with respect to a nondegenerate Gaussian, let \(G[p]\) denote its moment-matched Gaussian.  For every nondegenerate Gaussian reference \(\mu\), the information-projection identity is
\begin{equation}
	\KL(p\mid\mu)
	=
	\KL(p\mid G[p])
	+
	\KL(G[p]\mid\mu).
\end{equation}
Indeed, \(\log(dG[p]/d\mu)\) is quadratic, hence has the same expectation under \(p\) and \(G[p]\) because these measures have the same first two moments.  Consequently \(\KL(G[p]\mid\mu)\le\KL(p\mid\mu)\), with equality precisely when \(p=G[p]\) whenever the entropies are finite.  Together with Gelbrich's moment bound, these identities establish
\[
	W_2^2(G[p],G[q])\le W_2^2(p,q),\qquad
	\KL(G[p]\mid\mu_0)\le\KL(p\mid\mu_0),\qquad
	\KL(G[q]\mid\mu_1)\le\KL(q\mid\mu_1).
\]
Thus the infimum in \eqref{eq:Adef} is unchanged when \((p,q)\) is restricted to Gaussian probabilities.

The finite-dimensional Gaussian endpoint admits the following closed form.  Appendix~\ref{app:pairwise} verifies the pairwise characterization used here; a fuller pairwise treatment, including a KL-dual certificate, appears in~\cite{YangZhang2026}.  The characterization also includes real-analytic dependence on the endpoint parameters.

Let
\[
\mu_0=\N(m_0,\Sigma_0),
\qquad
\mu_1=\N(m_1,\Sigma_1).
\]
Put
\begin{equation}
	r_j=\frac{2}{\tau_j},\qquad
	K_j=\Sigma_j^{-1},\qquad
	C_j=K_j+r_jI\quad (j=0,1),\qquad
	\delta=r_1-r_0.
\end{equation}
All square roots below are principal square roots.  Define
\begin{equation}
	S_*
	=
	\frac12\left[
	\delta I+
	\left(
	\delta^2I+4C_1^{1/2}C_0C_1^{1/2}
	\right)^{1/2}
	\right],
	\label{eq:Sstar}
\end{equation}
\begin{equation}
	L_*=C_1^{-1/2}S_*C_1^{-1/2},
	\qquad
	P_*=[K_0+r_0(I-L_*)]^{-1},
	\qquad
	Q_*=L_*P_*L_*.
	\label{eq:LPQ}
\end{equation}
For the means, define
\begin{equation}
	h_*=(I+r_0\Sigma_0+r_1\Sigma_1)^{-1}(m_0-m_1),
	\label{eq:hstar}
\end{equation}
\begin{equation}
	u_*=m_0-r_0\Sigma_0h_*,
	\qquad
	v_*=m_1+r_1\Sigma_1h_*.
	\label{eq:uvstar}
\end{equation}
Define
\begin{equation}
	p_*=\N(u_*,P_*),
	\qquad
	q_*=\N(v_*,Q_*),
	\label{eq:pqstar}
\end{equation}
and \(q_*=(T_*)_\#p_*\) under the affine map
\begin{equation}
	T_*(x)=v_*+L_*(x-u_*).
	\label{eq:Tstar}
\end{equation}
The covariance map satisfies the Riccati equation
\begin{equation}
	L_*C_1L_*-\delta L_*=C_0.
	\label{eq:Riccati}
\end{equation}

We abbreviate
\begin{equation}
	\Ashape(\mu_0,\mu_1)
	=
	W_2^2(p_*,q_*)
	+\tau_0\KL(p_*\mid\mu_0)
	+\tau_1\KL(q_*\mid\mu_1).
\end{equation}
Whenever the penalty parameters are fixed, we suppress them from the notation.

\begin{theorem}\label{thm:pairwiseinterface}
	Let \(\tau_0,\tau_1>0\) and \(\mu_0,\mu_1\in\mathcal G_d^1\).  The normalized problem \eqref{eq:Adef} has a unique minimizer \((p_*,q_*)\), given by \eqref{eq:Sstar}--\eqref{eq:pqstar}.  The matrix \(L_*\) is the unique positive-definite solution of the Riccati equation \eqref{eq:Riccati}, and the unique optimal Wasserstein coupling of \((p_*,q_*)\) is supported on the affine graph \eqref{eq:Tstar}.  Moreover,
	\[
	(\mu_0,\mu_1)\longmapsto
	L_*,P_*,Q_*,u_*,v_*,\Ashape(\mu_0,\mu_1)
	\]
	is real analytic on \(\mathcal G_d^1\times\mathcal G_d^1\).
\end{theorem}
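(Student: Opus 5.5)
Since the infimum in \eqref{eq:Adef} may be restricted to Gaussian probabilities, the plan is to minimize over the finite-dimensional set of pairs \(p=\N(u,P)\), \(q=\N(v,Q)\) with \(P,Q\in\Spp^d\); degenerate limits carry infinite KL cost. By Gelbrich's formula the objective is
\[
\Phi=\norm{u-v}^2+\tr P+\tr Q-2\tr\bigl(P^{1/2}QP^{1/2}\bigr)^{1/2}+\tau_0\KL(\N(u,P)\mid\mu_0)+\tau_1\KL(\N(v,Q)\mid\mu_1).
\]
It splits into a mean part and a covariance part. The mean part is a strictly convex quadratic in \((u,v)\). Its first-order conditions are
\[
2(u-v)+\tau_0K_0(u-m_0)=0,\qquad 2(v-u)+\tau_1K_1(v-m_1)=0.
\]
Setting \(h=u-v\) gives \(u=m_0-r_0\Sigma_0h\) and \(v=m_1+r_1\Sigma_1h\). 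Subtracting, \(h=m_0-m_1-(r_0\Sigma_0+r_1\Sigma_1)h\), which is \eqref{eq:hstar}--\eqref{eq:uvstar}, with a unique solution.

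For the covariance part, existence and uniqueness come before the closed form.

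\textbf{Existence.} The terms \(-\log\det P+\tr(K_0P)\) (and likewise for \(Q\)) are coercive on \(\Spp^d\), while the Bures term is nonnegative. Hence a minimizer exists in the interior.

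\textbf{Uniqueness.} Joint convexity is the delicate point. I would use the coupling reformulation
\[
W_2^2=\min\bigl\{\tr P+\tr Q-2\tr C:\ \begin{pmatrix}P&C\\ C^\top&Q\end{pmatrix}\succeq0\bigr\}.
\]
This is jointly convex in \((P,Q)\) because it is a partial minimization of a linear function over a convex set. Combined with the strictly convex \(-\log\det\) terms, the problem has a unique minimizer. Uniqueness of the optimal Wasserstein coupling between nondegenerate Gaussians is Brenier's theorem, and that coupling is the affine graph of the symmetric positive-definite map \(L\) with \(Q=LPL\).

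\textbf{Closed form.} Parametrize \(Q=LPL\) with \(L\in\Spp^d\). The stationarity conditions, using \(\nabla_P W_2^2=I-L\) and \(\nabla_Q W_2^2=I-L^{-1}\), are
\[
(I-L)+\tfrac{\tau_0}{2}(K_0-P^{-1})=0,\qquad (I-L^{-1})+\tfrac{\tau_1}{2}(K_1-Q^{-1})=0.
\]
The first gives \(P^{-1}=K_0+r_0(I-L)\), which is \(P_*\). The second gives \(Q^{-1}=K_1+r_1(I-L^{-1})\). Substituting \(Q^{-1}=L^{-1}P^{-1}L^{-1}\) and multiplying by \(L\) on both sides yields
\[
K_0+r_0I-r_0L=LK_1L+r_1L^2-r_1L,
\]
that is, \(LC_1L-\delta L=C_0\), which is \eqref{eq:Riccati}.

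\textbf{Solving the Riccati equation.} Set \(S=C_1^{1/2}LC_1^{1/2}\). The equation becomes
\[
S^2-\delta S=C_1^{1/2}C_0C_1^{1/2},
\]
since \(C_1^{1/2}LC_1LC_1^{1/2}=S^2\) and \(\delta C_1^{1/2}LC_1^{1/2}=\delta S\). Completing the square, \((S-\delta I/2)^2=\delta^2I/4+C_1^{1/2}C_0C_1^{1/2}\).

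\textbf{Main obstacle.} The step I expect to be hardest is showing that the positive-definite solution is exactly the principal root choice \eqref{eq:Sstar}, and that \(P_*\) is positive definite. A positive-definite \(S\) need not a priori commute with the right-hand side. I would argue as follows.
\begin{enumerate}[label=(\roman*)]
\item \(S\) commutes with \(S^2-\delta S\), and so with its positive definite right-hand side \(R\).
\item Hence \(S\) is diagonalized in an eigenbasis of \(R\). On each eigenvalue \(\rho>0\), the roots are \(s=(\delta\pm\sqrt{\delta^2+4\rho})/2\).
\item The minus root is negative, so positivity forces the plus root. This gives uniqueness among positive-definite \(L\).
\end{enumerate}
Positive definiteness of \(K_0+r_0(I-L_*)\) must then be checked. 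It holds automatically if the covariance stationary point is the interior minimizer found above. Alternatively I would verify it directly from the Riccati equation, via \(r_0(I-L)=C_0-r_0L\) and \(C_0=L(C_1L-\delta I)\).

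\textbf{Analyticity.} Principal square roots and inverses are real analytic on \(\Spp^d\). The matrix \(\delta^2I+4C_1^{1/2}C_0C_1^{1/2}\) is positive definite, and \(P_*^{-1}\) stays positive definite. Therefore \(L_*,P_*,Q_*,u_*,v_*\) are analytic compositions of analytic maps in \((m_j,\Sigma_j)\). Finally, \(\Ashape\) is the objective evaluated at them via Gelbrich's formula, using \(\tr(P^{1/2}QP^{1/2})^{1/2}=\tr(LP)\) for \(Q=LPL\), together with explicit Gaussian KL formulas. It is therefore analytic as well.
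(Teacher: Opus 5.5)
\textbf{Gap: uniqueness over all probability pairs.} Your finite-dimensional analysis is sound, but as written it proves uniqueness only among Gaussian pairs. The theorem asserts uniqueness for \eqref{eq:Adef}, whose competitors range over all of \(\mathcal P(\R^d)^2\). Knowing that the infimum may be restricted to Gaussians gives equality of the two infima and shows that \((p_*,q_*)\) attains the unrestricted one. It does not exclude a non-Gaussian pair attaining the same value.

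You can close this with the strict form of the information-projection identity. Write \(\mathcal E\) for the bracketed objective in \eqref{eq:Adef}. Any pair with finite objective lies in \(\mathcal P_2(\R^d)^2\). Gelbrich's bound, together with \(\KL(p\mid\mu_0)=\KL(p\mid G[p])+\KL(G[p]\mid\mu_0)\) and its analogue for \(q\), gives
\[
\mathcal E(p,q)-\mathcal E(G[p],G[q])\ge\tau_0\KL(p\mid G[p])+\tau_1\KL(q\mid G[q])\ge0.
\]
At a minimizer the left side is nonpositive, so \(p=G[p]\) and \(q=G[q]\), and your Gaussian uniqueness then applies. This is exactly the first part of Step~3 in the paper's appendix.

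\textbf{Comparison with the paper.} Otherwise your route is essentially the paper's: the mean block, the Bures differentials \(I-L\) and \(I-L^{-1}\), elimination to \(LC_1L-\delta L=C_0\), the substitution \(S=C_1^{1/2}LC_1^{1/2}\), commutation with \(C_1^{1/2}C_0C_1^{1/2}\) and selection of the positive root, and analyticity from the explicit formulas. There are two local differences.

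First, positivity of \(P_*\). The paper proves \(P_*\succ0\) from the identity \((I-L_*)^2+r_0^{-1}K_0+r_1^{-1}L_*K_1L_*=\frac{r_0+r_1}{r_0r_1}[K_0+r_0(I-L_*)]\). It then exhibits \((P_*,Q_*)\) as a stationary point and concludes by strict convexity. Your coercivity argument instead produces an interior minimizer whose Wasserstein map is a positive-definite Riccati root. That map therefore equals \(L_*\), so \(K_0+r_0(I-L_*)\) is the inverse of a positive-definite matrix. State this chain explicitly rather than conditionally (``if the stationary point is the interior minimizer''). Once stated, it gives both positivity and uniqueness of the covariance minimizer without any convexity, since every stationary point is forced to equal \((P_*,Q_*)\). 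Your alternative direct check via \(C_0=L(C_1L-\delta I)\) alone only yields \(K_0+r_0(I-L)=LK_1L+r_1(L^2-L)\), which is not visibly positive. It needs the \(r_0^{-1},r_1^{-1}\)-weighted combination above.

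Second, convexity and analyticity. Deriving joint convexity of \(d_{\BW}^2\) from the semidefinite coupling representation is a self-contained replacement for the paper's citation. The identity \(\tr(P^{1/2}QP^{1/2})^{1/2}=\tr(LP)\) also makes analyticity of \(\Ashape\) transparent.
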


\begin{proof}
	The Gaussian reduction is established above.  The remaining finite-dimensional claims are verified in Appendix~\ref{app:pairwise}.
\end{proof}

The finite-mass reduction used in the barycenter problem is given by the following scaling identity.

\begin{proposition}\label{prop:endpoint}
	Let \(\alpha=a\mu_0\) and \(\beta=b\mu_1\) with \(a,b>0\) and \(\mu_0,\mu_1\in\mathcal G_d^1\).  Then
	\begin{equation}
		\GKU_{\tau_0,\tau_1}(a\mu_0,b\mu_1)
		=
		\tau_0a+\tau_1b-sM_*,
		\label{eq:endpointvalue}
	\end{equation}
	where
	\begin{equation}
		M_*
		=
		a^\theta b^{1-\theta}
		\exp\left[-\frac{\Ashape(\mu_0,\mu_1)}{s}\right].
		\label{eq:Mstar}
	\end{equation}
	The unique optimal plan is \(M_*\pi_*\), where \(\pi_*=(\operatorname{Id},T_*)_\#p_*\).
\end{proposition}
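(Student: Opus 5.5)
The plan is to decompose every admissible plan \(\gamma\) by its total mass and its normalized shape, optimize the shape with the mass held fixed, and then optimize the mass.

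\emph{Step 1: mass--shape decomposition.} Any \(\gamma\) with finite objective is nonzero unless it is the zero plan. The zero plan has value \(\tau_0a+\tau_1b\), and it is strictly improved upon by the final formula, so it can be set aside. For \(\gamma\neq0\), I write \(\gamma=M\pi\) with \(M=\gamma(\R^d\times\R^d)>0\) and \(\pi\) a probability coupling whose marginals are \(p=\pi_0\) and \(q=\pi_1\). The scaling identity \eqref{eq:scaledKL} then gives
\[
\int\norm{x-y}^2d\gamma+\tau_0\KL(\gamma_0\mid a\mu_0)+\tau_1\KL(\gamma_1\mid b\mu_1)
= M\Bigl[\int\norm{x-y}^2d\pi+\tau_0\KL(p\mid\mu_0)+\tau_1\KL(q\mid\mu_1)\Bigr]
+M\Bigl(s\log M-\tau_0\log a-\tau_1\log b\Bigr)-sM+\tau_0a+\tau_1b .
\]

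\emph{Step 2: shape infimum at fixed mass.} Fix \(M\) and infimize over \(\pi\). First minimizing over couplings with given marginals \((p,q)\) produces \(W_2^2(p,q)\). Minimizing then over \((p,q)\) yields \(\Ashape(\mu_0,\mu_1)=:A\) by \eqref{eq:Adef}. By Theorem~\ref{thm:pairwiseinterface}, this infimum is attained exactly at \((p_*,q_*)\) with the unique optimal coupling \(\pi_*=(\operatorname{Id},T_*)_\#p_*\). Consequently, for fixed \(M\), the bracketed term is at least \(A\), and equality holds if and only if \(\pi=\pi_*\).

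\emph{Step 3: one-dimensional mass optimization.} It remains to minimize
\[
\phi(M)=MA+sM\log M-M(\tau_0\log a+\tau_1\log b)-sM+\tau_0a+\tau_1b
\]
over \(M\ge0\). This function is strictly convex, and \(\phi'(M)=A+s\log M-\tau_0\log a-\tau_1\log b\). The derivative vanishes exactly at \(\log M_*=\theta\log a+(1-\theta)\log b-A/s\), which is \eqref{eq:Mstar}. Substituting the stationarity relation back in gives \(\phi(M_*)=\tau_0a+\tau_1b-sM_*\), which is \eqref{eq:endpointvalue}. Since \(\phi(0)=\tau_0a+\tau_1b>\phi(M_*)\), the zero plan is indeed excluded.

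\emph{Uniqueness.} Any optimal \(\gamma\) must have mass \(M_*\), by strict convexity of \(\phi\), and normalized shape \(\pi_*\), by the uniqueness in Step 2; hence \(\gamma=M_*\pi_*\).

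The only delicate point is justifying the reduction of Step 2. The reduction of \(\GKU\) to couplings with finite objective, and hence to finite second moments, is covered by the entropy-inequality argument preceding Theorem~\ref{thm:pairwiseinterface}. The uniqueness of the optimal coupling for given marginals \((p_*,q_*)\) is part of Theorem~\ref{thm:pairwiseinterface}. No further obstacle is expected.
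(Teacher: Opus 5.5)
Your proposal is correct and follows essentially the paper's argument: the decomposition \(\gamma=M\pi\), the scaled-KL identity \eqref{eq:scaledKL}, and a strictly convex one-dimensional mass problem, with uniqueness inherited from \Cref{thm:pairwiseinterface}. The only difference is the order of the two minimizations, which is immaterial. The paper first optimizes \(M\) for fixed \((p,q)\), obtaining \(\tau_0a+\tau_1b-sM(p,q)\), strictly increasing in the normalized objective, and only then optimizes the shape; your explicit equality-case argument for uniqueness is slightly more detailed than the paper's.
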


\begin{proof}
	The zero plan has value \(\tau_0a+\tau_1b\).  It is not optimal: any normalized competitor with finite objective can be rescaled to a sufficiently small positive transported mass to produce a strictly smaller value.  Hence every optimizer has positive mass.  Write \(\gamma=M\pi\), with \(M>0\), and let \(\mathcal E_{\mu_0,\mu_1}(p,q)\) denote the bracketed normalized objective in \eqref{eq:Adef}.  After minimizing over \(\pi\) and applying the scaled-KL identity, the objective becomes
	\[
	M\mathcal E_{\mu_0,\mu_1}(p,q)
	+\tau_0\left(M\log\frac{M}{a}-M+a\right)
	+\tau_1\left(M\log\frac{M}{b}-M+b\right).
	\]
	For fixed \((p,q)\), strict convexity in \(M\) gives the unique minimizer
	\[
	M(p,q)=a^\theta b^{1-\theta}
	\exp\!\left[-\frac{\mathcal E_{\mu_0,\mu_1}(p,q)}{s}\right],
	\]
	and substitution reduces the objective to \(\tau_0a+\tau_1b-sM(p,q)\), which is strictly increasing in \(\mathcal E_{\mu_0,\mu_1}(p,q)\).  Minimization over \((p,q)\) therefore selects the pairwise optimizer \((p_*,q_*)\), which establishes \eqref{eq:endpointvalue}--\eqref{eq:Mstar} and the plan formula.
\end{proof}

\begin{lemma}\label{lem:meancomponent}
	For \(\mu_0=\N(m_0,\Sigma_0)\) and \(\mu_1=\N(m_1,\Sigma_1)\), the normalized endpoint functional decomposes as
	\[
	\Ashape(\mu_0,\mu_1)
	=
	\Ashape_{\rm mean}(m_0,m_1;\Sigma_0,\Sigma_1)
	+
	\Ashape_{\rm cov}(\Sigma_0,\Sigma_1),
	\]
	where
	\begin{equation}
		\Ashape_{\rm mean}
		=
		(m_0-m_1)^\top
		\left(I+r_0\Sigma_0+r_1\Sigma_1\right)^{-1}
		(m_0-m_1).
		\label{eq:Amean}
	\end{equation}
	Moreover \(\Ashape_{\rm cov}\geq0\), and \(\Ashape(\mu_0,\mu_1)=0\) if and only if \(\mu_0=\mu_1\).
\end{lemma}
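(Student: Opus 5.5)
The plan is to split the normalized endpoint problem \eqref{eq:Adef}, restricted to Gaussians by the reduction above, into a mean subproblem and a covariance subproblem. Write \(p=\N(u,P)\) and \(q=\N(v,Q)\). Then three formulas are available. First, \(W_2^2(p,q)=\norm{u-v}^2+\BW^2(P,Q)\). Second,
\[
\KL(p\mid\mu_0)=\tfrac12(u-m_0)^\top\Sigma_0^{-1}(u-m_0)+\tfrac12\bigl[\tr(\Sigma_0^{-1}P)-d-\log\det(\Sigma_0^{-1}P)\bigr],
\]
and the analogous formula holds for \(\KL(q\mid\mu_1)\). Consequently the objective is a sum \(E_{\rm mean}(u,v)+E_{\rm cov}(P,Q)\), where the first summand depends only on the means and the second only on the covariances. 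The infimum therefore separates. We define \(\Ashape_{\rm cov}(\Sigma_0,\Sigma_1):=\inf_{P,Q\succ0}E_{\rm cov}\) and \(\Ashape_{\rm mean}:=\inf_{u,v}E_{\rm mean}\).

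The mean part is a strictly convex quadratic:
\[
E_{\rm mean}(u,v)=\norm{u-v}^2+r_0^{-1}(u-m_0)^\top K_0(u-m_0)+r_1^{-1}(v-m_1)^\top K_1(v-m_1),
\]
since \(\tau_j/2=1/r_j\). Its stationarity conditions are \(K_0(u-m_0)=-r_0(u-v)\) and \(K_1(v-m_1)=r_1(u-v)\). Setting \(h=u-v\) gives \(u=m_0-r_0\Sigma_0h\) and \(v=m_1+r_1\Sigma_1h\). Subtracting these yields \((I+r_0\Sigma_0+r_1\Sigma_1)h=m_0-m_1\), which recovers \eqref{eq:hstar}--\eqref{eq:uvstar}. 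Substituting back, the value is
\[
\norm{h}^2+r_0h^\top\Sigma_0h+r_1h^\top\Sigma_1h=h^\top(I+r_0\Sigma_0+r_1\Sigma_1)h=(m_0-m_1)^\top(I+r_0\Sigma_0+r_1\Sigma_1)^{-1}(m_0-m_1),
\]
which is exactly \eqref{eq:Amean}. Alternatively, this is the standard infimal convolution of the three quadratic forms, whose inverse matrices add.

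Nonnegativity of \(\Ashape_{\rm cov}\) is immediate, because each of its three terms, \(\BW^2\) and the two Gaussian covariance KL parts, is nonnegative.

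For the characterization of zeros, suppose first that \(\mu_0=\mu_1\). Taking \(p=q=\mu_0\) makes every term vanish, so \(\Ashape=0\). Conversely, suppose \(\Ashape=0\). Since \(I+r_0\Sigma_0+r_1\Sigma_1\succ0\), the mean part vanishes only if \(m_0=m_1\). For the covariance part, the infimum is attained at \((P_*,Q_*)\) by \cref{thm:pairwiseinterface}. If \(E_{\rm cov}(P_*,Q_*)=0\), then each nonnegative term vanishes. The KL terms vanish only when \(P_*=\Sigma_0\) and \(Q_*=\Sigma_1\), because \(t-1-\log t\ge0\) holds eigenvalue-wise with equality only at \(t=1\). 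The Bures term then forces \(\Sigma_0=P_*=Q_*=\Sigma_1\). Without appealing to attainment, one can argue instead that a minimizing sequence with vanishing cost must have its KL terms tend to zero, so \(P_n\to\Sigma_0\) and \(Q_n\to\Sigma_1\), and continuity of \(\BW\) gives \(\BW(\Sigma_0,\Sigma_1)=0\).

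The only mildly delicate point is this last step: the infimum being zero must force equality of the covariances, which requires attainment or the continuity argument just described. Apart from that, the separability of the Gaussian objective does all the work.
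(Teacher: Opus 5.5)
Your proposal is correct and follows essentially the same route as the paper. You separate the Gaussian objective into mean and covariance blocks, solve the mean block's first-order equations to get $h=(I+r_0\Sigma_0+r_1\Sigma_1)^{-1}(m_0-m_1)$ with value $h^\top(I+r_0\Sigma_0+r_1\Sigma_1)h$, and read off the zero characterization from the vanishing of the nonnegative terms at the optimizer supplied by \Cref{thm:pairwiseinterface}. Your alternative minimizing-sequence argument, which avoids attainment, is a valid extra; the paper does not need it because it uses the unique optimizer directly.
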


\begin{proof}
	The Gaussian Wasserstein and KL formulas separate the mean variables from the covariance variables.  The mean problem is
	\[
	\min_{u,v}
	\left\{
	\norm{u-v}^2
	+\frac{\tau_0}{2}(u-m_0)^\top\Sigma_0^{-1}(u-m_0)
	+\frac{\tau_1}{2}(v-m_1)^\top\Sigma_1^{-1}(v-m_1)
	\right\}.
	\]
	Solving its first-order equations recovers \eqref{eq:hstar}--\eqref{eq:uvstar}.  Writing
	\(K=I+r_0\Sigma_0+r_1\Sigma_1\) and \(h=K^{-1}(m_0-m_1)\), substitution reduces the minimum to
	\[
	\norm{h}^2+r_0h^\top\Sigma_0h+r_1h^\top\Sigma_1h
	=h^\top Kh
	=(m_0-m_1)^\top K^{-1}(m_0-m_1),
	\]
	which is \eqref{eq:Amean}.  The covariance component is nonnegative because it is the minimum of nonnegative terms.  If \(\Ashape=0\), all three terms in \eqref{eq:Adef} vanish at the optimizer, so \(p=q=\mu_0=\mu_1\).  The converse is immediate.
\end{proof}

\subsection{Gaussian identities and KL sublevels}

The compactness and boundary arguments use the following standard Gaussian identities.  For \(\mu_0=\N(m_0,\Sigma_0)\) and \(\mu_1=\N(m_1,\Sigma_1)\), the squared \(2\)-Wasserstein distance is
\begin{equation}
	W_2^2(\mu_0,\mu_1)
	=
	\norm{m_0-m_1}^2
	+
	\tr\!\left(
	\Sigma_0+\Sigma_1
	-2(\Sigma_0^{1/2}\Sigma_1\Sigma_0^{1/2})^{1/2}
	\right).
\end{equation}
Its covariance part will be denoted by
\begin{equation}
	d_{\BW}^2(P,Q)
	:=
	\tr\!\left(P+Q-2(P^{1/2}QP^{1/2})^{1/2}\right),
	\qquad P,Q\in\Spp^d.
	\label{eq:Burescovdistance}
\end{equation}
The positive-definite optimal map from covariance \(P\) to covariance \(Q\) is
\begin{equation}
	L=P^{-1/2}(P^{1/2}QP^{1/2})^{1/2}P^{-1/2},
	\qquad Q=LPL.
	\label{eq:GaussianW2map}
\end{equation}
For fixed \(P\), the standard differential formula for the Bures--Wasserstein covariance distance reads
\begin{equation}
	\nabla_Q d_{\BW}^2(P,Q)=I-L^{-1},
\end{equation}
where \(L\) is the map in \eqref{eq:GaussianW2map}; see, e.g.,~\cite{BhatiaJainLim2019}.

For Gaussian relative entropy we use
\begin{equation}
	\KL\bigl(\N(u,P)\mid\N(m,\Sigma)\bigr)
	=\tfrac12\left[\tr(\Sigma^{-1}P)+\norm{\Sigma^{-1/2}(u-m)}^2-d+\log\tfrac{\lvert\Sigma\rvert}{\lvert P\rvert}\right].
	\label{eq:GaussianKL}
\end{equation}

Both the global-attainment argument and the small-penalty analysis use the following uniform Gaussian KL sublevel estimate.

\begin{lemma}\label{lem:KLsublevel}
	Fix \(R<\infty\), \(0<\underline\sigma\leq\overline\sigma<\infty\), and \(C<\infty\).  There exist constants
	\[
	R_C<\infty,
	\qquad
	0<\underline p_C\leq\overline p_C<\infty
	\]
	such that, whenever
	\[
	\norm m\leq R,
	\qquad
	\underline\sigma I\preceq\Sigma\preceq\overline\sigma I,
	\qquad
	\KL\bigl(\N(u,P)\mid\N(m,\Sigma)\bigr)\leq C,
	\]
	one has
	\[
	\norm u\leq R_C,
	\qquad
	\underline p_C I\preceq P\preceq\overline p_C I.
	\]
\end{lemma}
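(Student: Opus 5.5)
We use the closed form \eqref{eq:GaussianKL}, which splits the divergence into a mean term and a covariance term, both nonnegative. Writing \(A=\Sigma^{-1/2}P\Sigma^{-1/2}\in\Spp^d\) with eigenvalues \(a_1,\dots,a_d>0\), we have
\[
\KL\bigl(\N(u,P)\mid\N(m,\Sigma)\bigr)
=\tfrac12\norm{\Sigma^{-1/2}(u-m)}^2+\tfrac12\sum_{k=1}^d\phi(a_k),
\qquad \phi(x)=x-1-\log x\ge0 .
\]
The hypothesis \(\KL\le C\) therefore bounds each piece separately. The mean piece gives \(\norm{\Sigma^{-1/2}(u-m)}^2\le 2C\), and the covariance piece gives \(\phi(a_k)\le 2C\) for every \(k\).

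For the mean, \(\Sigma\preceq\overline\sigma I\) implies \(\Sigma^{-1}\succeq\overline\sigma^{-1}I\). Hence \(\norm{u-m}^2\le\overline\sigma\,\norm{\Sigma^{-1/2}(u-m)}^2\le 2C\overline\sigma\). It follows that \(\norm u\le R+\sqrt{2C\overline\sigma}=:R_C\).

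For the covariance, \(\phi\) is strictly convex on \((0,\infty)\), vanishes only at \(1\), and tends to \(+\infty\) as \(x\to0^+\) and as \(x\to\infty\). So the sublevel set \(\{\phi\le 2C\}\) is a compact interval \([x_-,x_+]\subset(0,\infty)\) depending only on \(C\), and each \(a_k\) lies in it. This gives \(x_-I\preceq A\preceq x_+I\), and conjugating by \(\Sigma^{1/2}\) yields \(x_-\Sigma\preceq P\preceq x_+\Sigma\). Combining with \(\underline\sigma I\preceq\Sigma\preceq\overline\sigma I\), we take \(\underline p_C=x_-\underline\sigma\) and \(\overline p_C=x_+\overline\sigma\).

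The argument has no real obstacle. The only care needed is to pass from \(A=\Sigma^{-1/2}P\Sigma^{-1/2}\) to \(P\) via Loewner order preservation under congruence. One also uses the similarity \(\Sigma^{-1}P\sim A\) for the trace and determinant terms, so that \(\tr(\Sigma^{-1}P)=\sum_k a_k\) and \(\log(\lvert\Sigma\rvert/\lvert P\rvert)=-\sum_k\log a_k\).
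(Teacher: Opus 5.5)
Your proposal is correct and follows the paper's proof essentially step for step. Like the paper, you use the whitened matrix \(\Sigma^{-1/2}P\Sigma^{-1/2}\), the compact sublevel set of \(t-\log t-1\) to confine its spectrum, and a congruence together with the spectral box on \(\Sigma\) to bound \(P\); the mean term then bounds \(u\). You additionally make the constants explicit.
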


\begin{proof}
	Write \(B=\Sigma^{-1/2}P\Sigma^{-1/2}\).  By \eqref{eq:GaussianKL}, the relative-entropy identity reads
	\begin{equation}
		2\KL\bigl(\N(u,P)\mid\N(m,\Sigma)\bigr)
		=
		\norm{\Sigma^{-1/2}(u-m)}^2
		+\tr B-\log\det B-d.
		\label{eq:KLsublevelidentity}
	\end{equation}
	If \(\lambda_1(B),\ldots,\lambda_d(B)\) are the eigenvalues of \(B\), then
	\[
	\tr B-\log\det B-d
	=
	\sum_{j=1}^d
	\bigl(\lambda_j(B)-\log\lambda_j(B)-1\bigr).
	\]
	The scalar function \(f(t)=t-\log t-1\) is nonnegative and diverges as \(t\downarrow0\) or \(t\to\infty\).  Since \(\sum_j f(\lambda_j(B))\le 2C\), every eigenvalue belongs to the compact scalar level set \(\{t>0:f(t)\le 2C\}\).  Writing this set as \([\ell_C,u_C]\Subset(0,\infty)\) makes the spectral constants independent of the particular reference Gaussian in the stated spectral box.  The spectral bounds on \(\Sigma\) transfer this control to two-sided bounds for \(P\).  The first term in \eqref{eq:KLsublevelidentity} then bounds \(u\) uniformly because \(\norm m\le R\) and \(\Sigma\preceq\overline\sigma I\).  Thus the corresponding Gaussian KL sublevels are uniformly compact in mean--covariance coordinates.
\end{proof}

\section{Variational structure and stationarity}

Fix \(n\geq1\), weights \(\lambda_i>0\) with \(\sum_i\lambda_i=1\), and finite Gaussian inputs
\begin{equation}
	\alpha_i=a_i\mu_i,
	\qquad
	\mu_i=\N(m_i,\Sigma_i),
	\qquad
	a_i>0.
	\label{eq:inputs}
\end{equation}
For \(\beta=b\nu\in\mathcal G_d^+\), define
\begin{equation}
	\Fbar(b,\nu)
	=
	\sum_{i=1}^n\lambda_i
	\GKU_{\tau_0,\tau_1}(a_i\mu_i,b\nu).
	\label{eq:Fbar}
\end{equation}
The functional \eqref{eq:Fbar} is the objective for the right barycenter problem; its admissible class is the open cone of finite nondegenerate Gaussian measures \(\mathcal G_d^+\).

For each candidate \(\nu\in\mathcal G_d^1\), set
\begin{equation}
	A_i(\nu)=\Ashape(\mu_i,\nu),
	\qquad
	z_i(\nu)=a_i^\theta e^{-A_i(\nu)/s},
	\qquad
	\Zfun(\nu)=\sum_{i=1}^n\lambda_i z_i(\nu),
	\label{eq:zZ}
\end{equation}
and let \(\bar a:=\sum_{i=1}^n\lambda_i a_i\).

Table~\ref{tab:notation} summarizes the endpoint notation.
\begin{table}[h!]
	\centering
	\small
	\caption{Notation for the Gaussian barycenter problem.}
	\label{tab:notation}
	\begin{tabular}{@{}ll@{}}
		\toprule
		Symbol & Meaning \\
		\midrule
		\(\mu_i=\N(m_i,\Sigma_i)\) & normalized input Gaussian \\
		\(\nu=\N(m,\Sigma)\) & normalized candidate barycenter Gaussian \\
		\(p_i=\N(u_i,P_i)\) & adjusted source-side endpoint marginal \\
		\(q_i=\N(v_i,Q_i)\) & adjusted target marginal (barycenter side) \\
		\(A_i(\nu)\) & normalized pairwise endpoint value \(\Ashape(\mu_i,\nu)\) \\
		\(z_i,\Zfun\) & unnormalized and aggregated Gibbs factors in \eqref{eq:zZ} \\
		\(b(\nu)\) & profiled barycenter mass \\
		\(\omega_i(\nu)\) & normalized Gibbs weights introduced in \eqref{eq:gibbsweights} \\
		\bottomrule
	\end{tabular}
\end{table}

\subsection{Mass profiling and Gibbs representation}

\begin{theorem}\label{thm:massprofile}
	For every fixed \(\nu\in\mathcal G_d^1\), the map \(b\mapsto\Fbar(b,\nu)\) is strictly convex on \((0,\infty)\) and has the unique minimizer
	\begin{equation}
		b(\nu)=\Zfun(\nu)^{1/\theta}.
		\label{eq:bprofile}
	\end{equation}
	At this minimizing value, the profiled objective is
	\begin{equation}
		\Phi(\nu):=\inf_{b>0}\Fbar(b,\nu)=\tau_0\left(\bar a-\Zfun(\nu)^{1/\theta}\right).
		\label{eq:profiledF}
	\end{equation}
	The Gaussian KL-UOT barycenter functional is therefore minimized exactly at maximizers of \(\Zfun\):
	\begin{equation}
		\operatorname*{argmin}_{b>0,\,\nu\in\mathcal G_d^1}\Fbar(b,\nu)
		=
		\left\{
		\left(\Zfun(\nu)^{1/\theta},\nu\right):
		\nu\in\operatorname*{argmax}_{\mathcal G_d^1}\Zfun
		\right\},
	\end{equation}
\end{theorem}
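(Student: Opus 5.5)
By \cref{prop:endpoint}, each pairwise term is explicit in the mass $b$. Writing $A_i=A_i(\nu)$ for fixed $\nu$, we have
\[
\GKU_{\tau_0,\tau_1}(a_i\mu_i,b\nu)=\tau_0a_i+\tau_1b-s\,a_i^\theta b^{1-\theta}e^{-A_i/s}=\tau_0a_i+\tau_1 b-s\,z_i(\nu)\,b^{1-\theta}.
\]
Summing with weights $\lambda_i$ and using $\sum_i\lambda_i=1$ gives
\[
\Fbar(b,\nu)=\tau_0\bar a+\tau_1 b-s\,\Zfun(\nu)\,b^{1-\theta}.
\]
So the $b$-dependence reduces to a one-dimensional function $g(b)=\tau_1 b-sZ b^{1-\theta}$ with $Z=\Zfun(\nu)>0$. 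Positivity of $Z$ holds because $a_i>0$ and each $A_i$ is finite.

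\textbf{Strict convexity.} Since $0<1-\theta<1$ and $sZ>0$, the map $b\mapsto b^{1-\theta}$ is strictly concave on $(0,\infty)$. Hence $-sZb^{1-\theta}$ is strictly convex. Adding the linear term $\tau_1 b$ preserves this, which gives strict convexity. Explicitly,
\[
g''(b)=sZ\,\theta(1-\theta)b^{-1-\theta}>0 .
\]

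\textbf{Minimizer and profiled value.} The first-order condition is $g'(b)=\tau_1-sZ(1-\theta)b^{-\theta}=0$. Since $s(1-\theta)=\tau_1$, this becomes $b^{\theta}=Z$, so the critical point is $b(\nu)=Z^{1/\theta}$. Moreover $g'(b)\to-\infty$ as $b\downarrow0$ and $g'(b)\to\tau_1>0$ as $b\to\infty$, so this critical point is the unique global minimizer on $(0,\infty)$. Substituting $b(\nu)$, we get $b^{1-\theta}=Z^{(1-\theta)/\theta}$, and therefore
\[
Z\,b^{1-\theta}=Z^{1/\theta}.
\]
This yields $g(b(\nu))=(\tau_1-s)Z^{1/\theta}=-\tau_0Z^{1/\theta}$, which is \eqref{eq:profiledF}.

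\textbf{Argmin characterization.} This is a standard profiling argument. Since $\theta>0$, the map $t\mapsto\tau_0(\bar a-t^{1/\theta})$ is strictly decreasing on $(0,\infty)$. Hence minimizing $\Phi$ over $\nu$ is the same as maximizing $\Zfun$.

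For the forward inclusion, suppose $(b^*,\nu^*)$ is a joint minimizer. Then
\[
\Fbar(b^*,\nu^*)\le \inf_\nu\Phi(\nu)\le\Phi(\nu^*)\le\Fbar(b^*,\nu^*),
\]
so equality holds throughout. Therefore $\nu^*$ minimizes $\Phi$, i.e.\ maximizes $\Zfun$, and by uniqueness of the $b$-minimizer, $b^*=b(\nu^*)$.

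For the reverse inclusion, let $\nu$ maximize $\Zfun$ and take $b=b(\nu)$. For any pair $(b',\nu')$, we have $\Fbar(b',\nu')\ge\Phi(\nu')\ge\Phi(\nu)=\Fbar(b(\nu),\nu)$, so $(b(\nu),\nu)$ is a joint minimizer.

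The set identity holds as stated, including the case where it is empty; existence of a maximizer is a separate matter. No step is a genuine obstacle. The only care needed is confirming that $\Zfun(\nu)>0$ is finite, and that the boundary behaviour of $g'$ excludes minimization at $b\to0$ or $b\to\infty$.
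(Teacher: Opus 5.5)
Your proposal is correct and follows the same route as the paper: you substitute the endpoint formula from \Cref{prop:endpoint} to get $\Fbar(b,\nu)=\tau_0\bar a+\tau_1 b-s\Zfun(\nu)b^{1-\theta}$, then solve the one-dimensional strictly convex problem and evaluate the profile. Your explicit two-inclusion argument for the argmin identity fills in a step the paper leaves implicit, and the boundary behaviour of $g'$ is harmless but not needed, since a critical point of a strictly convex differentiable function is already its unique global minimizer.
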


\begin{proof}
	By \Cref{prop:endpoint}, the objective takes the form
	\[
	\Fbar(b,\nu)
	=
	\tau_0\bar a+\tau_1b
	-sb^{1-\theta}\Zfun(\nu).
	\]
	Using \(s(1-\theta)=\tau_1\), the first two derivatives with respect to \(b\) are
	\[
	\partial_b\Fbar(b,\nu)
	=\tau_1\left(1-\Zfun(\nu)b^{-\theta}\right),
	\qquad
	\partial_b^2\Fbar(b,\nu)
	=\tau_1\theta\Zfun(\nu)b^{-\theta-1}>0.
	\]
	Strict convexity then gives \eqref{eq:bprofile} as the unique minimizer.  At this point \(\Zfun(\nu)=b(\nu)^\theta\), so
	\[
	\Fbar(b(\nu),\nu)
	=
	\tau_0\bar a+\tau_1b(\nu)-sb(\nu)
	=
	\tau_0(\bar a-b(\nu)).
	\]
\end{proof}

Exact mass profiling therefore reduces every local or global variational question to the normalized shape variable \(\nu\), with the mass recovered uniquely from \eqref{eq:bprofile}.  The resulting log-sum-exp functional admits a useful Gibbs variational representation.  Set
\begin{equation}
	Z_0=\sum_{i=1}^n\lambda_i a_i^\theta,
	\qquad
	\pi_i=\frac{\lambda_i a_i^\theta}{Z_0}.
\end{equation}
Then \(\pi=(\pi_i)\) belongs to the probability simplex
\(\Delta_n:=\{\omega=(\omega_1,\ldots,\omega_n)\in[0,1]^n:\sum_{i=1}^n\omega_i=1\}\).

\begin{theorem}[Gibbs representation]
	For every \(\nu\in\mathcal G_d^1\),
	\begin{equation}
		-s\log\frac{\Zfun(\nu)}{Z_0}
		=
		\min_{\omega\in\Delta_n}
		\left\{
		\sum_{i=1}^n\omega_iA_i(\nu)
		+s\KL(\omega\mid\pi)
		\right\},
	\end{equation}
	where
	\[
	\KL(\omega\mid\pi)=\sum_i\omega_i\log\frac{\omega_i}{\pi_i},
	\qquad 0\log(0/\pi_i):=0.
	\]
	The minimizer is unique and equals
	\begin{equation}
		\omega_i(\nu)
		=
		\frac{\lambda_i a_i^\theta e^{-A_i(\nu)/s}}
		{\sum_{j=1}^n\lambda_j a_j^\theta e^{-A_j(\nu)/s}}.
		\label{eq:gibbsweights}
	\end{equation}
\end{theorem}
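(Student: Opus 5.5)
This is the Donsker--Varadhan/Gibbs variational identity on a finite simplex, and I would prove it by a direct completion-of-KL argument rather than Lagrange multipliers.

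Fix \(\nu\) and abbreviate \(A_i=A_i(\nu)\), which is finite by \Cref{thm:pairwiseinterface}. Define \(\omega(\nu)\) by \eqref{eq:gibbsweights}. Writing \(z_i=a_i^\theta e^{-A_i/s}\), the denominator is exactly \(\sum_j\lambda_jz_j=\Zfun(\nu)\). In terms of \(\pi\) this reads
\[
\omega_i(\nu)=\frac{\pi_i e^{-A_i/s}}{\Zfun(\nu)/Z_0},
\qquad \sum_i\pi_ie^{-A_i/s}=\frac{\Zfun(\nu)}{Z_0}.
\]
Every \(\pi_i>0\), since \(\lambda_i,a_i>0\), so \(\omega(\nu)\) lies in the interior of \(\Delta_n\).

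The key step is an exact identity. For any \(\omega\in\Delta_n\), take logarithms in the display above:
\[
\log\frac{\omega_i(\nu)}{\pi_i}=-\frac{A_i}{s}-\log\frac{\Zfun(\nu)}{Z_0}.
\]
Hence \(A_i=-s\log(\omega_i(\nu)/\pi_i)-s\log(\Zfun(\nu)/Z_0)\). Substituting this into \(\sum_i\omega_iA_i+s\sum_i\omega_i\log(\omega_i/\pi_i)\) and using \(\sum_i\omega_i=1\) gives
\[
\sum_{i}\omega_iA_i+s\KL(\omega\mid\pi)
= -s\log\frac{\Zfun(\nu)}{Z_0}+s\KL\bigl(\omega\mid\omega(\nu)\bigr).
\]
Here I need the convention \(0\log0=0\), and the fact that \(\omega(\nu)\) has full support so that the right-hand KL is finite and the log-ratios split termwise.

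Conclusion: Gibbs' inequality gives \(\KL(\omega\mid\omega(\nu))\ge0\) on the simplex, with equality iff \(\omega=\omega(\nu)\). By the identity, this yields both the minimum value and uniqueness of the minimizer simultaneously. The only mild care point is handling coordinates with \(\omega_i=0\) in the termwise bookkeeping; they contribute zero on both sides under the stated convention, so I expect no real obstacle.
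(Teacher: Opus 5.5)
Your proof is correct, but it takes a different route from the paper.

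The paper fixes \(\nu\) and uses strict convexity of \(G(\omega)=\sum_i\omega_iA_i+s\KL(\omega\mid\pi)\) on \(\Delta_n\) for uniqueness. It then solves the Lagrange first-order equations \(A_i+s(\log(\omega_i/\pi_i)+1)+c=0\) to get \(\omega_i\propto\pi_i e^{-A_i/s}\), and substitutes back to evaluate the minimum. You instead verify the stated formula through the exact decomposition \(G(\omega)=-s\log(\Zfun(\nu)/Z_0)+s\KL(\omega\mid\omega(\nu))\) together with Gibbs' inequality.

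What your route gains:
\begin{itemize}
\item It yields attainment, the minimum value, and uniqueness from a single identity.
\item It needs no differentiability or convexity.
\item It handles faces of the simplex where some \(\omega_i=0\) automatically, via the \(0\log 0\) convention. The Lagrange argument tacitly needs a further observation that the paper leaves implicit: an interior stationary point of a convex function is a global minimizer over the closed simplex, or equivalently, no minimizer lies on a face.
\item It gives a quantitative by-product: the excess \(G(\omega)-\min_{\Delta_n}G\) equals exactly \(s\KL(\omega\mid\omega(\nu))\).
\end{itemize}

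What the paper's route gains is that it derives the Gibbs form rather than needing it as an ansatz, which suits motivating the weights. Since the statement supplies the formula explicitly, your verification argument is fully adequate.
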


\begin{proof}
	For fixed \(\nu\), consider
	\[
	G(\omega)
	=\sum_i\omega_iA_i+s\sum_i\omega_i\log\frac{\omega_i}{\pi_i}.
	\]
	Strict convexity makes the minimizer unique.  Introducing a multiplier \(c\) for \(\sum_i\omega_i=1\), the first-order equations are
	\[
	A_i+s\left(\log\frac{\omega_i}{\pi_i}+1\right)+c=0,
	\]
	so \(\omega_i\propto\pi_i e^{-A_i/s}\).  Normalization fixes the weights in \eqref{eq:gibbsweights}, while substitution evaluates the minimum as
	\[
	\min_{\omega\in\Delta_n}G(\omega)
	=-s\log\sum_i\pi_i e^{-A_i/s}
	=-s\log\frac{\Zfun(\nu)}{Z_0}.
	\]
\end{proof}

The Gibbs weights factor into the external weights, mass factors \(a_i^\theta\), and shape compatibilities \(e^{-A_i(\nu)/s}\).  Thus they respond to mass imbalance and geometric mismatch; the latter factor drives the remote-input decay in \Cref{prop:redescending}.

Fix a candidate \(\nu=\N(m,\Sigma)\).  For the pair \((\mu_i,\nu)\), let \(p_i=\N(u_i,P_i)\) and \(q_i=\N(v_i,Q_i)\) be the unique adjusted Gaussian marginals in \eqref{eq:Adef}.  By \Cref{thm:pairwiseinterface}, all these quantities depend smoothly on \((m,\Sigma)\).

\begin{lemma}\label{lem:envelope}
	For \(A_i(m,\Sigma)=\Ashape(\mu_i,\N(m,\Sigma))\),
	\begin{equation}
		\nabla_m A_i
		=
		\tau_1\Sigma^{-1}(m-v_i),
		\label{eq:gradmA}
	\end{equation}
	and, with the Frobenius pairing on symmetric matrices,
	\begin{equation}
		\nabla_\Sigma A_i
		=
		\frac{\tau_1}{2}
		\left[
		\Sigma^{-1}
		-\Sigma^{-1}R_i\Sigma^{-1}
		\right],
		\label{eq:gradSigmaA}
	\end{equation}
	where \(R_i:=Q_i+(v_i-m)(v_i-m)^\top\).
\end{lemma}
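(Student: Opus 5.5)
This is an envelope-theorem computation.  By \Cref{thm:pairwiseinterface}, the value \(A_i(m,\Sigma)\) is the minimum over Gaussian pairs \((p,q)=(\N(u,P),\N(v,Q))\) of
\[
\mathcal E_i(u,P,v,Q;m,\Sigma)
=
\norm{u-v}^2+d_{\BW}^2(P,Q)
+\tau_0\KL(\N(u,P)\mid\mu_i)
+\tau_1\KL(\N(v,Q)\mid\N(m,\Sigma)).
\]
The minimizer \((u_i,P_i,v_i,Q_i)\) is unique and lies in the open set \(\R^d\times\Spp^d\times\R^d\times\Spp^d\).  It also depends real-analytically on \((m,\Sigma)\).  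Hence \(A_i(m,\Sigma)=\mathcal E_i(u_i(m,\Sigma),\dots;m,\Sigma)\) can be differentiated by the chain rule.  Every term involving derivatives of the optimizer is multiplied by a partial gradient of \(\mathcal E_i\) in \((u,P,v,Q)\).  These partial gradients vanish at an interior minimizer, by first-order optimality.  This is the envelope theorem.  I would state it explicitly: \(\mathcal E_i\) is smooth in all variables on the open domain, and the optimizer map is \(C^1\) by the analyticity in \Cref{thm:pairwiseinterface}.  The only point to check is that differentiability of the optimizer is available, and the theorem supplies it.  So no implicit-function argument is needed.

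It then suffices to compute the explicit \((m,\Sigma)\)-dependence.  Only the last term carries it, namely \(\tau_1\KL(\N(v_i,Q_i)\mid\N(m,\Sigma))\).  By \eqref{eq:GaussianKL}, this term equals
\[
\tfrac{\tau_1}{2}\bigl[\tr(\Sigma^{-1}Q_i)+(v_i-m)^\top\Sigma^{-1}(v_i-m)-d+\log\lvert\Sigma\rvert-\log\lvert Q_i\rvert\bigr].
\]
Differentiating in \(m\) with \(v_i,Q_i\) frozen gives \(\tfrac{\tau_1}{2}\cdot 2\Sigma^{-1}(m-v_i)\), which is \eqref{eq:gradmA}.

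For \(\Sigma\), I would first combine the trace and quadratic terms into \(\tr(\Sigma^{-1}R_i)\) with \(R_i=Q_i+(v_i-m)(v_i-m)^\top\).  I would then use two standard Frobenius gradients on symmetric matrices: \(\nabla_\Sigma\log\lvert\Sigma\rvert=\Sigma^{-1}\) and \(\nabla_\Sigma\tr(\Sigma^{-1}R)=-\Sigma^{-1}R\Sigma^{-1}\).  The second holds because \(d(\Sigma^{-1})=-\Sigma^{-1}\,d\Sigma\,\Sigma^{-1}\), and the result is symmetric since \(R_i\) is.  Together these yield \eqref{eq:gradSigmaA}.

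The main point of care is the envelope step.  One must confirm that the optimizer is interior (it is, since \(P_i,Q_i\in\Spp^d\)), so that the partial gradients in \(P\) and \(Q\) vanish as symmetric matrices.  One must also confirm that restricting \((p,q)\) to Gaussians does not alter the value function near \((m,\Sigma)\).  The Gaussian reduction preceding \Cref{thm:pairwiseinterface} guarantees this for every \(\nu\) simultaneously.  With these two checks, the computation is routine.
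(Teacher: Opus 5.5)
Your proposal is correct and follows the paper's own argument. By \Cref{thm:pairwiseinterface}, the optimizer is unique, interior, and depends analytically on \((m,\Sigma)\), so the envelope theorem removes the implicit derivative terms. Differentiating the target term \(\tau_1\KL(q_i\mid\N(m,\Sigma))\) from \eqref{eq:GaussianKL}, with \((v_i,Q_i)\) frozen, then yields \eqref{eq:gradmA}--\eqref{eq:gradSigmaA}; you merely spell out the interiority and \(C^1\) checks that the paper's two-line proof leaves implicit.
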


\begin{proof}
	By \Cref{thm:pairwiseinterface}, the unique optimizer depends smoothly on \((m,\Sigma)\).  The envelope argument cancels its implicit derivatives, leaving only \(\tau_1\KL(q_i\mid\N(m,\Sigma))\).  Differentiating \eqref{eq:GaussianKL} with respect to \(m\) and \(\Sigma\) gives the gradients in \eqref{eq:gradmA}--\eqref{eq:gradSigmaA}.
\end{proof}

\subsection{Global attainment}

Loss of compactness in Gaussian mean--covariance coordinates can occur through mean escape, divergence of the largest covariance eigenvalue, or collapse of the smallest one.  Endpoint sublevels control the first two mechanisms; the boundary requires a separate argument.

\begin{lemma}\label{lem:uppercompact}
	Fix \(\mu=\N(m_0,\Sigma_0)\in\mathcal G_d^1\).  For every \(C<\infty\), there exist constants \(R_C,\Lambda_C<\infty\) such that
	\[
	\Ashape(\mu,\N(m,\Sigma))\le C
	\]
	implies
	\begin{equation}
		\norm{m}\le R_C,
		\qquad
		\Sigma\preceq \Lambda_C I.
	\end{equation}
\end{lemma}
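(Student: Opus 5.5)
Since $\Ashape(\mu,\nu)$ is the minimum of three nonnegative terms at the optimizer $(p_*,q_*)$ of \Cref{thm:pairwiseinterface}, the bound $\Ashape(\mu,\N(m,\Sigma))\le C$ gives simultaneously
\[
W_2^2(p_*,q_*)\le C,\qquad \tau_0\KL(p_*\mid\mu)\le C,\qquad \tau_1\KL(q_*\mid\nu)\le C .
\]
The plan is to carry control from the fixed reference $\mu$ through $p_*$ and $q_*$ to $\nu=\N(m,\Sigma)$.

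\textbf{Step 1: control $p_*$.} Since $\mu$ is fixed, \Cref{lem:KLsublevel} applies with $R=\norm{m_0}$, $\underline\sigma=\lambda_{\min}(\Sigma_0)$, $\overline\sigma=\lambda_{\max}(\Sigma_0)$ and constant $C/\tau_0$. It yields $\norm{u_*}\le R'$ and $P_*\preceq \overline p\,I$.

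\textbf{Step 2: control $q_*$.} By the Gaussian $W_2$ formula, $\norm{u_*-v_*}^2\le C$. Also $\bigl(\tr^{1/2}Q_*-\tr^{1/2}P_*\bigr)^2\le d_{\BW}^2(P_*,Q_*)\le C$, since $d_{\BW}$ dominates the difference of Frobenius norms of the square roots. Hence $\norm{v_*}\le R'+\sqrt C$ and $\tr Q_*\le(\sqrt{d\overline p}+\sqrt C)^2$.

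\textbf{Step 3: transfer from $q_*$ to $\nu$.} This is the main obstacle, because \Cref{lem:KLsublevel} controls the \emph{first} argument of $\KL$ given the reference, whereas here the reference $\nu$ is the unknown. I will argue directly from \eqref{eq:KLsublevelidentity} with $B=\Sigma^{-1/2}Q_*\Sigma^{-1/2}$. Its eigenvalues lie in $[\ell,u]$, the level set of $f(t)=t-\log t-1$ at height $2C/\tau_1$, and $\ell>0$. Therefore $Q_*\succeq \ell\,\Sigma$, which gives $\Sigma\preceq \ell^{-1}Q_*\preceq \ell^{-1}\tr(Q_*)\,I=:\Lambda_C I$.

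For the mean, the first term of \eqref{eq:KLsublevelidentity} gives $(v_*-m)^\top\Sigma^{-1}(v_*-m)\le 2C/\tau_1$. Combined with $\Sigma\preceq\Lambda_C I$ this yields $\norm{v_*-m}^2\le 2C\Lambda_C/\tau_1$, so $\norm m\le \norm{v_*}+\sqrt{2C\Lambda_C/\tau_1}=:R_C$.

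\textbf{Remark on the lower bound.} No lower bound on $\Sigma$ is claimed, consistent with the lemma. This is why the eigenvalue inequality in Step 3 must be used only in the direction $\Sigma\preceq\ell^{-1}Q_*$, where the positivity of $\ell$ is exactly what is needed.
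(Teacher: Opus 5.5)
Your proof is correct and follows the paper's argument step for step: you split the endpoint bound into the three nonnegative terms, apply \Cref{lem:KLsublevel} to the source marginal, bound $v$ and $\tr Q$ through the Wasserstein term, and then use coercivity of $t-\log t-1$ for $B=\Sigma^{-1/2}Q\Sigma^{-1/2}$ to obtain $Q\succeq\ell\Sigma$, which yields $\Sigma\preceq\ell^{-1}(\tr Q)I$ and then the mean bound. The only cosmetic difference is how you justify $d_{\BW}(P,Q)\ge|\sqrt{\tr P}-\sqrt{\tr Q}|$: you use the reverse triangle inequality for the square-root representation, whereas the paper applies trace Cauchy--Schwarz to the fidelity term, and both give the same inequality.
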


\begin{proof}
	Let \(p=\N(u,P)\) and \(q=\N(v,Q)\) be the adjusted Gaussian marginals for \((\mu,\nu)\), where \(\nu=\N(m,\Sigma)\).  Nonnegativity of the three terms in \eqref{eq:Adef} bounds them individually:
	\begin{equation}
		W_2^2(p,q)\le C,
		\qquad
		\KL(p\mid\mu)\le C/\tau_0,
		\qquad
		\KL(q\mid\nu)\le C/\tau_1.
		\label{eq:sublevelthreebounds}
	\end{equation}
	The fixed-reference Gaussian KL sublevel estimate (see \Cref{lem:KLsublevel}) implies
	\[
	\norm u\le C_1,
	\qquad
	\underline p I\preceq P\preceq \overline p I
	\]
	for positive constants depending only on \((C,\mu,\tau_0)\).  The mean part of \(W_2^2(p,q)\) bounds \(v\) as well.  For the covariance part, the trace Cauchy--Schwarz inequality
	\[
	\tr\!\left(P^{1/2}QP^{1/2}\right)^{1/2}
	\le \sqrt{\tr P\,\tr Q}
	\]
	and the Bures formula imply
	\[
	d_{\BW}^2(P,Q)
	\ge
	\bigl(\sqrt{\tr P}-\sqrt{\tr Q}\bigr)^2.
	\]
	The Bures bound controls \(\tr Q\) uniformly; positivity then implies \(Q\preceq(\tr Q)I\).
	
	Set \(B:=\Sigma^{-1/2}Q\Sigma^{-1/2}\).
	By the Gaussian KL formula,
	\begin{equation}
		2\KL(q\mid\nu)
		=
		\norm{\Sigma^{-1/2}(v-m)}^2
		+\tr B-\log\det B-d.
		\label{eq:relativeBbound}
	\end{equation}
	Because \(t-\log t-1\to+\infty\) as \(t\downarrow0\) or \(t\uparrow\infty\), \eqref{eq:sublevelthreebounds} confines the spectrum of \(B\) to a fixed compact subinterval of \((0,\infty)\).  In particular, there is \(\beta_->0\) such that \(Q\succeq \beta_-\Sigma\).
	The uniform upper bound on \(Q\), together with \(Q\succeq\beta_-\Sigma\), forces \(\Sigma\preceq\Lambda_C I\).  The first term in \eqref{eq:relativeBbound}, combined with this spectral bound and the bound on \(v\), also controls \(m\).
\end{proof}

\begin{lemma}\label{lem:boundaryresponse}
	Fix \(\mu\in\mathcal G_d^1\) and \(C<\infty\).  There exist \(t_0,c_0>0\), depending only on \((\mu,C,\tau_0,\tau_1)\), with the following property.  Suppose
	\[
	\nu=\N(m,\Sigma)\in\mathcal G_d^1,
	\qquad
	\Ashape(\mu,\nu)\le C,
	\]
	and let \(e\) be a unit eigenvector of \(\Sigma\) with \(\Sigma e=te\) for some \(t\in(0,t_0)\).  If \(Q\) is the covariance of the adjusted target marginal, then
	\begin{equation}
		e^\top Qe\ge t\bigl(1+c_0\sqrt t\bigr).
		\label{eq:boundaryresponse}
	\end{equation}
\end{lemma}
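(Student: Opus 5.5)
The plan is to derive \eqref{eq:boundaryresponse} from the first-order optimality condition for the target marginal \(q=\N(v,Q)\), and then to convert a lower bound on the inverse transport map in direction \(e\) into a lower bound on \(e^\top Qe\).

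\textbf{Step 1 (stationarity in \(Q\)).} By \Cref{thm:pairwiseinterface}, \((p,q)\) is the unique minimizer of \eqref{eq:Adef}, and \(Q\) lies in the open cone \(\Spp^d\). Differentiating the objective in \(Q\), with \(P\) held fixed, gives the stationarity condition. I will use \(\nabla_Q d_{\BW}^2(P,Q)=I-L^{-1}\), where \(L\) is the map \eqref{eq:GaussianW2map} from \(P\) to \(Q\), together with \(\nabla_Q\KL=\tfrac12(\Sigma^{-1}-Q^{-1})\) from \eqref{eq:GaussianKL}. The resulting identity is
\[
Q^{-1}=\Sigma^{-1}+r_1\bigl(I-L^{-1}\bigr).
\]
Evaluating it at the eigenvector \(e\) gives \(e^\top Q^{-1}e=1/t+r_1-r_1\,e^\top L^{-1}e\). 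Since \(e\) is a unit vector, Jensen (or Cauchy--Schwarz) gives \(e^\top Qe\ge (e^\top Q^{-1}e)^{-1}\). It therefore suffices to show \(e^\top L^{-1}e\ge c_1/\sqrt t\) with \(c_1>0\) depending only on \((\mu,C,\tau_0,\tau_1)\). Indeed, this gives \(e^\top Q^{-1}e\le t^{-1}(1-r_1c_1\sqrt t+r_1t)\le t^{-1}(1-c_0'\sqrt t)\) for \(t<t_0\) small, and then \(e^\top Qe\ge t(1+c_0\sqrt t)\).

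\textbf{Step 2 (uniform spectral constants).} As in \eqref{eq:sublevelthreebounds}, the bound \(\Ashape\le C\) gives \(\KL(p\mid\mu)\le C/\tau_0\) and \(\KL(q\mid\nu)\le C/\tau_1\). \Cref{lem:KLsublevel}, applied with the fixed reference \(\mu\), yields \(P\succeq\underline p I\). The eigenvalue confinement in the proof of \Cref{lem:uppercompact}, applied to \(B=\Sigma^{-1/2}Q\Sigma^{-1/2}\), yields \(Q\preceq\beta_+\Sigma\). Both \(\underline p\) and \(\beta_+\) depend only on \((\mu,C,\tau_0,\tau_1)\); in particular neither depends on \(\nu\).

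\textbf{Step 3 (lower bound on \(e^\top L^{-1}e\); the main step).} Write
\[
L^{-1}=P^{1/2}\bigl(P^{1/2}QP^{1/2}\bigr)^{-1/2}P^{1/2}.
\]
From \(Q\preceq\beta_+\Sigma\) and operator monotonicity of \(x\mapsto x^{-1/2}\) (order-reversing), we get \((P^{1/2}QP^{1/2})^{-1/2}\succeq\beta_+^{-1/2}(P^{1/2}\Sigma P^{1/2})^{-1/2}\). Hence \(L^{-1}\succeq\beta_+^{-1/2}M^{-1}\), where \(M=P^{-1/2}(P^{1/2}\Sigma P^{1/2})^{1/2}P^{-1/2}\) is the positive-definite Bures map from \(P\) to \(\Sigma\), so that \(MPM=\Sigma\).

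Two estimates then finish the bound. First, Jensen gives \(e^\top M^{-1}e\ge (e^\top Me)^{-1}\). Second, \(\|P^{1/2}Me\|^2=e^\top\Sigma e=t\) together with \(P\succeq\underline pI\) gives \(\|Me\|\le\sqrt{t/\underline p}\), hence \(e^\top Me\le\sqrt{t/\underline p}\). Combining,
\[
e^\top L^{-1}e\ge\sqrt{\underline p/\beta_+}\;t^{-1/2},
\]
which is the bound required in Step 1 with \(c_1=\sqrt{\underline p/\beta_+}\).

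The main obstacle is this step: it must show that the transport map to the barycenter-side marginal is strongly contracting, at rate \(\sqrt t\), in a nearly collapsed direction. The delicate point is that this must hold uniformly in \(\nu\), which is why both the comparison \(Q\preceq\beta_+\Sigma\) and the reference-independent floor \(P\succeq\underline pI\) are needed.
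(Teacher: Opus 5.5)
Your proof is correct and shares the paper's skeleton: the \(Q\)-stationarity condition evaluated along \(e\), the source-side floor \(P\succeq\underline pI\) from \Cref{lem:KLsublevel}, and Cauchy--Schwarz to pass between \(e^\top Qe\) and \(e^\top Q^{-1}e\). The routes differ in how \(e^\top L^{-1}e\) is bounded below.

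The paper uses \(Q=LPL\) to get \(e^\top Le\le\sqrt{q_e/\underline p}\), where \(q_e=e^\top Qe\). This gives the self-referential bound \(e^\top L^{-1}e\ge\sqrt{\underline p/q_e}\). The paper then closes the argument by contradiction: the hypothesis \(q_e\le t(1+c_0\sqrt t)\le 2t\) forces \(e^\top L^{-1}e\gtrsim t^{-1/2}\), which violates stationarity for small \(t\). That argument uses only the source-side KL bound and gives \(c_0=\sqrt{\underline p}/(2\tau_1)\).

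You instead bring in the target-side confinement \(Q\preceq\beta_+\Sigma\). From it you derive an unconditional, explicit estimate \(e^\top L^{-1}e\ge\sqrt{\underline p/(\beta_+t)}\), so your argument is direct rather than by contradiction. The price is one extra ingredient, and as written, an operator-monotonicity step.

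The detour through the Bures map \(M\) from \(P\) to \(\Sigma\) is unnecessary. The comparison \(Q\preceq\beta_+\Sigma\) already gives \(q_e\le\beta_+t\). Inserting this into the paper's bound \(e^\top L^{-1}e\ge\sqrt{\underline p/q_e}\) yields exactly your constant \(c_1=\sqrt{\underline p/\beta_+}\), with no operator monotonicity needed.
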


\begin{proof}
	Let \(p=\N(u,P)\) and \(q=\N(v,Q)\) be the adjusted endpoint marginals, and let \(L\) be the positive-definite Wasserstein map from \(P\) to \(Q\), so \(Q=LPL\).  The endpoint bound implies \(\KL(p\mid\mu)\le C/\tau_0\).  Applying \Cref{lem:KLsublevel} to this KL sublevel bounds the adjusted source covariance away from zero: there is \(\underline p>0\), depending only on \((\mu,C,\tau_0)\), such that
	\begin{equation}
	P\succeq \underline p I.
		\label{eq:Plowerinward}
	\end{equation}
	
	The covariance first-order condition for the adjusted target marginal is
	\begin{equation}
		I-L^{-1}
		+\frac{\tau_1}{2}\bigl(\Sigma^{-1}-Q^{-1}\bigr)=0.
		\label{eq:Qfirstorder}
	\end{equation}
	Here \(\nabla_Qd_{\BW}^2(P,Q)=I-L^{-1}\), while the covariance derivative of \(\tau_1\KL(q\mid\nu)\) is \(\tfrac{\tau_1}{2}(\Sigma^{-1}-Q^{-1})\).
	
	Put \(q_e=e^\top Qe\).  Combining \(Q=LPL\) with \eqref{eq:Plowerinward} bounds \(q_e\) as
	\[
	q_e\ge \underline p\,\norm{Le}^2
	\ge \underline p\,(e^\top Le)^2.
	\]
	For every positive-definite matrix \(A\), Cauchy--Schwarz implies
	\[
	(e^\top Ae)(e^\top A^{-1}e)\ge1.
	\]
	With \(A=L\) and \(A=Q\), the same inequality becomes
	\begin{equation}
		e^\top L^{-1}e
		\ge \sqrt{\frac{\underline p}{q_e}},
		\qquad
		e^\top Q^{-1}e\ge \frac1{q_e}.
		\label{eq:inverseCSbounds}
	\end{equation}
	Evaluating \eqref{eq:Qfirstorder} along \(e\) and inserting \eqref{eq:inverseCSbounds} reduces the stationarity condition to
	\begin{equation}
		0
		\le
		1-\sqrt{\frac{\underline p}{q_e}}
		+\frac{\tau_1}{2}\left(\frac1t-\frac1{q_e}\right).
		\label{eq:qebasic}
	\end{equation}
	Set \(c_0=\sqrt{\underline p}/(2\tau_1)\).  If \(q_e\le t(1+c_0\sqrt t)\), then, for \(c_0\sqrt t\le1\), the right-hand side of \eqref{eq:qebasic} is at most
	\[
	1-\frac{\sqrt{\underline p}}{\sqrt{2t}}
	+\frac{\tau_1c_0}{2\sqrt t}
	=
	1-\frac{\sqrt{\underline p}}{\sqrt t}
	\left(\frac1{\sqrt2}-\frac14\right).
	\]
	For sufficiently small \(t\), this upper bound is negative, contradicting \eqref{eq:qebasic}.  After decreasing \(t_0\) if necessary, we obtain \eqref{eq:boundaryresponse}.
\end{proof}

\begin{lemma}\label{lem:inward}
	Under the hypotheses of \Cref{lem:boundaryresponse}, there exist \(t_0,c>0\), depending only on \((\mu,C,\tau_0,\tau_1)\), such that
	\begin{equation}
		D_\Sigma\Ashape(\mu,\nu)[ee^\top]
		\le -\frac{c}{\sqrt t}
		\qquad (0<t<t_0).
		\label{eq:inwardderivative}
	\end{equation}
\end{lemma}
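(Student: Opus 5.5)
We combine the envelope formula of \Cref{lem:envelope} with the boundary response of \Cref{lem:boundaryresponse}. By \eqref{eq:gradSigmaA}, with the Frobenius pairing, the directional derivative in the direction \(ee^\top\) is
\[
D_\Sigma\Ashape(\mu,\nu)[ee^\top]
=\frac{\tau_1}{2}\Bigl[e^\top\Sigma^{-1}e-e^\top\Sigma^{-1}R\Sigma^{-1}e\Bigr],
\qquad R=Q+(v-m)(v-m)^\top .
\]
Since \(e\) is an eigenvector of \(\Sigma\) with eigenvalue \(t\), we have \(\Sigma^{-1}e=t^{-1}e\). Hence
\[
D_\Sigma\Ashape(\mu,\nu)[ee^\top]=\frac{\tau_1}{2}\Bigl[\frac1t-\frac{e^\top Re}{t^2}\Bigr].
\]
The rank-one mean term satisfies \(e^\top(v-m)(v-m)^\top e=(e^\top(v-m))^2\ge0\), so we may drop it and obtain \(e^\top Re\ge e^\top Qe\).

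The whole argument therefore reduces to the lower bound \eqref{eq:boundaryresponse}, \(e^\top Qe\ge t(1+c_0\sqrt t)\), which holds for \(t<t_0\) with \(t_0,c_0\) depending only on \((\mu,C,\tau_0,\tau_1)\). Inserting it gives
\[
D_\Sigma\Ashape(\mu,\nu)[ee^\top]
\le\frac{\tau_1}{2}\Bigl[\frac1t-\frac{1+c_0\sqrt t}{t}\Bigr]
=-\frac{\tau_1c_0}{2}\,\frac{1}{\sqrt t}.
\]
This is \eqref{eq:inwardderivative} with \(c=\tau_1c_0/2\) and the same \(t_0\), so the constants depend only on \((\mu,C,\tau_0,\tau_1)\), as required.

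All the real difficulty, namely the \(\sqrt t\) gain in the target covariance along collapsing directions, sits in \Cref{lem:boundaryresponse}, which we take as given. The remaining points to check are bookkeeping. The envelope formula applies at every \(\nu\in\mathcal G_d^1\) by the smoothness in \Cref{thm:pairwiseinterface}. The Frobenius pairing of \(\nabla_\Sigma\Ashape\) with \(ee^\top\) gives exactly the quadratic forms used above, with no factor-of-two issue, because \(ee^\top\) is symmetric and \eqref{eq:gradSigmaA} is stated with respect to the Frobenius pairing on symmetric matrices. We also use that the adjusted target covariance \(Q\) in \eqref{eq:gradSigmaA} is the same matrix as in \Cref{lem:boundaryresponse}. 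Finally, the mean contribution enters \(R\) with the favorable sign, so discarding it only strengthens the inequality.
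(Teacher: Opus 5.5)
Your proposal is correct and matches the paper's proof: you evaluate the envelope gradient at \(H=ee^\top\) using \(\Sigma^{-1}e=t^{-1}e\), drop the rank-one mean term via \(R\succeq Q\), and insert \eqref{eq:boundaryresponse} to get \(c=\tau_1c_0/2\) with the same \(t_0\). The only cosmetic difference is that you cite \Cref{lem:envelope} for the gradient formula, whereas the paper rederives it inline from the envelope cancellation.
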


\begin{proof}
	By \Cref{thm:pairwiseinterface}, the optimizer varies differentiably with the target covariance.  The optimizer first-order conditions cancel the implicit derivative terms, leaving only the explicit covariance dependence of the target KL term.  Thus, for every symmetric perturbation \(H\),
	\[
	D_\Sigma\Ashape(\mu,\nu)[H]
	=
	\frac{\tau_1}{2}\tr\!\left[
	\left(\Sigma^{-1}-\Sigma^{-1}R\Sigma^{-1}\right)H
	\right],
	\qquad
	R=Q+(v-m)(v-m)^\top.
	\]
	With \(H=ee^\top\), \(\Sigma e=te\), and \(R\succeq Q\),
	\[
	D_\Sigma\Ashape(\mu,\nu)[ee^\top]
	\le
	\frac{\tau_1}{2}
	\left(\frac1t-\frac{e^\top Qe}{t^2}\right).
	\]
	The boundary response \eqref{eq:boundaryresponse} turns this bound into
	\[
	D_\Sigma\Ashape(\mu,\nu)[ee^\top]
	\le -\frac{\tau_1c_0}{2\sqrt t},
	\]
	giving \eqref{eq:inwardderivative} with \(c=\tau_1c_0/2\).
\end{proof}

On every fixed box with bounded mean and upper covariance spectrum, the competitor bound \(A_i(\nu)\le W_2^2(\mu_i,\nu)\) places all endpoints in common sublevels.  Since the input family is finite, the constants in \Cref{lem:boundaryresponse,lem:inward} can therefore be chosen uniformly in \(i\), as used in \Cref{lem:profiledinward}.

\begin{lemma}[Inward ascent]\label{lem:profiledinward}
	Fix \(R,\Lambda<\infty\).  There exist constants \(t_0,c>0\), depending only on the fixed inputs, the penalties, \(R\), and \(\Lambda\), such that the following holds.  If
	\[
	\nu=\N(m,\Sigma),
	\qquad
	\norm m\le R,
	\qquad
	\Sigma\preceq\Lambda I,
	\]
	and \(e\) is a unit eigenvector satisfying \(\Sigma e=te\) with \(0<t<t_0\), then
	\begin{equation}
		D_\Sigma\log\Zfun(\nu)[ee^\top]
		\ge \frac{c}{\sqrt t}.
		\label{eq:profiledinward}
	\end{equation}
\end{lemma}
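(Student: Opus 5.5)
We reduce the profiled estimate to the pairwise inward estimate of \Cref{lem:inward}, applied to each input separately. Since \(\log\Zfun=\log\sum_i\lambda_i z_i\) and \(z_i=a_i^\theta e^{-A_i/s}\), the chain rule gives
\[
D_\Sigma\log\Zfun(\nu)[H]
=-\frac1s\sum_{i=1}^n\omega_i(\nu)\,D_\Sigma A_i(\nu)[H],
\]
with the Gibbs weights \eqref{eq:gibbsweights}. This formula requires \(A_i\) to be differentiable, which \Cref{thm:pairwiseinterface} provides. It therefore suffices to bound every \(D_\Sigma A_i(\nu)[ee^\top]\) above by \(-c'/\sqrt t\) with a constant uniform in \(i\) and \(\nu\). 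Because the weights are nonnegative and sum to one, that bound then yields \eqref{eq:profiledinward} with \(c=c'/s\).

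To place every endpoint in a common sublevel, we use the competitor \(p=\mu_i\), \(q=\nu\) in \eqref{eq:Adef}, which gives \(A_i(\nu)\le W_2^2(\mu_i,\nu)\). From the Gaussian Wasserstein formula,
\[
W_2^2(\mu_i,\nu)\le \norm{m_i-m}^2+\tr\Sigma_i+\tr\Sigma\le (\norm{m_i}+R)^2+\tr\Sigma_i+d\Lambda.
\]
Thus \(A_i(\nu)\le C\), where \(C:=\max_i[(\norm{m_i}+R)^2+\tr\Sigma_i]+d\Lambda\) depends only on the inputs, \(R\), and \(\Lambda\).

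Next, for each \(i\), \Cref{lem:inward} applied with \(\mu=\mu_i\) and this \(C\) gives constants \(t_0^{(i)},c^{(i)}>0\), depending only on \((\mu_i,C,\tau_0,\tau_1)\). Whenever \(\Sigma e=te\) with \(t<t_0^{(i)}\), they yield \(D_\Sigma A_i(\nu)[ee^\top]\le -c^{(i)}/\sqrt t\). Set \(t_0=\min_i t_0^{(i)}\) and \(c'=\min_i c^{(i)}\); both are positive because the input family is finite. Substituting into the chain-rule identity gives
\[
D_\Sigma\log\Zfun(\nu)[ee^\top]\ge \frac1s\sum_i\omega_i\frac{c'}{\sqrt t}=\frac{c'}{s\sqrt t}.
\]

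The only delicate point is uniformity: the constants in \Cref{lem:inward} must not depend on \(\nu\) beyond the sublevel bound. This holds because \Cref{lem:boundaryresponse} uses only the lower bound on \(P\) coming from \(\KL(p\mid\mu_i)\le C/\tau_0\) through \Cref{lem:KLsublevel}, and that bound depends on \(\mu_i\) and \(C\) alone. No lower bound on the Gibbs weights is needed, since every term in the average carries the same sign and the same rate.
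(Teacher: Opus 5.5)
Your proposal is correct and follows essentially the same route as the paper: the competitor bound \(A_i(\nu)\le W_2^2(\mu_i,\nu)\) places all endpoints in a common sublevel on the box, \Cref{lem:inward} is applied to each of the finitely many inputs with minimized constants, and the Gibbs-weighted chain rule \(D_\Sigma\log\Zfun=-s^{-1}\sum_i\omega_iD_\Sigma A_i\) then gives \(c=s^{-1}\min_ic_i\). Your explicit constant \(C=\max_i[(\norm{m_i}+R)^2+\tr\Sigma_i]+d\Lambda\) just makes concrete the paper's remark that the Wasserstein competitor is uniformly bounded on the box even without a lower spectral bound.
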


\begin{proof}
	For each input \(\mu_i\), testing \eqref{eq:Adef} with \(p=\mu_i\) and \(q=\nu\) shows
	\[
	A_i(\nu)\le W_2^2(\mu_i,\nu).
	\]
	The right-hand side is uniformly bounded on the box \(\norm m\le R\), \(\Sigma\preceq\Lambda I\), even without a lower spectral bound.  By \Cref{lem:inward}, applied to the finite input family and these common endpoint sublevels, there are constants \(t_0>0\) and \(c_i>0\) such that
	\[
	D_\Sigma A_i(\nu)[ee^\top]
	\le -\frac{c_i}{\sqrt t}
	\qquad (i=1,\ldots,n).
	\]
	Writing
	\[
	\omega_i(\nu)
	=
	\frac{\lambda_i a_i^\theta e^{-A_i(\nu)/s}}{\Zfun(\nu)},
	\qquad
	\sum_i\omega_i(\nu)=1,
	\]
	the directional derivative of \eqref{eq:zZ} along \(ee^\top\) is
	\[
	D_\Sigma\log\Zfun(\nu)[ee^\top]
	=
	-\frac1s\sum_i\omega_i(\nu)
	D_\Sigma A_i(\nu)[ee^\top].
	\]
	Taking \(c=s^{-1}\min_i c_i>0\) proves \eqref{eq:profiledinward}.  Thus, once the mean and upper spectrum are controlled, the lemma provides a uniform ascent direction away from the positive-semidefinite boundary.
\end{proof}

\begin{theorem}[Global attainment and compactness]\label{thm:globalexistence}
	Assume \(\tau_0,\tau_1>0\), and let positive weights \(\lambda_i\) satisfy \(\sum_i\lambda_i=1\).  Let the inputs in \eqref{eq:inputs} satisfy \(a_i>0\) and \(\Sigma_i\in\Spp^d\).  Then the profiled shape objective \(\Phi\) attains its global minimum on the full Gaussian manifold \(\mathcal G_d^1\).  Equivalently, \(\Zfun\) attains its global maximum.  In particular, the infimum
	\[
	\inf_{b>0,\,\nu\in\mathcal G_d^1}\Fbar(b,\nu)
	\]
	is attained.  Its minimizer set is
	\begin{equation}
		\operatorname*{argmin}_{b>0,\,\nu\in\mathcal G_d^1}\Fbar(b,\nu)
		=
		\left\{
		\bigl(\Zfun(\nu)^{1/\theta},\nu\bigr):
		\nu\in\operatorname*{argmax}_{\mathcal G_d^1}\Zfun
		\right\}.
		\label{eq:globalbaryset}
	\end{equation}
	The maximizing shape set \(\operatorname*{argmax}_{\mathcal G_d^1}\Zfun\) is also compact in mean--covariance coordinates; consequently, the finite Gaussian barycenter set in \eqref{eq:globalbaryset} is compact.
\end{theorem}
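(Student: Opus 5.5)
By \Cref{thm:massprofile}, minimizing \(\Fbar\) over \(b>0\) and \(\nu\in\mathcal G_d^1\) is equivalent to maximizing \(\Zfun\) over \(\mathcal G_d^1\), and every minimizer has the form \((\Zfun(\nu)^{1/\theta},\nu)\). So the whole statement follows once we show two things: \(\Zfun\) attains its supremum, and the argmax set is compact in mean--covariance coordinates.

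\textbf{Sublevel containing all near-maximizers.} Set \(Z^*:=\sup\Zfun\). Testing \(\nu=\mu_1\) gives \(A_1(\mu_1)=0\) by \Cref{lem:meancomponent}, hence \(Z^*\ge\lambda_1a_1^\theta>0\). If \(\Zfun(\nu)\ge Z^*/2\), then some index \(i\) satisfies \(\lambda_ia_i^\theta e^{-A_i(\nu)/s}\ge Z^*/(2n)\). This yields \(A_i(\nu)\le C:=\max_i s\log(2n\lambda_ia_i^\theta/Z^*)\), a constant independent of \(\nu\). Applying \Cref{lem:uppercompact} to that index \(i\), and taking the maximum of the constants over the finitely many inputs, we get \(\norm m\le R\) and \(\Sigma\preceq\Lambda I\) on the superlevel set \(\{\Zfun\ge Z^*/2\}\).

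\textbf{Lower spectral floor.} This is the key step. Take \(t_0,c\) from \Cref{lem:profiledinward} for this box, and suppose \(\nu=\N(m,\Sigma)\) lies in the superlevel set with \(\lambda_{\min}(\Sigma)=t<t_0\). I would push \(\nu\) along the eigendirection by the rank-one path \(\Sigma_\epsilon=\Sigma+\epsilon ee^\top\). Along this path the eigenvalue \(t+\epsilon\) increases and the other eigenpairs are fixed. The path stays in the box \(\Sigma\preceq\Lambda I\) once \(\Lambda\) is enlarged to \(\Lambda+t_0\), which is harmless because the lemma's constants depend only on the box. The path also stays in the superlevel set, since \(\log\Zfun\) increases along it as long as \(t+\epsilon<t_0\).

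Then \(\frac{d}{d\epsilon}\log\Zfun(\Sigma_\epsilon)\ge c/\sqrt{t+\epsilon}\) as long as \(t+\epsilon<t_0\). Integrating from \(0\) to \(t_0/2-t\) gives
\[
\log\Zfun(\nu_{\rm new})-\log\Zfun(\nu)\ge 2c\bigl(\sqrt{t_0/2}-\sqrt t\bigr),
\]
which is positive once \(t<t_0/2\). A repeated eigenvalue is treated by applying this one direction at a time, or by choosing any unit eigenvector. To get a genuinely uniform floor I would argue by contradiction: along a maximizing sequence, replace each \(\nu_k\) having \(\lambda_{\min}<t_0/4\) by the pushed point. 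Each replacement does not decrease \(\Zfun\), and repeating it for each small eigenvalue, at most \(d\) steps, raises every eigenvalue to at least \(t_0/4\).

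\textbf{Existence and compactness.} This produces a maximizing sequence inside the compact set \(K:=\{\norm m\le R,\ (t_0/4)I\preceq\Sigma\preceq\Lambda' I\}\subset\mathcal G_d^1\). Since \(\Zfun\) is continuous (indeed analytic) by \Cref{thm:pairwiseinterface}, the supremum is attained on \(K\).

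For compactness of the argmax set, any maximizer \(\nu\) must satisfy \(\lambda_{\min}(\Sigma)\ge t_0/2\). Otherwise the strict increase shown above would contradict maximality. Hence the argmax set is a closed subset of \(K\): it is the preimage \(\Zfun^{-1}(Z^*)\) intersected with the compact set, and \(\Zfun\) is continuous there. So it is compact.

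The map \(\nu\mapsto(\Zfun(\nu)^{1/\theta},\nu)\) is continuous, so it carries this compact set onto a compact set, which is the barycenter set in \eqref{eq:globalbaryset}. The main obstacle is the eigenvalue-push step: one must check that the integrated ascent keeps the path inside the box where \Cref{lem:profiledinward} applies, and that the resulting floor does not depend on the sequence.
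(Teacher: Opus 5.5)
Your proposal is correct and follows the paper's proof essentially step for step. Both use the pigeonhole bound on the Gibbs sum together with \Cref{lem:uppercompact} on \(\{\Zfun\ge Z^*/2\}\), the rank-one eigenvector push integrated via \Cref{lem:profiledinward}, and the same strict-ascent argument to get a uniform spectral floor on the argmax set. Replacing near-collapsed terms by their pushed points, instead of deriving \(\Zfun(\nu_k(\varepsilon))>Z^*\), is only a cosmetic variant; it yields a sequence-independent floor directly.

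One point needs tidying: enlarging the box to \(\Lambda+t_0\) is circular, because \(t_0\) depends on the box. Fix the box first, as the paper does with \(\Lambda_+=\max\{\Lambda,1\}\), and cap the push at \(\tfrac12\min\{1,t_0\}\). Alternatively, shrink \(t_0\le\Lambda\) so the push never leaves the original box.
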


\begin{proof}
	\emph{Step 1: upper coercivity.}
	Set \(Z^*:=\sup_{\nu\in\mathcal G_d^1}\Zfun(\nu)\).  Since every term in \(\Zfun\) is positive and bounded above by \(\lambda_i a_i^\theta\), we have \(0<Z^*<\infty\).  Let \(\nu_k=\N(m_k,\Sigma_k)\) be a maximizing sequence.  For all sufficiently large \(k\), \(\Zfun(\nu_k)\ge Z^*/2\).  For each such \(k\), at least one index \(i(k)\) must satisfy
	\[
	\lambda_{i(k)}a_{i(k)}^\theta
	e^{-A_{i(k)}(\nu_k)/s}
	\ge
	\frac{Z^*}{2n}.
	\]
	For every index that can occur in this way, whenever \(i(k)=i\) the preceding inequality implies the finite bound
	\[
	A_i(\nu_k)
	\le
	C_i^*:=s\log\!\left(\frac{2n\lambda_i a_i^\theta}{Z^*}\right).
	\]
	The index \(i(k)\) may depend on \(k\), but it takes values in a finite set.  Applying \Cref{lem:uppercompact} to each relevant pair \((\mu_i,C_i^*)\) and taking the largest resulting constants confines the sequence to the uniform bounds
	\begin{equation}
		\norm{m_k}\le R,
		\qquad
		\Sigma_k\preceq \Lambda I
		\label{eq:maxsequpperbounds}
	\end{equation}
	for all sufficiently large \(k\).
	
	\emph{Step 2: exclusion of covariance collapse.}
	Suppose, after passing to a subsequence, that
	\[
	t_k:=\lambda_{\min}(\Sigma_k)\downarrow0.
	\]
	Choose a unit eigenvector \(e_k\) with \(\Sigma_ke_k=t_ke_k\), and for \(t\ge t_k\) set
	\[
	\Sigma_k(t)=\Sigma_k+(t-t_k)e_ke_k^\top,
	\qquad
	\nu_k(t)=\N(m_k,\Sigma_k(t)).
	\]
	Along this rank-one path, \(e_k\) remains an eigenvector with eigenvalue \(t\); it need not remain the minimum-eigenvalue direction after an eigenvalue crossing.  Set \(\Lambda_+:=\max\{\Lambda,1\}\).  The path satisfies \(\norm{m_k}\le R\) and \(\Sigma_k(t)\preceq\Lambda_+I\) for \(t\in[t_k,1]\).  Apply \Cref{lem:profiledinward} on this fixed upper box and choose \(\varepsilon:=\frac12\min\{1,t_0\}\), where \(t_0\) is the corresponding boundary threshold.  For all sufficiently large \(k\), \(t_k<\varepsilon\), and therefore
	\[
	\frac{d}{dt}\log\Zfun(\nu_k(t))
	\ge \frac{c}{\sqrt t}
	\qquad
	(t_k\le t\le\varepsilon).
	\]
	Integrating over the rank-one segment establishes
	\begin{equation}
		\log\frac{\Zfun(\nu_k(\varepsilon))}{\Zfun(\nu_k)}
		\ge
		2c\bigl(\sqrt\varepsilon-\sqrt{t_k}\bigr).
		\label{eq:Zprofiledintegrated}
	\end{equation}
	The right-hand side is eventually bounded below by a positive constant.  Since \(\Zfun(\nu_k)\to Z^*\), \eqref{eq:Zprofiledintegrated} would then force \(\Zfun(\nu_k(\varepsilon))>Z^*\) for all large \(k\), contradicting the definition of \(Z^*\).  Thus the chosen maximizing sequence stays uniformly away from covariance collapse.  After absorbing finitely many initial terms, there is a sequence-dependent constant \(\underline\sigma_{\nu_\bullet}>0\) such that
	\[
	\lambda_{\min}(\Sigma_k)\ge\underline\sigma_{\nu_\bullet}>0
	\qquad(k\ge0).
	\]
	
	\emph{Step 3: attainment.}
	Combined with \eqref{eq:maxsequpperbounds}, this lower spectral bound makes the chosen maximizing sequence precompact in mean--covariance coordinates.  The same argument applies to every maximizing sequence; hence each maximizing sequence is precompact.  A convergent subsequence and continuity of \(\Zfun\) then produce a maximizer \(\nu_*\in\mathcal G_d^1\).  The characterization \eqref{eq:globalbaryset} follows from \Cref{thm:massprofile}.
	
	\emph{Step 4: compactness of the optimizer set.}
	The lower spectral bound is uniform over the entire maximizing set.  Otherwise one could choose maximizers \(\widehat\nu_k=\N(\widehat m_k,\widehat\Sigma_k)\) with \(\lambda_{\min}(\widehat\Sigma_k)\le k^{-1}\).  For these maximizers, \(\Zfun(\widehat\nu_k)=Z^*\), while the bounds from Step~1 place them in a common box with bounded mean and upper covariance spectrum.  Applying the profiled inward-ascent estimate of \Cref{lem:profiledinward} on that box would produce a value larger than \(Z^*\), again a contradiction.  Consequently, common constants \(R,\underline\sigma,\Lambda\) place the whole maximizing set in the compact parameter set
	\[
	\mathcal K_*=
	\left\{\N(m,\Sigma):\norm m\le R,\ \underline\sigma I\preceq\Sigma\preceq\Lambda I\right\}.
	\]
	The maximizing set can therefore be written as
	\[
	\operatorname*{argmax}_{\mathcal G_d^1}\Zfun
	=
	\Zfun^{-1}(\{Z^*\})\cap\mathcal K_*,
	\]
	and continuity of \(\Zfun\) makes this level set closed in the compact set \(\mathcal K_*\).  Hence the maximizing set is compact.  The continuous mass profile \(\nu\mapsto\Zfun(\nu)^{1/\theta}\) maps it to a compact finite Gaussian barycenter set.
\end{proof}

\begin{corollary}\label{cor:nearmaxcompact}
	Let
	\[
	Z^*=\sup_{\nu\in\mathcal G_d^1}\Zfun(\nu),
	\qquad
	\Phi_*:=\min_{\nu\in\mathcal G_d^1}\Phi(\nu)
	=\tau_0\left(\bar a-(Z^*)^{1/\theta}\right).
	\]
	Then:
	\begin{enumerate}[label=(\alph*)]
		\item there exist \(\delta>0\), \(R<\infty\), and \(0<\underline\sigma\leq\Lambda<\infty\), depending only on the fixed inputs and penalties, such that
		\begin{equation}
			\Zfun(\N(m,\Sigma))\ge Z^*-\delta
			\quad\Longrightarrow\quad
			\norm m\le R,
			\qquad
			\underline\sigma I\preceq\Sigma\preceq\Lambda I;
		\end{equation}
		\item for
		\begin{equation}
			\varepsilon_*
			=\tau_0\left[(Z^*)^{1/\theta}-(Z^*-\delta)^{1/\theta}\right]>0,
			\label{eq:nearmin-epsilon}
		\end{equation}
		the sublevel set
		\begin{equation}
			\left\{\nu\in\mathcal G_d^1:
			\Phi(\nu)\le \Phi_*+\varepsilon_*\right\}
		\end{equation}
		is compact in mean--covariance coordinates.
	\end{enumerate}
\end{corollary}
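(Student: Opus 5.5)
The plan is to rerun Steps 1 and 2 of the proof of \Cref{thm:globalexistence} on near-maximizers instead of exact maximizers, using a fixed slack \(\delta\) in place of a vanishing one.

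For part~(a), take \(\delta:=Z^*/4\) at first; it will be shrunk once more below.

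\emph{Upper bounds.} If \(\Zfun(\nu)\ge Z^*-\delta\ge Z^*/2\), then some index \(i\) satisfies \(\lambda_i a_i^\theta e^{-A_i(\nu)/s}\ge Z^*/(2n)\). Hence \(A_i(\nu)\le C_i^*\) with the same constants \(C_i^*\) as in Step~1. Applying \Cref{lem:uppercompact} to each of the finitely many pairs \((\mu_i,C_i^*)\) and taking the largest constants gives \(\norm m\le R\) and \(\Sigma\preceq\Lambda I\). These constants depend only on the inputs and the penalties.

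\emph{Lower spectral bound.} Set \(\Lambda_+=\max\{\Lambda,1\}\). Let \(t_0,c\) be the constants of \Cref{lem:profiledinward} on the box \(\norm m\le R\), \(\Sigma\preceq\Lambda_+I\), and put \(\varepsilon=\tfrac12\min\{1,t_0\}\). Take \(\nu\) in the near-maximal set with \(t:=\lambda_{\min}(\Sigma)<\varepsilon\) and eigenvector \(e\), and follow the rank-one path \(\Sigma+(r-t)ee^\top\) for \(r\in[t,\varepsilon]\) as in Step~2. This path stays in the box, so integrating \eqref{eq:profiledinward} gives
\[
\log Z^*\ge\log\Zfun(\nu(\varepsilon))\ge \log(Z^*-\delta)+2c(\sqrt\varepsilon-\sqrt t).
\]
It follows that \(\sqrt t\ge\sqrt\varepsilon-\frac1{2c}\log\frac{Z^*}{Z^*-\delta}\).

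Now shrink \(\delta\) (this changes neither \(R,\Lambda\) nor \(t_0,c\), since they were fixed using \(\delta\le Z^*/4\)) so that \(\frac1{2c}\log\frac{Z^*}{Z^*-\delta}\le\tfrac12\sqrt\varepsilon\). Then \(t\ge\varepsilon/4\). Setting \(\underline\sigma:=\min\{\varepsilon/4,\Lambda\}\) covers both cases \(t<\varepsilon\) and \(t\ge\varepsilon\). The order of choices is the main point to get right: the box constants must be fixed before \(\delta\) is shrunk. Compared with Step~2, the difference is that the contradiction there used \(\Zfun(\nu_k)\to Z^*\), whereas here we need the quantitative gap estimate above.

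For part~(b), use \(\Phi=\tau_0(\bar a-\Zfun^{1/\theta})\) from \eqref{eq:profiledF}. Since \(x\mapsto x^{1/\theta}\) is strictly increasing on \((0,\infty)\), the condition \(\Phi(\nu)\le\Phi_*+\varepsilon_*\) with \(\varepsilon_*\) as in \eqref{eq:nearmin-epsilon} is equivalent to \(\Zfun(\nu)^{1/\theta}\ge(Z^*-\delta)^{1/\theta}\), that is, to \(\Zfun(\nu)\ge Z^*-\delta\). Also \(\varepsilon_*>0\) because \(\delta>0\).

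By part~(a), this sublevel set lies in the compact set \(\mathcal K=\{\norm m\le R,\ \underline\sigma I\preceq\Sigma\preceq\Lambda I\}\). Within \(\mathcal K\) it equals \(\Zfun^{-1}([Z^*-\delta,\infty))\cap\mathcal K\), which is closed because \(\Zfun\) is continuous (\Cref{thm:pairwiseinterface}). A closed subset of a compact set is compact, which proves (b).
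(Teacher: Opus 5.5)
Your proposal is correct and follows the paper's own proof. It uses the Step~1 box for $\{\Zfun\ge Z^*/2\}$, the rank-one inward-ascent integration from \Cref{lem:profiledinward} on the enlarged box with $\varepsilon=\tfrac12\min\{1,t_0\}$, and the profile-formula argument for (b). The paper instead fixes $t_\star=\varepsilon/4$ and $\eta=c\sqrt\varepsilon$, then argues by contradiction with $\delta=\tfrac{Z^*}{2}(1-e^{-\eta})$; your direct estimate reduces to essentially the same condition $\log\frac{Z^*}{Z^*-\delta}\le c\sqrt\varepsilon$.
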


\begin{proof}
	As in Step~1 of the proof of \Cref{thm:globalexistence}, the superlevel set \(\{\Zfun\ge Z^*/2\}\) lies in a common box \(\norm m\le R\), \(\Sigma\preceq\Lambda I\).  Apply \Cref{lem:profiledinward} on the enlarged box with \(\Lambda_+=\max\{\Lambda,1\}\), let \(t_0,c\) be the resulting constants, and set
	\[
	\varepsilon=\frac12\min\{1,t_0\},
	\qquad
	t_\star=\frac{\varepsilon}{4},
	\qquad
	\eta=2c\bigl(\sqrt\varepsilon-\sqrt{t_\star}\bigr)>0.
	\]
	If \(\lambda_{\min}(\Sigma)<t_\star\), the rank-one path used in Step~2 of \Cref{thm:globalexistence} satisfies
	\[
	\Zfun(\nu(\varepsilon))\ge e^\eta\Zfun(\nu).
	\]
	Choose \(\delta=\frac{Z^*}{2}(1-e^{-\eta})\).  Then every \(\nu\) with \(\Zfun(\nu)\ge Z^*-\delta\) lies in the preceding superlevel box; if its smallest covariance eigenvalue were below \(t_\star\), then
	\[
	\Zfun(\nu(\varepsilon))
	\ge e^\eta(Z^*-\delta)
	=\frac{e^\eta+1}{2}Z^*>Z^*.
	\]
	This contradicts the definition of \(Z^*\).  Hence (a) holds with \(\underline\sigma=t_\star\).
	
	For (b), the exact profile formula and \eqref{eq:nearmin-epsilon} show that \(\Phi(\nu)\le\Phi_*+\varepsilon_*\) implies \(\Zfun(\nu)\ge Z^*-\delta\).  Part (a) places the sublevel in one compact Gaussian parameter box, and continuity of \(\Phi\) makes it closed there.
\end{proof}

Together, \Cref{thm:globalexistence,cor:nearmaxcompact} establish compactness for minimizers and near-minimizers on the open manifold \(\R^d\times\Spp^d\); compactness of MM trajectories is proved separately in Section~\ref{sec:algorithm}.

\begin{corollary}\label{cor:massbound}
	For every \(\nu\in\mathcal G_d^1\),
	\begin{equation}
		0<b(\nu)
		\leq
		\left(\sum_{i=1}^n\lambda_i a_i^\theta\right)^{1/\theta}
		\leq
		\bar a.
		\label{eq:massbound}
	\end{equation}
	The last inequality is strict unless all \(a_i\) are equal.  Equality \(b(\nu)=\bar a\) holds if and only if all finite Gaussian inputs \(a_i\mu_i\) are identical and \(\nu\) equals their common normalized shape.
\end{corollary}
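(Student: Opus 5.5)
The plan is to read everything off the mass profile of \Cref{thm:massprofile}, namely \(b(\nu)=\Zfun(\nu)^{1/\theta}\) with \(\Zfun(\nu)=\sum_i\lambda_i a_i^\theta e^{-A_i(\nu)/s}\), and then apply two elementary inequalities.

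\emph{Step 1: the middle bound.} Positivity is immediate because each summand of \(\Zfun\) is positive. By \Cref{lem:meancomponent}, \(A_i(\nu)\ge0\), so \(e^{-A_i(\nu)/s}\le1\). This gives \(\Zfun(\nu)\le\sum_i\lambda_i a_i^\theta=Z_0\), and since \(t\mapsto t^{1/\theta}\) is increasing on \((0,\infty)\), it yields the middle inequality in \eqref{eq:massbound}.

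\emph{Step 2: the right bound.} Since \(\theta\in(0,1)\), the map \(x\mapsto x^\theta\) is strictly concave on \((0,\infty)\). Jensen's inequality with the probability weights \(\lambda_i>0\) gives \(\sum_i\lambda_i a_i^\theta\le\bar a^{\theta}\). Raising to the power \(1/\theta\) gives \(Z_0^{1/\theta}\le\bar a\). Because the \(\lambda_i\) are all positive and the concavity is strict, equality holds iff all \(a_i\) coincide. This proves the strictness claim.

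\emph{Step 3: the equality case.} Suppose \(b(\nu)=\bar a\). Then both inequalities are equalities.
\begin{itemize}
\item Equality in Step~2 forces \(a_1=\cdots=a_n\).
\item Equality in Step~1 requires \(\sum_i\lambda_i a_i^\theta(1-e^{-A_i(\nu)/s})=0\). Every term is nonnegative and every weight \(\lambda_i a_i^\theta\) is positive, so \(A_i(\nu)=0\) for all \(i\).
\end{itemize}
The equality clause of \Cref{lem:meancomponent} then gives \(\mu_i=\nu\) for every \(i\). Hence all inputs \(a_i\mu_i\) are identical and \(\nu\) is their common shape.

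For the converse, if \(a_i\mu_i=a\mu\) for all \(i\) and \(\nu=\mu\), then \(A_i(\nu)=0\) and \(\Zfun(\nu)=a^\theta\). This gives \(b(\nu)=a=\bar a\).

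No step is a real obstacle. The only point needing care is that the equality characterization rests on the ``if and only if'' part of \Cref{lem:meancomponent}, together with positivity of all the weights \(\lambda_i a_i^\theta\), so that no input can be ignored in the sum.
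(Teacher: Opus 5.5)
Your proposal is correct and follows the same route as the paper's proof. You use \(A_i(\nu)\ge0\) to get \(\Zfun(\nu)\le\sum_i\lambda_i a_i^\theta\), Jensen's inequality for the strictly concave map \(x\mapsto x^\theta\) for the second bound and its equality case, and the equality clause of \Cref{lem:meancomponent} to pass from \(A_i(\nu)=0\) to \(\mu_i=\nu\); your explicit use of \(\lambda_i a_i^\theta>0\) and your check of the converse just spell out what the paper treats as immediate.
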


\begin{proof}
	Since \(A_i(\nu)\geq0\) and \(x\mapsto x^\theta\) is strictly concave on \((0,\infty)\), Jensen's inequality gives
	\[
	\Zfun(\nu)
	\leq \sum_i\lambda_i a_i^\theta
	\leq
	\left(\sum_i\lambda_i a_i\right)^\theta,
	\]
	where equality in the second inequality occurs precisely when all \(a_i\) are equal.  Taking the \(1/\theta\) power and using \eqref{eq:bprofile} proves \eqref{eq:massbound}.  If \(b(\nu)=\bar a\), both displayed inequalities must be equalities.  The equality conditions force all masses to coincide and \(A_i(\nu)=0\) for every \(i\); \Cref{lem:meancomponent} then gives \(\mu_i=\nu\) for all \(i\).  The reverse implication is immediate.
\end{proof}

As a special case, if \(\mu_i=\mu\) for all \(i\), then \Cref{lem:meancomponent} shows that the unique maximizing shape is \(\nu=\mu\), and the profiled barycenter mass is
\[
b_*
=
\left(\sum_{i=1}^n\lambda_i a_i^\theta\right)^{1/\theta}.
\]
Indeed, \(A_i(\nu)=\Ashape(\mu,\nu)\geq0\) with equality only at \(\nu=\mu\), while the mass formula follows from \eqref{eq:bprofile}.

\subsection{Stationarity and moment equations}

With global attainment established, differentiating the profiled shape functional converts the Gibbs representation into explicit moment equations.

\begin{theorem}[Stationarity and fixed points]\label{thm:fixedpoint}
	For \(\nu=\N(m,\Sigma)\in\mathcal G_d^1\), define
	\begin{equation}
		\omega_i(\nu)
		=
		\frac{\lambda_i a_i^\theta e^{-A_i(\nu)/s}}
		{\sum_j\lambda_j a_j^\theta e^{-A_j(\nu)/s}},
	\end{equation}
	and let \(q_i(\nu)=\N(v_i,Q_i)\) be the adjusted target Gaussian for \((\mu_i,\nu)\).  Then \(\nu\) is a stationary point of \(\Phi\) if and only if
	\begin{equation}
		m=\sum_{i=1}^n\omega_i(\nu)v_i,
		\label{eq:fixedm}
	\end{equation}
	\begin{equation}
		\Sigma
		=
		\sum_{i=1}^n\omega_i(\nu)
		\left[
		Q_i+(v_i-m)(v_i-m)^\top
		\right].
		\label{eq:fixedSigma}
	\end{equation}
	Equivalently, the fixed points of the moment map in \Cref{sec:algorithm} are exactly the stationary points of the profiled functional.  At every such point, with
	\begin{equation}
		b_*=\Zfun(\nu)^{1/\theta},
		\qquad
		M_i^*
		=
		a_i^\theta b_*^{1-\theta}e^{-A_i(\nu)/s},
	\end{equation}
	the pairwise transported masses satisfy
	\begin{equation}
		b_*=\sum_{i=1}^n\lambda_iM_i^*,
		\qquad
		\omega_i(\nu)=\frac{\lambda_iM_i^*}{b_*}.
		\label{eq:massbalance}
	\end{equation}
\end{theorem}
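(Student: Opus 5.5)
Since \(\Phi=\tau_0(\bar a-\Zfun^{1/\theta})\) with \(\Zfun>0\) and \(x\mapsto x^{1/\theta}\) strictly increasing and smooth on \((0,\infty)\), stationary points of \(\Phi\) coincide with stationary points of \(\log\Zfun\) on the open parameter domain \(\R^d\times\Spp^d\). The map \((m,\Sigma)\mapsto A_i\) is real analytic by \Cref{thm:pairwiseinterface}, so \(\log\Zfun\) is smooth. Differentiating gives
\[
\nabla\log\Zfun=-\frac1s\sum_i\omega_i(\nu)\nabla A_i,
\]
which is the same weighted-derivative computation already used in \Cref{lem:profiledinward}. Stationarity is therefore equivalent to \(\sum_i\omega_i\nabla_mA_i=0\) and \(\sum_i\omega_i\nabla_\Sigma A_i=0\).

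Next I will insert the envelope gradients from \Cref{lem:envelope}. The mean equation becomes \(\tau_1\Sigma^{-1}\sum_i\omega_i(m-v_i)=0\). Since \(\Sigma^{-1}\) is invertible and \(\sum_i\omega_i=1\), this is equivalent to \eqref{eq:fixedm}. The covariance equation becomes
\[
\frac{\tau_1}{2}\Bigl[\Sigma^{-1}-\Sigma^{-1}\Bigl(\sum_i\omega_iR_i\Bigr)\Sigma^{-1}\Bigr]=0.
\]
Multiplying on both sides by \(\Sigma\) and again using \(\sum_i\omega_i=1\), this is equivalent to \(\Sigma=\sum_i\omega_iR_i\), which is \eqref{eq:fixedSigma}. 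Both implications reverse, since every step is an equivalence. The symmetric-matrix gradient is taken with respect to the Frobenius pairing on \(\Sym^d\); because \(\sum_i\omega_i\nabla_\Sigma A_i\) is symmetric, its vanishing as a functional on \(\Sym^d\) is the same as vanishing as a matrix. Nothing in this argument depends on the earlier equation for \(m\): \(R_i\) is defined with the current \(m\), and \eqref{eq:fixedSigma} is stated with that same \(m\). The statement that these are exactly the fixed points of the moment map is then only a remark about the map to be defined later, namely \(\nu\mapsto\) the right-hand sides of \eqref{eq:fixedm}--\eqref{eq:fixedSigma}.

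For the mass identities I will use \Cref{prop:endpoint} with \(a=a_i\), \(b=b_*\), which gives \(M_i^*=a_i^\theta b_*^{1-\theta}e^{-A_i/s}=b_*^{1-\theta}z_i\). Summing with weights \(\lambda_i\) yields \(\sum_i\lambda_iM_i^*=b_*^{1-\theta}\Zfun(\nu)\). Since \(\Zfun=b_*^\theta\) by \eqref{eq:bprofile}, this sum equals \(b_*\). Then \(\lambda_iM_i^*/b_*=\lambda_iz_i/\Zfun=\omega_i\). Note that the mass balance holds at every \(\nu\) once \(b=b(\nu)\), not only at stationary points.

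There is no genuine obstacle here. The only points needing care are justifying differentiation under the envelope argument, which is handled by the smoothness from \Cref{thm:pairwiseinterface} together with \Cref{lem:envelope}, and the symmetric-gradient convention just discussed.
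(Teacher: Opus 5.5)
Your proposal is correct and follows the paper's proof essentially step for step: you reduce stationarity of \(\Phi\) to \(\sum_i\omega_i(\nu)\nabla A_i(\nu)=0\) through the Gibbs-weighted gradient of \(\Zfun\), insert the envelope gradients of \Cref{lem:envelope}, invert \(\Sigma\), and read off the mass balance from \(\sum_i\lambda_iM_i^*=b_*^{1-\theta}\Zfun(\nu)=b_*\). One small precision: the update \eqref{eq:algSigma} uses \(m^{k+1}\) rather than the current \(m\) in the between-mean term, so it is not literally the right-hand side of \eqref{eq:fixedSigma}; however, \eqref{eq:fixedm} forces \(m^{k+1}=m\) at a fixed point, so the two maps have the same fixed points and your conclusion stands.
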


\begin{proof}
	Since
	\[
	\nabla \Zfun(\nu)
	=-\frac{\Zfun(\nu)}{s}\sum_i\omega_i(\nu)\nabla A_i(\nu),
	\]
	and \(\Phi=\tau_0(\bar a-\Zfun^{1/\theta})\), stationarity of \(\Phi\) is equivalent to
	\[
	\sum_i\omega_i(\nu)\nabla A_i(\nu)=0.
	\]
	Using \Cref{lem:envelope}, the mean and covariance components of this identity are respectively
	\[
	\tau_1\Sigma^{-1}\left(m-\sum_i\omega_i(\nu)v_i\right)=0
	\]
	and
	\[
	\frac{\tau_1}{2}
	\left[
	\Sigma^{-1}
	-\Sigma^{-1}\left(\sum_i\omega_i(\nu)R_i\right)\Sigma^{-1}
	\right]=0,
	\qquad
	R_i=Q_i+(v_i-m)(v_i-m)^\top.
	\]
	Because \(\Sigma\succ0\), these identities are equivalent to \eqref{eq:fixedm}--\eqref{eq:fixedSigma}, proving both directions at once.  Finally, the transported masses satisfy the balance identity
	\[
	\sum_i\lambda_iM_i^*
	=b_*^{1-\theta}\Zfun(\nu)
	=b_*,
	\]
	which is exactly \eqref{eq:massbalance}.
\end{proof}

Equivalently, \(\nu\) is the unique Gaussian matching the first two moments of the mixture \(\sum_i\omega_i(\nu)q_i(\nu)\); unlike the Gaussian Wasserstein barycenter, this mixture involves adjusted marginals, endogenous Gibbs weights, and their between-mean covariance.

\subsection{Remote-input attenuation}

The endogenous weights decay quantitatively when one input mean moves far from a compact candidate region.

\begin{proposition}[Remote-input attenuation]\label{prop:redescending}
	Fix a compact candidate set
	\begin{equation}
		\mathcal K
		=
		\left\{\N(m,\Sigma):\norm{m}\le R,\ \underline\sigma I\preceq\Sigma\preceq\overline\sigma I\right\}
		\subset\mathcal G_d^1.
	\end{equation}
	Assume that among the non-outlying inputs there is an anchor index \(j\neq o\) such that
	\[
	\lambda_j a_j^\theta\ge c_*>0,
	\qquad
	\mu_j\in\mathcal K_{\rm anc},
	\]
	where \(c_*>0\) and the compact Gaussian set \(\mathcal K_{\rm anc}\Subset\mathcal G_d^1\) are fixed.  Let the outlying input have mass \(a_o>0\), covariance \(\Sigma_o\in\Spp^d\) satisfying \(\Sigma_o\preceq\overline\sigma_o I\), and mean \(m_o\).  Then there exist constants \(c,C>0\), depending only on \(\mathcal K\), \(\mathcal K_{\rm anc}\), \(c_*\), \(\tau_0,\tau_1\), \(\lambda_o\), and \(\overline\sigma_o\), but independent of \(m_o\), \(a_o\), the choice of admissible anchor, and all remaining inputs, such that
	\begin{equation}
		\sup_{\nu\in\mathcal K}\omega_o(\nu)
		\le C a_o^\theta e^{-c\norm{m_o}^2},
		\label{eq:redescendingweight}
	\end{equation}
	and
	\begin{equation}
		\sup_{\nu\in\mathcal K}\omega_o(\nu)\norm{v_o(\nu)-m}
		\le C a_o^\theta(1+\norm{m_o})e^{-c\norm{m_o}^2}.
		\label{eq:redescendinginfluence}
	\end{equation}
	For the stationary covariance contribution \(R_o(\nu):=Q_o(\nu)+(v_o(\nu)-m)(v_o(\nu)-m)^\top\), one also has
	\begin{equation}
		\sup_{\nu\in\mathcal K}\omega_o(\nu)\norm{R_o(\nu)}_F
		\le C a_o^\theta(1+\norm{m_o}^2)e^{-c\norm{m_o}^2}.
		\label{eq:redescendingsecondmoment}
	\end{equation}
\end{proposition}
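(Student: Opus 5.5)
The plan is to bound the Gibbs weight of the outlier directly against the anchor's weight. The denominator of \eqref{eq:gibbsweights} is at least its \(j\)-th term, so
\[
\omega_o(\nu)\le \frac{\lambda_o a_o^\theta e^{-A_o(\nu)/s}}{\lambda_j a_j^\theta e^{-A_j(\nu)/s}}
\le \frac{\lambda_o a_o^\theta}{c_*}\,e^{A_j(\nu)/s}\,e^{-A_o(\nu)/s}.
\]
Two estimates then finish \eqref{eq:redescendingweight}: an upper bound on \(A_j\), uniform over \(\nu\in\mathcal K\) and \(\mu_j\in\mathcal K_{\rm anc}\), and a quadratic lower bound on \(A_o\) in \(\norm{m_o}\).

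\textbf{Upper bound on the anchor.} I would test \eqref{eq:Adef} with \(p=\mu_j\) and \(q=\nu\), as in the proof of \Cref{lem:profiledinward}. This gives \(A_j(\nu)\le W_2^2(\mu_j,\nu)\). The right-hand side is continuous on the compact set \(\mathcal K_{\rm anc}\times\mathcal K\), hence bounded by a constant \(C_0\) that does not depend on which admissible anchor is used.

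\textbf{Lower bound on the outlier.} I would use \Cref{lem:meancomponent} with \(\mu_0=\mu_o\) and \(\mu_1=\nu\). Since \(\Ashape_{\rm cov}\ge0\),
\[
A_o(\nu)\ge(m_o-m)^\top(I+r_0\Sigma_o+r_1\Sigma)^{-1}(m_o-m)\ge\kappa\norm{m_o-m}^2,
\qquad
\kappa:=(1+r_0\overline\sigma_o+r_1\overline\sigma)^{-1}.
\]
Next, \(\norm{m_o-m}^2\ge\tfrac12\norm{m_o}^2-R^2\). Together these give \eqref{eq:redescendingweight} with \(c=\kappa/(2s)\) and \(C=\lambda_o c_*^{-1}e^{(C_0+\kappa R^2)/s}\).

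\textbf{Influence bounds.} For \eqref{eq:redescendinginfluence} I would use the explicit mean formulas \eqref{eq:hstar}--\eqref{eq:uvstar} with the roles \(\mu_0=\mu_o\) and \(\mu_1=\nu\). They give \(v_o-m=r_1\Sigma h\) with \(h=(I+r_0\Sigma_o+r_1\Sigma)^{-1}(m_o-m)\). The inverse has norm at most \(1\), so
\[
\norm{v_o-m}\le r_1\overline\sigma\,(\norm{m_o}+R)\le C(1+\norm{m_o}).
\]
For \eqref{eq:redescendingsecondmoment} it then suffices to bound \(\norm{Q_o}_F\) uniformly, independently of \(m_o\). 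The outer-product part of \(R_o\) contributes the factor \((1+\norm{m_o})^2\).

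\textbf{Main obstacle: uniform bound on \(Q_o\).} The difficulty is that \(\Sigma_o\) has only an upper spectral bound and may be nearly degenerate. I would handle this through the covariance component of the endpoint problem alone, which decouples from the means. At the optimizer,
\[
\tau_1\KL\bigl(\N(0,Q_o)\mid\N(0,\Sigma)\bigr)\le\Ashape_{\rm cov}(\Sigma_o,\Sigma)\le d_{\BW}^2(\Sigma_o,\Sigma)\le\tr\Sigma_o+\tr\Sigma\le d(\overline\sigma_o+\overline\sigma),
\]
where the middle inequality comes from testing with \(P=\Sigma_o\) and \(Q=\Sigma\). Since \(\nu\in\mathcal K\) has \(\underline\sigma I\preceq\Sigma\preceq\overline\sigma I\), \Cref{lem:KLsublevel} then gives \(Q_o\preceq\overline p I\) with \(\overline p\) depending only on the permitted data.

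Finally, I would absorb the polynomial prefactors and the constant shift from \(R\) into \(C\), keeping the same exponent \(c\). This yields all three bounds, uniformly in \(m_o\), \(a_o\), the anchor, and the remaining inputs.
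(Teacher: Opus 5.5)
Your proposal is correct and follows essentially the same route as the paper. The anchor term lower-bounds the Gibbs denominator, and the mean component in \Cref{lem:meancomponent} gives the quadratic lower bound on $A_o$. The explicit mean formula then controls $v_o-m$, and testing the decoupled covariance subproblem with $(P,Q)=(\Sigma_o,\Sigma)$, combined with KL coercivity, bounds $Q_o$ uniformly. The only difference is cosmetic: you bound the anchor endpoint by the competitor $W_2^2(\mu_j,\nu)$ instead of invoking continuity of $\Ashape$ on $\mathcal K_{\rm anc}\times\mathcal K$, which is equally valid (though you may want to rename your constant $\kappa$ to avoid a clash with the penalty scale of \Cref{sec:limits}).
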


\begin{proof}
	\Cref{lem:meancomponent} gives the mean lower bound
	\[
	A_o(\nu)\ge(m_o-m)^\top\left(I+r_0\Sigma_o+r_1\Sigma\right)^{-1}(m_o-m).
	\]
	On the prescribed spectral sets, the inverse matrix obeys the uniform bound
	\[
	\left(I+r_0\Sigma_o+r_1\Sigma\right)^{-1}\succeq c_0I,
	\qquad c_0=\frac{1}{1+r_0\overline\sigma_o+r_1\overline\sigma}.
	\]
	Since \(\norm m\le R\), there are constants \(c_1,C_1>0\), independent of \(m_o\), such that
	\[
	A_o(\nu)\ge c_1\norm{m_o}^2-C_1
	\]
	uniformly over \(\nu\in\mathcal K\).  The numerator in \eqref{eq:gibbsweights} is therefore bounded by \(C_2a_o^\theta e^{-c_1\norm{m_o}^2/s}\).
	
	For the denominator, continuity of \(A\) on the compact product \(\mathcal K_{\rm anc}\times\mathcal K\) ensures
	\[
	C_{\rm anc}:=\sup_{\mu\in\mathcal K_{\rm anc},\,\nu\in\mathcal K}A(\mu,\nu)<\infty.
	\]
	The admissible anchor term is bounded below by \(c_*e^{-C_{\rm anc}/s}\), uniformly over the anchor family.  This proves \eqref{eq:redescendingweight}.
	
	The mean formula \eqref{eq:uvstar} reads
	\[
	v_o-m=r_1\Sigma\left(I+r_0\Sigma_o+r_1\Sigma\right)^{-1}(m_o-m),
	\]
	whose norm is at most \(C_3(1+\norm{m_o})\) on \(\mathcal K\).  Combining this with \eqref{eq:redescendingweight} proves \eqref{eq:redescendinginfluence}.
	
	It remains to control \(Q_o\).  Writing \(A_{o,{\rm cov}}\) for the covariance component in \Cref{lem:meancomponent}, this subproblem is independent of the means.  Testing the covariance subproblem with the unchanged covariances bounds
	\[
	A_{o,{\rm cov}}(\Sigma_o,\Sigma)
	\le d_{\BW}^2(\Sigma_o,\Sigma)
	\le \tr\Sigma_o+\tr\Sigma
	\le d(\overline\sigma_o+\overline\sigma).
	\]
	Hence the target covariance KL term at the optimum is uniformly bounded:
	\[
	\frac{\tau_1}{2}\left[\tr(\Sigma^{-1}Q_o)-\log\det(\Sigma^{-1}Q_o)-d\right]\le C_{\rm cov}.
	\]
	The coercivity of \(t-\log t-1\), together with the two-sided spectral bounds on \(\Sigma\), implies \(Q_o\preceq C_QI\) uniformly on \(\mathcal K\).  Therefore
	\[
	\norm{R_o(\nu)}_F\le C_4(1+\norm{m_o}^2),
	\]
	and \eqref{eq:redescendingsecondmoment} follows from \eqref{eq:redescendingweight}.
\end{proof}

Because \(m\) is bounded on \(\mathcal K\), the outlier weight and its first two moment contributions vanish whenever
\[
a_o^\theta(1+\norm{m_o}^2)e^{-c\norm{m_o}^2}\longrightarrow0,
\]
in particular for bounded \(a_o\).  Related KL-relaxed robust OT barycenters are discussed in~\cite{LeEtAl2021}.

\section{Local BW--FR geometry}

The diagonal second variation yields a BW--FR parallel-sum tensor.  Under \(\tau_j=\kappa\bar\tau_j\), it approaches the Bures--Wasserstein shape form as \(\kappa\to\infty\) and, after the natural small-penalty rescaling, the Fisher--Rao form as \(\kappa\downarrow0\), matching the global regimes of \Cref{sec:limits}.  Here, \emph{quadratic coefficient} means the coefficient of \(\varepsilon^2\); unless stated otherwise, \(o(\varepsilon^2)\) is local to the fixed base point and tangent directions.

\subsection{Bures--Wasserstein and Fisher--Rao forms}

We use the standard Gaussian Bures--Wasserstein and Fisher--Rao tensors; see, e.g.,~\cite{Takatsu2011,BhatiaJainLim2019} for the former and~\cite{CalvoOller1990,CalvoOller1991} for the multivariate-Gaussian Fisher geometry.  For \(\mu=\N(m,\Sigma)\), write a tangent vector as \(\xi=(\dot m,H)\in\R^d\times\Sym^d\) and set \(\mathcal L_\Sigma(X)=\Sigma X+X\Sigma\).  Then
\begin{equation}
	g_{\BW,\mu}(\xi,\xi)
	=
	\norm{\dot m}^2
	+\frac12\tr\left(H\mathcal L_\Sigma^{-1}H\right),
\end{equation}
and
\begin{equation}
	g_{\FR,\mu}(\xi,\xi)
	=
	\dot m^\top\Sigma^{-1}\dot m
	+\frac12\tr(\Sigma^{-1}H\Sigma^{-1}H).
	\label{eq:FRmetric}
\end{equation}
Their diagonal second-order expansions are
\begin{equation}
	W_2^2(\mu,\mu_\varepsilon)
	=
	\varepsilon^2g_{\BW,\mu}(\xi,\xi)+o(\varepsilon^2),
	\qquad
	\KL(\mu_\varepsilon\mid\mu)
	=
	\frac{\varepsilon^2}{2}g_{\FR,\mu}(\xi,\xi)+o(\varepsilon^2),
	\label{eq:KLlocal}
\end{equation}
for any smooth Gaussian curve with tangent \(\xi\) at \(\varepsilon=0\).  More generally, two smooth Gaussian curves through the same base point with tangent vectors \(x\) and \(y\) satisfy the joint expansions
\begin{equation}
	\begin{aligned}
		W_2^2(\mu_\varepsilon^x,\mu_\varepsilon^y)
		&=\varepsilon^2g_{\BW,\mu}(x-y,x-y)+o(\varepsilon^2),\\
		\KL(\mu_\varepsilon^x\mid\mu_\varepsilon^y)
		&=\frac{\varepsilon^2}{2}g_{\FR,\mu}(x-y,x-y)+o(\varepsilon^2).
	\end{aligned}
	\label{eq:twocurvelocal}
\end{equation}
For two positive-definite quadratic forms \(G,H\) on a finite-dimensional vector space, define their \emph{parallel sum} by the quadratic variational identity
\begin{equation}
	(G:H)(\xi,\xi)
	=
	\inf_{\eta}
	\left\{
	G(\eta,\eta)+H(\xi-\eta,\xi-\eta)
	\right\}.
	\label{eq:parallelsum}
\end{equation}
In matrix coordinates, if \(G,H\) are represented by positive-definite matrices, then
\[
G:H=(G^{-1}+H^{-1})^{-1}.
\]

Set \(\rBF:=\tau_0\tau_1/[2(\tau_0+\tau_1)]\).  We refer to \(g_{\BW}:\rBF g_{\FR}\) as the \emph{BW--FR parallel-sum metric}.

\subsection{Local endpoint expansion}

The exact Gaussian endpoint optimizer yields the following second-order envelope.

\begin{lemma}\label{lem:secondorderenvelope}
	Let \(\mu_\varepsilon\) be a smooth Gaussian curve through \(\mu\) with tangent \(\xi\) at \(\varepsilon=0\).  Denote by \((p_\varepsilon,q_\varepsilon)\) the unique Gaussian optimizer in \eqref{eq:Adef} for \((\mu,\mu_\varepsilon)\).  Then
	\begin{equation}
		\Ashape(\mu,\mu_\varepsilon)
		=
		\varepsilon^2
		\min_{x,y\in T_\mu\mathcal G_d^1}
		\mathcal Q_\xi(x,y)
		+o(\varepsilon^2),
		\label{eq:secondorderenvelope}
	\end{equation}
	where
	\begin{equation}
		\mathcal Q_\xi(x,y)
		=
		g_{\BW,\mu}(x-y,x-y)
		+\frac{\tau_0}{2}g_{\FR,\mu}(x,x)
		+\frac{\tau_1}{2}g_{\FR,\mu}(y-\xi,y-\xi).
	\end{equation}
	If \(\dot p_0,\dot q_0\in T_\mu\mathcal G_d^1\) are the tangent velocities of the optimizing curves \(p_\varepsilon\) and \(q_\varepsilon\), then \((\dot p_0,\dot q_0)\) is the unique minimizer of \(\mathcal Q_\xi\).
\end{lemma}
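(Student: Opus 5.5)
The plan is to prove \eqref{eq:secondorderenvelope} by matching an upper bound and a lower bound. Smoothness of the optimizer from \Cref{thm:pairwiseinterface} is what lets us expand along the optimizing curves. At \(\varepsilon=0\), \Cref{lem:meancomponent} gives \(\Ashape(\mu,\mu)=0\) with unique optimizer \(p_0=q_0=\mu\). By real analyticity, the optimizer curves \(p_\varepsilon,q_\varepsilon\) are smooth Gaussian curves through \(\mu\), and we write \(\dot p_0,\dot q_0\) for their tangents. We also record that \(\mathcal Q_\xi\) is a strictly convex quadratic in \((x,y)\): \(g_{\FR,\mu}\) is positive definite, so the two KL terms are coercive in \(x\) and \(y\). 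Hence \(\mathcal Q_\xi\) has a unique minimizer \((x_\xi,y_\xi)\), depending linearly on \(\xi\).

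\emph{Upper bound.} Fix arbitrary tangent vectors \(x,y\). Take competitor Gaussian curves \(p^x_\varepsilon\) and \(q^y_\varepsilon\) through \(\mu\) with tangents \(\varepsilon x\) and \(\varepsilon y\), for instance \(\N(m+\varepsilon\dot m_x,\Sigma+\varepsilon H_x)\). Insert them into \eqref{eq:Adef}. The two-curve expansions \eqref{eq:twocurvelocal} give
\[
W_2^2(p^x_\varepsilon,q^y_\varepsilon)=\varepsilon^2g_{\BW}(x-y,x-y)+o(\varepsilon^2),\qquad \KL(p^x_\varepsilon\mid\mu)=\tfrac{\varepsilon^2}{2}g_{\FR}(x,x)+o(\varepsilon^2),
\]
and, using \(\mu_\varepsilon\) as the second curve with tangent \(\xi\), \(\KL(q^y_\varepsilon\mid\mu_\varepsilon)=\tfrac{\varepsilon^2}{2}g_{\FR}(y-\xi,y-\xi)+o(\varepsilon^2)\). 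Therefore \(\Ashape(\mu,\mu_\varepsilon)\le\varepsilon^2\mathcal Q_\xi(x,y)+o(\varepsilon^2)\). Minimizing over \((x,y)\) yields \(\limsup_{\varepsilon\to0}\varepsilon^{-2}\Ashape(\mu,\mu_\varepsilon)\le\min\mathcal Q_\xi\).

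\emph{Lower bound.} Apply the same expansions \eqref{eq:twocurvelocal} to the actual optimizer curves \(p_\varepsilon,q_\varepsilon\), whose tangents are \(\dot p_0,\dot q_0\) by smoothness. This gives exactly
\[
\Ashape(\mu,\mu_\varepsilon)=\varepsilon^2\mathcal Q_\xi(\dot p_0,\dot q_0)+o(\varepsilon^2)\ge\varepsilon^2\min\mathcal Q_\xi+o(\varepsilon^2).
\]
This proves \eqref{eq:secondorderenvelope}. Combining it with the upper bound forces \(\mathcal Q_\xi(\dot p_0,\dot q_0)=\min\mathcal Q_\xi\). Uniqueness of the minimizer of the strictly convex quadratic then identifies \((\dot p_0,\dot q_0)\) as the unique minimizer.

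\emph{Main obstacle.} The delicate step is justifying \eqref{eq:twocurvelocal} along the optimizer curves, in particular the Wasserstein expansion between two distinct curves. We would handle it by direct computation. Since the curves are smooth in mean–covariance coordinates, the mean part is exactly \(\norm{\Delta m}^2\). The covariance part \(d_{\BW}^2(P_\varepsilon,Q_\varepsilon)\) is smooth near the diagonal \(P=Q=\Sigma\) and vanishes to second order there. Its Hessian on the diagonal in the difference direction is \(\tfrac12\tr(H\mathcal L_\Sigma^{-1}H)\), because the derivative formula \(\nabla_Q d_{\BW}^2=I-L^{-1}\) linearizes to \(\tfrac12\mathcal L_\Sigma^{-1}\) when \(L=I+\varepsilon\dot L\) with \(\Sigma\dot L+\dot L\Sigma=\dot Q-\dot P\). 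A Taylor expansion in the pair \((P,Q)\) then gives the \(o(\varepsilon^2)\) remainder, uniformly, since everything is smooth. The KL expansion follows from \eqref{eq:GaussianKL} by a second-order Taylor expansion in both arguments.
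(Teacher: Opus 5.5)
Your argument is correct, but it is organized differently from the paper's.

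\textbf{The paper's route.} The paper works in a chart and applies the implicit-function theorem to the stationarity equations of the endpoint objective \(F(\varepsilon,z_p,z_q)\), using that its Hessian at the origin is positive definite. It then identifies the exact optimizer with this local branch via uniqueness and analyticity from \Cref{thm:pairwiseinterface}. Finally, it evaluates a locally uniform expansion \(F(\varepsilon,\varepsilon x,\varepsilon y)=\varepsilon^2\mathcal Q_\xi(x,y)+o(\varepsilon^2)\) along the rescaled optimizer, and strict convexity of \(\mathcal Q_\xi\) yields both the value and the convergence of the rescaled velocities.

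\textbf{Your route.} You sandwich the value instead. The upper bound comes from fixed competitor curves, which needs only pointwise expansions, not expansions uniform in \((x,y)\). The other side is an exact expansion \(\Ashape(\mu,\mu_\varepsilon)=\varepsilon^2\mathcal Q_\xi(\dot p_0,\dot q_0)+o(\varepsilon^2)\) along the true optimizer curves. You take their smoothness directly from the analyticity in \Cref{thm:pairwiseinterface}, together with \Cref{lem:meancomponent} to place the optimizer at \((\mu,\mu)\) when \(\varepsilon=0\).

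\textbf{Comparison.} Your route shows that the implicit-function step is not actually needed once analyticity is granted. It also identifies \((\dot p_0,\dot q_0)\) by comparing values rather than by differentiating Euler equations. The paper's route, in exchange, exhibits the optimizer as a nondegenerate critical branch. It also makes explicit the linearized stationarity equations, which are the Euler equations of \(\mathcal Q_\xi\).

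\textbf{Two small slips.} First, your competitor curves have tangents \(x,y\); \(\varepsilon x,\varepsilon y\) are the first-order displacements, not the tangents. Second, in your side verification of \eqref{eq:twocurvelocal}, \(I-L^{-1}\) linearizes to \(\varepsilon\mathcal L_\Sigma^{-1}(\dot Q-\dot P)\), with no factor \(\tfrac12\). The \(\tfrac12\) appears only in the \(\varepsilon^2\)-coefficient after integrating the derivative along the curve. Your final coefficient \(\tfrac12\tr(H\mathcal L_\Sigma^{-1}H)\) is nonetheless correct.
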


\begin{proof}
	Choose a smooth local chart \(\chi\) centered at \(\mu\) and identify coordinate velocities with tangent vectors through \(D\chi^{-1}(0)\).  Then \(\chi(\mu_\varepsilon)=\varepsilon\xi+O(\varepsilon^2)\).  Let \(F(\varepsilon,z_p,z_q)\) be the Gaussian endpoint objective in these coordinates.  At \((0,0,0)\) its unique optimizer is \(p=q=\mu\), the first \(\varepsilon\)-derivative vanishes, and the Hessian in \((z_p,z_q)\) is twice
	\[
	g_{\BW,\mu}(x-y,x-y)
	+\frac{\tau_0}{2}g_{\FR,\mu}(x,x)
	+\frac{\tau_1}{2}g_{\FR,\mu}(y,y),
	\]
	which is positive definite.  Applying the finite-dimensional implicit-function theorem to the stationarity equations in \((z_p,z_q)\) therefore gives a unique smooth stationary branch near \((0,0,0)\), with \((z_p,z_q)=O(\varepsilon)\).  By \Cref{thm:pairwiseinterface}, however, the exact endpoint optimizer is unique and depends real analytically on the two Gaussian endpoints; at \(\varepsilon=0\) it equals \((p,q)=(\mu,\mu)\).  Hence, for sufficiently small \(\varepsilon\), the exact optimizer lies in the implicit-function neighborhood, satisfies the stationarity equations, and must coincide with this local stationary branch.  A locally uniform second-order Taylor expansion, combined with the two-curve expansions \eqref{eq:twocurvelocal}, takes the form
	\[
	F(\varepsilon,\varepsilon x,\varepsilon y)
	=
	\varepsilon^2\mathcal Q_\xi(x,y)+o(\varepsilon^2).
	\]
	Since \(\mathcal Q_\xi\) is strictly convex, evaluating this expansion along the exact optimizer branch establishes \eqref{eq:secondorderenvelope}; its rescaled velocities converge to the unique minimizer \((\dot p_0,\dot q_0)\).  Minimizing over these endpoint tangent variables gives the optimized quadratic coefficient in the next lemma.
\end{proof}

\begin{lemma}\label{lem:localA}
	Let \(\mu_\varepsilon\) be a smooth Gaussian curve with \(\mu_0=\mu\) and tangent \(\xi\).  Then
	\begin{equation}
		\Ashape(\mu,\mu_\varepsilon)
		=
		\varepsilon^2
		\left(g_{\BW,\mu}:\rBF g_{\FR,\mu}\right)(\xi,\xi)
		+o(\varepsilon^2).
	\end{equation}
\end{lemma}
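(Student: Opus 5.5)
By \Cref{lem:secondorderenvelope}, the quadratic coefficient of \(\Ashape(\mu,\mu_\varepsilon)\) is \(\min_{x,y}\mathcal Q_\xi(x,y)\). The plan is to evaluate this finite-dimensional quadratic minimum in two stages and recognize each stage as a parallel sum in the sense of \eqref{eq:parallelsum}. Write \(G:=g_{\BW,\mu}\) and \(F:=g_{\FR,\mu}\), both positive-definite forms on \(T_\mu\mathcal G_d^1\).

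\emph{Step 1: fixed displacement.} Reparametrize by \(w:=x-y\) and \(y\), so that \(x=y+w\). Then
\[
\mathcal Q_\xi=G(w,w)+\tfrac{\tau_0}{2}F(y+w,y+w)+\tfrac{\tau_1}{2}F(y-\xi,y-\xi).
\]
For fixed \(w\), the \(y\)-minimization only involves the two Fisher terms. Substituting \(\eta=y+w\) turns it into
\[
\inf_\eta\Bigl\{\tfrac{\tau_0}{2}F(\eta,\eta)+\tfrac{\tau_1}{2}F\bigl((\xi-w)-\eta,(\xi-w)-\eta\bigr)\Bigr\},
\]
which by \eqref{eq:parallelsum} is \(\bigl(\tfrac{\tau_0}{2}F:\tfrac{\tau_1}{2}F\bigr)(\xi-w,\xi-w)\). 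For scalar multiples of a single form the matrix formula \((G^{-1}+H^{-1})^{-1}\) gives
\[
\bigl(\tfrac{\tau_0}{2}F\bigr):\bigl(\tfrac{\tau_1}{2}F\bigr)=\Bigl(\tfrac{2}{\tau_0}+\tfrac{2}{\tau_1}\Bigr)^{-1}F=\frac{\tau_0\tau_1}{2(\tau_0+\tau_1)}F=\rBF F.
\]

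\emph{Step 2: outer minimization.} The remaining problem is
\[
\inf_w\bigl\{G(w,w)+\rBF F(\xi-w,\xi-w)\bigr\}=(G:\rBF F)(\xi,\xi),
\]
again directly by \eqref{eq:parallelsum}. Since \((x,y)\mapsto(w,y)\) is a linear bijection and an iterated infimum equals the joint infimum, \(\min\mathcal Q_\xi=(g_{\BW,\mu}:\rBF g_{\FR,\mu})(\xi,\xi)\). Substituting this into \eqref{eq:secondorderenvelope} gives the claim.

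\emph{Main obstacle.} The only point needing care is that all forms are positive definite, so the infima are attained and the matrix formula for the parallel sum applies. Positive definiteness holds for \(g_{\BW,\mu}\) and \(g_{\FR,\mu}\) at nondegenerate \(\Sigma\) by their explicit expressions, and the analytic content is already supplied by \Cref{lem:secondorderenvelope}. As a sanity check, one can verify the scalar case \(d=1\) with a mean-only perturbation: there \(G=1\) and \(F=\sigma^{-2}\), so the parallel sum is \((1+\sigma^2/\rBF)^{-1}\), which matches the mean formula \eqref{eq:Amean} at \(\Sigma_0=\Sigma_1=\sigma^2\), since \((1+r_0\sigma^2+r_1\sigma^2)^{-1}=(1+\sigma^2/\rBF)^{-1}\).
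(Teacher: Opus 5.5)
This is essentially the paper's proof: you reduce to $\min\mathcal Q_\xi$ via \Cref{lem:secondorderenvelope}, collapse the two Fisher terms at fixed displacement into $\rBF g_{\FR,\mu}$, and then read off the outer minimum as the parallel sum \eqref{eq:parallelsum}. There is one harmless sign slip: with $\eta=y+w$ one has $y-\xi=\eta-(\xi+w)$, so the inner value is $\rBF F(\xi+w,\xi+w)$, not $\rBF F(\xi-w,\xi-w)$, and you need the substitution $w\mapsto -w$ (using evenness of $G$) before invoking \eqref{eq:parallelsum}. This is exactly the paper's ``replace $z$ by $-\eta$'' step.
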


\begin{proof}
	By \Cref{lem:secondorderenvelope}, the second-order coefficient is the minimum
	\begin{equation}
		\inf_{x,y}
		\left\{
		g_{\BW}(x-y,x-y)
		+\frac{\tau_0}{2}g_{\FR}(x,x)
		+\frac{\tau_1}{2}g_{\FR}(y-\xi,y-\xi)
		\right\}.
		\label{eq:localquadraticxy}
	\end{equation}
	For fixed \(z=x-y\), minimizing the two Fisher terms over \(x\) has the solution
	\[
	x=\frac{\tau_1}{\tau_0+\tau_1}(z+\xi),
	\qquad
	y-\xi=-\frac{\tau_0}{\tau_0+\tau_1}(z+\xi),
	\]
	and the minimum of the Fisher part equals
	\[
	\frac{\tau_0\tau_1}{2(\tau_0+\tau_1)}
	g_{\FR}(z+\xi,z+\xi)
	=\rBF g_{\FR}(z+\xi,z+\xi).
	\]
	Replacing \(z\) by \(-\eta\), the remaining minimum is exactly
	\[
	\inf_{\eta}
	\left\{
	g_{\BW}(\eta,\eta)
	+\rBF g_{\FR}(\xi-\eta,\xi-\eta)
	\right\},
	\]
	which equals \((g_{\BW}:\rBF g_{\FR})(\xi,\xi)\) by \eqref{eq:parallelsum}.
\end{proof}

\subsection{Finite-mass expansion}

Let
\[
\alpha=a\mu,
\qquad
\alpha_\varepsilon
=(a+\varepsilon\dot a)\mu_\varepsilon,
\]
where \(\mu_\varepsilon\) has tangent \(\xi\) at \(\mu\).  For \(\lvert\varepsilon\rvert\) sufficiently small, \(a+\varepsilon\dot a>0\), so \(\alpha_\varepsilon\in\mathcal G_d^+\).

\begin{theorem}[BW--FR parallel-sum local quadratic form]
	As \(\varepsilon\to0\),
	\begin{equation}
		\GKU_{\tau_0,\tau_1}(\alpha,\alpha_\varepsilon)
		=
		\varepsilon^2
		g_{\alpha}^{\rm U}
		\big((\dot a,\xi),(\dot a,\xi)\big)
		+o(\varepsilon^2),
	\end{equation}
	where
	\begin{equation}
		g_{\alpha}^{\rm U}\big((\dot a,\xi),(\dot a,\xi)\big)
		=\frac{\rBF}{a}\dot a^2+a\left(g_{\BW,\mu}:\rBF g_{\FR,\mu}\right)(\xi,\xi).
		\label{eq:Umetric}
	\end{equation}
	This positive-definite form defines a smooth Riemannian tensor on \(\mathcal G_d^+\).
\end{theorem}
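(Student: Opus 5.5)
Starting from \Cref{prop:endpoint} with \(a\) and \(b=a+\varepsilon\dot a\), the plan is to write
\[
\GKU_{\tau_0,\tau_1}(\alpha,\alpha_\varepsilon)
=\tau_0a+\tau_1 b_\varepsilon-s\,a^\theta b_\varepsilon^{1-\theta}e^{-A_\varepsilon/s},
\qquad b_\varepsilon=a+\varepsilon\dot a,\quad A_\varepsilon=\Ashape(\mu,\mu_\varepsilon),
\]
and expand in \(\varepsilon\) in two separate pieces. By \Cref{lem:localA}, \(A_\varepsilon=\varepsilon^2(g_{\BW,\mu}:\rBF g_{\FR,\mu})(\xi,\xi)+o(\varepsilon^2)\), so \(e^{-A_\varepsilon/s}=1-A_\varepsilon/s+o(\varepsilon^2)\). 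Substituting gives
\[
\GKU=\bigl[\tau_0a+\tau_1b_\varepsilon-s a^\theta b_\varepsilon^{1-\theta}\bigr]+a^\theta b_\varepsilon^{1-\theta}A_\varepsilon+o(\varepsilon^2).
\]
The second term is \(aA_\varepsilon+o(\varepsilon^2)\), because \(b_\varepsilon^{1-\theta}=a^{1-\theta}+O(\varepsilon)\) and \(A_\varepsilon=O(\varepsilon^2)\). This yields the shape contribution \(a(g_{\BW}:\rBF g_{\FR})(\xi,\xi)\).

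The mass term is a scalar Taylor expansion of \(f(b)=\tau_0a+\tau_1b-sa^\theta b^{1-\theta}\) at \(b=a\). We have \(f(a)=\tau_0a+\tau_1a-sa=0\), and \(f'(a)=\tau_1-s(1-\theta)=0\) since \(s(1-\theta)=\tau_1\). The second derivative is \(f''(a)=s(1-\theta)\theta a^{-1}=\tau_0\tau_1/(s a)\). Hence
\[
f(b_\varepsilon)=\tfrac12\varepsilon^2\dot a^2\,\tau_0\tau_1/(sa)+O(\varepsilon^3)=\varepsilon^2\rBF\dot a^2/a+O(\varepsilon^3),
\]
which matches \eqref{eq:Umetric}. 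No cross term \(\varepsilon\dot a\cdot\xi\) appears, because the mass-shape interaction is \((a^\theta b_\varepsilon^{1-\theta}-a)A_\varepsilon=O(\varepsilon^3)\). This decoupling is the structural point worth stating explicitly.

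For the final claim, positive definiteness holds because \(a,\rBF>0\) and the parallel sum of positive-definite forms, \((G^{-1}+H^{-1})^{-1}\), is positive definite. The form is block diagonal in \((\dot a,\xi)\) with positive blocks. Smoothness in the base point \((a,m,\Sigma)\) follows because \(g_{\BW,\mu}\) (through \(\mathcal L_\Sigma^{-1}\)) and \(g_{\FR,\mu}\) (through \(\Sigma^{-1}\)) depend smoothly on \(\Sigma\in\Spp^d\). Matrix inversion is smooth on positive-definite matrices, so the parallel sum inherits smoothness.

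I expect the main obstacle to be the uniformity of the remainders. \Cref{lem:localA} gives \(o(\varepsilon^2)\) only for the fixed curve, which suffices for the pointwise expansion as stated. To make sure the \(o(\varepsilon^2)\) terms combine correctly, I would use \(e^{-x}=1-x+O(x^2)\) with \(x=O(\varepsilon^2)\), so that the error is \(O(\varepsilon^4)\). No further uniformity is needed, because the mass variable enters only through an explicit analytic scalar factor.
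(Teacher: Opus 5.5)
Your proposal is correct and follows essentially the same route as the paper: substitute \Cref{lem:localA} into the finite-mass formula of \Cref{prop:endpoint}, use \(s(1-\theta)=\tau_1\) to cancel the constant and first-order mass terms, read off the second-order mass coefficient \(\rBF\dot a^2/a\) and the shape term \(aA_\varepsilon\), and obtain positive definiteness and smoothness from the parallel-sum map on positive-definite forms. The differences are only cosmetic: you Taylor-expand \(f(b)\) at \(b=a\) instead of substituting \(x=\varepsilon\dot a/a\), and you state explicitly that the mass--shape cross term is \(O(\varepsilon^3)\), which the paper leaves implicit.
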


\begin{proof}
	Let
	\[
	A_\varepsilon=\Ashape(\mu,\mu_\varepsilon)
	=
	\varepsilon^2Q(\xi)+o(\varepsilon^2),
	\qquad
	Q=g_{\BW}:\rBF g_{\FR},
	\]
	by \Cref{lem:localA}.  Set
	\(x=\varepsilon\dot a/a\), so that
	\(a+\varepsilon\dot a=a(1+x)\).  The finite-mass endpoint formula in \Cref{prop:endpoint} then becomes
	\begin{align*}
		\GKU_{\tau_0,\tau_1}(\alpha,\alpha_\varepsilon)
		&=
		\tau_0a+\tau_1a(1+x)
		-sa(1+x)^{1-\theta}e^{-A_\varepsilon/s}.
	\end{align*}
	Using
	\[
	(1+x)^{1-\theta}
	=
	1+(1-\theta)x-\frac{\theta(1-\theta)}{2}x^2+o(x^2),
	\qquad
	e^{-A_\varepsilon/s}=1-\frac{A_\varepsilon}{s}+o(\varepsilon^2),
	\]
	the constant and first-order terms cancel because \(s(1-\theta)=\tau_1\).  The second-order mass contribution is
	\[
	sa\frac{\theta(1-\theta)}{2}x^2
	=
	\frac{\tau_0\tau_1}{2s}\frac{\varepsilon^2\dot a^2}{a}
	=
	\varepsilon^2\frac{\rBF}{a}\dot a^2,
	\]
	while the shape contribution is \(aA_\varepsilon=\varepsilon^2aQ(\xi)+o(\varepsilon^2)\).  Their sum is exactly the coefficient in \eqref{eq:Umetric}.  The mass term and the parallel sum are both positive definite, so the resulting quadratic form is positive definite as well.  Smoothness follows because \(a\mapsto a^{-1}\), the Gaussian tensors \(g_{\BW}\) and \(g_{\FR}\), and the parallel-sum map \((G,H)\mapsto(G^{-1}+H^{-1})^{-1}\) are smooth on their positive-definite domains.
\end{proof}

\begin{remark}
	Only \(\rBF\) carries the penalty dependence of this local tensor.  The construction records the diagonal second-order Riemannian tensor of the asymmetric endpoint functional, rather than a global squared-distance representation of the KL-UOT value.  Its parallel-sum form in \eqref{eq:Umetric} arises from the Schur-complement/infimal-convolution reduction of \eqref{eq:localquadraticxy}, in contrast with the additive Bures--Wasserstein--Fisher--Rao structure of Gaussian dynamic transport--reaction models; see, for example, \cite{ChizatFoCM2018,LieroMielkeTseZhu2026}.
\end{remark}

Absorbing the mass coefficient in the local tensor \eqref{eq:Umetric} into a square-root radial coordinate reveals its cone representation.

\begin{theorem}[Cone representation of the local KL-UOT tensor]
	Define the normalized-shape metric
	\begin{equation}
		h_{\rBF,\mu}
		:=
		\frac{1}{4\rBF}
		\left(g_{\BW,\mu}:\rBF g_{\FR,\mu}\right)
		\qquad(\mu\in\mathcal G_d^1),
	\end{equation}
	and introduce the radial mass coordinate
	\begin{equation}
		\varrho=2\sqrt{\rBF a},
		\qquad
		a=\frac{\varrho^2}{4\rBF}.
		\label{eq:coneradial}
	\end{equation}
	Under the diffeomorphism
	\[
	\mathcal G_d^+\ni a\mu
	\longmapsto
	(\varrho,\mu)\in(0,\infty)\times\mathcal G_d^1,
	\]
	the tensor \eqref{eq:Umetric} becomes
	\begin{equation}
		g^{\rm U}=d\varrho^2+\varrho^2h_{\rBF}.
		\label{eq:conemetric}
	\end{equation}
	Thus the finite Gaussian manifold equipped with the local tensor \(g^{\rm U}\) is isometric to the open Riemannian cone over \((\mathcal G_d^1,h_{\rBF})\).  In particular, along a fixed normalized shape \(\mu\), radial curves are geodesics for the metric induced by this local tensor and
	\begin{equation}
		d_{g^{\rm U}}(a_0\mu,a_1\mu)
		=2\sqrt{\rBF}\,
		\left|\sqrt{a_1}-\sqrt{a_0}\right|.
		\label{eq:radialdistance}
	\end{equation}
	The metric completion of the corresponding local tensor geometry in the radial direction adds a cone apex at zero mass, whose distance from \(a\mu\) is \(2\sqrt{\rBF a}\).
\end{theorem}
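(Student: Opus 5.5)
The plan is a direct change of variables in the local tensor \eqref{eq:Umetric}, followed by the standard facts about Riemannian cones.

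\emph{Step 1: the metric in cone coordinates.} Write a tangent vector at \(a\mu\) as \((\dot a,\xi)\). From \eqref{eq:coneradial} we have \(a=\varrho^2/(4\rBF)\), so \(\dot a=\varrho\dot\varrho/(2\rBF)\). Substituting into the mass term gives
\[
\frac{\rBF}{a}\dot a^2=\rBF\cdot\frac{4\rBF}{\varrho^2}\cdot\frac{\varrho^2\dot\varrho^2}{4\rBF^2}=\dot\varrho^2 .
\]
For the shape term, \(a\,(g_{\BW}:\rBF g_{\FR})(\xi,\xi)=\frac{\varrho^2}{4\rBF}(g_{\BW}:\rBF g_{\FR})(\xi,\xi)=\varrho^2h_{\rBF}(\xi,\xi)\). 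Since \eqref{eq:Umetric} contains no cross term between \(\dot a\) and \(\xi\), this yields \eqref{eq:conemetric} exactly.

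\emph{Step 2: the diffeomorphism.} The map \(a\mu\mapsto(\varrho,\mu)\) is a diffeomorphism, because \(a\mapsto2\sqrt{\rBF a}\) is a smooth bijection of \((0,\infty)\) with smooth inverse. The normalized shape is recovered smoothly from the product coordinates. By definition of the Riemannian cone over \((\mathcal G_d^1,h_{\rBF})\), the isometry claim is then immediate. Note that \(h_{\rBF}\) is a smooth positive-definite tensor, by the smoothness argument already given for \eqref{eq:Umetric}.

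\emph{Step 3: radial geodesics and distance \eqref{eq:radialdistance}.} Take any piecewise \(C^1\) curve \(\varepsilon\mapsto(\varrho(\varepsilon),\mu(\varepsilon))\) from \((\varrho_0,\mu)\) to \((\varrho_1,\mu)\). Its length satisfies
\[
\int\sqrt{\dot\varrho^2+\varrho^2h(\dot\mu,\dot\mu)}\,d\varepsilon\ \ge\ \int\lvert\dot\varrho\rvert\,d\varepsilon\ \ge\ \lvert\varrho_1-\varrho_0\rvert .
\]
Equality holds for the radial segment with \(\mu\) fixed. Hence the radial curve is length-minimizing, and constant-speed parametrizations of it are geodesics. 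Substituting \(\varrho_j=2\sqrt{\rBF a_j}\) gives \eqref{eq:radialdistance}.

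\emph{Step 4: the apex.} As \(a_0\downarrow0\) along a fixed shape, the distances in \eqref{eq:radialdistance} form a Cauchy family with no limit inside \(\mathcal G_d^+\). Adjoining a single point \(o\) produces the completion in the radial direction; this is the standard cone completion, and all shapes are identified at \(\varrho=0\) because the angular term carries the factor \(\varrho^2\). The distance from \(o\) to \(a\mu\) is the limit of \eqref{eq:radialdistance} as \(a_0\to0\), namely \(2\sqrt{\rBF a}\).

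\emph{Main obstacle.} The computation itself is routine. The step needing the most care is the apex statement. One must check that curves approaching \(\varrho=0\) through different shapes all define the same completion point, and that the completion distance equals \(\varrho\). The argument is the inequality from Step 3: every curve from \(a\mu\) to \(\varrho\le\eta\) has length at least \(\varrho(a\mu)-\eta\). In the other direction, two points at radius \(\eta\) can be joined by an angular curve \(\{\eta\}\times\gamma\) of length \(\eta\cdot\mathrm{len}_h(\gamma)\), which tends to \(0\) as \(\eta\to0\).
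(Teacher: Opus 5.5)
Your proposal is correct and follows the paper's proof essentially step for step. You use the same substitution $\frac{\rBF}{a}\,da^2=d\varrho^2$, the same length bound $L\ge\int|\dot\varrho|\ge|\varrho_1-\varrho_0|$ for the radial distance, and the same fixed-radius curve of length $\varrho\,L_{h_{\rBF}}(\eta)\to0$ to identify all rays at the apex. Your added observation, that any curve from $a\mu$ to radius $\le\eta$ has length at least $\varrho(a\mu)-\eta$, makes rigorous the apex-distance claim that the paper leaves implicit.
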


\begin{proof}
	Differentiating \eqref{eq:coneradial} gives \(da=\varrho(2\rBF)^{-1}\,d\varrho\).  Hence the mass and shape terms in \eqref{eq:Umetric} satisfy, respectively,
	\[
	\frac{\rBF}{a}\,da^2=d\varrho^2,
	\qquad
	a\left(g_{\BW}:\rBF g_{\FR}\right)
	=\varrho^2 h_{\rBF}.
	\]
	This proves \eqref{eq:conemetric}.  For any piecewise \(C^1\) curve \(\gamma(t)=(\varrho(t),\mu(t))\) joining two points with the same normalized shape, the cone metric assigns the length
	\[
	L(\gamma)
	=\int\sqrt{\dot\varrho(t)^2+\varrho(t)^2\norm{\dot\mu(t)}_{h_{\rBF}}^2}\,dt
	\ge \int\left\lvert \dot\varrho(t)\right\rvert\,dt
	\ge \left\lvert \varrho_1-\varrho_0\right\rvert.
	\]
	The radial curve with \(\mu(t)\equiv\mu\) attains this lower bound, so the distance is exactly \(|\varrho_1-\varrho_0|\), which is \eqref{eq:radialdistance}.  To identify the zero-radius limit across different shapes, let \(\mu_0,\mu_1\in\mathcal G_d^1\) and choose a piecewise \(C^1\) curve \(\eta\) joining them with finite \(h_{\rBF}\)-length.  At fixed radius \(\varrho>0\), the curve \(t\mapsto(\varrho,\eta(t))\) has \(g^{\rm U}\)-length \(\varrho L_{h_{\rBF}}(\eta)\), and hence
	\[
	d_{g^{\rm U}}\bigl((\varrho,\mu_0),(\varrho,\mu_1)\bigr)
	\le \varrho L_{h_{\rBF}}(\eta)
	\longrightarrow 0
	\qquad(\varrho\downarrow0).
	\]
	Since \(\mathcal G_d^1\) is connected and \(h_{\rBF}\) is a smooth positive-definite metric, such a finite-length curve exists between any two shapes.  Thus all radial rays determine the same zero-mass completion point, whose distance from \(a\mu\) is \(2\sqrt{\rBF a}\).  This proves the cone-apex assertion.
\end{proof}

\begin{remark}
	The theorem identifies the common radial cone apex.  Additional boundary points may arise from incompleteness of the base metric \(h_{\rBF}\), whose completeness properties therefore enter a full characterization of the metric completion.
\end{remark}

\begin{corollary}
	For \(\xi=(\dot m,H)\), the mean part of \eqref{eq:Umetric} is
	\begin{equation}
		\left(g_{\BW}:\rBF g_{\FR}\right)_{\rm mean}(\dot m,\dot m)
		=
		\dot m^\top
		\left(I+\rBF^{-1}\Sigma\right)^{-1}
		\dot m.
	\end{equation}
	If
	\[
	\Sigma=V\operatorname{diag}(\sigma_1,\ldots,\sigma_d)V^\top,
	\qquad
	\widetilde H=V^\top HV,
	\]
	then the covariance part is
	\begin{equation}
		\left(g_{\BW}:\rBF g_{\FR}\right)_{\rm cov}(H,H)
		=
		\sum_{j,k=1}^d
		\frac{\widetilde H_{jk}^2}
		{2(\sigma_j+\sigma_k)+2\rBF^{-1}\sigma_j\sigma_k}.
		\label{eq:covmetricexplicit}
	\end{equation}
\end{corollary}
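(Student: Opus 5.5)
The plan is to compute the parallel sum directly from its variational definition \eqref{eq:parallelsum}. Both tensors are block-diagonal in the splitting \(\xi=(\dot m,H)\), and after a rotation the covariance block is diagonal in a common orthogonal basis of \(\Sym^d\). The parallel sum of two positive-definite forms that are simultaneously diagonal in some basis is diagonal in that basis, with coefficients given by the scalar parallel sum \(g:h=(g^{-1}+h^{-1})^{-1}=gh/(g+h)\). So the corollary reduces to reading off the scalar coefficients of \(g_{\BW}\) and \(\rBF g_{\FR}\) in each block and combining them.

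\emph{Step 1: decoupling.} Neither \(g_{\BW,\mu}\) nor \(g_{\FR,\mu}\) contains a term mixing \(\dot m\) and \(H\). Hence in \eqref{eq:parallelsum} the infimum over \(\eta=(\eta_m,\eta_H)\) separates into a mean infimum and a covariance infimum, and \(g_{\BW}:\rBF g_{\FR}\) is the orthogonal sum of the two block parallel sums.

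\emph{Step 2: mean block.} Here the forms are represented by \(I\) and \(\rBF\Sigma^{-1}\). The matrix formula gives
\[
\bigl(I+\rBF^{-1}\Sigma\bigr)^{-1},
\]
which is the claimed mean part.

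\emph{Step 3: covariance block.} Conjugate by \(V\), using that the trace forms, \(\mathcal L_\Sigma\) and \(\Sigma^{-1}\) are equivariant under \(H\mapsto V^\top HV\). This lets us assume \(\Sigma=\operatorname{diag}(\sigma_j)\). Then \(\mathcal L_\Sigma^{-1}\) acts entrywise, \((\mathcal L_\Sigma^{-1}H)_{jk}=H_{jk}/(\sigma_j+\sigma_k)\), so
\[
g_{\BW}(H,H)=\sum_{j,k}\frac{\widetilde H_{jk}^2}{2(\sigma_j+\sigma_k)},
\qquad
\rBF g_{\FR}(H,H)=\sum_{j,k}\frac{\rBF\,\widetilde H_{jk}^2}{2\sigma_j\sigma_k}.
\]
Both forms are therefore diagonal in the coordinates \(\{\widetilde H_{jj}\}_j\cup\{\widetilde H_{jk}\}_{j<k}\) of \(\Sym^d\). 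An off-diagonal pair contributes the same factor \(2\) to each form, so that factor cancels consistently.

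The infimum in \eqref{eq:parallelsum} then splits coordinatewise into scalar problems \(\inf_e\{g e^2+h(x-e)^2\}=\frac{gh}{g+h}x^2\). Taking \(g=1/(2(\sigma_j+\sigma_k))\) and \(h=\rBF/(2\sigma_j\sigma_k)\) gives
\[
\frac{1}{g^{-1}+h^{-1}}=\frac{1}{2(\sigma_j+\sigma_k)+2\rBF^{-1}\sigma_j\sigma_k},
\]
which yields \eqref{eq:covmetricexplicit} after undoing the symmetric-pair bookkeeping.

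The main point requiring care is Step 3. One must verify that the orthogonal change of variables preserves both tensors, so that the parallel sum commutes with the rotation. One must also check that the entrywise diagonalization treats diagonal entries and symmetric off-diagonal pairs with identical multiplicities in both forms, so that no spurious factor of \(2\) appears. Everything else is direct substitution.
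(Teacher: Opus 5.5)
Your proposal is correct and follows the paper's own proof: you decouple the mean and covariance blocks, obtain \(\bigl(I+\rBF^{-1}\Sigma\bigr)^{-1}\) from the matrix parallel-sum formula, and, after diagonalizing \(\Sigma\), combine the coefficients \([2(\sigma_j+\sigma_k)]^{-1}\) and \(\rBF/(2\sigma_j\sigma_k)\) by scalar parallel sums, with ordered-pair bookkeeping for the off-diagonal entries. Your additional checks on rotation equivariance and on the cancellation of the factor \(2\) only make explicit what the paper leaves implicit.
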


\begin{proof}
	In the mean block, \(g_{\BW}\) and \(\rBF g_{\FR}\) are represented by \(I\) and \(\rBF\Sigma^{-1}\), so their parallel sum is \((I+\rBF^{-1}\Sigma)^{-1}\).  Diagonalizing \(\Sigma\), the covariance coefficients become \([2(\sigma_j+\sigma_k)]^{-1}\) and \(\rBF/(2\sigma_j\sigma_k)\); their scalar parallel sum recovers \eqref{eq:covmetricexplicit}.  Because the sum in \eqref{eq:covmetricexplicit} runs over ordered pairs \((j,k)\), the off-diagonal entries of a symmetric \(H\) occur twice, consistently with the Frobenius pairing.
\end{proof}

\begin{corollary}
	Fix \(\bar\tau_0,\bar\tau_1>0\) and set
	\(\tau_j=\kappa\bar\tau_j\) for \(j\in\{0,1\}\).  Let
	\[
	\bar{\rBF}=\frac{\bar\tau_0\bar\tau_1}{2(\bar\tau_0+\bar\tau_1)},
	\qquad
	\rBF_\kappa=\kappa\bar{\rBF}.
	\]
	Then:
	\begin{enumerate}[label=(\alph*)]
		\item as \(\kappa\to\infty\),
		\[
		g_{\BW}:\rBF_\kappa g_{\FR}\longrightarrow g_{\BW},
		\]
		while the mass coefficient \(\rBF_\kappa/a\to\infty\);
		\item as \(\kappa\downarrow0\),
		\begin{equation}
			\frac{1}{\rBF_\kappa}g_{\alpha}^{\rm U}
			\longrightarrow
			\frac{\dot a^2}{a}
			+a\,g_{\FR,\mu}(\xi,\xi).
		\end{equation}
	\end{enumerate}
\end{corollary}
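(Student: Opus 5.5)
The plan is to work in matrix coordinates and use the representation \(G:H=(G^{-1}+H^{-1})^{-1}\) for positive-definite forms. Fix \(\mu=\N(m,\Sigma)\) and split tangent vectors into the mean block and the covariance block. Both \(g_{\BW}\) and \(g_{\FR}\) are block diagonal in this splitting, so the parallel sum is block diagonal as well. The previous corollary makes this explicit: the mean block is \((I+\rBF^{-1}\Sigma)^{-1}\). In the eigenbasis of \(\Sigma\), the covariance block is diagonal in the entries \(\widetilde H_{jk}\), with coefficients
\[
c_{jk}(\rBF)=\bigl[2(\sigma_j+\sigma_k)+2\rBF^{-1}\sigma_j\sigma_k\bigr]^{-1}.
\]
Both limits then reduce to scalar (or commuting-matrix) limits of these coefficients, taken at a fixed base point and fixed tangent vector.

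For part (a), substitute \(\rBF_\kappa=\kappa\bar\rBF\) and let \(\kappa\to\infty\). Then \(\rBF_\kappa^{-1}\to0\), so \((I+\rBF_\kappa^{-1}\Sigma)^{-1}\to I\), which is the mean block of \(g_{\BW}\). Likewise \(c_{jk}\to[2(\sigma_j+\sigma_k)]^{-1}\), which is the eigenbasis coefficient of \(\tfrac12\tr(H\mathcal L_\Sigma^{-1}H)\). The convergence is monotone and locally uniform in \(\Sigma\), so it holds as quadratic forms (and as tensors on compact sets). The statement that the mass coefficient \(\rBF_\kappa/a=\kappa\bar\rBF/a\) tends to infinity is immediate.

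For part (b), divide \eqref{eq:Umetric} by \(\rBF_\kappa\). The mass term becomes \(\dot a^2/a\) exactly, for every \(\kappa\). For the shape term, use the homogeneity \(\rBF^{-1}(G:\rBF F)=(\rBF G):F\), which follows from \((G^{-1}+\rBF^{-1}F^{-1})^{-1}/\rBF=(\rBF G^{-1}+F^{-1})^{-1}\). As \(\rBF\downarrow0\), \(\rBF G^{-1}\to0\), so \((\rBF G^{-1}+F^{-1})^{-1}\to F=g_{\FR}\). Concretely, in the mean block \(\rBF^{-1}(I+\rBF^{-1}\Sigma)^{-1}=(\rBF I+\Sigma)^{-1}\to\Sigma^{-1}\). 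In the covariance block, \(\rBF^{-1}c_{jk}=[2\rBF(\sigma_j+\sigma_k)+2\sigma_j\sigma_k]^{-1}\to(2\sigma_j\sigma_k)^{-1}\), which is the eigenbasis form of \(\tfrac12\tr(\Sigma^{-1}H\Sigma^{-1}H)\) in \eqref{eq:FRmetric}. Multiplying by \(a\) gives the stated limit.

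There is no real obstacle in either part. The only points needing care are the bookkeeping of the factor \(\tfrac12\) and of the ordered-pair double counting in \eqref{eq:covmetricexplicit}, so that the limiting coefficients match \(g_{\BW}\) and \(g_{\FR}\) exactly. It is also worth stating the mode of convergence: pointwise in \((\mu,\xi)\), and locally uniform on compacts, which follows from the explicit rational formulas.
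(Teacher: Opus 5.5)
Your argument is correct and is essentially the paper's proof. The paper invokes \(G:(cH)=(G^{-1}+c^{-1}H^{-1})^{-1}\), so that \(G:(cH)\to G\) as \(c\to\infty\) and \(c^{-1}(G:(cH))\to H\) as \(c\downarrow0\), with \(c=\rBF_\kappa\); you additionally check both limits blockwise through the explicit mean and covariance coefficients. One small slip: \(\rBF^{-1}(G:\rBF F)=(\rBF G^{-1}+F^{-1})^{-1}\) equals \((\rBF^{-1}G):F\), not \((\rBF G):F\) (which tends to \(0\)), but your limit uses the correct middle expression, so nothing downstream is affected.
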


\begin{proof}
	This follows directly from \(G:(cH)=(G^{-1}+c^{-1}H^{-1})^{-1}\): as \(c\to\infty\), \(G:(cH)\to G\), while as \(c\downarrow0\), \(c^{-1}(G:(cH))\to H\).  Apply this with \(G=g_{\BW}\), \(H=g_{\FR}\), \(c=\rBF_\kappa\), together with the mass term in \eqref{eq:Umetric}.  These are the infinitesimal counterparts of the two global regimes established in \Cref{sec:limits}.  Even when \(\tau_0\neq\tau_1\), the local quadratic form depends only on \(\rBF\), which is invariant under exchanging the two penalties; hence the quadratic term is symmetric under \(\tau_0\leftrightarrow\tau_1\).
\end{proof}

\section{MM iteration and convergence}
\label{sec:algorithm}

The stationary moment equations define a profiled reverse-KL MM iteration whose update is the Gaussian centroid of the adjusted target marginals.  We establish Lyapunov descent and spectral noncollapse, followed by sufficient-decrease and relative-error estimates that yield the full-sequence convergence result in \Cref{thm:cluster}.  A local surrogate Hessian then gives the contraction rate near nondegenerate stationary points.  For general fixed penalties, \Cref{thm:cluster} gives convergence to a stationary fixed point; \Cref{thm:largebranch} establishes global uniqueness in the sufficiently large common-penalty regime.

\subsection{Profiled reverse-KL MM iteration}

The MM step rests on the following reverse-KL Pythagorean identity for Gaussian centroids.

\begin{lemma}\label{lem:KLcentroid}
	Let \(q_i=\N(v_i,Q_i)\in\mathcal G_d^1\), and let positive weights \(\omega_i\) satisfy \(\sum_i\omega_i=1\).  Define
	\begin{equation}
		\bar m=\sum_i\omega_iv_i,
		\qquad
		\bar\Sigma
		=
		\sum_i\omega_i\left[Q_i+(v_i-\bar m)(v_i-\bar m)^\top\right],
		\label{eq:gausscentroid}
	\end{equation}
	and \(\bar q=\N(\bar m,\bar\Sigma)\).  Then \(\bar\Sigma\succ0\), and \(\bar q\) is the unique minimizer of
	\[
	\nu\longmapsto\sum_i\omega_i\KL(q_i\mid\nu)
	\qquad\text{over }\nu\in\mathcal G_d^1,
	\]
	and for every \(\nu\in\mathcal G_d^1\),
	\begin{equation}
		\sum_i\omega_i\KL(q_i\mid\nu)
		=
		\sum_i\omega_i\KL(q_i\mid\bar q)
		+\KL(\bar q\mid\nu).
		\label{eq:KLPythagorean}
	\end{equation}
\end{lemma}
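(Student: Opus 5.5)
We prove positivity of \(\bar\Sigma\) first and then establish the Pythagorean identity \eqref{eq:KLPythagorean}. Uniqueness of the minimizer then follows at once, because \(\KL(\bar q\mid\nu)\ge0\) with equality only for \(\nu=\bar q\).

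For positivity, each \(Q_i\succ0\), the weights \(\omega_i\) are positive, and the between-mean terms \((v_i-\bar m)(v_i-\bar m)^\top\) are positive semidefinite. Hence \(\bar\Sigma\succeq\omega_1Q_1\succ0\), and \(\bar q\in\mathcal G_d^1\).

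For the identity, we fix \(\nu=\N(m,\Sigma)\) and expand each term with the Gaussian KL formula \eqref{eq:GaussianKL}:
\[
2\KL(q_i\mid\nu)=\tr(\Sigma^{-1}Q_i)+(v_i-m)^\top\Sigma^{-1}(v_i-m)-d+\log\det\Sigma-\log\det Q_i .
\]
We then average with the weights \(\omega_i\) and use the bias--variance decomposition
\[
\sum_i\omega_i(v_i-m)(v_i-m)^\top=\sum_i\omega_i(v_i-\bar m)(v_i-\bar m)^\top+(\bar m-m)(\bar m-m)^\top ,
\]
which holds because the cross terms vanish by the definition of \(\bar m\). Together with the definition of \(\bar\Sigma\), this gives
\[
2\sum_i\omega_i\KL(q_i\mid\nu)=\tr(\Sigma^{-1}\bar\Sigma)+(\bar m-m)^\top\Sigma^{-1}(\bar m-m)-d+\log\det\Sigma-\sum_i\omega_i\log\det Q_i .
\]
We add and subtract \(\log\det\bar\Sigma\) and recognize \(\tr(\Sigma^{-1}\bar\Sigma)+(\bar m-m)^\top\Sigma^{-1}(\bar m-m)-d+\log\det\Sigma-\log\det\bar\Sigma\) as \(2\KL(\bar q\mid\nu)\). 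What remains is \(\log\det\bar\Sigma-\sum_i\omega_i\log\det Q_i\), which is independent of \(\nu\).

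To identify this constant, we evaluate the same averaged expression at \(\nu=\bar q\). There \(\KL(\bar q\mid\bar q)=0\), so the constant equals \(2\sum_i\omega_i\KL(q_i\mid\bar q)\). This yields \eqref{eq:KLPythagorean}.

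There is no real obstacle. The only point needing care is the cancellation of the mean cross terms, and that uses exactly the weighted-mean definition of \(\bar m\) with \(\sum_i\omega_i=1\).
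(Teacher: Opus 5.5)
Your proof is correct, but it takes a different route from the paper.

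You verify the Pythagorean identity by direct computation in coordinates. You expand every \(\KL(q_i\mid\nu)\) with the Gaussian formula, combine the quadratic terms through the weighted bias--variance decomposition, split off \(2\KL(\bar q\mid\nu)\), and identify the leftover \(\nu\)-independent constant by evaluating at \(\nu=\bar q\).

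The paper instead introduces the mixture \(\eta=\sum_i\omega_iq_i\) and notes that \(\eta\) and \(\bar q\) share their first two moments. Since \(\log(d\bar q/d\nu)\) is a quadratic polynomial, the difference \(\sum_i\omega_i\KL(q_i\mid\nu)-\sum_i\omega_i\KL(q_i\mid\bar q)=\int\log(d\bar q/d\nu)\,d\eta\) can be evaluated under \(\bar q\) instead of \(\eta\). This gives \(\KL(\bar q\mid\nu)\) in one line.

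The paper's argument is shorter and coordinate-free; it is the standard moment-projection argument for exponential families. It also exhibits \(\bar q\) as the moment-matched Gaussian of the mixture, which is how the stationary equations are interpreted later. Your argument is fully elementary and yields, as a by-product, the closed form \(2\sum_i\omega_i\KL(q_i\mid\bar q)=\log\det\bar\Sigma-\sum_i\omega_i\log\det Q_i\) for the residual term. In effect, your bias--variance cancellation is the explicit content hidden in the paper's moment-matching step.
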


\begin{proof}
	The covariance \(\bar\Sigma\) is positive definite by \eqref{eq:gausscentroid}.  Let \(\eta=\sum_i\omega_iq_i\).  By construction, \(\eta\) and \(\bar q\) have the same first two moments, and hence integrate every Gaussian log-density identically.  Expanding the relative entropies and cancelling the common terms leaves
	\[
	\sum_i\omega_i\KL(q_i\mid\nu)-\sum_i\omega_i\KL(q_i\mid\bar q)
	=
	\int\log\frac{\bar q}{\nu}\,d\eta
	=
	\KL(\bar q\mid\nu),
	\]
	which is \eqref{eq:KLPythagorean}.  Since \(\KL(\bar q\mid\nu)=0\) if and only if \(\nu=\bar q\), the minimizer is unique.
\end{proof}

Applying this identity to the adjusted target marginals and Gibbs weights gives the profiled MM update as their Gaussian reverse-KL centroid.

Given \(\nu^k=\N(m^k,\Sigma^k)\), compute for every \(i\) the endpoint quantities \(A_i^k:=A_i(\nu^k)\) and \(q_i^k:=\N(v_i^k,Q_i^k)\).
Set
\begin{equation}
	z_i^k=a_i^\theta e^{-A_i^k/s},
	\qquad
	Z^k=\sum_i\lambda_i z_i^k,
	\qquad
	\omega_i^k=\frac{\lambda_i z_i^k}{Z^k}.
	\label{eq:algorithmweights}
\end{equation}
Then update
\begin{equation}
	m^{k+1}=\sum_i\omega_i^kv_i^k,
	\label{eq:algm}
\end{equation}
\begin{equation}
	\Sigma^{k+1}
	=
	\sum_i\omega_i^k
	\left[
	Q_i^k+(v_i^k-m^{k+1})(v_i^k-m^{k+1})^\top
	\right],
	\label{eq:algSigma}
\end{equation}
and set \(\nu^{k+1}=\N(m^{k+1},\Sigma^{k+1})\).  The profiled mass is
\begin{equation}
	b^k=(Z^k)^{1/\theta}.
	\label{eq:algmass}
\end{equation}

The shape update \eqref{eq:algm}--\eqref{eq:algSigma} does not use the current mass explicitly.  The adjusted probability marginals \(q_i^k\) depend only on normalized shapes; mass information enters through the factors \(a_i^\theta\) in the Gibbs weights and through the profiled mass \eqref{eq:algmass}.

For stopping, after each shape update define the complete step residual
\begin{equation}
	\operatorname{res}_k=\max\left\{
	\sqrt{\KL(\nu^{k+1}\mid\nu^k)},
	\frac{\norm{m^{k+1}-m^k}}{1+\norm{m^k}},
	\frac{\norm{\Sigma^{k+1}-\Sigma^k}_F}{1+\norm{\Sigma^k}_F}
	\right\}.
	\label{eq:stoppingresidual}
\end{equation}
For any returned terminal shape \(\nu^{\rm term}\), the mass is reprofiled once by
\begin{equation}
	b^{\rm term}=\Zfun(\nu^{\rm term})^{1/\theta}.
	\label{eq:terminalmass}
\end{equation}
In the pseudocode below, \(\operatorname{LSE}(x_1,\ldots,x_n)=\log\sum_i e^{x_i}\) denotes the log-sum-exp operator.

\begin{algorithm}[!ht]
	\caption{Profiled reverse-KL MM iteration}
	\small
	\begin{algorithmic}[1]
		\Require Data \(\{a_i,m_i,\Sigma_i,\lambda_i\}_{i=1}^n\), penalties \(\tau_0,\tau_1>0\), initial shape \(\nu^0=\N(m^0,\Sigma^0)\), tolerance \(\varepsilon_{\rm tol}>0\), and iteration cap \(K_{\max}\).
		\Ensure Terminal stationary Gaussian candidate \(\beta^{\rm term}\) and a residual-based convergence flag.
		\For{\(k=0,\ldots,K_{\max}-1\)}
		\For{\(i=1,\ldots,n\)}
		\State Evaluate \(A_i^k\) and \(q_i^k=\N(v_i^k,Q_i^k)\).
		\State \(\ell_i^k\gets\log\lambda_i+\theta\log a_i-A_i^k/s\).
		\EndFor
		\State \(\log Z^k\gets\operatorname{LSE}(\ell_1^k,\ldots,\ell_n^k)\).
		\State \(\omega_i^k\gets\exp(\ell_i^k-\log Z^k)\) for \(i=1,\ldots,n\).
		\State \(\log b^k\gets(\log Z^k)/\theta\) and \(b^k\gets\exp(\log b^k)\) if the current mass is required.
		\State Update \(m^{k+1}\) and \(\Sigma^{k+1}\) using \eqref{eq:algm}--\eqref{eq:algSigma}.
		\State \(\nu^{k+1}\gets\N(m^{k+1},\Sigma^{k+1})\); evaluate \(\operatorname{res}_k\) by \eqref{eq:stoppingresidual}.
		\If{\(\operatorname{res}_k\le\varepsilon_{\rm tol}\)}
		\State \(\nu^{\rm term}\gets\nu^{k+1}\); reprofile \(b^{\rm term}\) by \eqref{eq:terminalmass}.
		\State \(\beta^{\rm term}\gets b^{\rm term}\nu^{\rm term}\).
		\State \Return \((\beta^{\rm term},\texttt{converged})\).
		\EndIf
		\EndFor
		\State \(\nu^{\rm term}\gets\nu^{K_{\max}}\); reprofile \(b^{\rm term}\) by \eqref{eq:terminalmass}.
		\State \(\beta^{\rm term}\gets b^{\rm term}\nu^{\rm term}\).
		\State \Return \((\beta^{\rm term},\texttt{not converged})\).
	\end{algorithmic}
\end{algorithm}

The flag records attainment of the prescribed complete step residual.  The stationarity conclusion in \Cref{thm:cluster} concerns the exact infinite MM trajectory; global optimality at intermediate penalties requires separate information.  Each dense iteration costs \(O(nd^3)\), dominated by the pairwise positive-definite matrix operations.

\subsection{Lyapunov descent and convergence}

To quantify descent, we freeze the exact pairwise plans at the current iterate and regard the barycenter variable as the free MM block.  For fixed \(\beta\in\mathcal G_d^+\), define the lifted objective
\begin{equation}
	\mathcal J(\beta;\gamma_1,\ldots,\gamma_n)
	=\!{\textstyle\sum_i\lambda_i[\int\!\norm{x-y}^2d\gamma_i+\tau_0\KL((\gamma_i)_0\!\mid\!\alpha_i)+\tau_1\KL((\gamma_i)_1\!\mid\!\beta)]}.
\end{equation}
Then \(\Fbar(\beta)=\inf_{\gamma_1,\ldots,\gamma_n}\mathcal J(\beta;\gamma_1,\ldots,\gamma_n)\).

\begin{proposition}\label{prop:analyticmomentmap}
	Let
	\[
	\mathcal X:=\R^d\times\Spp^d,
	\qquad x=(m,\Sigma),
	\qquad \nu_x=\N(m,\Sigma).
	\]
	The update \eqref{eq:algm}--\eqref{eq:algSigma} defines a well-posed real-analytic self-map \(T:\mathcal X\to\mathcal X\).  For \(x,y\in\mathcal X\), let \(\gamma_i(x)\) be the KL-UOT optimizer between \(\alpha_i\) and \(b(x)\nu_x\), where \(b(x)=\Zfun(\nu_x)^{1/\theta}\), and define
	\begin{equation}
		\mathcal S(y\mid x)
		:=\mathcal J\bigl(b(x)\nu_y;\gamma_1(x),\ldots,\gamma_n(x)\bigr).
		\label{eq:globalMMsurrogate}
	\end{equation}
	Then \(\mathcal S\) is real analytic on \(\mathcal X\times\mathcal X\) and satisfies
	\begin{equation}
		\Phi(\nu_y)\le \mathcal S(y\mid x),
		\qquad
		\Phi(\nu_x)=\mathcal S(x\mid x),
		\label{eq:globalMMmajorization}
	\end{equation}
	while
	\begin{equation}
		T(x)=\operatorname*{argmin}_{y\in\mathcal X}\mathcal S(y\mid x)
		\label{eq:globalMMargmin}
	\end{equation}
	is the unique minimizer.
\end{proposition}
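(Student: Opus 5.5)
We split the argument into three parts: well-posedness and analyticity of \(T\), an explicit formula for the surrogate \(\mathcal S\), and the majorization/argmin claims.

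\emph{Well-posedness and analyticity of \(T\).}
By \Cref{thm:pairwiseinterface}, the maps \(x\mapsto A_i(\nu_x),v_i(x),Q_i(x)\) are real analytic on \(\mathcal X\). Hence the weights \(\omega_i(x)\) are analytic: they are ratios of positive exponentials of analytic functions. Then \(m^{+}=\sum_i\omega_iv_i\) and \(\Sigma^{+}\) are polynomial in analytic quantities, so they are analytic. Each \(Q_i\succ0\) and \(\omega_i>0\), so \(\Sigma^+\succeq\sum_i\omega_iQ_i\succ0\) (as in \Cref{lem:KLcentroid}). Thus \(T(\mathcal X)\subset\mathcal X\).

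\emph{Explicit form of \(\mathcal S\).}
By \Cref{prop:endpoint}, \(\gamma_i(x)=M_i(x)\pi_i(x)\), where \(M_i=a_i^\theta b(x)^{1-\theta}e^{-A_i(\nu_x)/s}\) and the second marginal of \(\pi_i\) is \(q_i(x)\). Freezing the plans, only the term \(\tau_1\KL(M_iq_i\mid b(x)\nu_y)\) depends on \(y\). By the scaled-KL identity \eqref{eq:scaledKL},
\[
\mathcal S(y\mid x)=c(x)+\tau_1\sum_i\lambda_iM_i(x)\KL(q_i(x)\mid\nu_y),
\]
where \(c(x)\) collects the terms independent of \(y\) (transport cost, source KL, and mass terms). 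The term \(c(x)\) is analytic in \(x\) by \Cref{thm:pairwiseinterface}. Each \(\KL(q_i(x)\mid\nu_y)\) is given by the Gaussian formula \eqref{eq:GaussianKL}, which is analytic in \((v_i,Q_i,m,\Sigma)\) on the positive-definite domain. Hence \(\mathcal S\) is real analytic on \(\mathcal X\times\mathcal X\).

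\emph{Majorization and the argmin.}
We first prove \(\Phi(\nu_y)\le\mathcal S(y\mid x)\). Since \(\Phi(\nu_y)=\inf_b\Fbar(b,\nu_y)\le\Fbar(b(x)\nu_y)\), and \(\Fbar(\beta)=\inf_\gamma\mathcal J(\beta;\gamma)\le\mathcal J(b(x)\nu_y;\gamma(x))\), the inequality follows.

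At \(y=x\) the plans \(\gamma_i(x)\) are optimal for \(b(x)\nu_x\), so \(\mathcal S(x\mid x)=\Fbar(b(x)\nu_x)=\Phi(\nu_x)\) by \Cref{thm:massprofile}.

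For the argmin, note that \(\lambda_iM_i(x)\) is proportional to \(\omega_i(x)\), with common factor \(b(x)^{1-\theta}\Zfun(\nu_x)>0\); this is \eqref{eq:massbalance} applied at any \(x\). Minimizing \(\mathcal S(\cdot\mid x)\) is therefore equivalent to minimizing \(\sum_i\omega_i\KL(q_i\mid\nu_y)\). By \Cref{lem:KLcentroid}, this has the unique minimizer given by the Gaussian centroid \eqref{eq:gausscentroid}, which is exactly \(T(x)\).

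\emph{Expected obstacle.}
The main point requiring care is bookkeeping. We must check that the frozen plan's target marginal is \(M_iq_i\) with exactly the Gibbs-proportional mass. We must also check that the reference mass \(b(x)\), held fixed in \(\mathcal S\), contributes only \(y\)-independent terms, so that the minimization over \(y\) is over shape alone.
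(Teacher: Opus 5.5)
Your proposal is correct and follows the paper's proof essentially line for line: analyticity and positivity of the moment update come from the pairwise interface, and the scaled-KL identity gives the separation \(\mathcal S(y\mid x)=C(x)+\tau_1\sum_i\lambda_iM_i(x)\KL(q_i(x)\mid\nu_y)\); majorization follows from pairwise optimality plus exact mass profiling, and the argmin from the reverse-KL centroid lemma. One small point: the proportionality \(\lambda_iM_i(x)=b(x)\,\omega_i(x)\) holds at every \(x\) directly from \(M_i=a_i^\theta b^{1-\theta}e^{-A_i/s}\) and \(b^\theta=\Zfun\) (as in \eqref{eq:massidentityk}), so you need not cite \eqref{eq:massbalance}, which the paper states only at stationary points.
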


\begin{proof}
	For each \(x\), the pairwise interface defines unique adjusted targets \(q_i(x)=\N(v_i(x),Q_i(x))\) with real-analytic dependence on \(x\); the Gibbs weights are positive and analytic.  The covariance update is a positive weighted sum of the matrices \(Q_i(x)\succ0\), plus a positive-semidefinite between-mean term, so \(T(x)\in\mathcal X\).  The explicit moment formulas then show analyticity of \(T\).
	
	For the surrogate, pairwise optimality and exact mass profiling imply the two inequalities in \eqref{eq:globalMMmajorization}, with equality on the diagonal.  Write the terminal marginal of \(\gamma_i(x)\) as \(M_i(x)q_i(x)\).  The scaled-KL identity separates the \(y\)-dependence explicitly as
	\[
	\mathcal S(y\mid x)
	=C(x)+\tau_1\sum_i\lambda_iM_i(x)\KL(q_i(x)\mid\nu_y),
	\]
	where \(C(x)\) is independent of \(y\).  This finite-dimensional representation shows directly that \(\mathcal S\) is real analytic.  The Gaussian reverse-KL centroid identity in \Cref{lem:KLcentroid} gives the moment update as the unique minimizer \eqref{eq:algm}--\eqref{eq:algSigma}.
\end{proof}

\begin{corollary}
	The moment map has at least one fixed point in \(\mathcal G_d^1\).  Every global minimizer of the Gaussian-restricted barycenter problem is such a fixed point.
\end{corollary}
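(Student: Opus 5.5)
The plan is to obtain the fixed point directly from global attainment and the stationarity characterization; no topological fixed-point argument is needed. By \Cref{thm:globalexistence}, the profiled objective \(\Phi\) attains its global minimum on \(\mathcal G_d^1\); equivalently, \(\Zfun\) attains its maximum at some \(\nu_*=\N(m_*,\Sigma_*)\). It therefore suffices to show that every global minimizer of \(\Phi\) is a fixed point of \(T\). Existence then follows by applying this to \(\nu_*\).

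\emph{Step 1: a global minimizer is stationary.} Let \(\nu_*\) be a global minimizer. Since \(\mathcal X=\R^d\times\Spp^d\) is open, \(\nu_*\) is an interior point of the parameter domain. By \Cref{thm:pairwiseinterface}, each \(A_i\) is real analytic, so \(\Zfun\) and \(\Phi=\tau_0(\bar a-\Zfun^{1/\theta})\) are differentiable. Hence \(\nabla_{\rm par}\Phi(\nu_*)=0\), and \(\nu_*\) is a stationary point of \(\Phi\).

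\emph{Step 2: stationarity is the fixed-point equation.} By \Cref{thm:fixedpoint}, stationarity is equivalent to the moment equations \eqref{eq:fixedm}--\eqref{eq:fixedSigma} with the Gibbs weights \(\omega_i(\nu_*)\) and the adjusted targets \(q_i(\nu_*)\). These are exactly the update formulas \eqref{eq:algm}--\eqref{eq:algSigma} evaluated at \(\nu^k=\nu_*\). The weights \eqref{eq:algorithmweights} coincide with \eqref{eq:gibbsweights}, and the means \(v_i^k\) and covariances \(Q_i^k\) are those of the same pairwise optimizers. The only point to check is that the centered term in \eqref{eq:algSigma} uses \(m^{k+1}\) while \eqref{eq:fixedSigma} uses \(m\); these agree because \eqref{eq:fixedm} gives \(m_*=\sum_i\omega_iv_i=m^{k+1}\). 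Thus \(T(x_*)=x_*\).

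\emph{Alternative route via the majorization.} One can also argue with \Cref{prop:analyticmomentmap}: \(\Phi(\nu_{T(x_*)})\le\mathcal S(T(x_*)\mid x_*)\le\mathcal S(x_*\mid x_*)=\Phi(\nu_{x_*})\). Because \(x_*\) is a global minimizer, every inequality in this chain is an equality. In particular \(\mathcal S(T(x_*)\mid x_*)=\mathcal S(x_*\mid x_*)\), so \(x_*\) also minimizes \(\mathcal S(\cdot\mid x_*)\). Uniqueness of that minimizer in \eqref{eq:globalMMargmin} then forces \(T(x_*)=x_*\).

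I would present this second argument as the main proof, since it avoids differentiating and uses only the unique-argmin property. No real obstacle is expected. The only delicate point is that the equality in the surrogate must come from the exact diagonal identity \(\Phi(\nu_x)=\mathcal S(x\mid x)\), which \Cref{prop:analyticmomentmap} already provides.
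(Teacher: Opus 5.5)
Your proposal is correct. Steps 1--2 are exactly the paper's proof: attainment from \Cref{thm:globalexistence}, a vanishing gradient at an interior minimizer of the smooth \(\Phi\), and the equivalence of stationarity with the moment equations in \Cref{thm:fixedpoint}. Your observation that \(m^{k+1}=m_*\) reconciles the centering in \eqref{eq:algSigma} with that in \eqref{eq:fixedSigma} is the only bookkeeping the paper leaves implicit.

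The majorization argument you would make primary is a genuinely different route. It is the equality case of the MM sandwich \(\Phi(\nu_{T(x)})\le\mathcal S(T(x)\mid x)\le\mathcal S(x\mid x)=\Phi(\nu_x)\). It needs only \eqref{eq:globalMMmajorization} and the uniqueness in \eqref{eq:globalMMargmin}. It therefore bypasses the envelope gradients of \Cref{lem:envelope} and the stationarity characterization altogether. It is the qualitative form of the mechanism behind \Cref{thm:descent}, which would even give \(\KL(\nu_{T(x_*)}\mid\nu_*)\le0\).

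What this route buys is a differentiation-free argument. What it loses is scope. Because \(T(x_*)\) need not lie near \(x_*\), the inequality \(\Phi(\nu_{T(x_*)})\ge\Phi(\nu_{x_*})\) is available only from global minimality. The paper's route instead shows that every critical point, in particular every local minimizer, is a fixed point.

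Both routes rely on \Cref{thm:globalexistence} for the existence half, and either suffices for the corollary as stated.
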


\begin{proof}
	By \Cref{thm:globalexistence}, the smooth profiled functional \(\Phi\) has a global minimizer in the open manifold \(\mathcal G_d^1\).  Every such minimizer is stationary, and \Cref{thm:fixedpoint} shows that stationary points are exactly the fixed points of the moment map.
\end{proof}

The surrogate also implies a quantitative Lyapunov decrease.

\begin{theorem}[MM descent]\label{thm:descent}
	Let \(\nu^{k+1}\) be obtained from \(\nu^k\) by \eqref{eq:algorithmweights}--\eqref{eq:algSigma}, and let \(b^k=b(\nu^k)\).  Then
	\begin{equation}
		\Phi(\nu^k)-\Phi(\nu^{k+1})
		\geq
		\tau_1 b^k\KL(\nu^{k+1}\mid\nu^k)
		\geq0.
		\label{eq:descent}
	\end{equation}
	The descent is strict whenever \(\nu^{k+1}\neq\nu^k\); equivalently,
	\[
	\Phi(\nu^{k+1})=\Phi(\nu^k)
	\quad\Longleftrightarrow\quad
	\nu^{k+1}=\nu^k.
	\]
\end{theorem}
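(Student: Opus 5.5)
The plan is to chain the majorization inequalities of \Cref{prop:analyticmomentmap} with the reverse-KL Pythagorean identity of \Cref{lem:KLcentroid}. Write \(x=(m^k,\Sigma^k)\) and \(y=(m^{k+1},\Sigma^{k+1})=T(x)\). By \eqref{eq:globalMMmajorization},
\[
\Phi(\nu^{k+1})\le\mathcal S(y\mid x),\qquad \Phi(\nu^k)=\mathcal S(x\mid x).
\]
Hence \(\Phi(\nu^k)-\Phi(\nu^{k+1})\ge\mathcal S(x\mid x)-\mathcal S(y\mid x)\). It then suffices to compute this surrogate gap exactly.

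Next I use the separated form of the surrogate from the proof of \Cref{prop:analyticmomentmap},
\[
\mathcal S(y'\mid x)=C(x)+\tau_1\sum_i\lambda_iM_i(x)\KL(q_i(x)\mid\nu_{y'}).
\]
Here \(M_i(x)\) is the transported mass of the pairwise plan between \(\alpha_i\) and \(b^k\nu^k\). By \Cref{prop:endpoint}, \(M_i(x)=a_i^\theta(b^k)^{1-\theta}e^{-A_i^k/s}\), so \(\lambda_iM_i(x)=(b^k)^{1-\theta}Z^k\,\omega_i^k=b^k\omega_i^k\), using \(Z^k=(b^k)^\theta\). This is the mass balance \eqref{eq:massbalance}, which holds for every \(\nu\) and not only at fixed points, since only the profile \(b=\Zfun^{1/\theta}\) is used. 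The surrogate is therefore
\[
C(x)+\tau_1b^k\sum_i\omega_i^k\KL(q_i^k\mid\nu_{y'}).
\]

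I then apply \eqref{eq:KLPythagorean} with weights \(\omega_i^k\) and \(q_i=q_i^k\). By \eqref{eq:algm}--\eqref{eq:algSigma}, the centroid \(\bar q\) is exactly \(\nu^{k+1}\), so
\[
\sum_i\omega_i^k\KL(q_i^k\mid\nu)=\sum_i\omega_i^k\KL(q_i^k\mid\nu^{k+1})+\KL(\nu^{k+1}\mid\nu)
\]
for every \(\nu\). Taking \(\nu=\nu^k\) gives
\[
\mathcal S(x\mid x)-\mathcal S(y\mid x)=\tau_1b^k\KL(\nu^{k+1}\mid\nu^k).
\]
This proves \eqref{eq:descent}, and nonnegativity follows from \(\KL\ge0\) and \(b^k>0\).

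For the equivalence: if \(\nu^{k+1}\neq\nu^k\), then \(\KL(\nu^{k+1}\mid\nu^k)>0\) because KL between distinct Gaussians is positive, and \(b^k>0\), so the descent is strict. Conversely, \(\nu^{k+1}=\nu^k\) trivially gives equal values. The only delicate step is verifying that the \(y\)-dependent part of \(\mathcal S\) has coefficient exactly \(\tau_1b^k\omega_i^k\). This requires that the target marginal of \(\gamma_i(x)\) be \(M_i(x)q_i^k\) with \(M_i\) as in \Cref{prop:endpoint}, and that the scaled-KL identity \eqref{eq:scaledKL} isolate \(\tau_1M_i\KL(q_i^k\mid\nu_{y'})\). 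The remaining terms, \(\tau_1(M_i\log(M_i/b^k)-M_i+b^k)\), the transport cost and the source KL, do not depend on \(y'\) because the mass \(b(x)\) is frozen in the surrogate.
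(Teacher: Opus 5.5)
Your proposal is correct and is essentially the paper's proof. The surrogate $\mathcal S(y\mid x)=\mathcal J\bigl(b(x)\nu_y;\gamma_1(x),\ldots,\gamma_n(x)\bigr)$ that you take from \Cref{prop:analyticmomentmap} is exactly the frozen-plan, frozen-mass objective the paper evaluates at $\widetilde\beta^{k+1}=b^k\nu^{k+1}$, and both arguments then use $\lambda_iM_i^k=b^k\omega_i^k$ and the centroid identity \eqref{eq:KLPythagorean} to evaluate the gap as $\tau_1b^k\KL(\nu^{k+1}\mid\nu^k)$. The only difference is packaging: you cite the majorization property, whereas the paper writes out the chain $\Phi(\nu^{k+1})\le\Fbar(\widetilde\beta^{k+1})\le\mathcal J(\widetilde\beta^{k+1};\gamma_1^k,\ldots,\gamma_n^k)$ explicitly.
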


\begin{proof}
	Let \(\beta^k=b^k\nu^k\), and let \(\gamma_i^k=M_i^k\Pi_i^k\) be the exact pairwise KL-UOT optimizer between \(\alpha_i\) and \(\beta^k\).  The transported-mass formula \eqref{eq:Mstar} gives \(M_i^k=(b^k)^{1-\theta}z_i^k\).
	Using \(b^k=(Z^k)^{1/\theta}\), these masses satisfy the normalization identities
	\begin{equation}
		\sum_i\lambda_iM_i^k
		=(b^k)^{1-\theta}Z^k=b^k,
		\qquad
		\frac{\lambda_iM_i^k}{b^k}=\omega_i^k.
		\label{eq:massidentityk}
	\end{equation}
	The terminal marginal of \(\gamma_i^k\) is \(M_i^kq_i^k\).
	
	Fix the mass at \(b^k\) and set \(\widetilde\beta^{k+1}=b^k\nu^{k+1}\).  With the plans \(\gamma_i^k\) fixed, replacing \(\beta^k\) by \(\widetilde\beta^{k+1}\) changes only the terminal KL terms.  Using \eqref{eq:scaledKL}, the mass-dependent terms cancel and
	\[
		\mathcal J(\beta^k;\gamma_1^k,\ldots,\gamma_n^k)-\mathcal J(\widetilde\beta^{k+1};\gamma_1^k,\ldots,\gamma_n^k)
		=\tau_1\sum_i\lambda_iM_i^k\bigl[\KL(q_i^k\mid\nu^k)-\KL(q_i^k\mid\nu^{k+1})\bigr].
	\]
	The normalized masses in \eqref{eq:massidentityk} are precisely \(\omega_i^k\).  The centroid identity in \Cref{lem:KLcentroid} then turns the last display into \(\tau_1b^k\KL(\nu^{k+1}\mid\nu^k)\).
	At the current iterate, pairwise optimality makes the lifted and profiled objectives coincide:
	\[
	\Fbar(\beta^k)
	=\mathcal J(\beta^k;\gamma_1^k,\ldots,\gamma_n^k).
	\]
	Reusing the same plans at the updated shape bounds the new objective from above:
	\[
	\Fbar(\widetilde\beta^{k+1})
	\leq
	\mathcal J(\widetilde\beta^{k+1};\gamma_1^k,\ldots,\gamma_n^k).
	\]
	Reprofiling the mass at the updated shape does not increase the objective:
	\[
	\Phi(\nu^{k+1})
	=\inf_{b>0}\Fbar(b,\nu^{k+1})
	\leq
	\Fbar(\widetilde\beta^{k+1}).
	\]
	Combining these inequalities proves \eqref{eq:descent}.  Strictness follows from \(\KL(\nu^{k+1}\mid\nu^k)>0\) whenever \(\nu^{k+1}\ne\nu^k\).
\end{proof}

\begin{corollary}\label{cor:globaliteration}
	For every initialization \(\nu^0\in\mathcal G_d^1\), the profiled reverse-KL MM iteration satisfies:
	\begin{enumerate}[label=(\alph*)]
		\item \(\Phi(\nu^k)\) is nonincreasing and the profiled masses \(b^k=b(\nu^k)\) are nondecreasing.  Moreover,
		\begin{equation}
			b^0\le b^k\uparrow b_\infty
			\le
			\left(\sum_{i=1}^n\lambda_i a_i^\theta\right)^{1/\theta};
		\end{equation}
		\item the KL steps are summable:
		\begin{equation}
			\sum_{k=0}^{\infty}\KL(\nu^{k+1}\mid\nu^k)
			\le
			\frac{\tau_0}{\tau_1 b^0}(b_\infty-b^0)<\infty,
			\label{eq:KLsummable}
		\end{equation}
		and hence \(\KL(\nu^{k+1}\mid\nu^k)\to0\);
		\item there exist constants \(R<\infty\) and \(\Lambda<\infty\), depending only on the data, penalties, and initialization, such that
		\begin{equation}
			\norm{m^k}\le R,
			\qquad
			\Sigma^k\preceq\Lambda I
			\qquad(k\ge0).
			\label{eq:trajectoryuppercompact}
		\end{equation}
	\end{enumerate}
\end{corollary}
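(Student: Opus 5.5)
Part (a) follows from the MM descent estimate of \Cref{thm:descent} combined with the exact mass profile. Since \(\Phi(\nu)=\tau_0(\bar a-b(\nu))\) with \(b(\nu)=\Zfun(\nu)^{1/\theta}\) by \eqref{eq:profiledF}, and \eqref{eq:descent} gives \(\Phi(\nu^{k+1})\le\Phi(\nu^k)\), the masses \(b^k\) are nondecreasing. By \Cref{cor:massbound}, every \(b(\nu)\) is bounded by \(\bigl(\sum_i\lambda_ia_i^\theta\bigr)^{1/\theta}\). Monotone convergence then yields a limit \(b_\infty\) with \(b^0\le b^k\uparrow b_\infty\) below that bound.

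For (b), rewrite \eqref{eq:descent} in terms of masses:
\[
\tau_0(b^{k+1}-b^k)\ge \tau_1 b^k\KL(\nu^{k+1}\mid\nu^k)\ge\tau_1 b^0\KL(\nu^{k+1}\mid\nu^k),
\]
using \(b^k\ge b^0>0\). Summing over \(k\) telescopes the left side to at most \(\tau_0(b_\infty-b^0)\), which is \eqref{eq:KLsummable}. Summability forces \(\KL(\nu^{k+1}\mid\nu^k)\to0\).

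For (c), we pass from a lower bound on the Gibbs factor to an endpoint sublevel, as in Step~1 of \Cref{thm:globalexistence}. For every \(k\), \(\Zfun(\nu^k)=(b^k)^\theta\ge(b^0)^\theta=\Zfun(\nu^0)=:Z_{\rm init}>0\). Hence some index \(i(k)\) satisfies
\[
\lambda_{i(k)}a_{i(k)}^\theta e^{-A_{i(k)}(\nu^k)/s}\ge Z_{\rm init}/n,
\]
that is,
\[
A_{i(k)}(\nu^k)\le C_i:=s\log\bigl(n\lambda_ia_i^\theta/Z_{\rm init}\bigr),
\]
a finite constant depending only on the data, the penalties and \(\nu^0\). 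Applying \Cref{lem:uppercompact} to each of the finitely many pairs \((\mu_i,C_i)\) and taking the largest constants gives \(R,\Lambda\) with \(\norm{m^k}\le R\) and \(\Sigma^k\preceq\Lambda I\) for all \(k\ge0\), which is \eqref{eq:trajectoryuppercompact}.

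No step is a real obstacle here. The only point needing care is that the bound in (c) uses the initial value \(Z_{\rm init}\) rather than \(Z^*/2\), so that it holds for every \(k\), including \(k=0\). The constants are therefore explicitly initialization-dependent, as the statement allows. Lower spectral control, which is not claimed here, would need the separate noncollapse argument mentioned in the text.
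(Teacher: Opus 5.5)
Your proposal is correct and follows the paper's proof essentially step for step. Parts (a)--(b) come from rewriting the descent bound \eqref{eq:descent} through \(\Phi=\tau_0(\bar a-b)\), using \Cref{cor:massbound} for the upper bound and telescoping with \(b^k\ge b^0>0\), while (c) uses \(\Zfun(\nu^k)\ge\Zfun(\nu^0)\) to select an index with a bounded endpoint value and then applies \Cref{lem:uppercompact} over the finitely many inputs.
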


\begin{proof}
	By \eqref{eq:profiledF},
	\[
	\Phi(\nu^k)-\Phi(\nu^{k+1})
	=\tau_0(b^{k+1}-b^k).
	\]
	Insert the descent bound of \Cref{thm:descent} into this identity:
	\begin{equation}
		\tau_0(b^{k+1}-b^k)
		\ge
		\tau_1b^k\KL(\nu^{k+1}\mid\nu^k)\ge0.
		\label{eq:massKLstep}
	\end{equation}
	The nonnegative right-hand side makes \(b^k\) nondecreasing, and \Cref{cor:massbound} supplies a finite upper bound; hence \(b^k\to b_\infty\).  Since \(b^k\ge b^0>0\), summing \eqref{eq:massKLstep} establishes the finite KL budget in \eqref{eq:KLsummable}.
	
	To obtain \eqref{eq:trajectoryuppercompact}, set \(Z_{\rm init}:=\Zfun(\nu^0)=(b^0)^\theta>0\).
	Since \(b^k\) is nondecreasing, \(\Zfun(\nu^k)=(b^k)^\theta\ge Z_{\rm init}\) for every \(k\).  One term in this finite sum must satisfy
	\[
	\lambda_{i(k)}a_{i(k)}^\theta e^{-A_{i(k)}(\nu^k)/s}
	\ge \frac{Z_{\rm init}}{n}.
	\]
	Whenever the selected index equals a fixed \(i\), that term obeys
	\[
	A_i(\nu^k)
	\le
	s\log\!\left(\frac{n\lambda_i a_i^\theta}{Z_{\rm init}}\right).
	\]
	Only finitely many indices can occur.  Taking the largest constants from \Cref{lem:uppercompact} over those endpoint sublevels bounds the whole trajectory by common \(R,\Lambda\), proving \eqref{eq:trajectoryuppercompact}.
\end{proof}

The following pairwise covariance inequality supplies the lower spectral floor not provided by these upper-sublevel bounds.

\begin{lemma}[Spectral transfer]\label{lem:spectraltransfer}
	Consider a pairwise Gaussian endpoint with target \(\nu=\N(m,\Sigma)\in\mathcal G_d^1\), adjusted source covariance \(P\), and adjusted target covariance \(Q\).  Suppose that
	\begin{equation}
		P\succeq \underline p I
		\qquad\text{for some }\underline p>0.
		\label{eq:spectraltransferPlower}
	\end{equation}
	Set
	\[
	t=\lambda_{\min}(\Sigma),
	\qquad
	q=\lambda_{\min}(Q),
	\qquad
	\eta_1=\frac{\tau_1}{2}.
	\]
	Then
	\begin{equation}
		q\ge \psi_{\underline p,\eta_1}(t)
		:=
		\left[
		\frac{
			\sqrt{\underline p+4\eta_1(1+\eta_1/t)}-\sqrt{\underline p}
		}{2\eta_1}
		\right]^{-2}.
		\label{eq:spectraltransfer}
	\end{equation}
	Moreover,
	\begin{equation}
		\psi_{\underline p,\eta_1}(t)
		\ge \min\{t,\underline p\}
		\qquad(t>0),
		\label{eq:psibarrier}
	\end{equation}
	with strict inequality relative to the smaller quantity whenever \(t\ne\underline p\).  In particular, if \(t<\underline p\), then \(\lambda_{\min}(Q)>t\).  If, in addition, the endpoint satisfies \(\Ashape(\mu,\nu)\le C\) for a fixed source \(\mu\), then \Cref{lem:KLsublevel} supplies a choice of \(\underline p\) depending only on \((\mu,C,\tau_0)\).
\end{lemma}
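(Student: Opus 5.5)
The plan is to work with the covariance first-order condition for the adjusted target marginal, already used in the proof of \Cref{lem:boundaryresponse}:
\[
I-L^{-1}+\eta_1\bigl(\Sigma^{-1}-Q^{-1}\bigr)=0,
\]
where \(L\) is the positive-definite Wasserstein map from \(P\) to \(Q\), so \(Q=LPL\).  This condition is legitimate because, by \Cref{thm:pairwiseinterface}, the optimizer \((p,q)\) is an interior critical point of the smooth Gaussian endpoint objective.  Unlike \Cref{lem:boundaryresponse}, I will test the condition along a unit eigenvector \(e\) of \(Q\) with \(Qe=qe\), \(q=\lambda_{\min}(Q)\), rather than an eigenvector of \(\Sigma\).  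This gives the exact identity
\[
0=1-e^\top L^{-1}e+\eta_1\bigl(e^\top\Sigma^{-1}e-q^{-1}\bigr).
\]
Two of its terms are controlled at once: \(e^\top\Sigma^{-1}e\le 1/t\) because \(\Sigma\succeq tI\), and \(e^\top Q^{-1}e=1/q\) exactly.

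The key step, and the one I expect to be the main obstacle, is a lower bound on \(e^\top L^{-1}e\) in terms of \(q\) alone.  From \eqref{eq:spectraltransferPlower}, \(Q=LPL\succeq \underline p\,L^2\), so \(L^2\preceq Q/\underline p\).  Operator monotonicity of the square root then gives \(L\preceq \underline p^{-1/2}Q^{1/2}\), and operator monotonicity of inversion gives \(L^{-1}\succeq \underline p^{1/2}Q^{-1/2}\).  Since \(e\) is an eigenvector of \(Q\), this yields \(e^\top L^{-1}e\ge\sqrt{\underline p/q}\).  Choosing \(e\) as an eigenvector of \(Q\) is exactly what makes this bound available, since \(Q^{-1/2}\) then acts as a scalar on \(e\).

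Inserting these bounds into the identity gives
\[
0\le 1-\sqrt{\underline p}\,x+\eta_1\bigl(t^{-1}-x^2\bigr),\qquad x:=q^{-1/2}.
\]
Equivalently, \(f(x):=\eta_1x^2+\sqrt{\underline p}\,x-(1+\eta_1/t)\le0\).  Since \(f\) is increasing on \((0,\infty)\), \(x\) is at most the positive root \(x_*=\bigl[\sqrt{\underline p+4\eta_1(1+\eta_1/t)}-\sqrt{\underline p}\bigr]/(2\eta_1)\).  Inverting, \(q\ge x_*^{-2}=\psi_{\underline p,\eta_1}(t)\), which is \eqref{eq:spectraltransfer}.

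For the barrier \eqref{eq:psibarrier}, it suffices to show \(x_*\le y:=\max\{t^{-1/2},\underline p^{-1/2}\}\), that is, \(f(y)\ge0\).
\begin{itemize}
\item If \(t\le\underline p\), then \(y=t^{-1/2}\) and \(f(y)=\sqrt{\underline p/t}-1\ge0\).
\item If \(t\ge\underline p\), then \(y=\underline p^{-1/2}\) and \(f(y)=\eta_1(1/\underline p-1/t)\ge0\).
\end{itemize}
In each case the inequality is strict when \(t\ne\underline p\), and strict monotonicity of \(f\) then gives the strict version.  In particular, if \(t<\underline p\), then \(\lambda_{\min}(Q)\ge\psi>t\).

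Finally, if \(\Ashape(\mu,\nu)\le C\), nonnegativity of the three terms gives \(\KL(p\mid\mu)\le C/\tau_0\).  \Cref{lem:KLsublevel}, applied with the single fixed reference \(\mu\), then yields \(P\succeq\underline p I\) with \(\underline p\) depending only on \((\mu,C,\tau_0)\).
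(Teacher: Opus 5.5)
Your proposal is correct and is essentially the paper's proof: it uses the same target first-order condition, the same operator bound \(L^{-1}\succeq\sqrt{\underline p}\,Q^{-1/2}\) (you reach it from \(Q\succeq\underline p\,L^2\) via square root and inversion, the paper from \(Q^{-1}\preceq\underline p^{-1}L^{-2}\) via square root), the same scalar quadratic in \(x=q^{-1/2}\), and the same evaluations of \(f\) at \(t^{-1/2}\) and \(\underline p^{-1/2}\) for the barrier. Testing the identity along a bottom eigenvector of \(Q\) is just an explicit version of the paper's step of comparing largest eigenvalues in \(I+\eta_1\Sigma^{-1}\succeq\sqrt{\underline p}\,Q^{-1/2}+\eta_1Q^{-1}\).
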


\begin{proof}
	Let \(L\succ0\) be the Wasserstein map from \(P\) to \(Q\), so \(Q=LPL\).  The lower bound \eqref{eq:spectraltransferPlower} implies
	\[
	Q^{-1}=L^{-1}P^{-1}L^{-1}
	\preceq \underline p^{-1}L^{-2}.
	\]
	Operator monotonicity of the principal square root transforms this into
	\begin{equation}
		\sqrt{\underline p}\,Q^{-1/2}\preceq L^{-1}.
		\label{eq:LinvQinvhalf}
	\end{equation}
	The adjusted-target covariance first-order condition \eqref{eq:Qfirstorder} can be written as
	\[
	I+\eta_1\Sigma^{-1}
	=L^{-1}+\eta_1Q^{-1}.
	\]
	Combining this identity with \eqref{eq:LinvQinvhalf} and taking largest eigenvalues reduces the estimate to
	\begin{equation}
		1+\frac{\eta_1}{t}
		\ge
		\frac{\sqrt{\underline p}}{\sqrt q}
		+\frac{\eta_1}{q}.
	\end{equation}
	Writing \(x=q^{-1/2}\), the left side of
	\[
	\eta_1x^2+\sqrt{\underline p}\,x
	\le 1+\frac{\eta_1}{t}
	\]
	is strictly increasing for \(x>0\).  Solving the corresponding quadratic equality explicitly recovers \eqref{eq:spectraltransfer}.  To compare the barrier with \(t\) and \(\underline p\), evaluate
	\[
	f(x)=\eta_1x^2+\sqrt{\underline p}\,x
	\]
	at \(x=t^{-1/2}\) and \(x=\underline p^{-1/2}\).  If \(t<\underline p\), then
	\[
	f(t^{-1/2})
	=\frac{\eta_1}{t}+\sqrt{\frac{\underline p}{t}}
	>1+\frac{\eta_1}{t},
	\]
	so the positive root in \eqref{eq:spectraltransfer} is smaller than \(t^{-1/2}\), hence \(\psi_{\underline p,\eta_1}(t)>t\).  If \(t>\underline p\), then
	\[
	f(\underline p^{-1/2})
	=1+\frac{\eta_1}{\underline p}
	>1+\frac{\eta_1}{t},
	\]
	so \(\psi_{\underline p,\eta_1}(t)>\underline p\).  Equality occurs at \(t=\underline p\), proving \eqref{eq:psibarrier}.
	
	If \(\Ashape(\mu,\nu)\le C\), nonnegativity of the endpoint objective bounds \(\KL(p\mid\mu)\) by \(C/\tau_0\).  Applying \Cref{lem:KLsublevel} to the fixed reference Gaussian \(\mu\) establishes \eqref{eq:spectraltransferPlower} with a constant depending only on \((\mu,C,\tau_0)\).
\end{proof}

\begin{corollary}[Covariance noncollapse]\label{cor:trajectorynoncollapse}
	For every initialization \(\nu^0=\N(m^0,\Sigma^0)\in\mathcal G_d^1\), there exists \(\underline\sigma_{\rm orb}>0\), depending on the data, penalties, and initialization, such that
	\begin{equation}
		\Sigma^k\succeq \underline\sigma_{\rm orb}I
		\qquad(k\ge0).
		\label{eq:trajectorynoncollapse}
	\end{equation}
	More precisely, one may choose a constant \(\underline p_{\rm orb}>0\) for which
	\begin{equation}
		\lambda_{\min}(\Sigma^{k+1})
		\ge
		\min\{\lambda_{\min}(\Sigma^k),\underline p_{\rm orb}\},
		\label{eq:spectralrecurrence}
	\end{equation}
	and therefore
	\begin{equation}
		\underline\sigma_{\rm orb}
		=\min\{\lambda_{\min}(\Sigma^0),\underline p_{\rm orb}\}>0.
		\label{eq:orbitalfloor}
	\end{equation}
\end{corollary}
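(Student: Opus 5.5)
The plan is to combine the uniform trajectory bounds of \Cref{cor:globaliteration} with \Cref{lem:spectraltransfer}, applied pairwise, and then pass the resulting eigenvalue bound through the centroid update \eqref{eq:algSigma}.

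\emph{Step 1: a uniform source floor.} By \Cref{cor:globaliteration}, \(\Zfun(\nu^k)\ge Z_{\rm init}:=\Zfun(\nu^0)\) for every \(k\). This gives a selected index with \(A_{i(k)}(\nu^k)\le C_{i(k)}\), but that is not enough, because every index carrying positive weight \(\omega_i^k\) enters the update. I will therefore bound every endpoint value directly. By \eqref{eq:trajectoryuppercompact}, all iterates lie in the box \(\norm{m}\le R\), \(\Sigma\preceq\Lambda I\). On this box,
\[
A_i(\nu^k)\le W_2^2(\mu_i,\nu^k)\le 2\norm{m_i}^2+2R^2+\tr\Sigma_i+d\Lambda=:C_i.
\]
This is the competitor bound already used in \Cref{lem:profiledinward}, and it needs no lower spectral bound. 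Consequently \(\KL(p_i^k\mid\mu_i)\le C_i/\tau_0\), and \Cref{lem:KLsublevel} gives \(P_i^k\succeq\underline p_iI\) with \(\underline p_i\) depending only on \((\mu_i,C_i,\tau_0)\). Set \(\underline p_{\rm orb}:=\min_i\underline p_i>0\). This constant depends on the data, the penalties and (through \(R,\Lambda\)) the initialization.

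\emph{Step 2: transfer to each adjusted target.} Apply \Cref{lem:spectraltransfer} to each pair \((\mu_i,\nu^k)\) with \(\underline p=\underline p_{\rm orb}\), which is admissible since \(P_i^k\succeq\underline p_iI\succeq\underline p_{\rm orb}I\). With \(t_k=\lambda_{\min}(\Sigma^k)\), the barrier inequality \eqref{eq:psibarrier} gives
\[
\lambda_{\min}(Q_i^k)\ge\psi_{\underline p_{\rm orb},\eta_1}(t_k)\ge\min\{t_k,\underline p_{\rm orb}\}.
\]
This holds uniformly in \(i\).

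\emph{Step 3: pass through the centroid.} In \eqref{eq:algSigma}, \(\Sigma^{k+1}\succeq\sum_i\omega_i^kQ_i^k\), because the between-mean terms are positive semidefinite. The weights \(\omega_i^k\) are positive and sum to one, so
\[
\lambda_{\min}(\Sigma^{k+1})\ge\min_i\lambda_{\min}(Q_i^k)\ge\min\{t_k,\underline p_{\rm orb}\}.
\]
This is \eqref{eq:spectralrecurrence}. A simple induction then gives \(\lambda_{\min}(\Sigma^k)\ge\min\{\lambda_{\min}(\Sigma^0),\underline p_{\rm orb}\}\) for all \(k\), since \(\min\{\min\{a,p\},p\}=\min\{a,p\}\). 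This yields \eqref{eq:trajectorynoncollapse} and \eqref{eq:orbitalfloor}.

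The main obstacle is Step 1. The floor \(\underline p\) must hold for \emph{every} input at every iterate, not only for the one index that dominates \(\Zfun\). The mass-monotonicity argument controls only that selected index, which is why I rely instead on the crude Wasserstein competitor bound on the uniform box from \Cref{cor:globaliteration}(c). The remaining steps are direct applications of \Cref{lem:spectraltransfer} and Weyl-type monotonicity for positive combinations of matrices.
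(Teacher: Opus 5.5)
Your proposal is correct and follows the paper's proof essentially step for step. The paper also uses the Wasserstein competitor bound on the uniform box from \Cref{cor:globaliteration} to obtain a source floor for every input via \Cref{lem:KLsublevel}, transfers it to each \(Q_i^k\) through \Cref{lem:spectraltransfer}, and then uses the positive-semidefinite between-mean term, concavity of \(\lambda_{\min}\), and induction.
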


\begin{proof}
	By \Cref{cor:globaliteration}, the orbit satisfies common bounds \(\norm{m^k}\le R\) and \(\Sigma^k\preceq\Lambda I\).  For each fixed input, testing with \(p=\mu_i\), \(q=\nu^k\) shows
	\[
	A_i(\nu^k)\le W_2^2(\mu_i,\nu^k)\le C_i
	\]
	uniformly in \(k\).  Thus the adjusted source covariance \(P_i^k\) belongs to a fixed KL sublevel relative to \(\mu_i\), and there is \(\underline p_i>0\) such that
	\[
	P_i^k\succeq\underline p_i I
	\qquad(k\ge0).
	\]
	Set \(\underline p_{\rm orb}=\min_i\underline p_i\) and \(t_k=\lambda_{\min}(\Sigma^k)\).  Applying \Cref{lem:spectraltransfer} to every adjusted target covariance bounds its smallest eigenvalue by
	\[
	\lambda_{\min}(Q_i^k)
	\ge \min\{t_k,\underline p_i\}
	\ge \min\{t_k,\underline p_{\rm orb}\}.
	\]
	Since the between-mean term in \eqref{eq:algSigma} is positive semidefinite,
	\[
		\lambda_{\min}(\Sigma^{k+1})\ge
		\lambda_{\min}\!\left(\sum_i\omega_i^kQ_i^k\right)\ge
		\sum_i\omega_i^k\lambda_{\min}(Q_i^k)\ge
		\min\{t_k,\underline p_{\rm orb}\},
	\]
	which is \eqref{eq:spectralrecurrence}.  Induction proves \eqref{eq:orbitalfloor}.  If \(t_k<\underline p_{\rm orb}\), then \Cref{lem:spectraltransfer} implies \(\lambda_{\min}(Q_i^k)>t_k\) for every \(i\); thus the boundary is strictly repelling in the minimum-eigenvalue sense.
\end{proof}

With the orbit confined to a two-sided spectral box, Gaussian KL controls displacement in the ambient parameter metric.

\begin{lemma}\label{lem:KLseparation}
	Fix \(0<\underline\sigma\le\overline\sigma<\infty\).  If
	\[
	\nu=\N(m,\Sigma),\qquad \nu'=\N(m',\Sigma'),
	\qquad
	\underline\sigma I\preceq\Sigma,\Sigma'\preceq\overline\sigma I,
	\]
	with arbitrary means \(m,m'\in\R^d\), then there exists
	\(c_{\underline\sigma,\overline\sigma}>0\) such that
	\begin{equation}
		\KL(\nu'\mid\nu)
		\ge c_{\underline\sigma,\overline\sigma}\,
		d_{\rm par}(\nu,\nu')^2.
		\label{eq:KLquadraticcontrol}
	\end{equation}
	Consequently, for every \(\varepsilon>0\), the infimum of
	\(\KL(\nu'\mid\nu)\) over pairs satisfying the same spectral bounds and
	\(d_{\rm par}(\nu,\nu')\ge\varepsilon\) is positive.  In particular, for
	sequences with common two-sided covariance bounds,
	\(\KL(\nu_k'\mid\nu_k)\to0\) implies
	\(d_{\rm par}(\nu_k,\nu_k')\to0\).
\end{lemma}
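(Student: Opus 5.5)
We split the Gaussian relative entropy \eqref{eq:GaussianKL} into its mean and covariance parts and bound each from below separately. With \(B=\Sigma^{-1/2}\Sigma'\Sigma^{-1/2}\),
\[
2\KL(\nu'\mid\nu)=\norm{\Sigma^{-1/2}(m'-m)}^2+\sum_{j=1}^d f(\lambda_j(B)),\qquad f(x)=x-\log x-1.
\]
\emph{Mean term.} Since \(\Sigma\preceq\overline\sigma I\), we have \(\Sigma^{-1}\succeq\overline\sigma^{-1}I\), so the mean term is at least \(\overline\sigma^{-1}\norm{m'-m}^2\). No bound on the means themselves is used, which is why \(m,m'\) may be arbitrary.

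\emph{Covariance term.} The two-sided spectral bounds confine every eigenvalue of \(B\) to the compact interval \([\underline\sigma/\overline\sigma,\overline\sigma/\underline\sigma]\subset(0,\infty)\). On this interval the function \(g(x)=f(x)/(x-1)^2\), extended by \(g(1)=1/2\), is continuous and positive. Hence there is \(c_1>0\) with \(f(x)\ge c_1(x-1)^2\) there. Summing over the eigenvalues gives
\[
\sum_j f(\lambda_j(B))\ge c_1\norm{B-I}_F^2.
\]
Next we write \(\Sigma'-\Sigma=\Sigma^{1/2}(B-I)\Sigma^{1/2}\). This gives
\[
\norm{\Sigma'-\Sigma}_F\le\norm{\Sigma^{1/2}}_2^2\norm{B-I}_F\le\overline\sigma\norm{B-I}_F,
\]
so that \(\norm{B-I}_F^2\ge\overline\sigma^{-2}\norm{\Sigma'-\Sigma}_F^2\).

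\emph{Conclusion.} Combining the two bounds,
\[
\KL(\nu'\mid\nu)\ge\tfrac12\min\{\overline\sigma^{-1},c_1\overline\sigma^{-2}\}\,d_{\rm par}(\nu,\nu')^2,
\]
which is \eqref{eq:KLquadraticcontrol}. The two consequences follow at once. Pairs with \(d_{\rm par}\ge\varepsilon\) satisfy \(\KL\ge c_{\underline\sigma,\overline\sigma}\varepsilon^2>0\). For sequences with common two-sided covariance bounds, \(d_{\rm par}(\nu_k,\nu_k')^2\le\KL(\nu_k'\mid\nu_k)/c_{\underline\sigma,\overline\sigma}\to0\).

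The only step needing any care is the uniform quadratic lower bound \(f(x)\ge c_1(x-1)^2\) on the compact eigenvalue interval. It rests on \(f(1)=f'(1)=0\) and \(f''(1)=1\), which give the removable singularity \(g(1)=1/2\), and on the fact that \(f\) vanishes only at \(x=1\).
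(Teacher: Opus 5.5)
Your proposal is correct and follows the paper's proof essentially step for step. You use the same split into mean and covariance terms, the same positive minimum of \(f(t)/(t-1)^2\) on \([\underline\sigma/\overline\sigma,\overline\sigma/\underline\sigma]\), the same transfer \(\norm{\Sigma'-\Sigma}_F\le\overline\sigma\norm{B-I}_F\), and even the same constant \(\tfrac12\min\{\overline\sigma^{-1},c_1\overline\sigma^{-2}\}\).
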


\begin{proof}
	Set \(B:=\Sigma^{-1/2}\Sigma'\Sigma^{-1/2}\).
	The Gaussian KL formula reads
	\[
	2\KL(\nu'\mid\nu)
	=\norm{\Sigma^{-1/2}(m'-m)}^2
	+\tr B-\log\det B-d.
	\]
	The mean term is bounded below by
	\(\overline\sigma^{-1}\norm{m'-m}^2\).  The spectrum of \(B\) lies in the fixed interval
	\([\underline\sigma/\overline\sigma,\overline\sigma/\underline\sigma]\).  For
	\(f(t)=t-\log t-1\), the quotient \(f(t)/(t-1)^2\), continuously extended by the value \(1/2\) at \(t=1\), has a positive minimum \(c_f\) on this interval.  Hence
	\[
	\tr B-\log\det B-d
	=\sum_j f(\lambda_j(B))
	\ge c_f\norm{B-I}_F^2.
	\]
	Using \(\Sigma'-\Sigma=\Sigma^{1/2}(B-I)\Sigma^{1/2}\), submultiplicativity gives
	\[
	\norm{\Sigma'-\Sigma}_F
	\le \overline\sigma\norm{B-I}_F.
	\]
	Combining these estimates proves \eqref{eq:KLquadraticcontrol}, where one may take \(c_{\underline\sigma,\overline\sigma}:=\frac12\min\{\overline\sigma^{-1},c_f\overline\sigma^{-2}\}\).
	The remaining assertions follow immediately.
\end{proof}

The Lyapunov and spectral estimates provide compactness and square-summable KL steps, but full-sequence convergence additionally requires sufficient decrease and relative error.  We establish these estimates first in mean--covariance coordinates and then transfer them through \Cref{lem:KLlogcoords} to global log-covariance coordinates, where the Kurdyka--\L{}ojasiewicz theorem applies directly.

\begin{lemma}\label{lem:KLrelativeerror}
	Let \((\nu^k)_{k\geq0}\) be any profiled MM trajectory generated from \(\nu^0\in\mathcal G_d^1\).  Regard the parameter domain \(\mathcal X:=\R^d\times\Spp^d\) as an open subset of the Euclidean space \(\R^d\times\Sym^d\), equipped with the norm underlying \(d_{\rm par}\), and let \(\nabla_{\rm par}\Phi\) denote the corresponding gradient.  Then there exist constants \(a,L>0\) such that the following estimates hold for every \(k\):
	\begin{align}
		\Phi(\nu^k)-\Phi(\nu^{k+1})
		&\ge a\,d_{\rm par}(\nu^k,\nu^{k+1})^2,
		\label{eq:KLsufficientdecrease}\\
		\norm{\nabla_{\rm par}\Phi(\nu^{k+1})}
		&\le L\,d_{\rm par}(\nu^k,\nu^{k+1}).
		\label{eq:KLrelativeerror}
	\end{align}
\end{lemma}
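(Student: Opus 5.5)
The plan is to work on the compact orbit box supplied by \Cref{cor:globaliteration}(c) and \Cref{cor:trajectorynoncollapse}. Every iterate lies in
\[
\mathcal K_{\rm orb}:=\{(m,\Sigma):\norm m\le R,\ \underline\sigma_{\rm orb}I\preceq\Sigma\preceq\Lambda I\}.
\]
This is a compact subset of the open domain \(\mathcal X\). All constants will come from continuity of analytic quantities on \(\mathcal K_{\rm orb}\) or on a slightly enlarged compact neighborhood of it.

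\emph{Sufficient decrease.} Start from \Cref{thm:descent}:
\[
\Phi(\nu^k)-\Phi(\nu^{k+1})\ge\tau_1b^k\KL(\nu^{k+1}\mid\nu^k).
\]
By \Cref{cor:globaliteration}(a), \(b^k\ge b^0>0\). Both \(\nu^k\) and \(\nu^{k+1}\) have covariances in \([\underline\sigma_{\rm orb},\Lambda]\), so \Cref{lem:KLseparation} gives
\[
\KL(\nu^{k+1}\mid\nu^k)\ge c\,d_{\rm par}(\nu^k,\nu^{k+1})^2.
\]
Combining these yields \eqref{eq:KLsufficientdecrease} with \(a=\tau_1b^0c\).

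\emph{Relative error.} Use the surrogate \(\mathcal S\) of \Cref{prop:analyticmomentmap}. The function \(y\mapsto\mathcal S(y\mid x)-\Phi(\nu_y)\) is nonnegative and vanishes at \(y=x\), so \(y=x\) is an interior minimizer. This gives the tangency identity
\[
\nabla_{\rm par}\Phi(\nu_x)=\nabla_y\mathcal S(y\mid x)\big|_{y=x}.
\]
Alternatively, the identity can be checked directly from \Cref{lem:envelope}: \(\nabla_y\mathcal S\) equals \(\tau_1\sum_i\lambda_iM_i(x)\nabla_y\KL(q_i(x)\mid\nu_y)\), which reproduces \(b\sum_i\omega_i\nabla A_i\) at \(y=x\). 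Apply it with \(x=x^{k+1}\). Since \(T(x^k)=x^{k+1}\) minimizes \(\mathcal S(\cdot\mid x^k)\), we have \(\nabla_y\mathcal S(x^{k+1}\mid x^k)=0\). Therefore
\[
\norm{\nabla_{\rm par}\Phi(\nu^{k+1})}
=\norm{\nabla_y\mathcal S(x^{k+1}\mid x^{k+1})-\nabla_y\mathcal S(x^{k+1}\mid x^k)}.
\]
The map \((y,x)\mapsto\nabla_y\mathcal S(y\mid x)\) is real analytic on \(\mathcal X\times\mathcal X\), hence \(C^1\). It is therefore Lipschitz in \(x\), uniformly in \(y\), on the convex set \(\operatorname{conv}\mathcal K_{\rm orb}\times\operatorname{conv}\mathcal K_{\rm orb}\). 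This set is still compact in \(\mathcal X\), because the spectral bounds \([\underline\sigma_{\rm orb},\Lambda]\) are preserved under convex combinations. The segment from \(x^k\) to \(x^{k+1}\) lies in it, and the mean-value inequality gives \eqref{eq:KLrelativeerror}.

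\emph{Main obstacle.} The delicate point is the Lipschitz step. It requires the \(x\)-dependence of \(\nabla_y\mathcal S\) to be uniformly \(C^1\), and this dependence runs through \(M_i(x)\), \(v_i(x)\), \(Q_i(x)\). For this I will invoke the real-analytic dependence in \Cref{thm:pairwiseinterface} together with compactness of the convex hull of the orbit box. Convexity of that box is what makes the mean-value segment legitimate.

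As a concrete check, \(\nabla_y\mathcal S\) has the explicit form
\[
\tau_1 b(x)\Bigl(\Sigma_y^{-1}(m_y-\bar m(x)),\ \tfrac12\bigl[\Sigma_y^{-1}-\Sigma_y^{-1}\bar R(x,m_y)\Sigma_y^{-1}\bigr]\Bigr),
\]
where \(\bar m(x)\) and \(\bar R\) are the \(\omega\)-weighted moments. This makes the Lipschitz dependence transparent: it is bounded by \(\norm{T(x^{k+1})-T(x^k)}\) plus a term in \(|b(x^{k+1})-b(x^k)|\), and both are controlled by \(\operatorname{Lip}(T)\,d_{\rm par}\).
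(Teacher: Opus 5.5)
Your proposal is correct and follows essentially the paper's proof. Sufficient decrease comes from \Cref{thm:descent}, $b^k\ge b^0$, and \Cref{lem:KLseparation} on the orbit's spectral box, and relative error comes from tangency of the surrogate on the diagonal, first-order optimality of $T(x^k)$, and a mean-value bound on a compact convex box. The only difference is bookkeeping: the paper freezes $x=x^k$ and moves $y$ from $x^k$ to $x^{k+1}$ in the residual $\mathcal R(y,x)=\nabla_y\mathcal S(y\mid x)-\nabla_{\rm par}\Phi(\nu_y)$, which vanishes on the diagonal, whereas you freeze $y=x^{k+1}$ and move the plan argument $x$, which needs only a bound on the mixed derivative $D_x\nabla_y\mathcal S$.
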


\begin{proof}
	By \Cref{cor:globaliteration,cor:trajectorynoncollapse}, there are constants \(R<\infty\) and \(0<\underline\sigma\le\Lambda<\infty\) such that the whole trajectory satisfies \(\norm{m^k}\le R\) and \(\underline\sigma I\preceq\Sigma^k\preceq\Lambda I\).  Its closure is therefore a compact subset \(\mathcal K\Subset\mathcal G_d^1\).  Combining \Cref{thm:descent,lem:KLseparation} with \(b^k\ge b^0>0\), the sufficient-decrease estimate becomes
	\[
	\Phi(\nu^k)-\Phi(\nu^{k+1})
	\ge \tau_1b^0c_{\underline\sigma,\Lambda}\,
	d_{\rm par}(\nu^k,\nu^{k+1})^2,
	\]
	which is \eqref{eq:KLsufficientdecrease}.
	
	For the relative-error estimate, use the global surrogate \(\mathcal S(y\mid x)\) from \Cref{prop:analyticmomentmap} and set
	\[
	\mathcal R(y,x)
	:=\nabla_y\mathcal S(y\mid x)-\nabla_{\rm par}\Phi(\nu_y).
	\]
	The majorization relation \eqref{eq:globalMMmajorization} is tangent on the diagonal, hence \(\mathcal R(x,x)=0\) for every \(x\in\mathcal X\).  Choose a slightly larger compact convex box \(\mathcal K^+\Subset\mathcal X\).  Analyticity of \(\mathcal S\) and \(\Phi\) bounds \(D_y\mathcal R\) uniformly over \(\mathcal K^+\times\mathcal K^+\).  The segment from \(x^k\) to \(x^{k+1}\) lies in \(\mathcal K^+\); the fundamental theorem of calculus, together with \(\mathcal R(x^k,x^k)=0\), gives
	\[
	\mathcal R(x^{k+1},x^k)
	=
	\int_0^1
	D_y\mathcal R\!\left(x^k+t(x^{k+1}-x^k),x^k\right)
	[x^{k+1}-x^k] \,dt.
	\]
	Hence
	\[
	\norm{\mathcal R(x^{k+1},x^k)}
	\le L\norm{x^{k+1}-x^k}.
	\]
	Finally, \eqref{eq:globalMMargmin} states
	\(\nabla_y\mathcal S(x^{k+1}\mid x^k)=0\), and therefore
	\[
	\norm{\nabla_{\rm par}\Phi(\nu^{k+1})}
	=\norm{\mathcal R(x^{k+1},x^k)}
	\le Ld_{\rm par}(\nu^k,\nu^{k+1}),
	\]
	which proves \eqref{eq:KLrelativeerror}.
\end{proof}

To apply a global Euclidean K\L{} convergence theorem without changing local charts along the orbit, we pass to log-covariance coordinates.

\begin{lemma}\label{lem:KLlogcoords}
	Let \((\nu^k)_{k\geq0}\) be any profiled MM trajectory generated from \(\nu^0\in\mathcal G_d^1\).  Let
	\[
	\mathcal E:=\R^d\times\Sym^d,
	\qquad
	\norm{(h,H)}_{\mathcal E}^2:=\norm{h}^2+\norm{H}_F^2,
	\]
	and define the global coordinate map and pulled-back objective by
	\begin{equation}
		\Xi(m,S):=\N\!\left(m,\exp(S)\right),
		\qquad
		\Psi(m,S):=\Phi\bigl(\Xi(m,S)\bigr).
	\end{equation}
	Then \(\Xi:\mathcal E\to\mathcal G_d^1\) is a real-analytic diffeomorphism, with inverse \(\Xi^{-1}(\N(m,\Sigma))=(m,\log\Sigma)\), and \(\Psi:\mathcal E\to\R\) is real analytic.  If
	\[
	w^k=(m^k,S^k),
	\qquad
	S^k=\log\Sigma^k,
	\]
	then there exist constants \(c_-,c_+,\widetilde a,\widetilde L>0\) and a compact convex neighborhood \(\mathcal W\subset\mathcal E\) of the orbit \(\{w^k:k\geq0\}\) such that, for \(w,w'\in\mathcal W\),
	\begin{equation}
		c_-\norm{w-w'}_{\mathcal E}
		\le
		d_{\rm par}\bigl(\Xi(w),\Xi(w')\bigr)
		\le
		c_+\norm{w-w'}_{\mathcal E},
		\label{eq:logcovbilipschitz}
	\end{equation}
	and, for every \(k\),
	\begin{align}
		\Psi(w^k)-\Psi(w^{k+1})
		&\ge \widetilde a\norm{w^{k+1}-w^k}_{\mathcal E}^2,
		\label{eq:ABSH1}\\
		\norm{\nabla\Psi(w^{k+1})}_{\mathcal E}
		&\le \widetilde L\norm{w^{k+1}-w^k}_{\mathcal E}.
		\label{eq:ABSH2}
	\end{align}
\end{lemma}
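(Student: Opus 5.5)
The plan is to transfer \Cref{lem:KLrelativeerror} through the matrix exponential, using the two-sided spectral box supplied by \Cref{cor:globaliteration,cor:trajectorynoncollapse}.

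\emph{Analyticity.} The map \(S\mapsto\exp(S)\) is a real-analytic bijection from \(\Sym^d\) onto \(\Spp^d\), with inverse the principal logarithm. Its differential at \(S\) is the Daleckii--Krein operator. In an eigenbasis of \(S\) with eigenvalues \(s_j\), this operator multiplies the \((j,k)\) entry by the divided difference \((e^{s_j}-e^{s_k})/(s_j-s_k)\), which is positive. The differential is therefore invertible everywhere, and the inverse function theorem together with bijectivity makes \(\Xi\) a real-analytic diffeomorphism. The function \(\Psi=\Phi\circ\Xi\) is real analytic, since \(\Phi=\tau_0(\bar a-\Zfun^{1/\theta})\) with \(\Zfun>0\) analytic by \Cref{thm:pairwiseinterface}.

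\emph{Bi-Lipschitz bound.} The orbit satisfies \(\norm{m^k}\le R\) and \(\underline\sigma I\preceq\Sigma^k\preceq\Lambda I\). Hence \(S^k\) lies in the compact convex set
\[
\mathcal W:=\{(m,S):\norm m\le R+1,\ (\log\underline\sigma-1)I\preceq S\preceq(\log\Lambda+1)I\},
\]
which is convex because Loewner intervals are convex. For \(w,w'\in\mathcal W\), the mean component is an isometry. For the covariance component, write
\[
\exp(S')-\exp(S)=\int_0^1 D\exp(S+t(S'-S))[S'-S]\,dt.
\]
On \(\mathcal W\) the divided differences lie between \(e^{\log\underline\sigma-1}\) and \(e^{\log\Lambda+1}\), so \(\norm{D\exp(S)[H]}_F\le e\Lambda\norm H_F\). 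This gives the upper constant \(c_+\). For the lower bound, apply the same argument to the logarithm on the convex spectral box \(e^{-1}\underline\sigma I\preceq\Sigma\preceq e\Lambda I\), which contains \(\exp(\mathcal W)\). The divided differences of \(\log\) are at most \(e/\underline\sigma\) there, so \(\norm{\log\Sigma'-\log\Sigma}_F\le (e/\underline\sigma)\norm{\Sigma'-\Sigma}_F\). This gives \(c_-\), and \eqref{eq:logcovbilipschitz} follows.

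\emph{Transfer of the estimates.} Sufficient decrease \eqref{eq:ABSH1} follows from \eqref{eq:KLsufficientdecrease} and the lower half of \eqref{eq:logcovbilipschitz}, with \(\widetilde a=a c_-^2\). For \eqref{eq:ABSH2}, the chain rule gives \(\nabla\Psi(w)=D\Xi(w)^*\nabla_{\rm par}\Phi(\Xi(w))\). The adjoint of \(D\exp(S)\) for the Frobenius pairing is again the Daleckii--Krein operator, which is self-adjoint, so \(\norm{D\Xi(w)^*}\le\max\{1,e\Lambda\}\) on \(\mathcal W\). Combining this with \eqref{eq:KLrelativeerror} and the upper half of \eqref{eq:logcovbilipschitz} yields \(\widetilde L=\max\{1,e\Lambda\}\,L\,c_+\).

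The only delicate point is \emph{convexity}. The Lipschitz constants must be uniform along straight segments, and these are taken in \(S\)-coordinates for the upper bound but in \(\Sigma\)-coordinates for the lower bound. This is why both Loewner boxes must be chosen convex and nested as above. All the constants then depend only on the data, the penalties, and \(\nu^0\).
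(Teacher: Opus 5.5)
Your proposal is correct and follows the paper's route. The steps match: exp/log as a global real-analytic diffeomorphism, the two-sided orbit box from \Cref{cor:globaliteration,cor:trajectorynoncollapse}, mean-value bounds on a slightly enlarged convex box for the bi-Lipschitz comparison, and the chain rule $\nabla\Psi=D\Xi^*\nabla_{\rm par}\Phi$ to transfer \Cref{lem:KLrelativeerror}. The paper only asserts that first derivatives are uniformly bounded on a ``slightly larger box''; you make this explicit via Daleckii--Krein divided differences and nested convex Loewner boxes in $S$- and $\Sigma$-coordinates, obtaining concrete constants such as $c_+=\max\{1,e\Lambda\}$ and $c_-=\min\{1,\underline\sigma/e\}$.
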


\begin{proof}
	The matrix exponential is a real-analytic diffeomorphism from \(\Sym^d\) onto \(\Spp^d\), with inverse the principal logarithm~\cite{Higham2008}; hence \(\Xi\) and \(\Psi=\Phi\circ\Xi\) are real analytic.  By \Cref{cor:globaliteration,cor:trajectorynoncollapse}, the trajectory lies in a fixed compact mean--spectral box.  On a slightly larger box, \(\exp\), \(\log\), and their first derivatives are uniformly bounded, so the mean-value theorem establishes the bi-Lipschitz comparison \eqref{eq:logcovbilipschitz}.  The sufficient-decrease estimate \eqref{eq:ABSH1} follows from \Cref{lem:KLrelativeerror} and the left inequality in \eqref{eq:logcovbilipschitz}.  Finally, the chain rule gives
	\[
	\nabla\Psi(w)=D\Xi(w)^*\nabla_{\rm par}\Phi(\Xi(w)),
	\]
	and the uniform bound on \(D\Xi\), together with \eqref{eq:KLrelativeerror} and the right inequality in \eqref{eq:logcovbilipschitz}, proves \eqref{eq:ABSH2}.  These estimates match the hypotheses of the K\L{} convergence theorem.
\end{proof}

\begin{theorem}[Finite-length MM convergence]\label{thm:cluster}
	For every initialization \(\nu^0\in\mathcal G_d^1\), the profiled MM trajectory is contained in a compact subset of \(\mathcal G_d^1\), has finite length in mean--covariance parameter space,
	\begin{equation}
		\sum_{k=0}^{\infty}
		d_{\rm par}(\nu^{k+1},\nu^k)<\infty,
		\label{eq:finiteparameterlength}
	\end{equation}
	and converges to a single stationary fixed point \(\nu_\infty\in\mathcal G_d^1\).  Moreover, with \(b^k=b(\nu^k)\) and \(\beta^k=b^k\nu^k\),
	\begin{equation}
		\sum_{k=0}^{\infty}
		\left(|b^{k+1}-b^k|+d_{\rm par}(\nu^{k+1},\nu^k)\right)<\infty,
		\label{eq:finitefullparameterlength}
	\end{equation}
	and the full finite Gaussian iterates converge in the mass, mean, and covariance parameters to
	\[
	\beta_\infty=b(\nu_\infty)\nu_\infty.
	\]
\end{theorem}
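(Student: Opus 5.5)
The plan is to apply the Kurdyka--\L{}ojasiewicz convergence theorem for descent sequences (Attouch--Bolte--Svaiter type) in the global log-covariance coordinates of \Cref{lem:KLlogcoords}, and then transfer the conclusions back to mean--covariance coordinates and to the mass.

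\emph{Compactness.} By \Cref{cor:globaliteration,cor:trajectorynoncollapse}, every iterate satisfies \(\norm{m^k}\le R\) and \(\underline\sigma_{\rm orb}I\preceq\Sigma^k\preceq\Lambda I\). The orbit therefore lies in a compact box \(\mathcal K\Subset\mathcal G_d^1\), and its image \(\{w^k\}\) under \(\Xi^{-1}\) lies in the compact convex set \(\mathcal W\).

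\emph{K\L{} argument.} The function \(\Psi\) is real analytic on \(\mathcal E\), so by \L{}ojasiewicz's gradient inequality it has the K\L{} property at every point, with a desingularizing function \(\varphi(t)=ct^{1-\vartheta}\). The descent sequence \(w^k\) satisfies the sufficient-decrease estimate \eqref{eq:ABSH1} and the relative-error estimate \eqref{eq:ABSH2}. We also need continuity along convergent subsequences, which holds because \(\Psi\) is continuous.

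\begin{enumerate}[label=(\roman*)]
\item The values \(\Psi(w^k)\) decrease monotonically and are bounded below by \(\Phi_*\), so they converge to some \(\Psi_\infty\).
\item Let \(\Omega\) be the set of cluster points of \((w^k)\). It is nonempty and compact because \(\mathcal W\) is compact. By \eqref{eq:ABSH1}, the increments \(w^{k+1}-w^k\) tend to zero, so \(\Omega\) is connected. By \eqref{eq:ABSH2} and continuity of \(\nabla\Psi\), every point of \(\Omega\) is critical, and \(\Psi\equiv\Psi_\infty\) on \(\Omega\).
\item The uniformized K\L{} lemma on the compact set \(\Omega\) gives one \(\varphi\) and a neighborhood on which it applies. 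For large \(k\), concavity of \(\varphi\) yields the standard telescoping estimate
\[
2\norm{w^{k+1}-w^k}\le\norm{w^k-w^{k-1}}+\tfrac{\widetilde L}{\widetilde a}\bigl[\varphi(\Psi(w^k)-\Psi_\infty)-\varphi(\Psi(w^{k+1})-\Psi_\infty)\bigr].
\]
\item Summing this estimate gives \(\sum_k\norm{w^{k+1}-w^k}_{\mathcal E}<\infty\).
\end{enumerate}

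One technicality must be handled: if \(\Psi(w^K)=\Psi_\infty\) for some \(K\), then \eqref{eq:ABSH1} forces the sequence to be constant from \(K\) on, and the claims are trivial.

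\emph{Transfer back.} The bi-Lipschitz bound \eqref{eq:logcovbilipschitz} turns finite length in \(\mathcal E\) into \eqref{eq:finiteparameterlength}. Finite length makes \((\nu^k)\) Cauchy, so \(\nu^k\to\nu_\infty\in\mathcal K\subset\mathcal G_d^1\). The limit is a critical point of \(\Phi\), since \(\nabla_{\rm par}\Phi(\nu^{k+1})\to0\) by \eqref{eq:KLrelativeerror}. By \Cref{thm:fixedpoint} it is therefore a fixed point; alternatively, continuity of the analytic map \(T\) from \Cref{prop:analyticmomentmap} gives \(T(\nu_\infty)=\nu_\infty\) directly.

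For the mass, \(b(\nu)=\Zfun(\nu)^{1/\theta}\) is smooth, hence Lipschitz on the compact convex hull of \(\mathcal K\). This gives \(|b^{k+1}-b^k|\le C\,d_{\rm par}(\nu^{k+1},\nu^k)\). A simpler route also works: \(b^k\) is monotone and bounded by \Cref{cor:globaliteration}, so \(\sum|b^{k+1}-b^k|=b_\infty-b^0\). Either way \eqref{eq:finitefullparameterlength} follows, and continuity of \(b\) gives \(b^k\to b(\nu_\infty)\), hence \(\beta^k\to\beta_\infty\).

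The main obstacle is already handled by the preceding lemmas: the orbitwise spectral floor, which makes the compact neighborhood \(\mathcal W\) and the constants \(\widetilde a,\widetilde L\) uniform. What remains is to carefully invoke the uniformized K\L{} property over the whole cluster set, rather than at a single point.
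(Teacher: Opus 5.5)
Your proposal is correct and follows essentially the same route as the paper: orbitwise compactness from \Cref{cor:globaliteration,cor:trajectorynoncollapse}, the Kurdyka--\L{}ojasiewicz argument in the log-covariance coordinates of \Cref{lem:KLlogcoords} using \eqref{eq:ABSH1}--\eqref{eq:ABSH2}, transfer of finite length back to mean--covariance coordinates via \eqref{eq:logcovbilipschitz}, and the monotone-mass telescoping for \eqref{eq:finitefullparameterlength}. The only difference is presentational: the paper cites Theorem~2.9 of Attouch--Bolte--Svaiter directly, whereas you unpack its proof (uniformized K\L{} on the cluster set and the telescoping inequality), and that sketch is correct.
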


\begin{proof}
	The upper bounds \eqref{eq:trajectoryuppercompact} and the lower bound \eqref{eq:trajectorynoncollapse} place the entire trajectory in a compact subset \(\mathcal K\Subset\mathcal G_d^1\).  Introduce the global log-covariance coordinates from \Cref{lem:KLlogcoords},
	\[
	w^k=(m^k,\log\Sigma^k)\in\mathcal E,
	\qquad
	\Psi=\Phi\circ\Xi.
	\]
	Since \(\Psi\) is finite-valued and real analytic on the Euclidean space \(\mathcal E\), it is proper in the extended-real-valued sense, continuous (hence lower semicontinuous), and satisfies the Kurdyka--\L{}ojasiewicz property at every point.  In the present smooth setting, its limiting subdifferential is \(\partial\Psi(w)=\{\nabla\Psi(w)\}\), so the relative-error condition in Theorem~2.9 of Attouch, Bolte, and Svaiter~\cite{AttouchBolteSvaiter2013} is precisely \eqref{eq:ABSH2}.  The lower bound \(\Phi\ge0\) controls the objective values.  Hypotheses H1 and H2 are \eqref{eq:ABSH1} and \eqref{eq:ABSH2}, respectively.  For H3, compactness of the orbit provides a convergent subsequence \(w^{k_j}\to\bar w\), and continuity gives \(\Psi(w^{k_j})\to\Psi(\bar w)\).

	Thus all hypotheses of Theorem~2.9 are satisfied, and its finite-length conclusion gives
	\[
	\sum_{k=0}^{\infty}\norm{w^{k+1}-w^k}_{\mathcal E}<\infty,
	\qquad
	w^k\longrightarrow w_\infty,
	\]
	where \(w_\infty\) is a critical point of \(\Psi\).  Since \(\Xi\) is a diffeomorphism, the chain rule implies that \(\nu_\infty:=\Xi(w_\infty)\) is stationary for \(\Phi\).  The upper Lipschitz bound in \eqref{eq:logcovbilipschitz} transfers finite length back to the original parameters and proves \eqref{eq:finiteparameterlength}.  By \Cref{cor:globaliteration}, \(b^k\uparrow b_\infty\), while continuity of \(b(\nu)=\Zfun(\nu)^{1/\theta}\) and \(\nu^k\to\nu_\infty\) gives \(b_\infty=b(\nu_\infty)\).  Hence \(\sum_k|b^{k+1}-b^k|=b(\nu_\infty)-b^0<\infty\); together with \eqref{eq:finiteparameterlength}, this proves \eqref{eq:finitefullparameterlength} and convergence of the full finite Gaussian iterates.  Finally, \Cref{thm:fixedpoint} shows that \(\nu_\infty\) is a fixed point of the moment map.
\end{proof}

\subsection{Local linear convergence}

If a stationary fixed point is locally nondegenerate, the MM surrogate determines a quantitative linear rate.

\begin{lemma}\label{lem:MMsurrogate}
	Let \(\nu_*\in\mathcal G_d^1\) be a fixed point of the moment map and set \(b_*=b(\nu_*)\).  Choose a smooth local chart \(\chi\) whose image \(V\) contains \(x_*=\chi(\nu_*)\), and write \(\nu_x=\chi^{-1}(x)\) and \(b(x)=b(\nu_x)\).  By continuity of the moment map, shrink to an open neighborhood \(U\subset V\) of \(x_*\) such that \(\chi(T(\nu_x))\in V\) for all \(x\in U\).  Pull back the global surrogate in \eqref{eq:globalMMsurrogate} to these coordinates and denote the pullback again by \(\mathcal S(y\mid x)\).  With \(\gamma_i(x)\) denoting the optimizer attached to \(\nu_x\),
	\begin{equation}
		\mathcal S(y\mid x)
		=
		\mathcal J\bigl(b(x)\nu_y;\gamma_1(x),\ldots,\gamma_n(x)\bigr).
	\end{equation}
	Then:
	\begin{enumerate}[label=(\alph*)]
		\item \(\Phi(\nu_y)\le \mathcal S(y\mid x)\) for all \(x\in U\) and \(y\in V\), with equality at \(y=x\);
		\item the fixed-point update is the unique minimizer of the surrogate over \(V\), that is,
		\[
		\chi(T(\nu_x))=\operatorname*{argmin}_{y\in V}\mathcal S(y\mid x);
		\]
		\item at the fixed point,
		\begin{equation}
			\mathcal S(y\mid x_*)
			=
			K_*+\tau_1b_*\KL(\nu_*\mid\nu_y)
			\label{eq:MMfixedform}
		\end{equation}
		for a constant \(K_*\) independent of \(y\).
	\end{enumerate}
	Moreover, set
	\[
	B_*:=D_{yy}^2\mathcal S(x_*\mid x_*),
	\qquad
	H_*:=D^2(\Phi\circ\chi^{-1})(x_*).
	\]
	Then
	\begin{equation}
		B_*=\tau_1b_*G_{\FR,*},
		\label{eq:BstarFR}
	\end{equation}
	where \(G_{\FR,*}\) is the matrix of the Gaussian Fisher--Rao metric in the chart, and
	\begin{equation}
		B_*-H_*\succeq0.
		\label{eq:HlessB}
	\end{equation}
	If, in addition, \(\nu_*\) is a local minimizer of \(\Phi\), then \(0\preceq H_*\preceq B_*\).
\end{lemma}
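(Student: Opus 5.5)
Parts (a) and (b) will be obtained by pulling back \Cref{prop:analyticmomentmap} through the chart. The majorization \(\Phi(\nu_y)\le\mathcal S(y\mid x)\) and the diagonal equality hold globally on \(\mathcal X\times\mathcal X\). They therefore hold for \(x\in U\), \(y\in V\) after composing with \(\chi^{-1}\), and this gives (a).

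For (b), \(T(\nu_x)\) is the unique global minimizer of \(y\mapsto\mathcal S(y\mid x)\) over all of \(\mathcal G_d^1\). By the choice of \(U\), its chart image lies in \(V\). A global unique minimizer lying inside \(V\) is a fortiori the unique minimizer over \(V\).

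For (c), we use the representation from the proof of \Cref{prop:analyticmomentmap}:
\[
\mathcal S(y\mid x)=C(x)+\tau_1\sum_i\lambda_iM_i(x)\KL(q_i(x)\mid\nu_y).
\]
By \eqref{eq:massidentityk}, \(\lambda_iM_i(x)=b(x)\omega_i(x)\). At \(x=x_*\) the centroid of the \(q_i(x_*)\) with weights \(\omega_i(x_*)\) is \(T(\nu_*)=\nu_*\), by \Cref{thm:fixedpoint}. The Pythagorean identity \eqref{eq:KLPythagorean} of \Cref{lem:KLcentroid} then rewrites the sum as a \(y\)-independent constant plus \(\tau_1b_*\KL(\nu_*\mid\nu_y)\). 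Absorbing the constants gives \eqref{eq:MMfixedform}.

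For the Hessians, \eqref{eq:MMfixedform} reduces \(B_*\) to \(\tau_1b_*\) times the Hessian at \(y=x_*\) of \(y\mapsto\KL(\nu_*\mid\nu_y)\). This function vanishes to second order at \(y=x_*\). By the local expansion \eqref{eq:KLlocal}, or its two-curve form \eqref{eq:twocurvelocal} with one curve constant, its second-order coefficient along a chart line \(x_*+\varepsilon\xi\) is \(\tfrac{\varepsilon^2}{2}g_{\FR}(\xi,\xi)\). Here the tangent vector is \(D\chi^{-1}(x_*)\xi\). Second-order Taylor terms in a chart equal \(\tfrac12\xi^\top D^2f\,\xi\), so the Hessian of the KL term at \(x_*\) is exactly the Fisher matrix \(G_{\FR,*}\). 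No Christoffel-type correction appears, because the gradient vanishes at \(x_*\). This proves \eqref{eq:BstarFR}.

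For \eqref{eq:HlessB}, define the gap \(D(y):=\mathcal S(y\mid x_*)-\Phi(\nu_y)\ge0\) on \(V\). It satisfies \(D(x_*)=0\), so \(x_*\) is a local minimizer of the smooth function \(D\). Hence \(\nabla D(x_*)=0\) and \(D^2D(x_*)=B_*-H_*\succeq0\).

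If \(\nu_*\) is in addition a local minimizer of \(\Phi\), then \(\Phi\circ\chi^{-1}\) has a local minimum at \(x_*\). It is smooth there by \Cref{thm:pairwiseinterface}, so the second-order necessary condition gives \(H_*\succeq0\), and combining this with \eqref{eq:HlessB} yields \(0\preceq H_*\preceq B_*\).

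The only delicate step is identifying \(D^2_{yy}\KL(\nu_*\mid\nu_y)\) with the chart Fisher matrix. The expansion \eqref{eq:KLlocal} is stated for \(\KL(\mu_\varepsilon\mid\mu)\), whereas here the curve sits in the reference slot. I would handle this by citing the two-curve expansion \eqref{eq:twocurvelocal} with \(x=0\), whose coefficient \(\tfrac12g_{\FR}(y,y)\) is symmetric under this swap. As a cross-check, the chart Hessian of \(\nu\mapsto\KL(\nu_*\mid\nu)\) can be computed directly from \eqref{eq:GaussianKL} in mean–covariance coordinates.
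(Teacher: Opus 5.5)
Your proposal is correct and follows the paper's proof essentially step for step. It uses the global surrogate's majorization and unique minimization for (a)--(b), the mass identity \(\lambda_iM_i=b\,\omega_i\) with the Pythagorean identity at the fixed point for (c), the Fisher--Rao Hessian of \(y\mapsto\KL(\nu_*\mid\nu_y)\) for \(B_*\), and the nonnegative gap \(\mathcal S(y\mid x_*)-\Phi(\nu_y)\) for \(B_*-H_*\succeq0\); your handling of the reference-slot expansion via the two-curve formula and the vanishing gradient is slightly more careful than the paper's one-line assertion.
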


\begin{proof}
	For fixed \(x\), the optimal plans \(\gamma_i(x)\) satisfy
	\[
	\Phi(\nu_y)
	=
	\inf_{b>0}\Fbar(b,\nu_y)
	\le
	\Fbar(b(x),\nu_y)
	\le
	\mathcal S(y\mid x).
	\]
	At \(y=x\), exact mass profiling and pairwise optimality make both inequalities equalities, proving (a).  With the mass and plans fixed, only the terminal reverse-KL terms depend on the shape.  By \Cref{lem:KLcentroid}, their unique Gaussian minimizer is the moment update \eqref{eq:algm}--\eqref{eq:algSigma}.  The resulting minimizer lies in \(V\), which proves (b).
	
	At the fixed point, \eqref{eq:massidentityk} and \Cref{thm:fixedpoint} imply
	\[
	\sum_i\lambda_iM_i^*=b_*,
	\qquad
	\omega_i^*=\frac{\lambda_iM_i^*}{b_*},
	\]
	and \(\nu_*\) is the reverse-KL Gaussian centroid of the adjusted terminal marginals \(q_i^*\).  Applying \eqref{eq:KLPythagorean} reduces this sum to
	\[
	\sum_i\lambda_iM_i^*\KL(q_i^*\mid\nu_y)
	=
	C'_*
	+b_*\KL(\nu_*\mid\nu_y),
	\]
	which is \eqref{eq:MMfixedform}.  At the diagonal, the Hessian of \(y\mapsto\KL(\nu_*\mid\nu_y)\) is the Fisher information matrix, so \(B_*=\tau_1b_*G_{\FR,*}\) as in \eqref{eq:BstarFR}.
	
	Part (a) also shows that the function
	\[
	y\longmapsto \mathcal S(y\mid x_*)-\Phi(\nu_y)
	\]
	is nonnegative and vanishes at \(y=x_*\).  Its Hessian at \(x_*\) is positive semidefinite; equivalently, \(B_*-H_*\succeq0\) as in \eqref{eq:HlessB}.  If \(\nu_*\) is also a local minimizer, then \(H_*\succeq0\), so \(0\preceq H_*\preceq B_*\).
\end{proof}

For a positive-definite matrix \(B\), write \(\norm{h}_{B}^2=h^\top Bh\) and let \(\norm{A}_{B}=\sup_{h\ne0}\norm{Ah}_{B}/\norm{h}_{B}\) denote the induced operator norm.

\begin{theorem}[MM Jacobian and local linear convergence]\label{thm:localrate}
	Let \(\nu_*\) be a fixed point of the moment map.  Choose a smooth local chart \(\chi\) and write \(x_*=\chi(\nu_*)\).  Assume
	\begin{equation}
		H_*=D^2(\Phi\circ\chi^{-1})(x_*)\succ0
		\label{eq:nondegenerateH}
	\end{equation}
	for one (equivalently, every) smooth local chart.  Then \(\nu_*\) is a strict local minimizer.  Let \(B_*:=\tau_1b_*G_{\FR,*}\) as in \Cref{lem:MMsurrogate}.  The generalized spectrum of \((H_*,B_*)\), and hence the quantities \(\mu_*\) and \(\rho_*\) below, is chart independent.  The derivative of the moment fixed-point map satisfies
	\begin{equation}
		D\widehat T(x_*)
		=
		I-B_*^{-1}H_*,
		\label{eq:exactJacobian}
	\end{equation}
	where \(\widehat T=\chi\circ T\circ\chi^{-1}\).  Define
	\begin{equation}
		\mu_*
		:=
		\lambda_{\min}\!\left(B_*^{-1/2}H_*B_*^{-1/2}\right)
		\in(0,1],
		\qquad
		\rho_*:=1-\mu_*\in[0,1),
		\qquad
		\norm{D\widehat T(x_*)}_{B_*}=\rho_*.
		\label{eq:localmu}
	\end{equation}
	For every \(q\in(\rho_*,1)\), there exist a neighborhood \(V\) of \(x_*\) in the chart domain and \(r>0\) such that
	\[
	\Phi(\nu_x)\geq\Phi(\nu_*)\qquad(x\in V),
	\]
	and
	\[
	\overline{\mathcal U}_r:=\{x:\norm{x-x_*}_{B_*}\le r\}\subset V,
	\qquad
	\mathcal U_r:=\{x:\norm{x-x_*}_{B_*}<r\}.
	\]
	The open ball \(\mathcal U_r\) is invariant under \(\widehat T\).  Moreover, whenever \(x_0\in\mathcal U_r\), the iterates satisfy
	\begin{equation}
		\norm{x_k-x_*}_{B_*}
		\le
		q^k\norm{x_0-x_*}_{B_*}.
		\label{eq:localQlinear}
	\end{equation}
	The mass and objective errors satisfy
	\begin{equation}
		|b(\nu_{x_k})-b_*|=O(q^k),
		\qquad
		\Phi(\nu_{x_k})-\Phi(\nu_*)=O(q^{2k}).
		\label{eq:localmassobjective}
	\end{equation}
\end{theorem}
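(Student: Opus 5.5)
The argument starts from the surrogate structure of \Cref{lem:MMsurrogate}. Strict local minimality is immediate: a fixed point is stationary by \Cref{thm:fixedpoint}, so \(D(\Phi\circ\chi^{-1})(x_*)=0\), and with \(H_*\succ0\) the second-order sufficient condition gives a strict local minimum. Chart independence follows from the transformation rule at a critical point. Under a change of chart with Jacobian \(J\), both \(H_*\) and \(B_*\) transform as \(J^\top(\cdot)J\). For \(H_*\) this holds because the gradient vanishes, so no second-derivative term of the chart change appears. For \(B_*\) it holds because \(y\mapsto\mathcal S(y\mid x_*)-K_*\) also has a critical point at \(x_*\). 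Hence the generalized spectrum of \((H_*,B_*)\) is invariant, and so are \(\mu_*,\rho_*\).

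For the Jacobian formula \eqref{eq:exactJacobian}, implicitly differentiate the optimality condition \(\nabla_y\mathcal S(\widehat T(x)\mid x)=0\) from \Cref{lem:MMsurrogate}(b). By \eqref{eq:MMfixedform} and \eqref{eq:BstarFR}, \(D_{yy}^2\mathcal S(x_*\mid x_*)=B_*\succ0\). The implicit function theorem then gives
\[
D\widehat T(x_*)=-B_*^{-1}D_{yx}^2\mathcal S(x_*\mid x_*).
\]
The cross derivative comes from the tangency identity \(\nabla_y\mathcal S(x\mid x)=\nabla(\Phi\circ\chi^{-1})(x)\), which holds because \(\mathcal S(\cdot\mid x)-\Phi\) is nonnegative and vanishes at \(y=x\). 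Differentiating this identity in \(x\) at \(x_*\) gives \(B_*+D_{yx}^2\mathcal S=H_*\), so \(D_{yx}^2\mathcal S=H_*-B_*\) and \eqref{eq:exactJacobian} follows.

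Next come the spectral claims. By \eqref{eq:HlessB} and \(H_*\succ0\), the symmetric matrix \(M=B_*^{-1/2}H_*B_*^{-1/2}\) has spectrum in \((0,1]\), so \(\mu_*\in(0,1]\). The conjugated Jacobian \(B_*^{1/2}D\widehat T(x_*)B_*^{-1/2}=I-M\) is symmetric with eigenvalues in \([0,1-\mu_*]\). Its operator norm, which is \(\norm{D\widehat T(x_*)}_{B_*}\), is therefore exactly \(\rho_*\).

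For the local contraction, fix \(q\in(\rho_*,1)\). Since \(\widehat T\) is \(C^1\) (indeed analytic, by \Cref{prop:analyticmomentmap}), continuity of \(D\widehat T\) gives \(r>0\) with \(\norm{D\widehat T(x)}_{B_*}\le q\) on the closed \(B_*\)-ball \(\overline{\mathcal U}_r\). The radius is also taken small enough that the ball lies inside the neighborhood \(V\) where \(\Phi\ge\Phi(\nu_*)\) and inside the chart domain \(U\). Because the ball is convex and \(\widehat T(x_*)=x_*\), the mean-value inequality gives \(\norm{\widehat T(x)-x_*}_{B_*}\le q\norm{x-x_*}_{B_*}\). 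This yields invariance of \(\mathcal U_r\) and, by induction, \eqref{eq:localQlinear}. The rates \eqref{eq:localmassobjective} then follow from smoothness. The mass \(b=\Zfun^{1/\theta}\) is \(C^1\), which gives the \(O(q^k)\) bound. For the objective, \(\nabla\Phi(x_*)=0\), so Taylor's theorem gives \(\Phi(\nu_x)-\Phi(\nu_*)=O(\norm{x-x_*}^2)=O(q^{2k})\).

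The main obstacle I expect is making the tangency/cross-derivative identity rigorous with the surrogate defined only for \(x\in U\), \(y\in V\). Differentiating in \(x\) requires joint \(C^2\) regularity of \(\mathcal S\), which I would take from the analyticity in \Cref{prop:analyticmomentmap}, pulled back through \(\chi\). The remaining steps are routine.
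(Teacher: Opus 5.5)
Your proposal is correct and follows essentially the same route as the paper. Tangency of the surrogate gives $H_*=B_*+C_*$, implicit differentiation of $D_y\mathcal S(\widehat T(x)\mid x)=0$ then yields $D\widehat T(x_*)=I-B_*^{-1}H_*$, and the mean-value inequality on a small $B_*$-ball inside the region where $\Phi\ge\Phi(\nu_*)$ gives invariance, the $Q$-linear rate, and the mass and objective estimates (your worry about joint $C^2$ regularity is resolved, as you suggest, by \Cref{prop:analyticmomentmap}, which makes $\mathcal S$ real analytic on all of $\mathcal X\times\mathcal X$).
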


\begin{proof}
	By \Cref{thm:fixedpoint}, the fixed point \(\nu_*\) is stationary.  At \(x_*\), a coordinate change with Jacobian \(J\) sends \(H_*\) and \(B_*\) to \(J^\top H_*J\) and \(J^\top B_*J\), respectively.  Hence positive definiteness of \(H_*\) and the generalized spectrum of \((H_*,B_*)\) are coordinate invariant.  The pairwise interface and explicit Gaussian centroid formulas also show that the local surrogate and moment map are real analytic near \(x_*\).  Their surrogate first-order condition is
	\begin{equation}
		D_y\mathcal S\bigl(\widehat T(x)\mid x\bigr)=0.
	\end{equation}
	For fixed \(x\), the function \(y\mapsto\mathcal S(y\mid x)-\Phi(\nu_y)\) has a zero local minimum at \(y=x\).  Its first derivative therefore vanishes there:
	\begin{equation}
		D_y\mathcal S(x\mid x)=D(\Phi\circ\chi^{-1})(x).
		\label{eq:MMtangency}
	\end{equation}
	Differentiating \eqref{eq:MMtangency} at \(x_*\) splits the Hessian as
	\begin{equation}
		H_*=B_*+C_*,
		\qquad
		C_*:=D_{yx}^2\mathcal S(x_*\mid x_*).
		\label{eq:HBC}
	\end{equation}
	The derivative of the surrogate first-order condition at the fixed point is
	\[
	B_*D\widehat T(x_*)+C_*=0.
	\]
	Since \(B_*\succ0\), combining this identity with \eqref{eq:HBC} proves \eqref{eq:exactJacobian}.
	
	By \Cref{lem:MMsurrogate} and \eqref{eq:nondegenerateH},
	\[
	0\prec H_*\preceq B_*.
	\]
	All eigenvalues of \(\Xi_*:=B_*^{-1/2}H_*B_*^{-1/2}\) belong to \((0,1]\).  In the \(B_*\)-inner product, \(D\widehat T(x_*)\) is similar to the symmetric matrix \(I-\Xi_*\), and hence
	\[
	\norm{D\widehat T(x_*)}_{B_*}
	=
	\max_j|1-\lambda_j(\Xi_*)|
	=
	1-\lambda_{\min}(\Xi_*)
	=
	\rho_*<1,
	\]
	identifying the intrinsic contraction factor in \eqref{eq:localmu}.
	
	Real analyticity of the endpoint formulas, Gibbs weights, and Gaussian moment update makes \(D\widehat T\) continuous.  Since \(H_*\succ0\) and \(\nu_*\) is stationary, the second-order Taylor expansion yields a chart neighborhood \(V\) on which \(\Phi(\nu_x)\geq\Phi(\nu_*)\), with strict inequality for \(x\neq x_*\) sufficiently close.  Fix \(q\in(\rho_*,1)\).  Shrinking \(r>0\) if necessary, require \(\overline{\mathcal U_r}\subset V\) and
	\[
	\sup_{x\in\overline{\mathcal U_r}}\norm{D\widehat T(x)}_{B_*}\le q.
	\]
	Because the ball is convex and \(\widehat T(x_*)=x_*\), the mean-value theorem along the segment from \(x_*\) to \(x\in\mathcal U_r\) implies
	\[
	\norm{\widehat T(x)-x_*}_{B_*}
	\le q\norm{x-x_*}_{B_*}
	<qr<r.
	\]
	The bound keeps \(\widehat T(x)\) inside \(\mathcal U_r\).  Iterating this bound along the orbit proves \eqref{eq:localQlinear}.  Local Lipschitz continuity of the profiled mass proves the first estimate in \eqref{eq:localmassobjective}.  Because the orbit stays in \(V\), stationarity and the local \(C^2\) expansion of \(\Phi\circ\chi^{-1}\) imply
	\[
	0\le \Phi(\nu_{x_k})-\Phi(\nu_*)
	\le C\norm{x_k-x_*}^2,
	\]
	which is the second estimate in \eqref{eq:localmassobjective}.
\end{proof}

\begin{remark}
	The contraction factor is governed by the generalized curvature ratio
	\[
	\mu_*
	=
	\inf_{h\ne0}
	\frac{H_*[h,h]}{\tau_1b_*\,g_{\FR,\nu_*}(h,h)}.
	\]
	Thus \(\mu_*\) is the smallest profiled curvature in the Fisher metric induced by the reverse-KL surrogate and \(\rho_*=\norm{D\widehat T(x_*)}_{B_*}\) is the operator-norm contraction factor.  \Cref{thm:localrate} gives local \(Q\)-linear convergence for every \(q>\rho_*\); an individual orbit may converge faster if it does not activate the slowest spectral mode.
\end{remark}

\section{Large- and small-penalty limits}
\label{sec:limits}

We scale both marginal penalties by a common parameter \(\kappa>0\):
\begin{equation}
	\tau_0^{(\kappa)}=\kappa\bar\tau_0,
	\qquad
	\tau_1^{(\kappa)}=\kappa\bar\tau_1,
	\qquad
	\bar s=\bar\tau_0+\bar\tau_1,
	\qquad
	\theta=\frac{\bar\tau_0}{\bar s}.
\end{equation}
The exponent \(\theta\) is independent of \(\kappa\).  We write \(A_i^{(\kappa)}\), \(\Zfun_\kappa\), \(b_\kappa\), and \(\Phi_\kappa\) for the corresponding quantities.

The analysis combines endpoint expansions uniform on compact sets with localization bounds for global minimizers that are uniform in the penalty scale.  Large penalties yield a Wasserstein limit and a first-order correction, whereas small penalties yield a Chernoff affinity limit.  Localization is essential because finite Wasserstein sublevels may approach singular covariance at finite cost.

\subsection{Large-penalty Wasserstein limit and localization}

For brevity, set \(\mathcal W(p,q):=W_2^2(p,q)\) and \(\mathcal W_i(\nu):=\mathcal W(\mu_i,\nu)\).
On compact positive-definite spectral boxes, inversion and the principal square root are uniformly Lipschitz~\cite{Higham2008}.  We use any fixed matrix norm, and \(C^r(\mathcal K)\) refers to ambient mean--covariance coordinates.

\begin{lemma}\label{lem:largeanalyticextension}
	Fix an input \(\mu_i=\N(m_i,\Sigma_i)\) and a compact Gaussian parameter set \(\mathcal K\Subset\mathcal G_d^1\).  Put \(\varepsilon=\kappa^{-1}\).  There exist an open neighborhood \(U\supset\mathcal K\) and \(\varepsilon_0>0\) such that the endpoint formulas
	\[
	(\varepsilon,\nu)\longmapsto
	\bigl(L_i^{(1/\varepsilon)},P_i^{(1/\varepsilon)},Q_i^{(1/\varepsilon)},
	u_i^{(1/\varepsilon)},v_i^{(1/\varepsilon)}\bigr),
	\qquad \varepsilon>0,
	\]
	extend real analytically to \((-\varepsilon_0,\varepsilon_0)\times U\), with value
	\[
	\bigl(L_i^W(\nu),\Sigma_i,\Sigma,m_i,m\bigr)
	\]
	at \(\varepsilon=0\), where \(L_i^W(\nu)\) is the Gaussian Wasserstein map from \(\mu_i\) to \(\nu=\N(m,\Sigma)\).  In particular,
	\begin{align}
		L_i^{(\kappa)}&=L_i^{W}(\nu)+O(\kappa^{-1}),
		&
		P_i^{(\kappa)}&=\Sigma_i+O(\kappa^{-1}),\notag\\
		Q_i^{(\kappa)}&=\Sigma+O(\kappa^{-1}),
		&
		u_i^{(\kappa)}&=m_i+O(\kappa^{-1}),
		\qquad
		v_i^{(\kappa)}=m+O(\kappa^{-1}).
		\label{eq:largeparameterexpansion}
	\end{align}
	For \(\varepsilon>0\), write \(p_i^\varepsilon:=p_i^{(1/\varepsilon)}\) and \(q_i^\varepsilon:=q_i^{(1/\varepsilon)}\).  The endpoint value
	\begin{equation}
		\widehat A_i(\varepsilon,\nu)
		:=W_2^2(p_i^\varepsilon,q_i^\varepsilon)
		+\frac{\bar\tau_0}{\varepsilon}\KL(p_i^\varepsilon\mid\mu_i)
		+\frac{\bar\tau_1}{\varepsilon}\KL(q_i^\varepsilon\mid\nu)
		\label{eq:analyticAhat}
	\end{equation}
	also extends real analytically through \(\varepsilon=0\), with \(\widehat A_i(0,\nu)=\mathcal W_i(\nu)\).  For every fixed integer \(r\ge0\), Taylor remainders in \(\varepsilon\) are uniform in the \(C^r(\mathcal K)\) norm, and in particular
	\begin{equation}
		\sup_{\nu\in\mathcal K}
		\left|A_i^{(\kappa)}(\nu)-W_2^2(\mu_i,\nu)\right|
		=O(\kappa^{-1}).
		\label{eq:largeA}
	\end{equation}
\end{lemma}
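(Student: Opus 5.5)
The plan is to work directly with the closed-form endpoint formulas \eqref{eq:Sstar}--\eqref{eq:uvstar}, rewritten in the variable \(\varepsilon=\kappa^{-1}\). Under the scaling, \(r_j=2/\tau_j=2\varepsilon/\bar\tau_j\), so \(r_0,r_1,\delta\) are linear in \(\varepsilon\), and \(C_j=K_j+r_jI\) is analytic in \((\varepsilon,\nu)\) and stays positive definite for \(|\varepsilon|\) small, uniformly on a neighborhood of \(\mathcal K\). Choose \(U\) to be a relatively compact open neighborhood of \(\mathcal K\) whose closure lies in \(\mathcal G_d^1\), with two-sided spectral bounds. The mean formulas \eqref{eq:hstar}--\eqref{eq:uvstar} are rational in \(\varepsilon\) with invertible denominator \(I+r_0\Sigma_i+r_1\Sigma\). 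At \(\varepsilon=0\) they give \(u=m_i\) and \(v=m\). For the covariances, \(\delta^2I+4C_1^{1/2}C_0C_1^{1/2}\) is positive definite near \(\varepsilon=0\), because at \(\varepsilon=0\) it equals \(4\Sigma^{-1/2}\Sigma_i^{-1}\Sigma^{-1/2}\) (taking \(\mu_0=\mu_i\), \(\mu_1=\nu\)). The principal square root is real analytic on \(\Spp^d\), so \(S_*\), \(L_*\), \(P_*\) and \(Q_*\) extend analytically. Here \(P_*=[K_0+r_0(I-L_*)]^{-1}\) remains well-defined since \(r_0\to0\).

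At \(\varepsilon=0\) one has
\[
L_*=\Sigma^{1/2}(\Sigma^{-1/2}\Sigma_i^{-1}\Sigma^{-1/2})^{1/2}\Sigma^{1/2}.
\]
This is the positive-definite solution of \(L\Sigma^{-1}L=\Sigma_i^{-1}\), i.e. \(\Sigma=L\Sigma_iL\), which is exactly \(L_i^W(\nu)\) by \eqref{eq:GaussianW2map}. It also gives \(P_*=\Sigma_i\) and \(Q_*=L\Sigma_iL=\Sigma\). The \(O(\kappa^{-1})\) bounds in \eqref{eq:largeparameterexpansion} then follow from analyticity, since first-order Taylor remainders are controlled by derivative bounds on a compact neighborhood.

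The main obstacle is \(\widehat A_i\): the KL terms carry the singular factor \(1/\varepsilon\). I would handle it by writing \(\KL(p^\varepsilon\mid\mu_i)\) via \eqref{eq:GaussianKL} as an analytic function \(k_0(\varepsilon,\nu)\) that vanishes at \(\varepsilon=0\), since \(p^0=\mu_i\). By analyticity, \(k_0(\varepsilon,\nu)=\varepsilon\,\tilde k_0(\varepsilon,\nu)\) with \(\tilde k_0\) analytic, obtained from the integral form \(\tilde k_0=\int_0^1\partial_\varepsilon k_0(t\varepsilon,\nu)\,dt\). The same applies to \(k_1\).

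In fact KL vanishes to second order at the base point, so \(\tilde k_j(0,\cdot)=0\). Hence \((\bar\tau_j/\varepsilon)k_j\) is analytic and vanishes at \(\varepsilon=0\). The term \(W_2^2(p^\varepsilon,q^\varepsilon)\) is written through the means and \(\tr(P+Q-2LP)\) using \(Q=LPL\), which avoids a square-root ambiguity and is analytic; at \(\varepsilon=0\) it equals \(\mathcal W_i(\nu)\). Therefore \(\widehat A_i(0,\nu)=\mathcal W_i(\nu)\), and for \(\varepsilon>0\), \(\widehat A_i\) coincides with \(A_i^{(1/\varepsilon)}\) by \Cref{thm:pairwiseinterface}.

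Finally, real analyticity on \((-\varepsilon_0,\varepsilon_0)\times U\) gives Cauchy-type bounds on all mixed derivatives over a compact product \([-\varepsilon_0/2,\varepsilon_0/2]\times\mathcal K'\), where \(\mathcal K\subset\operatorname{int}\mathcal K'\Subset U\). Taylor's theorem in \(\varepsilon\), with integral remainder differentiated under the integral in \(\nu\), then yields uniform \(C^r(\mathcal K)\) remainder bounds, and \eqref{eq:largeA} is the case \(r=0\) at first order.
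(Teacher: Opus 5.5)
Your proposal is correct and follows essentially the same route as the paper. Both arguments analytically continue the closed-form endpoint formulas in \(\varepsilon=\kappa^{-1}\), identify the \(\varepsilon=0\) values through \(L\Sigma^{-1}L=\Sigma_i^{-1}\), and remove the \(1/\varepsilon\) singularity by using that the KL terms vanish to second order. Your factorization \(k_j=\varepsilon\tilde k_j\) with \(\tilde k_j(0,\cdot)=0\) is equivalent to the paper's \(R_{j,i}=\varepsilon^2\widetilde R_{j,i}\), and your rewriting \(d_{\BW}^2(P,Q)=\tr(P+Q-2LP)\) simply makes explicit the paper's assertion that the transport term is jointly analytic.
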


\begin{proof}
	In \eqref{eq:Sstar}--\eqref{eq:uvstar}, \(r_j=2\varepsilon/\bar\tau_j\) and \(\delta\) are analytic in \(\varepsilon\).  At \(\varepsilon=0\), all matrices entering inverses and principal square roots are positive definite; compactness of \(\mathcal K\) allows one common neighborhood on which these operations remain real analytic~\cite{Higham2008}.  Evaluating the covariance equation at this limit reduces it to \(L_0\Sigma^{-1}L_0=\Sigma_i^{-1}\), whose positive-definite solution is the Gaussian Wasserstein map \(L_i^W(\nu)\); the remaining endpoint limits and \eqref{eq:largeparameterexpansion} follow directly.
	
	For \eqref{eq:analyticAhat}, set
	\[
	R_{0,i}(\varepsilon,\nu)=\KL(p_i^\varepsilon\mid\mu_i),
	\qquad
	R_{1,i}(\varepsilon,\nu)=\KL(q_i^\varepsilon\mid\nu).
	\]
	After the endpoint parameters have been analytically extended, both \(R_{j,i}\) are jointly real analytic near \(\{0\}\times\mathcal K\).  At \(\varepsilon=0\), one has \(p_i^0=\mu_i\) and \(q_i^0=\nu\), so \(R_{j,i}(0,\nu)=0\).  The first \(\varepsilon\)-derivatives also vanish because the differential of Gaussian relative entropy with respect to its moving argument is zero on the diagonal.  Taylor's formula with integral remainder gives the explicit factorization
	\[
	R_{j,i}(\varepsilon,\nu)=\varepsilon^2\widetilde R_{j,i}(\varepsilon,\nu),
	\qquad
	\widetilde R_{j,i}(\varepsilon,\nu)
	=\int_0^1(1-t)\,\partial_\varepsilon^2R_{j,i}(t\varepsilon,\nu)\,dt.
	\]
	The integrand is jointly analytic, so \(\widetilde R_{j,i}\) is jointly analytic as well.
	Consequently \(\varepsilon^{-1}R_{j,i}=\varepsilon\widetilde R_{j,i}\) extends analytically through zero.  The transport term is already jointly analytic and reduces at \(\varepsilon=0\) to \(\mathcal W_i(\nu)\).  Hence \(\widehat A_i\) is jointly analytic on a neighborhood of \(\{0\}\times\mathcal K\); compactness then gives the stated uniform \(C^r(\mathcal K)\) Taylor bounds and \eqref{eq:largeA}.
\end{proof}

This analytic continuation provides the expansion at \(\varepsilon=0\), while the transport problem itself corresponds to \(\varepsilon>0\).  Substituting the endpoint expansion into the exact mass profile gives the limiting barycenter functional.  Set
\begin{equation}
	Z_0=\sum_i\lambda_i a_i^\theta,
	\qquad
	\widetilde\lambda_i
	:=\pi_i
	=
	\frac{\lambda_i a_i^\theta}{Z_0}.
	\label{eq:largeweights}
\end{equation}

\begin{theorem}\label{thm:largelimit}
	For every compact \(\mathcal K\Subset\mathcal G_d^1\), uniformly for \(\nu\in\mathcal K\),
	\begin{equation}
		\Phi_\kappa(\nu)=\kappa\bar\tau_0\left(\bar a-Z_0^{1/\theta}\right)
		+Z_0^{1/\theta-1}\sum_{i=1}^n\lambda_i a_i^\theta W_2^2(\mu_i,\nu)+O(\kappa^{-1}).
		\label{eq:largeprofileexpansion}
	\end{equation}
	Define the shifted profiled functional
	\begin{equation}
		\widetilde\Phi_\kappa(\nu)
		=
		\Phi_\kappa(\nu)
		-\kappa\bar\tau_0\left(\bar a-Z_0^{1/\theta}\right).
		\label{eq:shiftedlargeprofile}
	\end{equation}
	Then \(\widetilde\Phi_\kappa\) converges uniformly on \(\mathcal K\) to
	\begin{equation}
		\Phi_\infty(\nu)
		=
		Z_0^{1/\theta}
		\sum_{i=1}^n\widetilde\lambda_iW_2^2(\mu_i,\nu).
		\label{eq:BWlimitfunctional}
	\end{equation}
	Moreover, the profiled mass satisfies
	\begin{equation}
		b_\kappa(\nu)\longrightarrow Z_0^{1/\theta}
		\label{eq:largemasslimit}
	\end{equation}
	uniformly on \(\mathcal K\).
\end{theorem}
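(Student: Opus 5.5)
The proof substitutes the uniform endpoint expansion \eqref{eq:largeA} of \Cref{lem:largeanalyticextension} into the exact profile formula \eqref{eq:profiledF}, and then Taylor expands. Fix a compact \(\mathcal K\). With \(s=\kappa\bar s\), set \(A_i^{(\kappa)}(\nu)=W_2^2(\mu_i,\nu)+e_i^{(\kappa)}(\nu)\), where \(\sup_{\mathcal K}|e_i^{(\kappa)}|=O(\kappa^{-1})\). The quantities \(\mathcal W_i\) are bounded on \(\mathcal K\), say by \(C_{\mathcal K}\), by continuity of the Gaussian \(W_2\) formula. Hence the exponents satisfy \(A_i^{(\kappa)}/s=\kappa^{-1}\bar s^{-1}\mathcal W_i+O(\kappa^{-2})\) uniformly.

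The first step is the expansion of \(\Zfun_\kappa\). The exponential Taylor bound \(e^{-x}=1-x+O(x^2)\) on bounded sets gives
\[
\Zfun_\kappa(\nu)=Z_0-\frac{1}{\kappa\bar s}\sum_i\lambda_ia_i^\theta\mathcal W_i(\nu)+O(\kappa^{-2}),
\]
uniformly on \(\mathcal K\). In particular \(\Zfun_\kappa\to Z_0\) uniformly. Since \(x\mapsto x^{1/\theta}\) is continuous at \(Z_0>0\), this already yields \eqref{eq:largemasslimit}.

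The second step expands \((\cdot)^{1/\theta}\) around \(Z_0\). Uniformly, \(\Zfun_\kappa\) stays in a compact interval around \(Z_0\) bounded away from zero, so \(x^{1/\theta}\) is \(C^2\) there and
\[
\Zfun_\kappa^{1/\theta}=Z_0^{1/\theta}+\tfrac1\theta Z_0^{1/\theta-1}(\Zfun_\kappa-Z_0)+O(|\Zfun_\kappa-Z_0|^2).
\]
Multiply by \(\tau_0=\kappa\bar\tau_0\) and use \(\bar\tau_0/(\theta\bar s)=1\). Then
\[
\Phi_\kappa=\kappa\bar\tau_0(\bar a-\Zfun_\kappa^{1/\theta})=\kappa\bar\tau_0(\bar a-Z_0^{1/\theta})+Z_0^{1/\theta-1}\sum_i\lambda_ia_i^\theta\mathcal W_i+\kappa\cdot O(\kappa^{-2}).
\]
This is exactly \eqref{eq:largeprofileexpansion}, with remainder \(O(\kappa^{-1})\).

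The last step reads off the remaining claims. Subtracting the constant gives \(\widetilde\Phi_\kappa\to Z_0^{1/\theta-1}\sum_i\lambda_ia_i^\theta\mathcal W_i\) uniformly on \(\mathcal K\). Rewriting \(\lambda_ia_i^\theta=Z_0\widetilde\lambda_i\) via \eqref{eq:largeweights} turns this limit into \eqref{eq:BWlimitfunctional}.

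The only point needing care is uniformity of the remainders. It requires that \(\kappa\) times the second-order errors still vanish, which is why \(e_i^{(\kappa)}/s=O(\kappa^{-2})\) is needed rather than merely \(o(\kappa^{-1})\). This is secured by \eqref{eq:largeA}, together with boundedness of \(\mathcal W_i\) on \(\mathcal K\).
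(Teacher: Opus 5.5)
Your proposal is correct and follows the paper's proof essentially step by step. Like the paper, you insert the uniform endpoint estimate \eqref{eq:largeA} into the Gibbs sum to get \(\Zfun_\kappa=Z_0-(\kappa\bar s)^{-1}\sum_i\lambda_ia_i^\theta\mathcal W_i+O(\kappa^{-2})\), Taylor-expand \(x\mapsto x^{1/\theta}\) about \(Z_0\), and use \(\theta\bar s=\bar\tau_0\) in the exact profile \(\Phi_\kappa=\kappa\bar\tau_0(\bar a-\Zfun_\kappa^{1/\theta})\). You also state explicitly a point the paper leaves implicit: the \(O(\kappa^{-1})\) endpoint error together with boundedness of \(\mathcal W_i\) on \(\mathcal K\) is what keeps \(\kappa\) times the second-order remainders of size \(O(\kappa^{-1})\).
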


\begin{proof}
	\Cref{lem:largeanalyticextension} gives the uniform expansion
	\[
	e^{-A_i^{(\kappa)}(\nu)/(\kappa\bar s)}
	=
	1-\frac{W_2^2(\mu_i,\nu)}{\kappa\bar s}
	+O(\kappa^{-2}).
	\]
	Substituting this expansion into \(\Zfun_\kappa\) produces the explicit expansion
	\begin{equation}
		\Zfun_\kappa(\nu)
		=
		Z_0-\frac{1}{\kappa\bar s}
		\sum_i\lambda_i a_i^\theta W_2^2(\mu_i,\nu)
		+O(\kappa^{-2}).
	\end{equation}
	Taylor expansion of \(x\mapsto x^{1/\theta}\) about \(Z_0\) gives
	\begin{align*}
		\Zfun_\kappa(\nu)^{1/\theta}
		={}&
		Z_0^{1/\theta}
		-
		\frac{1}{\theta\kappa\bar s}
		Z_0^{1/\theta-1}
		\sum_i\lambda_i a_i^\theta W_2^2(\mu_i,\nu)
		+O(\kappa^{-2}).
	\end{align*}
	Now \(\theta\bar s=\bar\tau_0\), and
	\[
	\Phi_\kappa(\nu)
	=
	\kappa\bar\tau_0
	\left(\bar a-\Zfun_\kappa(\nu)^{1/\theta}\right),
	\]
	which is \eqref{eq:largeprofileexpansion}.  Applying the same expansion to \(b_\kappa=\Zfun_\kappa^{1/\theta}\) proves \eqref{eq:largemasslimit}.
\end{proof}

Uniform convergence on compact sets does not control global minimizers without two additional ingredients: nondegeneracy of the Wasserstein limit and localization bounds uniform in the penalty scale.  These are provided by \Cref{lem:BWlimitHessian,lem:largelocalization}.

\begin{lemma}[Hessian of the Wasserstein limit]\label{lem:BWlimitHessian}
	In global mean--covariance coordinates \((m,\Sigma)\in\R^d\times\Spp^d\), the limit functional \(\Phi_\infty\) in \eqref{eq:BWlimitfunctional} has positive-definite Euclidean Hessian at every point.  Hence, viewed as a function of \((m,\Sigma)\), it is strictly convex on the convex affine parameter domain \(\R^d\times\Spp^d\).  Every critical point is nondegenerate and any minimizer is unique.  Moreover, for every compact \(\mathcal K\Subset\mathcal G_d^1\) there exists \(\mu_{\mathcal K}>0\) such that \(D^2\Phi_\infty(\nu)\succeq\mu_{\mathcal K}I\) throughout \(\mathcal K\) in mean--covariance coordinates.  At a critical point, positive definiteness also holds in every smooth local chart.
\end{lemma}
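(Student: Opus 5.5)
The plan is to compute the Euclidean Hessian of \(\Phi_\infty\) directly from the closed-form Gaussian Wasserstein distance and show that it is block diagonal with a positive-definite mean block and a positive-definite covariance block. Since \(Z_0^{1/\theta}>0\) and \(\widetilde\lambda_i>0\), it suffices to treat a single term \(\mathcal W_i(m,\Sigma)=W_2^2(\mu_i,\N(m,\Sigma))\). By the Gaussian formula,
\[
\mathcal W_i(m,\Sigma)=\norm{m-m_i}^2+\tr\Sigma_i+\tr\Sigma-2\tr\bigl(\Sigma_i^{1/2}\Sigma\Sigma_i^{1/2}\bigr)^{1/2}.
\]
The mean and covariance variables separate, so the mixed block of the Hessian vanishes. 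The mean block equals \(2I\), and \(\tr\Sigma_i+\tr\Sigma\) is affine in \(\Sigma\). Everything therefore reduces to showing that \(\Sigma\mapsto-\tr(A\Sigma A)^{1/2}\), with \(A=\Sigma_i^{1/2}\succ0\), has a positive-definite Hessian on \(\Spp^d\).

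For the covariance block, first write \(X=A\Sigma A\). The map \(\Sigma\mapsto X\) is linear and injective on \(\Sym^d\), so its Hessian pulls back under the congruence \(K=AHA\). It is then enough to show that \(X\mapsto\tr X^{1/2}\) has a negative-definite second derivative at every \(X\succ0\). Take a curve \(X+tK\) and let \(Y(t)\) be its principal square root, so \(Y(0)=X^{1/2}\succ0\). Differentiating \(Y^2=X+tK\) twice at \(t=0\) gives
\[
YD+DY=K,\qquad YE+EY+2D^2=0,
\]
with \(D=\dot Y(0)\) and \(E=\ddot Y(0)\). Multiplying the second identity by \(Y^{-1}\) and taking traces yields
\[
\tfrac{d^2}{dt^2}\tr Y=\tr E=-\tr(Y^{-1}D^2)=-\tr\bigl(DY^{-1}D\bigr).
\]
This quantity is strictly negative whenever \(D\neq0\). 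The Sylvester equation \(YD+DY=K\) has a unique solution that depends linearly and injectively on \(K\), because \(Y\succ0\). Hence the second derivative is a negative-definite quadratic form in \(K\). Consequently the covariance block of \(D^2\mathcal W_i\) is \(2\) times a positive-definite form in \(H\), and \(D^2\Phi_\infty\succ0\) everywhere.

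The remaining claims follow by standard arguments. The domain \(\R^d\times\Spp^d\) is convex, so a positive-definite Hessian gives strict convexity, and strict convexity gives at most one minimizer. Every critical point is nondegenerate because the Hessian is definite. For a compact \(\mathcal K\), the entries of the Hessian are continuous in \((m,\Sigma)\), since the principal square root and the Sylvester solution operator are real analytic on \(\Spp^d\). Therefore \(\nu\mapsto\lambda_{\min}(D^2\Phi_\infty(\nu))\) is continuous and positive, and it attains a positive minimum \(\mu_{\mathcal K}\) on \(\mathcal K\). Finally, at a critical point the first-order term in the chain rule vanishes. A smooth change of chart with Jacobian \(J\) then transforms the Hessian as \(J^\top D^2\Phi_\infty J\), which is again positive definite.

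The main obstacle is obtaining strict definiteness rather than mere convexity in the covariance block. The trace identity \(\tr E=-\tr(DY^{-1}D)\), combined with injectivity of the Sylvester operator, is the step I would check most carefully, since this is where strictness comes from.
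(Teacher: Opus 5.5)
Your proof is correct, and its architecture matches the paper's. Both proofs separate the mean and covariance blocks and use the injective congruence $H\mapsto AHA$ to reduce to strict concavity of $X\mapsto\tr X^{1/2}$. Both then use continuity of the Hessian on compacts to get $\mu_{\mathcal K}$, and congruence at critical points for chart independence.

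The genuine difference is the key computation for the covariance block. The paper diagonalizes and invokes the Lewis--Sendov second-derivative formula for spectral functions. It reads off explicit positive coefficients $1/(2\lambda_i^{3/2})$ and $2/(\sqrt{\lambda_i\lambda_j}(\sqrt{\lambda_i}+\sqrt{\lambda_j}))$, with the divided differences understood by continuity at repeated eigenvalues. You instead differentiate $Y(t)^2=X+tK$ twice and obtain $\tr\ddot Y(0)=-\tr(DY^{-1}D)$, where $D$ is the Sylvester solution. Your route is basis-free, needs only differentiability of the principal square root and the trivial injectivity of $D\mapsto YD+DY$, and avoids the repeated-eigenvalue caveat. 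The paper's route displays the positivity coefficient by coefficient.

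The two expressions are the same quantity. If $\lambda_j$ are the eigenvalues of your $X=A\Sigma A$ (the paper's $Y$), then in that eigenbasis $D_{jk}=K_{jk}/(\sqrt{\lambda_j}+\sqrt{\lambda_k})$. Symmetrizing $2\tr(DY^{-1}D)=2\sum_{j,k}D_{jk}^2\lambda_k^{-1/2}$ then reproduces the paper's coefficients exactly.

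Either form also gives an explicit curvature constant if you want one. From $\norm{K}_F\le 2\lambda_{\max}(Y)\norm{D}_F$ and $DY^{-1}D\succeq\lambda_{\max}(Y)^{-1}D^2$, you get $2\tr(DY^{-1}D)\ge\norm{K}_F^2/(2\lambda_{\max}(X)^{3/2})$. So $\mu_{\mathcal K}$ can be made explicit in terms of the spectral box rather than coming from compactness alone.
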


\begin{proof}
	It suffices to prove pointwise Euclidean Hessian positivity; strict convexity then follows because \(\R^d\times\Spp^d\) is convex.  The mean and covariance variables separate in \(W_2^2\), and the mean block of \(D^2\Phi_\infty\) is \(2Z_0^{1/\theta}I\).  For the covariance block, fix \(A\in\Spp^d\) and set
	\[
	F_A(X)=d_{\BW}^2(A,X)
	=\tr A+\tr X-2\tr\!\left(A^{1/2}XA^{1/2}\right)^{1/2}.
	\]
	For \(H\in\Sym^d\), diagonalize
	\[
	Y=A^{1/2}XA^{1/2}=U\operatorname{diag}(\lambda_1,\ldots,\lambda_d)U^\top,
	\qquad
	\widehat H=U^\top A^{1/2}HA^{1/2}U.
	\]
	For a trace spectral function, the second-derivative formula~\cite{LewisSendov2001} reads
	\[
	D^2\tr f(Y)[K,K]
	=
	\sum_i f''(\lambda_i)K_{ii}^2
	+2\sum_{i<j}
	\frac{f'(\lambda_i)-f'(\lambda_j)}{\lambda_i-\lambda_j}
	K_{ij}^2,
	\]
	with the quotient understood by continuity when eigenvalues coincide.  For \(f(t)=\sqrt t\),
	\[
	f''(\lambda_i)=-\frac{1}{4\lambda_i^{3/2}},
	\qquad
	\frac{f'(\lambda_i)-f'(\lambda_j)}{\lambda_i-\lambda_j}
	=-\frac{1}{2\sqrt{\lambda_i\lambda_j}(\sqrt{\lambda_i}+\sqrt{\lambda_j})}.
	\]
	Since \(F_A(X)=\tr A+\tr X-2\tr f(Y)\), the affine terms have zero Hessian and therefore
	\[
	D^2F_A(X)[H,H]
	=
	\sum_{i=1}^d\frac{\widehat H_{ii}^2}{2\lambda_i^{3/2}}
	+
	2\sum_{i<j}
	\frac{\widehat H_{ij}^2}
	{\sqrt{\lambda_i\lambda_j}(\sqrt{\lambda_i}+\sqrt{\lambda_j})}.
	\]
	Every coefficient is positive, and \(A^{1/2}HA^{1/2}=0\) only when \(H=0\) because \(A^{1/2}\) is invertible.  Therefore \(D^2F_A(X)[H,H]>0\) for every nonzero \(H\).  Positive weights and the absence of mean--covariance cross terms make \(D^2\Phi_\infty\) positive definite on \(\R^d\times\Spp^d\).  Continuity of the Hessian on a compact set makes its smallest eigenvalue uniformly positive, proving the asserted lower bound on every compact \(\mathcal K\Subset\mathcal G_d^1\).
	
	At a critical point the gradient vanishes, so under a smooth coordinate change the Hessian transforms by congruence.  Positive definiteness is therefore chart independent there and supplies the limit-side nondegeneracy used in \Cref{thm:largebranch}.
\end{proof}

\begin{lemma}[Large-penalty localization]\label{lem:largelocalization}
	Let
	\[
	\nu_\kappa\in
	\operatorname*{argmin}_{\nu\in\mathcal G_d^1}\Phi_\kappa(\nu)
	\]
	be any global Gaussian minimizer.  There exist \(\kappa_0<\infty\) and constants \(R<\infty\), \(0<\underline\sigma\le\Lambda<\infty\), independent of the choice of minimizer and of \(\kappa\ge\kappa_0\), such that
	\begin{equation}
		\norm{m_\kappa}\le R,
		\qquad
		\underline\sigma I\preceq\Sigma_\kappa\preceq\Lambda I.
		\label{eq:largegloballocalization}
	\end{equation}
\end{lemma}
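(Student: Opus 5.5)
We plan to use only two facts about \(\nu_\kappa\): it maximizes \(\Zfun_\kappa\) (\Cref{thm:massprofile}), and, being a global minimizer on the open manifold, it is a stationary fixed point (\Cref{thm:fixedpoint}). The upper bounds will come from comparison with a fixed competitor. The lower spectral bound will come from the fixed-point equation \eqref{eq:fixedSigma} combined with \Cref{lem:spectraltransfer}. This avoids the inward-ascent constants of \Cref{lem:profiledinward}, which depend on the penalties in a way we do not want to track.

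\emph{Step 1: uniform endpoint bounds.} Fix a competitor \(\bar\nu\in\mathcal G_d^1\), for instance \(\bar\nu=\mu_1\). Testing \eqref{eq:Adef} with \(p=\mu_i\), \(q=\bar\nu\) gives \(A_i^{(\kappa)}(\bar\nu)\le W_2^2(\mu_i,\bar\nu)=:W_i\) for every \(\kappa\). Optimality \(\Zfun_\kappa(\nu_\kappa)\ge\Zfun_\kappa(\bar\nu)\) then yields
\[
\sum_i\lambda_ia_i^\theta\Bigl(1-e^{-A_i^{(\kappa)}(\nu_\kappa)/(\kappa\bar s)}\Bigr)
\le\sum_i\lambda_ia_i^\theta\Bigl(1-e^{-W_i/(\kappa\bar s)}\Bigr)
\le\frac{1}{\kappa\bar s}\sum_i\lambda_ia_i^\theta W_i .
\]
Every summand on the left is nonnegative, so each one is \(O(\kappa^{-1})\). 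Inverting \(x\mapsto 1-e^{-x/(\kappa\bar s)}\) gives, for \(\kappa\ge\kappa_0\), a bound \(A_i^{(\kappa)}(\nu_\kappa)\le C\) for all \(i\), with \(C\) independent of \(\kappa\) and of the choice of minimizer.

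\emph{Step 2: mean and upper covariance bounds.} Rerun the proof of \Cref{lem:uppercompact} with \(\tau_j=\kappa\bar\tau_j\), tracking the dependence on \(\kappa\). The three bounds \eqref{eq:sublevelthreebounds} become \(W_2^2(p_i,q_i)\le C\), \(\KL(p_i\mid\mu_i)\le C/(\kappa\bar\tau_0)\le C/(\kappa_0\bar\tau_0)\), and \(\KL(q_i\mid\nu_\kappa)\le C/(\kappa_0\bar\tau_1)\). All three right-hand sides are uniform in \(\kappa\ge\kappa_0\). The constants produced in that proof depend only on these KL levels, via \Cref{lem:KLsublevel} and the compact level set of \(t-\log t-1\), and on the Bures bound. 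Hence the proof gives \(\norm{m_\kappa}\le R\) and \(\Sigma_\kappa\preceq\Lambda I\) uniformly. As a by-product we obtain \(P_i^{(\kappa)}\succeq\underline p I\), with \(\underline p:=\min_i\underline p_i>0\) independent of \(\kappa\).

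\emph{Step 3: lower spectral bound (main obstacle).} Let \(t=\lambda_{\min}(\Sigma_\kappa)\) and suppose \(t<\underline p\). \Cref{lem:spectraltransfer} holds for any \(\eta_1=\kappa\bar\tau_1/2\), since \eqref{eq:psibarrier} does not involve \(\eta_1\). It therefore gives \(\lambda_{\min}(Q_i^{(\kappa)})>t\) for every \(i\). The stationary equation \eqref{eq:fixedSigma} writes \(\Sigma_\kappa\) as a convex combination of the \(Q_i\) plus a positive-semidefinite term. Exactly as in \Cref{cor:trajectorynoncollapse}, this gives \(\lambda_{\min}(\Sigma_\kappa)\ge\min_i\lambda_{\min}(Q_i)>t\), a contradiction. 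Hence \(\underline\sigma:=\underline p\) works.

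The delicate point is that every constant must be uniform in \(\kappa\). This holds because the KL budgets only improve as \(\kappa\) grows, and because the spectral barrier \(\min\{t,\underline p\}\) is insensitive to \(\eta_1\). We would verify this monotonicity explicitly when invoking \Cref{lem:KLsublevel}.
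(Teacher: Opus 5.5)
Your proposal is correct and follows the paper's proof essentially step for step. You obtain a uniform endpoint bound by comparison with a fixed reference shape, then rerun \Cref{lem:uppercompact} with the \(\kappa\)-uniform KL budgets \(C/(\kappa_0\bar\tau_j)\) to get the mean, upper-spectral, and \(P_i^{(\kappa)}\succeq\underline p I\) bounds, and finally combine the stationary equation \eqref{eq:fixedSigma} with the \(\eta_1\)-independent barrier of \Cref{lem:spectraltransfer} to force \(\lambda_{\min}(\Sigma_\kappa)\ge\underline p\). The one minor difference is in Step~1: you lower-bound \(\Zfun_\kappa(\bar\nu)\) using the elementary test pair \(A_i^{(\kappa)}(\bar\nu)\le W_2^2(\mu_i,\bar\nu)\) and \(1-e^{-x}\le x\), whereas the paper invokes \Cref{lem:largeanalyticextension} on a singleton; your version is slightly more self-contained and gives the same \(O(\kappa^{-1})\) deficit.
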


\begin{proof}
	Write \(c_i=\lambda_i a_i^\theta\), so \(Z_0=\sum_i c_i\).  Fix one reference Gaussian \(\nu_{\rm ref}\).  By \Cref{lem:largeanalyticextension} on the singleton \(\{\nu_{\rm ref}\}\),
	\[
	\Zfun_\kappa(\nu_{\rm ref})
	\ge Z_0-\frac{C}{\kappa}
	\]
	for large \(\kappa\).  Since minimizing \(\Phi_\kappa\) is equivalent to maximizing \(\Zfun_\kappa\),
	\[
	0\le Z_0-\Zfun_\kappa(\nu_\kappa)\le \frac{C}{\kappa}.
	\]
	Each summand in
	\[
	Z_0-\Zfun_\kappa(\nu_\kappa)
	=\sum_i c_i
	\left[
	1-\exp\!\left(-\frac{A_i^{(\kappa)}(\nu_\kappa)}{\kappa\bar s}\right)
	\right]
	\]
	is nonnegative, and for every \(i\),
	\[
	1-\exp\!\left(-\frac{A_i^{(\kappa)}(\nu_\kappa)}{\kappa\bar s}\right)
	\le \frac{C_i}{\kappa}.
	\]
	After increasing \(\kappa_0\), the right-hand side is at most \(1/2\), and \(-\log(1-x)\le2x\) converts this into an endpoint bound uniform in \(\kappa\)
	\begin{equation}
		A_i^{(\kappa)}(\nu_\kappa)\le C_i'
		\qquad(i=1,\ldots,n,\ \kappa\ge\kappa_0).
		\label{eq:largeendpointglobalbound}
	\end{equation}
	The proof of \Cref{lem:uppercompact} is uniform for \(\kappa\ge\kappa_0\): indeed \eqref{eq:largeendpointglobalbound} directly bounds \(W_2^2(p_i,q_i)\) by \(C_i'\),
	\[
	\KL(p_i\mid\mu_i)\le \frac{C_i'}{\kappa_0\bar\tau_0},
	\qquad
	\KL(q_i\mid\nu_\kappa)\le \frac{C_i'}{\kappa_0\bar\tau_1}.
	\]
	The same argument therefore bounds the means and largest covariance eigenvalues uniformly by common constants \(R,\Lambda\), as required in \eqref{eq:largegloballocalization}.  The first KL bound also places the adjusted source covariance in a fixed KL sublevel, so
	\[
	P_i^{(\kappa)}\succeq \underline p_i I
	\]
	with \(\underline p_i>0\) independent of \(\kappa\).  Let \(\underline p=\min_i\underline p_i\).
	
	Every global minimizer is stationary for the smooth profiled functional, hence satisfies the fixed-point covariance equation \eqref{eq:fixedSigma}.  Put \(t_\kappa=\lambda_{\min}(\Sigma_\kappa)\).  Apply \Cref{lem:spectraltransfer} with the scaled target penalty \(\tau_1^{(\kappa)}=\kappa\bar\tau_1\).  If \(t_\kappa<\underline p\), then
	\[
	\lambda_{\min}(Q_i^{(\kappa)})>t_\kappa
	\qquad\text{for every }i.
	\]
	The positive-semidefinite between-mean term in \eqref{eq:fixedSigma} would then imply
	\[
	t_\kappa
	\ge
	\sum_i\omega_i(\nu_\kappa)\lambda_{\min}(Q_i^{(\kappa)})
	>t_\kappa.
	\]
	This contradiction rules out \(t_\kappa<\underline p\).  Hence \(t_\kappa\ge\underline p\), completing \eqref{eq:largegloballocalization}.
\end{proof}

\begin{theorem}[Large-penalty barycenter limit]\label{thm:largeglobal}
	Let \(\nu_{\BW}\) denote the Gaussian Wasserstein barycenter of \(\mu_1,\ldots,\mu_n\) with weights \(\widetilde\lambda_i\) from \eqref{eq:largeweights}.  Its existence is classical~\cite{AguehCarlier2011,AlvarezEsteban2016}, while uniqueness follows from \Cref{lem:BWlimitHessian}.  Choose any \(\nu_\kappa\in\operatorname*{argmin}_{\nu\in\mathcal G_d^1}\Phi_\kappa(\nu)\).  Then
	\begin{equation}
		\nu_\kappa\longrightarrow\nu_{\BW}
		\qquad(\kappa\to\infty).
		\label{eq:largeglobalshape}
	\end{equation}
	Moreover,
	\begin{equation}
		b_\kappa(\nu_\kappa)\longrightarrow Z_0^{1/\theta},
		\label{eq:largeglobalmass}
	\end{equation}
	and
	\begin{equation}
		\min_{\nu\in\mathcal G_d^1}\widetilde\Phi_\kappa(\nu)
		\longrightarrow
		\Phi_\infty(\nu_{\BW}).
		\label{eq:largeglobalvalue}
	\end{equation}
\end{theorem}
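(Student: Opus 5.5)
The plan is a standard Γ-convergence-type argument on a fixed compact set. \Cref{lem:largelocalization} makes the set compact, and the uniform expansion of \Cref{thm:largelimit} provides the convergence. By \Cref{lem:largelocalization}, there are \(\kappa_0\) and constants \(R,\underline\sigma,\Lambda\) such that every global minimizer \(\nu_\kappa\) with \(\kappa\ge\kappa_0\) lies in
\[
\mathcal K_0=\{\N(m,\Sigma):\norm m\le R,\ \underline\sigma I\preceq\Sigma\preceq\Lambda I\}.
\]
I will enlarge this to a compact box \(\mathcal K\Subset\mathcal G_d^1\) that also contains \(\nu_{\BW}\); such a box exists because \(\nu_{\BW}\) is a nondegenerate Gaussian. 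On \(\mathcal K\), \Cref{thm:largelimit} gives \(\sup_{\mathcal K}|\widetilde\Phi_\kappa-\Phi_\infty|=:\epsilon_\kappa\to0\). Since \(\Phi_\infty\) is \(Z_0^{1/\theta}\) times the \(\widetilde\lambda\)-weighted Wasserstein barycenter functional, its unique minimizer over \(\mathcal G_d^1\) is \(\nu_{\BW}\), by \Cref{lem:BWlimitHessian} together with classical existence.

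\emph{Value convergence.} The shift in \eqref{eq:shiftedlargeprofile} does not depend on \(\nu\), so \(\nu_\kappa\) also minimizes \(\widetilde\Phi_\kappa\). I will prove \eqref{eq:largeglobalvalue} by a two-sided bound.

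The upper bound uses \(\nu_{\BW}\) as a competitor:
\[
\min\widetilde\Phi_\kappa\le\widetilde\Phi_\kappa(\nu_{\BW})\le\Phi_\infty(\nu_{\BW})+\epsilon_\kappa.
\]
The lower bound uses \(\nu_\kappa\in\mathcal K\):
\[
\min\widetilde\Phi_\kappa=\widetilde\Phi_\kappa(\nu_\kappa)\ge\Phi_\infty(\nu_\kappa)-\epsilon_\kappa\ge\Phi_\infty(\nu_{\BW})-\epsilon_\kappa.
\]
Letting \(\kappa\to\infty\) in both bounds gives \eqref{eq:largeglobalvalue}.

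\emph{Shape convergence.} The same two inequalities also give \(\Phi_\infty(\nu_\kappa)-\Phi_\infty(\nu_{\BW})\le2\epsilon_\kappa\). I expect this step to be the main point to handle carefully, because the convergence must hold for arbitrary choices of minimizers and not just along a subsequence. The plan is a contradiction argument via compactness. Suppose some sequence of minimizers had \(d_{\rm par}(\nu_{\kappa_j},\nu_{\BW})\ge\varepsilon\). Then a subsequence converges in the compact set \(\mathcal K\) to some \(\bar\nu\ne\nu_{\BW}\). Continuity of \(\Phi_\infty\) would then give \(\Phi_\infty(\bar\nu)=\Phi_\infty(\nu_{\BW})\), which contradicts uniqueness of the minimizer.

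A quantitative alternative is also available. By \Cref{lem:BWlimitHessian}, \(D^2\Phi_\infty\succeq\mu_{\mathcal K}I\) on the box \(\mathcal K\), which is convex, and \(\nabla\Phi_\infty(\nu_{\BW})=0\). Hence
\[
\Phi_\infty(\nu)-\Phi_\infty(\nu_{\BW})\ge\frac{\mu_{\mathcal K}}{2}\,d_{\rm par}(\nu,\nu_{\BW})^2
\quad\text{on }\mathcal K.
\]
Combined with \(\epsilon_\kappa=O(\kappa^{-1})\), this yields \(d_{\rm par}(\nu_\kappa,\nu_{\BW})=O(\kappa^{-1/2})\) uniformly over all choices of minimizers, which proves \eqref{eq:largeglobalshape}.

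\emph{Mass convergence.} Finally, \eqref{eq:largeglobalmass} follows directly. Since \(\nu_\kappa\in\mathcal K\), the uniform convergence \(b_\kappa\to Z_0^{1/\theta}\) on \(\mathcal K\) from \eqref{eq:largemasslimit} applies at \(\nu_\kappa\). This needs no information about where \(\nu_\kappa\) converges, only that it stays in \(\mathcal K\).
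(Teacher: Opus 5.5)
Your proposal is correct and follows the paper's proof. It localizes all large-penalty minimizers with \Cref{lem:largelocalization}, uses the uniform convergence \(\widetilde\Phi_\kappa\to\Phi_\infty\) and \(b_\kappa\to Z_0^{1/\theta}\) on a compact set from \Cref{thm:largelimit}, and identifies every cluster point with the unique minimizer \(\nu_{\BW}\) via \Cref{lem:BWlimitHessian}. Enlarging the box to contain \(\nu_{\BW}\) makes explicit a comparison step the paper leaves implicit, and your strong-convexity variant adds a uniform rate \(d_{\rm par}(\nu_\kappa,\nu_{\BW})=O(\kappa^{-1/2})\) that the paper only sharpens later, to \(O(\kappa^{-1})\), in \Cref{thm:largebranch}.
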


\begin{proof}
	By \Cref{lem:largelocalization}, every large-penalty global minimizer belongs to one compact set \(\mathcal K_*\Subset\mathcal G_d^1\).  On \(\mathcal K_*\), \Cref{thm:largelimit} establishes uniform convergence \(\widetilde\Phi_\kappa\to\Phi_\infty\).  Every cluster point of a minimizing sequence therefore minimizes \(\Phi_\infty\).  By \Cref{lem:BWlimitHessian}, \(\Phi_\infty\) is strictly convex in global mean--covariance coordinates, so its minimizer is unique; hence every cluster point equals \(\nu_{\BW}\), proving \eqref{eq:largeglobalshape}.  The mass limit \eqref{eq:largemasslimit} on \(\mathcal K_*\) gives \eqref{eq:largeglobalmass}, and uniform convergence gives the limiting minimum value in \eqref{eq:largeglobalvalue}.
\end{proof}

\begin{remark}
	Along \(\nu_t=\N(0,tI)\) with \(t\downarrow0\), each \(W_2^2(\mu_i,\nu_t)\) has a finite limit.  Finite sublevels of \(\Phi_\infty\) may therefore approach singular covariances even though the global minimizers remain in a compact subset of the open Gaussian manifold \(\mathcal G_d^1\).  Rank-changing limits naturally lead to a completion containing singular Gaussians.
\end{remark}

\subsection{First-order expansion and eventual uniqueness}

The zeroth-order limit leaves two questions: the first correction and eventual uniqueness.  Both are resolved by the analytic endpoint expansion.

\begin{proposition}\label{prop:largefirstcorrection}
	For a covector \(\ell\in T_\mu^*\mathcal G_d^1\), write \(\norm{\ell}_{\FR,\mu,*}\) for the dual norm induced by the Gaussian Fisher--Rao metric \eqref{eq:FRmetric}.  Define
	\begin{equation}
		\mathcal C_i(\nu)
		:=
		-\frac{1}{2\bar\tau_0}
		\norm{D_1\mathcal W(\mu_i,\nu)}_{\FR,\mu_i,*}^2
		-\frac{1}{2\bar\tau_1}
		\norm{D_2\mathcal W(\mu_i,\nu)}_{\FR,\nu,*}^2.
		\label{eq:largeendpointcorrection}
	\end{equation}
	Then, on every compact \(\mathcal K\Subset\mathcal G_d^1\), the endpoint admits the expansion
	\begin{equation}
		A_i^{(\kappa)}(\nu)
		=\mathcal W_i(\nu)+\frac{1}{\kappa}\mathcal C_i(\nu)
		+O(\kappa^{-2}),
		\label{eq:largeAsecond}
	\end{equation}
	where the remainder is uniform together with derivatives in \(\nu\) up to order two.
\end{proposition}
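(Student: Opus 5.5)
We start from \Cref{lem:largeanalyticextension}, which gives an analytic extension in \(\varepsilon=\kappa^{-1}\) of \(\widehat A_i(\varepsilon,\nu)\), with uniform \(C^r(\mathcal K)\) Taylor remainders. The expansion \eqref{eq:largeAsecond} therefore reduces to computing \(\partial_\varepsilon\widehat A_i(0,\nu)\) and identifying it with \(\mathcal C_i(\nu)\). Once this is done, the \(O(\varepsilon^2)\) remainder is uniform together with \(\nu\)-derivatives up to order two, by the same lemma with \(r=2\).

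To compute the derivative, we use a variational (envelope) argument rather than differentiating the closed forms \eqref{eq:Sstar}--\eqref{eq:uvstar}. For \(\varepsilon>0\) we write
\[
\widehat A_i(\varepsilon,\nu)=\min_{p,q}\Bigl\{\mathcal W(p,q)+\varepsilon^{-1}\bar\tau_0\KL(p\mid\mu_i)+\varepsilon^{-1}\bar\tau_1\KL(q\mid\nu)\Bigr\}.
\]
By \eqref{eq:largeparameterexpansion} the optimizers are \(p=\mu_i+\varepsilon x+O(\varepsilon^2)\) and \(q=\nu+\varepsilon y+O(\varepsilon^2)\) in Gaussian coordinates, where \((x,y)\) are the analytic first-order velocities. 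We substitute these into the objective. By \eqref{eq:KLlocal}, \(\varepsilon^{-1}\bar\tau_0\KL(p\mid\mu_i)=\tfrac{\varepsilon}{2}\bar\tau_0 g_{\FR,\mu_i}(x,x)+O(\varepsilon^2)\), and analogously for the \(q\) term. Since \(\mathcal W\) is smooth near \((\mu_i,\nu)\), Taylor expansion gives \(\mathcal W(p,q)=\mathcal W_i(\nu)+\varepsilon[D_1\mathcal W\cdot x+D_2\mathcal W\cdot y]+O(\varepsilon^2)\). This smoothness holds because both covariances are nondegenerate on a compact set, so the Bures term is analytic there. The first-order coefficient is thus
\[
D_1\mathcal W\cdot x+\tfrac{\bar\tau_0}{2}g_{\FR,\mu_i}(x,x)+D_2\mathcal W\cdot y+\tfrac{\bar\tau_1}{2}g_{\FR,\nu}(y,y).
\]
Minimizing in \(x\) and \(y\) separately yields \(-\frac{1}{2\bar\tau_0}\norm{D_1\mathcal W}_{\FR,\mu_i,*}^2-\frac{1}{2\bar\tau_1}\norm{D_2\mathcal W}_{\FR,\nu,*}^2=\mathcal C_i(\nu)\).

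To make this rigorous, we bound the value from both sides. The upper bound comes from the competitor \(p=\mu_i+\varepsilon x^\sharp\), \(q=\nu+\varepsilon y^\sharp\) built from the explicit minimizing velocities \(x^\sharp=-\bar\tau_0^{-1}G_{\FR,\mu_i}^{-1}D_1\mathcal W\) and its analogue \(y^\sharp\); this gives \(\widehat A_i\le\mathcal W_i+\varepsilon\mathcal C_i+O(\varepsilon^2)\). The lower bound comes from evaluating at the true optimizer, whose velocities are bounded by the analytic extension, and bounding the resulting quadratic below by its minimum. Together these identify \(\partial_\varepsilon\widehat A_i(0,\nu)=\mathcal C_i(\nu)\). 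Since the identification is pointwise in \(\nu\) and the remainder bounds come from analyticity, the order-two \(C^2(\mathcal K)\) uniformity is inherited.

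The main obstacle is uniformity of the \(O(\varepsilon^2)\) errors in these Taylor steps. The KL expansions are taken about a moving base point \(\nu\) and must hold uniformly over \(\mathcal K\) and over the bounded velocity set. We would handle this by noting that every quantity involved is jointly analytic in \((\varepsilon,\nu,x,y)\) near \(\{0\}\times\mathcal K\times\text{bounded set}\), and then invoking compactness. A secondary check is that the covector pairing \(D_j\mathcal W\cdot x\) and the dual norms are taken in the same mean--covariance coordinates in which \(g_{\FR}\) is written in \eqref{eq:FRmetric}.
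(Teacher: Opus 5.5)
Your argument is correct. It shares the paper's skeleton: joint analyticity of $\widehat A_i$ from Lemma~\ref{lem:largeanalyticextension}, identification of the $\varepsilon$-coefficient as $\min\mathcal Q_i^{(1)}$ evaluated by completing the square, and $C^2(\mathcal K)$ uniformity inherited from that analyticity. The difference is in how you justify the central identification.

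The paper differentiates the two endpoint first-order conditions at $\varepsilon=0$. This gives linear equations showing that the optimizer velocities $(x_*,y_*)$ are the negative Fisher--Rao Riesz representatives of $D_1\mathcal W/\bar\tau_0$ and $D_2\mathcal W/\bar\tau_1$. These are exactly the Euler equations of $\mathcal Q_i^{(1)}$, and the paper then evaluates $\mathcal Q_i^{(1)}$ there.

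You instead sandwich the optimal value between two bounds: the linear competitor built from the minimizing velocities, and $\mathcal Q_i^{(1)}$ evaluated along the true optimizer. This avoids linearizing the stationarity system altogether. By strict convexity it still recovers $(x_*,y_*)=\operatorname{argmin}\mathcal Q_i^{(1)}$ as a by-product, whereas the paper's route delivers that velocity characterization directly.

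The uniformity you single out as the main obstacle is not actually needed. The sandwich only has to identify $\partial_\varepsilon\widehat A_i(0,\nu)$ for each fixed $\nu$. For that, $o(\varepsilon)$ remainders along two fixed curves, via \eqref{eq:KLlocal}, suffice. All uniform $C^2$ control then comes from Lemma~\ref{lem:largeanalyticextension}, exactly as in the paper.
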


\begin{proof}
	Set \(\varepsilon=\kappa^{-1}\) and use the global mean--covariance coordinates on the open set \(\R^d\times\Spp^d\).  Write \(\operatorname{par}(\N(u,P))=(u,P)\).  By \Cref{lem:largeanalyticextension}, the unique adjusted marginals have parameter expansions
	\[
	\operatorname{par}(p_i^\varepsilon)
	=\operatorname{par}(\mu_i)+\varepsilon x_*+O(\varepsilon^2),
	\qquad
	\operatorname{par}(q_i^\varepsilon)
	=\operatorname{par}(\nu)+\varepsilon y_*+O(\varepsilon^2),
	\]
	where \(x_*\in T_{\mu_i}\mathcal G_d^1\) and \(y_*\in T_\nu\mathcal G_d^1\) under these coordinates, with coefficients depending smoothly (indeed analytically) on \(\nu\).  For arbitrary tangent displacements \((x,y)\), the coefficient of \(\varepsilon\) in the endpoint objective is
	\begin{equation}
		\mathcal Q_i^{(1)}(x,y)
		=D_1\mathcal W(\mu_i,\nu)[x]
		+D_2\mathcal W(\mu_i,\nu)[y]
		+\frac{\bar\tau_0}{2}g_{\FR,\mu_i}(x,x)
		+\frac{\bar\tau_1}{2}g_{\FR,\nu}(y,y).
	\end{equation}
	Indeed, the transport term contributes its two first variations, whereas the diagonal expansion \eqref{eq:KLlocal} contributes the Fisher--Rao quadratic forms after multiplication by \(\varepsilon^{-1}\bar\tau_j\).  The linearized stationarity equations make this identification explicit.  For every \(\xi\in T_{\mu_i}\mathcal G_d^1\) and \(\eta\in T_\nu\mathcal G_d^1\), differentiation of the two endpoint first-order equations at \(\varepsilon=0\) gives
	\begin{align}
		D_1\mathcal W(\mu_i,\nu)[\xi]
		+\bar\tau_0 g_{\FR,\mu_i}(x_*,\xi)&=0,
		\label{eq:large-linearized-source}\\
		D_2\mathcal W(\mu_i,\nu)[\eta]
		+\bar\tau_1 g_{\FR,\nu}(y_*,\eta)&=0.
		\label{eq:large-linearized-target}
	\end{align}
	Here the Hessian of the moving-argument Gaussian KL divergence on the diagonal is precisely the Fisher--Rao metric by \eqref{eq:KLlocal}.  Thus \(x_*\) and \(y_*\) are the negative Fisher--Rao Riesz representatives of \(D_1\mathcal W/\bar\tau_0\) and \(D_2\mathcal W/\bar\tau_1\), respectively.  Equations \eqref{eq:large-linearized-source}--\eqref{eq:large-linearized-target} are also the Euler equations for minimizing \(\mathcal Q_i^{(1)}\).  Its strict convexity makes the minimizer unique, and completing the square in each variable evaluates the two infima as
	\[
	\inf_x\left\{D_1\mathcal W[x]+\frac{\bar\tau_0}{2}g_{\FR}(x,x)\right\}
	=-\frac{1}{2\bar\tau_0}\norm{D_1\mathcal W}_{\FR,*}^2,
	\]
	and the analogous identity in the second slot.  The coefficient of \(\varepsilon\) is exactly \(\mathcal C_i\) in \eqref{eq:largeendpointcorrection}.
	
	This derivation is coordinate independent because the covectors and Fisher--Rao Riesz representatives in \eqref{eq:large-linearized-source}--\eqref{eq:large-linearized-target} are intrinsic.  Joint analyticity provides uniform control of the differentiated remainder on compact sets.  By \Cref{lem:largeanalyticextension}, \(\widehat A_i(\varepsilon,\nu)\) is real analytic on a neighborhood of \(\{0\}\times\mathcal K\).  Taylor's theorem in \(\varepsilon\), with derivatives in \(\nu\) taken in the fixed ambient mean--covariance coordinates, bounds the remainder uniformly:
	\[
	\left\|\widehat A_i(\varepsilon,\cdot)-\mathcal W_i-\varepsilon\mathcal C_i\right\|_{C^2(\mathcal K)}
	\le C_{\mathcal K}\varepsilon^2.
	\]
	Substituting \(\varepsilon=\kappa^{-1}\) proves \eqref{eq:largeAsecond} with the claimed uniform \(C^2\) remainder.
\end{proof}

\begin{corollary}\label{cor:largeprofilecorrection}
	Set \(p_\theta=1/\theta\), \(c_i=\lambda_i a_i^\theta\), and
	\begin{equation}
		M_1(\nu)=\sum_i c_i\mathcal W_i(\nu),
		\qquad
		M_2(\nu)=\sum_i c_i\mathcal W_i(\nu)^2,
		\qquad
		\mathcal C(\nu)=\sum_i c_i\mathcal C_i(\nu).
	\end{equation}
	Then the profiled functional admits the following \(C^2\)-uniform expansion on compact Gaussian sets:
	\begin{equation}
		\widetilde\Phi_\kappa(\nu)
		=\Phi_\infty(\nu)
		+\frac{1}{\kappa}\Phi_1(\nu)
		+O(\kappa^{-2}),
		\label{eq:largeprofilefirstcorrection}
	\end{equation}
	where
	\begin{equation}
		\Phi_1(\nu)=Z_0^{p_\theta-1}\mathcal C(\nu)-\frac{Z_0^{p_\theta-1}}{2\bar s}M_2(\nu)-\frac{p_\theta-1}{2\bar s}Z_0^{p_\theta-2}M_1(\nu)^2.
		\label{eq:Phi1}
	\end{equation}
	The profiled mass has the expansion
	\begin{equation}
		b_\kappa(\nu)
		=Z_0^{1/\theta}
		-\frac{Z_0^{1/\theta-1}}{\bar\tau_0\kappa}M_1(\nu)
		+O(\kappa^{-2}).
		\label{eq:largemassfirstcorrection}
	\end{equation}
\end{corollary}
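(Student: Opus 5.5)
The plan is to substitute the endpoint expansion \eqref{eq:largeAsecond} of \Cref{prop:largefirstcorrection} into the exact profile formula $\Phi_\kappa=\kappa\bar\tau_0(\bar a-\Zfun_\kappa^{p_\theta})$ from \Cref{thm:massprofile}, and then Taylor-expand twice. The first expansion is of the Gibbs exponentials, the second is of the power map $x\mapsto x^{p_\theta}$ about $Z_0$. Throughout I set $\varepsilon=\kappa^{-1}$. I work on a fixed compact $\mathcal K\Subset\mathcal G_d^1$, where every $\mathcal W_i$, $\mathcal C_i$ is bounded in $C^2(\mathcal K)$.

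\emph{Step 1: expansion of $\Zfun_\kappa$.} The exponent is $A_i^{(\kappa)}/(\kappa\bar s)=\varepsilon\mathcal W_i/\bar s+\varepsilon^2\mathcal C_i/\bar s+O(\varepsilon^3)$. Using $e^{-x}=1-x+x^2/2+O(x^3)$ and summing with the weights $c_i$ gives
\[
\Zfun_\kappa=Z_0+\delta_\kappa,\qquad
\delta_\kappa=-\frac{\varepsilon}{\bar s}M_1+\varepsilon^2\Bigl(-\frac{\mathcal C}{\bar s}+\frac{M_2}{2\bar s^2}\Bigr)+O(\varepsilon^3).
\]

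\emph{Step 2: expansion of the power.} I expand $Z^{p_\theta}=Z_0^{p_\theta}+p_\theta Z_0^{p_\theta-1}\delta+\tfrac{p_\theta(p_\theta-1)}{2}Z_0^{p_\theta-2}\delta^2+O(\delta^3)$ and keep terms through order $\varepsilon^2$. Then I multiply by $-\kappa\bar\tau_0$ and use the identity $\bar\tau_0p_\theta=\bar s$, which holds because $\theta\bar s=\bar\tau_0$.
\begin{itemize}
\item The order-$\varepsilon^0$ term of $\widetilde\Phi_\kappa$ is $Z_0^{p_\theta-1}M_1$. Since $\widetilde\lambda_i=c_i/Z_0$, this equals $\Phi_\infty$ in \eqref{eq:BWlimitfunctional}.
\item The order-$\varepsilon$ term collects $Z_0^{p_\theta-1}\mathcal C-\tfrac{Z_0^{p_\theta-1}}{2\bar s}M_2-\tfrac{p_\theta-1}{2\bar s}Z_0^{p_\theta-2}M_1^2$, which is exactly \eqref{eq:Phi1}.
\item For the mass \eqref{eq:largemassfirstcorrection}, the same power expansion stopped at first order gives $b_\kappa=Z_0^{p_\theta}-p_\theta Z_0^{p_\theta-1}M_1/(\bar s\kappa)+O(\kappa^{-2})$. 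The identity $p_\theta/\bar s=1/\bar\tau_0$ then yields the stated coefficient.
\end{itemize}
The error in the profile is $\kappa\cdot O(\varepsilon^3)=O(\kappa^{-2})$, as claimed.

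\emph{Step 3: $C^2$ uniformity.} This is the only delicate point, and I expect it to be the main obstacle. The factor $\kappa$ multiplies an $O(\varepsilon^3)$ remainder, so I need that remainder to be $O(\varepsilon^3)$ in $C^2(\mathcal K)$, not merely pointwise. To get this I avoid composing remainders term by term. By \Cref{lem:largeanalyticextension}, $\widehat A_i(\varepsilon,\nu)$ is jointly real analytic near $\{0\}\times\mathcal K$. Hence $G(\varepsilon,\nu):=\sum_ic_i\exp(-\varepsilon\widehat A_i(\varepsilon,\nu)/\bar s)$ is jointly analytic, with $G(0,\cdot)=Z_0>0$. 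It follows that $\varepsilon^{-1}\bigl(Z_0^{p_\theta}-G^{p_\theta}\bigr)$ extends analytically through $\varepsilon=0$. Taylor's formula with integral remainder in $\varepsilon$, followed by differentiation under the integral in $\nu$ up to order two, then bounds the remainder uniformly in $C^2(\mathcal K)$. The explicit coefficients computed in Steps 1–2 are the first Taylor coefficients of this analytic function, using $\partial_\varepsilon\widehat A_i(0,\cdot)=\mathcal C_i$ from \Cref{prop:largefirstcorrection}.
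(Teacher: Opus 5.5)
Your proposal is correct and follows the paper's proof. Like the paper, you substitute the $C^2$-uniform endpoint expansion of \Cref{prop:largefirstcorrection} into the Gibbs exponentials, sum to expand $\Zfun_\kappa$, and Taylor-expand $z\mapsto z^{p_\theta}$ using $\bar\tau_0p_\theta=\bar s$; your coefficients match \eqref{eq:Phi1} and \eqref{eq:largemassfirstcorrection}, and your Step~3 (analytic division of $Z_0^{p_\theta}-G^{p_\theta}$ by $\varepsilon$) just makes explicit the $C^2$ bookkeeping the paper compresses into ``in $C^2$ on compact sets,'' mirroring the argument it later uses in \Cref{thm:largebranch}.
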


\begin{proof}
	\Cref{prop:largefirstcorrection} yields the second-order exponential expansion
	\[
	\exp\!\left[-\frac{A_i^{(\kappa)}}{\kappa\bar s}\right]
	=1-\frac{\mathcal W_i}{\kappa\bar s}
	+\frac{1}{\kappa^2}
	\left(\frac{\mathcal W_i^2}{2\bar s^2}-\frac{\mathcal C_i}{\bar s}\right)
	+O(\kappa^{-3})
	\]
	in \(C^2\) on compact sets.  After summing over \(i\), \(\Zfun_\kappa\) has the corresponding second-order expansion.  Taylor expanding \(z\mapsto z^{p_\theta}\), using \(\bar\tau_0p_\theta=\bar s\), and substituting into \eqref{eq:shiftedlargeprofile} establishes \eqref{eq:largeprofilefirstcorrection}--\eqref{eq:Phi1}.  Applying the same expansion to \(\Zfun_\kappa^{1/\theta}\) establishes \eqref{eq:largemassfirstcorrection}.
\end{proof}

Here \(\mathcal C\) is the correction due to endpoint relaxation, while \(M_2\) and \(M_1^2\) arise, respectively, from the Gibbs exponential and the nonlinear mass profile.

\begin{theorem}[Large-penalty uniqueness and analytic branch]\label{thm:largebranch}
	Let \(x\) be any real-analytic local chart near the Gaussian Wasserstein barycenter \(\nu_{\BW}\) (for example, mean--log-covariance coordinates), and write \(x_{\BW}\) for its coordinate.  Set
	\begin{equation}
		H_\infty
		:=D_x^2(\Phi_\infty\circ x^{-1})(x_{\BW})\succ0,
		\label{eq:largeHnondegenerate}
	\end{equation}
	where positive definiteness follows from \Cref{lem:BWlimitHessian}.  Then there exists \(\kappa_*<\infty\) such that for every \(\kappa\ge\kappa_*\) the Gaussian-restricted KL-UOT barycenter shape over \(\mathcal G_d^1\) is unique.  Since the mass profile is unique for each shape by \Cref{thm:massprofile}, the corresponding finite Gaussian barycenter is also unique.  Its coordinate \(x_\kappa\) depends real analytically on \(\varepsilon=\kappa^{-1}\) near \(0\) and satisfies
	\begin{equation}
		x_\kappa=x_{\BW}-\frac{1}{\kappa}H_\infty^{-1}\nabla_x\bigl(\Phi_1\circ x^{-1}\bigr)(x_{\BW})+O(\kappa^{-2}).
		\label{eq:largebranchcorrection}
	\end{equation}
	The corresponding mass satisfies
	\begin{equation}
		b_\kappa(\nu_\kappa)
		=Z_0^{1/\theta}
		-\frac{Z_0^{1/\theta-1}}{\bar\tau_0\kappa}
		M_1(\nu_{\BW})
		+O(\kappa^{-2}).
		\label{eq:largebranchmass}
	\end{equation}
\end{theorem}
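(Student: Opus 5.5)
We use the analytic continuation in $\varepsilon=\kappa^{-1}$ together with the implicit function theorem, and then the uniform localization of \Cref{lem:largelocalization} to upgrade local uniqueness to global uniqueness.

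\emph{Step 1: an analytic critical branch.} Fix a compact neighborhood $\mathcal K$ of $\nu_{\BW}$ inside the chart domain. By \Cref{lem:largeanalyticextension}, every $\widehat A_i(\varepsilon,\nu)$ is jointly real analytic on a neighborhood of $\{0\}\times\mathcal K$. Hence
\[
\widehat\Phi(\varepsilon,x)
:=\varepsilon^{-1}\bar\tau_0\Bigl(\bar a-\bigl(\textstyle\sum_i c_i e^{-\widehat A_i(\varepsilon,x)\varepsilon/\bar s}\bigr)^{1/\theta}\Bigr)
-\varepsilon^{-1}\bar\tau_0\bigl(\bar a-Z_0^{1/\theta}\bigr)
\]
is jointly analytic, because the bracket minus $Z_0^{1/\theta}$ is $\varepsilon$ times an analytic function. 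This function coincides with $\widetilde\Phi_\kappa$ for $\varepsilon>0$ and with $\Phi_\infty$ at $\varepsilon=0$. Define $G(\varepsilon,x):=\nabla_x\widehat\Phi(\varepsilon,x)$. Then $G(0,x_{\BW})=0$, and $D_xG(0,x_{\BW})=H_\infty\succ0$ by \Cref{lem:BWlimitHessian}. The analytic implicit function theorem gives $\varepsilon_1>0$, a ball $B$ around $x_{\BW}$, and an analytic branch $x(\varepsilon)$ that is the unique zero of $G(\varepsilon,\cdot)$ in $B$. Shrinking $B$ and $\varepsilon_1$, continuity of $D_x^2\widehat\Phi$ keeps the Hessian positive definite on $[0,\varepsilon_1)\times B$. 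Differentiating $G(\varepsilon,x(\varepsilon))=0$ at $\varepsilon=0$ gives $x'(0)=-H_\infty^{-1}\partial_\varepsilon\nabla_x\widehat\Phi(0,x_{\BW})$. By \Cref{cor:largeprofilecorrection}, $\partial_\varepsilon\widehat\Phi(0,\cdot)=\Phi_1$, and since the expansion holds in $C^2$ the $x$-gradient and the $\varepsilon$-derivative commute. This yields \eqref{eq:largebranchcorrection}.

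\emph{Step 2: global minimizers lie on the branch.} \Cref{thm:globalexistence} guarantees that minimizers exist. Take any family of global minimizers $\nu_\kappa$. By \Cref{thm:largeglobal}, which rests on the uniform localization of \Cref{lem:largelocalization}, every choice converges to $\nu_{\BW}$ uniformly in the choice. Arguing by contradiction, if some choice stayed outside $B$ along a sequence $\kappa\to\infty$, that sequence of minimizers would not converge to $\nu_{\BW}$, which is impossible. So for $\kappa\ge\kappa_*$ every global minimizer lies in $\chi^{-1}(B)$. Each is a critical point of $\Phi_\kappa$, and hence of $\widetilde\Phi_\kappa$, since the shift is constant. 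By uniqueness of zeros of $G$ in $B$, every global minimizer equals $x(\kappa^{-1})$. This proves uniqueness of the shape, and \Cref{thm:massprofile} then gives uniqueness of the mass.

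\emph{Step 3: the mass.} Insert $\nu_\kappa=\nu_{\BW}+O(\kappa^{-1})$ into \eqref{eq:largemassfirstcorrection}. The term $M_1$ is Lipschitz on $\mathcal K$, and it is already multiplied by $\kappa^{-1}$, so the error from evaluating at $\nu_\kappa$ rather than $\nu_{\BW}$ is $O(\kappa^{-2})$. This gives \eqref{eq:largebranchmass}.

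The main obstacle is Step 1's joint analyticity of $\widehat\Phi$ through $\varepsilon=0$, specifically dividing by $\varepsilon$ after the Taylor expansion of $z^{1/\theta}$. I would handle this as in the proof of \Cref{lem:largeanalyticextension}, by factoring the analytic function that vanishes at $\varepsilon=0$ through an integral-remainder representation. The other point needing care is chart independence. Nondegeneracy at the critical point $x_{\BW}$ transfers across analytic charts by congruence of the Hessian, so the argument may be run in mean--covariance coordinates and then transported to any real-analytic chart.
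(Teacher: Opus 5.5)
Your proof is correct. Steps 1 and 3 match the paper: analytic continuation of the shifted profile through $\varepsilon=0$, the analytic implicit-function branch, the first correction from differentiating the stationarity equation, and the mass from \eqref{eq:largemassfirstcorrection}.

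You differ on how uniqueness is obtained. The paper proves uniqueness before touching the branch, in three moves:
\begin{itemize}
\item it places all large-$\kappa$ global minimizers in the convex localization box $\mathcal K_*$ of \Cref{lem:largelocalization};
\item it combines the uniform bound $D^2\Phi_\infty\succeq\mu_*I$ on that box from \Cref{lem:BWlimitHessian} with the $C^2$-uniform expansion of \Cref{cor:largeprofilecorrection} on the whole box, giving $D^2\widetilde\Phi_\kappa\succeq\frac{\mu_*}{2}I$ on $\mathcal K_*$;
\item it concludes by strong convexity, and only afterwards uses \Cref{thm:largeglobal} to place the already-unique minimizer on the branch.
\end{itemize}
You let the implicit-function theorem do double duty instead. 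Your contradiction argument turns \Cref{thm:largeglobal} into a set-wise statement: eventually every minimizer lies in $B$. Local uniqueness of zeros of $G(\varepsilon,\cdot)$ in $B$ then forces every minimizer to equal $x(\kappa^{-1})$.

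Your route is more economical. It needs the analytic expansion and nondegeneracy only near $\nu_{\BW}$, not $C^2$ control and a uniform convexity constant on the whole localization box. Global information enters only through \Cref{thm:largeglobal}.

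The paper's route gives a stronger by-product. For large $\kappa$, $\widetilde\Phi_\kappa$ is strongly convex on all of $\mathcal K_*$, so the global minimizer is the only stationary point in that box. Your argument excludes competing global minimizers everywhere, but excludes other critical points only inside the small ball $B$.

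One minor point: the interchange of $\partial_\varepsilon$ and $\nabla_x$ in Step 1 is justified by joint analyticity of $\widehat\Phi$ in $(\varepsilon,x)$, not by the $C^2$-in-$\nu$ expansion alone.
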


\begin{proof}
	For all sufficiently large \(\kappa\), \Cref{lem:largelocalization} confines every global minimizer to a common set that may be enlarged to the compact mean--covariance box
	\[
	\mathcal K_*=
	\left\{\N(m,\Sigma):\norm m\le R_*,\ 
	\underline\sigma_*I\preceq\Sigma\preceq\overline\sigma_*I\right\}
	\Subset\mathcal G_d^1.
	\]
	This box is convex in the ambient coordinates \((m,\Sigma)\).  On it, \Cref{lem:BWlimitHessian} supplies \(\mu_*>0\) with \(D^2\Phi_\infty\succeq\mu_*I\).  The \(C^2\)-uniform expansion in \Cref{cor:largeprofilecorrection} therefore implies, after increasing \(\kappa\) if necessary,
	\[
	D^2\widetilde\Phi_\kappa\succeq\frac{\mu_*}{2}I
	\qquad\text{on }\mathcal K_*.
	\]
	The shifted functional \(\widetilde\Phi_\kappa\) is strongly convex on the localization box; the same is true of \(\Phi_\kappa\) because the two differ by a constant.  Localization and strong convexity on \(\mathcal K_*\) therefore imply uniqueness of the global minimizer for all sufficiently large \(\kappa\).
	
	It remains to identify this minimizer with an analytic branch and compute its first correction.  By \Cref{lem:largeanalyticextension,cor:largeprofilecorrection}, the shifted objective in local coordinates, \((\varepsilon,x)\mapsto\widetilde\Phi_{1/\varepsilon}(x)\), extends real analytically through \(\varepsilon=0\) with value \(\Phi_\infty(x)\).  Indeed, \(\Zfun_{1/\varepsilon}(x)^{1/\theta}-Z_0^{1/\theta}\) is analytic in \((\varepsilon,x)\) and vanishes at \(\varepsilon=0\); analytic division by \(\varepsilon\) removes the apparent singularity in \eqref{eq:shiftedlargeprofile}.  Its stationarity map is analytic, and its derivative in \(x\) at \((0,x_{\BW})\) is the positive-definite matrix \(H_\infty\) from \eqref{eq:largeHnondegenerate}.  By the analytic implicit-function theorem, there is a unique analytic critical branch \(x(\varepsilon)\) near \(x_{\BW}\).
	
	By \Cref{thm:largeglobal}, the unique global minimizer converges to \(\nu_{\BW}\), so for all sufficiently large \(\kappa\) it lies in the implicit-function neighborhood and coincides with \(x(\kappa^{-1})\).  Differentiating the stationarity equation at \(\varepsilon=0\) and inserting \eqref{eq:largeprofilefirstcorrection} gives the first correction in \eqref{eq:largebranchcorrection}.  Finally, evaluating \eqref{eq:largemassfirstcorrection} at \(x_\kappa=x_{\BW}+O(\kappa^{-1})\) establishes \eqref{eq:largebranchmass}.
\end{proof}

The first-order coefficient in \eqref{eq:largebranchcorrection} is intrinsic: because \(d\Phi_\infty(\nu_{\BW})=0\), \(-H_\infty^{-1}d\Phi_1\) defines a tangent vector independently of the chosen chart.

In the equal-mass case \(a_i=a\) for every \(i\), one has \(\widetilde\lambda_i=\lambda_i\), \(Z_0^{1/\theta}=a\), and the limiting shape functional is \(a\sum_i\lambda_iW_2^2(\mu_i,\nu)\); thus the shape converges to the ordinary Gaussian Wasserstein barycenter and the mass to \(a\).

\subsection{Small-penalty Chernoff limit}

As \(\kappa\downarrow0\), the adjusted endpoint marginals coalesce and the shape criterion becomes affinity based.  For positive densities \(f_\mu,f_\nu\), define the order-\(\theta\) Chernoff affinity~\cite{Chernoff1952}
\begin{equation}
	\rhoCher(\mu,\nu)
	=
	\int_{\R^d}f_\mu(x)^\theta f_\nu(x)^{1-\theta}\,dx.
\end{equation}
For nondegenerate Gaussians, \(0<\rhoCher\leq1\).

\begin{lemma}\label{lem:weightedKL}
	Let \(\mu,\nu\in\mathcal G_d^1\), and define the probability measure \(\zeta_{\mu,\nu}\) by
	\begin{equation}
		f_{\zeta_{\mu,\nu}}
		=
		\frac{f_\mu^\theta f_\nu^{1-\theta}}{\rhoCher(\mu,\nu)}.
		\label{eq:geommeanmeasure}
	\end{equation}
	Then
	\begin{equation}
		\inf_{\eta\in\mathcal P(\R^d)}
		\left\{
		\bar\tau_0\KL(\eta\mid\mu)
		+\bar\tau_1\KL(\eta\mid\nu)
		\right\}
		=
		-\bar s\log\rhoCher(\mu,\nu),
		\label{eq:weightedKLmin}
	\end{equation}
	with unique minimizer \(\eta=\zeta_{\mu,\nu}\); moreover \(\zeta_{\mu,\nu}\in\mathcal G_d^1\).
\end{lemma}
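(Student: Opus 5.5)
The plan is to reduce the two-KL objective to a single relative entropy with respect to the normalized geometric mean $\zeta_{\mu,\nu}$ (a Donsker--Varadhan/Gibbs-type identity), and then read off the minimum and the unique minimizer.

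First I check that $\zeta_{\mu,\nu}$ is well defined and Gaussian. The log-density $\theta\log f_\mu+(1-\theta)\log f_\nu$ is a quadratic whose quadratic part is $-\tfrac12 x^\top\bigl(\theta\Sigma_\mu^{-1}+(1-\theta)\Sigma_\nu^{-1}\bigr)x$. This precision matrix is positive definite because $\theta\in(0,1)$. Hence $f_\mu^\theta f_\nu^{1-\theta}$ is integrable, $0<\rhoCher<\infty$, and after normalization
\[
\zeta_{\mu,\nu}=\N(m_\zeta,\Sigma_\zeta),\qquad \Sigma_\zeta^{-1}=\theta\Sigma_\mu^{-1}+(1-\theta)\Sigma_\nu^{-1},
\]
with $m_\zeta$ obtained by completing the square. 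In particular $\zeta_{\mu,\nu}\in\mathcal G_d^1$. (Hölder gives $\rhoCher\le1$, though that is not needed here.)

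Next, dividing by $\bar s$ and using $\theta=\bar\tau_0/\bar s$, it suffices to minimize $\theta\KL(\eta\mid\mu)+(1-\theta)\KL(\eta\mid\nu)$ over probability measures $\eta$. If $\eta\not\ll$ Lebesgue, both terms are $+\infty$, because $\mu$ and $\nu$ are mutually equivalent to Lebesgue measure. If some term is infinite, there is nothing to prove. For $\eta$ with density $g$ and both entropies finite, I would establish
\[
\theta\KL(\eta\mid\mu)+(1-\theta)\KL(\eta\mid\nu)=\KL(\eta\mid\zeta_{\mu,\nu})-\log\rhoCher(\mu,\nu).
\]
This follows by writing $\log g-\theta\log f_\mu-(1-\theta)\log f_\nu=\log\bigl(g/f_\zeta\bigr)-\log\rhoCher$ and integrating against $\eta$.

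The main obstacle is justifying that integration, namely that all the log terms are $\eta$-integrable so the splitting is legitimate. By the moment argument in Section 2, finiteness of $\KL(\eta\mid\mu)$ gives $\eta\in\mathcal P_2$, so every quadratic function, in particular $\log f_\mu$, $\log f_\nu$ and $\log f_\zeta$, is $\eta$-integrable. Then $\int g\log g$ is finite as well, and the identity holds as a sum of finite integrals.

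Given the identity, nonnegativity of $\KL(\eta\mid\zeta_{\mu,\nu})$, with equality if and only if $\eta=\zeta_{\mu,\nu}$, yields the minimum value $-\log\rhoCher$ and uniqueness of the minimizer. The value is attained because $\zeta_{\mu,\nu}$ is Gaussian and therefore has finite entropy relative to both $\mu$ and $\nu$. Multiplying back by $\bar s$ gives \eqref{eq:weightedKLmin}.
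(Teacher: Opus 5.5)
Your proposal is correct and follows the paper's route: the weighted entropy identity $\theta\KL(\eta\mid\mu)+(1-\theta)\KL(\eta\mid\nu)=\KL(\eta\mid\zeta_{\mu,\nu})-\log\rhoCher(\mu,\nu)$ for finite-entropy competitors, strict positivity of KL off the diagonal for uniqueness, and Gaussianity of the normalized geometric mean through the positive-definite precision $\theta\Sigma_\mu^{-1}+(1-\theta)\Sigma_\nu^{-1}$. Your integrability step fills in a detail the paper leaves implicit: finite $\KL(\eta\mid\mu)$ forces $\eta\in\mathcal P_2(\R^d)$, so every quadratic log-density is $\eta$-integrable.
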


\begin{proof}
	For finite-KL competitors, the standard weighted entropy identity is
	\[
	\theta\KL(\eta\mid\mu)+(1-\theta)\KL(\eta\mid\nu)
	=
	\KL(\eta\mid\zeta_{\mu,\nu})-\log\rhoCher(\mu,\nu),
	\]
	while competitors with an infinite term are irrelevant.  Multiplication by \(\bar s\) proves \eqref{eq:weightedKLmin}; strict positivity of KL away from the diagonal makes \(\eta=\zeta_{\mu,\nu}\) the unique minimizer.  The density in \eqref{eq:geommeanmeasure} is Gaussian because the product \(f_\mu^\theta f_\nu^{1-\theta}\) is proportional to a nondegenerate Gaussian density.
\end{proof}

For Gaussian inputs \(\mu=\N(m_0,\Sigma_0)\) and \(\nu=\N(m_1,\Sigma_1)\), the weighted geometric mean \(\zeta_{\mu,\nu}\) has precision and precision-weighted mean
\begin{equation}
	\Lambda_\theta
	=\theta\Sigma_0^{-1}+(1-\theta)\Sigma_1^{-1},
	\qquad
	h_\theta
	=\theta\Sigma_0^{-1}m_0+(1-\theta)\Sigma_1^{-1}m_1,
\end{equation}
so
\[
\zeta_{\mu,\nu}
=\N\!\left(\Lambda_\theta^{-1}h_\theta,\Lambda_\theta^{-1}\right).
\]
Writing \(D_\theta=(1-\theta)\Sigma_0+\theta\Sigma_1\), the same completion of the square yields the affinity
\begin{equation}
	\rhoCher(\mu,\nu)=\frac{\det(\Sigma_0)^{(1-\theta)/2}\det(\Sigma_1)^{\theta/2}}{\det(D_\theta)^{1/2}}
	\exp\!\left[-\frac{\theta(1-\theta)}{2}(m_0-m_1)^\top D_\theta^{-1}(m_0-m_1)\right].
	\label{eq:ChernoffGaussianGeneral}
\end{equation}
The symmetric specialization \(\theta=1/2\) leads to the Bhattacharyya formula in Corollary~\ref{cor:bhattacharyya}.  This weighted entropy identity is precisely the leading-order endpoint problem under the small-penalty scaling.

\begin{lemma}\label{lem:smallendpoint}
	For every compact \(\mathcal K\Subset\mathcal G_d^1\),
	\begin{equation}
		\sup_{\nu\in\mathcal K}
		\left|
		\frac{A_i^{(\kappa)}(\nu)}{\kappa\bar s}
		+\log\rhoCher(\mu_i,\nu)
		\right|
		\longrightarrow0
		\qquad
		(\kappa\downarrow0).
		\label{eq:smallA}
	\end{equation}
\end{lemma}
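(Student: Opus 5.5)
We prove \eqref{eq:smallA} by sandwiching the normalized endpoint value between an explicit upper bound and a lower bound. Both bounds come from the variational definition \eqref{eq:Adef} with \(\tau_j=\kappa\bar\tau_j\). Dividing by \(\kappa\bar s\), we get
\[
\frac{A_i^{(\kappa)}(\nu)}{\kappa\bar s}
=\inf_{p,q}\Bigl\{\tfrac{1}{\kappa\bar s}W_2^2(p,q)+\tfrac{1}{\bar s}\bigl[\bar\tau_0\KL(p\mid\mu_i)+\bar\tau_1\KL(q\mid\nu)\bigr]\Bigr\}.
\]

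\emph{Upper bound.} Test the infimum with \(p=q=\zeta_{\mu_i,\nu}\). The transport term then vanishes, and \Cref{lem:weightedKL} gives \(A_i^{(\kappa)}(\nu)/(\kappa\bar s)\le-\log\rhoCher(\mu_i,\nu)\) for every \(\kappa>0\) and every \(\nu\). Only the reverse inequality remains.

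\emph{Lower bound.} Let \((p,q)=(p_i^{(\kappa)},q_i^{(\kappa)})\) be the Gaussian optimizers from \Cref{thm:pairwiseinterface}. First, the upper bound controls the optimizers uniformly. On \(\mathcal K\), the quantity \(-\log\rhoCher(\mu_i,\nu)\) is continuous, hence bounded by some \(C_{\mathcal K}\). Since each term in the normalized objective is nonnegative, \(\KL(p\mid\mu_i)\le C\), \(\KL(q\mid\nu)\le C\), and \(W_2^2(p,q)\le\kappa\bar sC_{\mathcal K}\to0\). \Cref{lem:KLsublevel} then confines \(p\) and \(q\) to a fixed compact Gaussian box \(\mathcal B\) that is uniform in \(\nu\in\mathcal K\) and \(\kappa\le1\).

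Next, drop the nonnegative transport term:
\[
\frac{A_i^{(\kappa)}}{\kappa\bar s}\ge \tfrac1{\bar s}\bigl[\bar\tau_0\KL(p\mid\mu_i)+\bar\tau_1\KL(p\mid\nu)\bigr]-\tfrac{\bar\tau_1}{\bar s}\bigl|\KL(q\mid\nu)-\KL(p\mid\nu)\bigr|.
\]
By \Cref{lem:weightedKL}, the first bracket is at least \(-\log\rhoCher(\mu_i,\nu)\). For the error term, \(\KL(\cdot\mid\nu)\) is given by the explicit formula \eqref{eq:GaussianKL}. That formula is uniformly Lipschitz in the mean and covariance parameters on \(\mathcal B\times\mathcal K\), because all spectra there are pinched away from \(0\) and \(\infty\).

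\emph{Main obstacle.} The hardest step is converting \(W_2(p,q)\to0\) into \(d_{\rm par}(p,q)\to0\) uniformly. We plan to do this with a compactness argument. The Gaussian \(W_2\) metric is continuous on \(\mathcal B\times\mathcal B\) and vanishes only on the diagonal. Since \(\mathcal B\times\mathcal B\) is compact, for every \(\epsilon>0\) there is \(\delta>0\) with \(W_2(p,q)<\delta\Rightarrow d_{\rm par}(p,q)<\epsilon\). This is a uniform-continuity-of-the-inverse argument; alternatively, one can use the explicit bound \(W_2^2\ge\norm{u-v}^2+d_{\BW}^2(P,Q)\) together with the fact that Bures and Frobenius distances are comparable on spectral boxes.

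Combining the pieces, the error term is at most \(C'\,d_{\rm par}(p,q)\to0\) uniformly on \(\mathcal K\). Together with the upper bound, this yields \eqref{eq:smallA}.
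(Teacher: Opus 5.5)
Your proof is correct, but it handles the lower bound differently from the paper.

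\textbf{What is shared.} The paper also obtains the upper bound by testing with $p=q=\zeta_{\mu_i,\nu}$. It also uses \Cref{lem:KLsublevel} to place the optimizers in a common compact set.

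\textbf{Where the routes diverge.} The paper then argues by sequential compactness. Along arbitrary sequences $\kappa_j\downarrow0$, $\nu_j\in\mathcal K$, it extracts limits $p_{\kappa_j},q_{\kappa_j}\to\bar\zeta$ and $\nu_j\to\bar\nu$. It applies the weighted entropy inequality of \Cref{lem:weightedKL} only at the limit $\bar\zeta$. A separate contradiction argument is then needed to upgrade the sequential bound to the uniform statement. You instead apply \Cref{lem:weightedKL} directly at the finite-$\kappa$ source marginal $\eta=p$. You pay for swapping $q$ to $p$ in the target KL term with a Lipschitz error $C\,d_{\rm par}(p,q)$. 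Because this error is uniform on $\mathcal K$ from the start, you need neither limit identification nor the contradiction step. Compactness enters only to convert $W_2(p,q)\to0$ into $d_{\rm par}(p,q)\to0$.

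\textbf{What your route buys.} If you take the explicit alternative you mention, your argument gives a rate. Let $X=P^{1/2}$ and $Y=Q^{1/2}U$ with $U$ the optimal orthogonal alignment, so that $d_{\BW}(P,Q)=\norm{X-Y}_F$. Writing $P-Q=X(X-Y)^\top+(X-Y)Y^\top$ gives $\norm{P-Q}_F\le2\sqrt{\overline p}\,d_{\BW}(P,Q)$, where $\overline p$ is the upper spectral bound of $\mathcal B$. Combined with $W_2^2(p,q)\le\kappa\bar s C_{\mathcal K}$, this yields $\bigl|A_i^{(\kappa)}/(\kappa\bar s)+\log\rhoCher(\mu_i,\cdot)\bigr|=O(\kappa^{1/2})$ uniformly on $\mathcal K$. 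The paper's qualitative proof gives no such rate.

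\textbf{What the paper's route buys.} In exchange, the paper needs only continuity of the Gaussian KL formula and of $\rhoCher$. It requires no Lipschitz estimates and no convexity of the box for a mean-value argument.
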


\begin{proof}
	Write
	\begin{equation}
		\frac{A_i^{(\kappa)}(\nu)}{\kappa}
		=
		\inf_{p,q}
		\left\{
		\frac{1}{\kappa}W_2^2(p,q)
		+\bar\tau_0\KL(p\mid\mu_i)
		+\bar\tau_1\KL(q\mid\nu)
		\right\}.
	\end{equation}
	For the upper bound, choose \(p=q=\zeta_{\mu_i,\nu}\), where \(\zeta_{\mu_i,\nu}\) is the weighted geometric mean from Lemma~\ref{lem:weightedKL}.  Then the transport term vanishes and
	\begin{equation}
		\frac{A_i^{(\kappa)}(\nu)}{\kappa}
		\leq
		-\bar s\log\rhoCher(\mu_i,\nu).
		\label{eq:smallupper}
	\end{equation}
	
	For the lower bound, let \((p_\kappa,q_\kappa)\) be minimizers.  The right-hand side of \eqref{eq:smallupper} is uniformly bounded on \(\mathcal K\), so each nonnegative term is bounded separately:
	\[
	\bar\tau_0\KL(p_\kappa\mid\mu_i)\le C,
	\qquad
	\bar\tau_1\KL(q_\kappa\mid\nu)\le C,
	\qquad
	W_2^2(p_\kappa,q_\kappa)\le C\kappa.
	\]
	These bounds keep both KL terms uniformly bounded and force the transport discrepancy to zero.  By the Gaussian endpoint reduction, the endpoint minimizers are Gaussian.  Because \(\mathcal K\Subset\mathcal G_d^1\), there are common constants \(R<\infty\) and \(0<\underline\sigma\le\overline\sigma<\infty\) such that every \(\nu=\N(m,\Sigma)\in\mathcal K\) satisfies \(\norm{m}\le R\) and \(\underline\sigma I\preceq\Sigma\preceq\overline\sigma I\).  Thus \Cref{lem:KLsublevel} applies to the target-side bounds with constants independent of the varying reference \(\nu\in\mathcal K\); the fixed source \(\mu_i\) is the corresponding special case for \(p_\kappa\).  Hence both optimizer families lie in one common compact set of mean--covariance parameters.  Every sequence \(\kappa_j\downarrow0\), \(\nu_j\in\mathcal K\) therefore has a subsequence with
	\[
	\nu_j\to\bar\nu,
	\qquad
	p_{\kappa_j}\to\bar p,
	\qquad
	q_{\kappa_j}\to\bar q
	\]
	and \(W_2(p_{\kappa_j},q_{\kappa_j})\to0\) forces \(\bar p=\bar q=\bar\zeta\).  All Gaussian parameters remain in a common compact subset of the positive-definite domain, so joint continuity of the Gaussian KL formula allows passage to the limits
	\[
	\KL(p_{\kappa_j}\mid\mu_i)\to\KL(\bar\zeta\mid\mu_i),
	\qquad
	\KL(q_{\kappa_j}\mid\nu_j)\to\KL(\bar\zeta\mid\bar\nu).
	\]
	Dropping the nonnegative transport term leaves the lower bound
	\[
		\liminf_{j\to\infty}\frac{A_i^{(\kappa_j)}(\nu_j)}{\kappa_j}\geq
		\bar\tau_0\KL(\bar\zeta\mid\mu_i)+\bar\tau_1\KL(\bar\zeta\mid\bar\nu)\geq
		-\bar s\log\rhoCher(\mu_i,\bar\nu).
	\]
	To prove uniformity, assume by contradiction that \eqref{eq:smallA} fails.  Then there exist \(\varepsilon_0>0\), a sequence \(\kappa_j\downarrow0\), and \(\nu_j\in\mathcal K\) such that
	\[
	\left|
	\frac{A_i^{(\kappa_j)}(\nu_j)}{\kappa_j\bar s}
	+\log\rhoCher(\mu_i,\nu_j)
	\right|\geq\varepsilon_0
	\qquad\text{for every }j.
	\]
	By compactness of \(\mathcal K\), pass to a subsequence such that \(\nu_j\to\bar\nu\).  Optimizer compactness then yields a further subsequence with \(p_{\kappa_j}\to\bar\zeta\) and \(q_{\kappa_j}\to\bar\zeta\).  The lower bound above and the universal upper bound \eqref{eq:smallupper}, together with continuity of \(\rhoCher\), force
	\[
	\frac{A_i^{(\kappa_j)}(\nu_j)}{\kappa_j\bar s}
	+\log\rhoCher(\mu_i,\nu_j)\longrightarrow0,
	\]
	contradicting the defining lower bound \(\varepsilon_0\).  This proves the uniform convergence in \eqref{eq:smallA}.
\end{proof}

Substituting the endpoint limit into the Gibbs sum and exact mass profile gives the limit of the barycenter functional on compact sets.

\begin{theorem}\label{thm:smalllimit}
	Define
	\begin{equation}
		\Zfun_0(\nu)
		=
		\sum_{i=1}^n\lambda_i a_i^\theta\rhoCher(\mu_i,\nu).
	\end{equation}
	Then, uniformly on every compact \(\mathcal K\Subset\mathcal G_d^1\),
	\begin{equation}
		\Zfun_\kappa(\nu)\longrightarrow\Zfun_0(\nu),
		\qquad
		b_\kappa(\nu)\longrightarrow\Zfun_0(\nu)^{1/\theta},
		\label{eq:smallZmass}
	\end{equation}
	and
	\begin{equation}
		\frac{1}{\kappa}\Phi_\kappa(\nu)
		\longrightarrow
		\bar\tau_0
		\left[
		\bar a-\Zfun_0(\nu)^{1/\theta}
		\right].
		\label{eq:smallprofilelimit}
	\end{equation}
\end{theorem}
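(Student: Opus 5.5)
The argument substitutes the uniform endpoint limit of \Cref{lem:smallendpoint} into the Gibbs sum and then into the exact mass profile. Under the scaling \(\tau_j^{(\kappa)}=\kappa\bar\tau_j\) we have \(s=\kappa\bar s\), so
\[
\Zfun_\kappa(\nu)=\sum_{i=1}^n\lambda_ia_i^\theta\exp\!\left[-\frac{A_i^{(\kappa)}(\nu)}{\kappa\bar s}\right].
\]
\Cref{lem:smallendpoint} gives \(-A_i^{(\kappa)}(\nu)/(\kappa\bar s)\to\log\rhoCher(\mu_i,\nu)\) uniformly on \(\mathcal K\).

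To pass through the exponential, I first bound the exponents on \(\mathcal K\). They are bounded above by \(0\), since \(A_i^{(\kappa)}\ge0\). They are bounded below uniformly for small \(\kappa\): by the upper bound \eqref{eq:smallupper}, \(A_i^{(\kappa)}/(\kappa\bar s)\le-\log\rhoCher(\mu_i,\nu)\). The Gaussian formula \eqref{eq:ChernoffGaussianGeneral} shows that \(\rhoCher(\mu_i,\cdot)\) is continuous and strictly positive on \(\mathcal K\), so \(-\log\rhoCher(\mu_i,\nu)\le C_{\mathcal K}\) there. The exponential is Lipschitz on \([-C_{\mathcal K},0]\), so each summand converges uniformly to \(\lambda_ia_i^\theta\rhoCher(\mu_i,\nu)\). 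Summing the finitely many terms gives \(\Zfun_\kappa\to\Zfun_0\) uniformly on \(\mathcal K\).

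For the mass, note that \(\Zfun_0\ge\min_i\lambda_ia_i^\theta\,\min_{\mathcal K}\rhoCher>0\) on \(\mathcal K\), and every \(\Zfun_\kappa\) lies in \((0,Z_0]\) with \(Z_0=\sum_i\lambda_ia_i^\theta\). Uniform convergence therefore keeps \(\Zfun_\kappa\) eventually in a compact interval \([c,Z_0]\subset(0,\infty)\). On that interval \(z\mapsto z^{1/\theta}\) is uniformly Lipschitz, which gives \(b_\kappa=\Zfun_\kappa^{1/\theta}\to\Zfun_0^{1/\theta}\) uniformly by \eqref{eq:bprofile}.

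Finally, \eqref{eq:profiledF} with \(\tau_0=\kappa\bar\tau_0\) reads \(\kappa^{-1}\Phi_\kappa(\nu)=\bar\tau_0(\bar a-\Zfun_\kappa(\nu)^{1/\theta})\). This converges uniformly to the stated limit \eqref{eq:smallprofilelimit} by the mass step.

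The only real obstacle is getting uniform rather than pointwise control. The positive lower bound on \(\rhoCher\) over \(\mathcal K\) carries this: it bounds the exponents and keeps \(\Zfun_0\) away from \(0\), so that both nonlinear maps (the exponential and the power \(1/\theta\)) are uniformly Lipschitz on the relevant ranges.
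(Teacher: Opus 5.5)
Your proposal is correct and follows the paper's proof: you take the uniform convergence of the Gibbs factors from \Cref{lem:smallendpoint}, sum over the finite input family, pass through $z\mapsto z^{1/\theta}$ on a compact subinterval of $(0,\infty)$, and substitute into the exact profile $\kappa^{-1}\Phi_\kappa=\bar\tau_0(\bar a-\Zfun_\kappa^{1/\theta})$. Your explicit bounds on the exponents just spell out the uniform passage through the exponential, which the paper asserts without detail.
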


\begin{proof}
	\Cref{lem:smallendpoint} shows that the normalized Gibbs factor converges uniformly as
	\[
	e^{-A_i^{(\kappa)}(\nu)/(\kappa\bar s)}
	\longrightarrow
	\rhoCher(\mu_i,\nu)
	\]
	uniformly on \(\mathcal K\).  The finite sum defining \(\Zfun_\kappa\) converges uniformly to \(\Zfun_0\), which is the first limit in \eqref{eq:smallZmass}.  On \(\mathcal K\), \(\Zfun_\kappa\) and \(\Zfun_0\) lie in a common compact subinterval of \((0,\infty)\), so continuity of \(x^{1/\theta}\) transfers the uniform convergence to the profiled mass.  The profile has the exact form
	\[
	\Phi_\kappa(\nu)
	=
	\kappa\bar\tau_0
	\left[
	\bar a-\Zfun_\kappa(\nu)^{1/\theta}
		\right].
	\]
	Substituting the mass limit into the exact profile proves \eqref{eq:smallprofilelimit}.
\end{proof}

To pass from the compact-set limit to global minimizers, we next establish localization uniform in the small-penalty scale.

\begin{lemma}[Small-penalty localization]\label{lem:smalllocalization}
	Let
	\[
	\nu_\kappa\in
	\operatorname*{argmin}_{\nu\in\mathcal G_d^1}\Phi_\kappa(\nu)
	\]
	for \(\kappa\downarrow0\).  There exist \(\kappa_0>0\) and constants \(R<\infty\), \(0<\underline\sigma\le\Lambda<\infty\), independent of the choice of minimizer and of \(0<\kappa\le\kappa_0\), such that
	\begin{equation}
		\norm{m_\kappa}\le R,
		\qquad
		\underline\sigma I\preceq\Sigma_\kappa\preceq\Lambda I.
		\label{eq:smallgloballocalization}
	\end{equation}
\end{lemma}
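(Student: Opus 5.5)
The plan mirrors \Cref{lem:largelocalization}: first obtain one endpoint bound that is uniform in \(\kappa\) from near-optimality of \(\nu_\kappa\), then read off the three localization bounds from it. In the small-penalty regime, however, I would exclude covariance collapse by a direct transport obstruction rather than by the fixed-point argument, since \(W_2^2(p,q)\) is forced to zero.

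\emph{Step 1: a uniform endpoint bound for one index.}
Fix a reference \(\nu_{\rm ref}\in\mathcal G_d^1\). By \Cref{thm:smalllimit} on the singleton \(\{\nu_{\rm ref}\}\), we have \(\Zfun_\kappa(\nu_{\rm ref})\to\Zfun_0(\nu_{\rm ref})>0\). Minimizing \(\Phi_\kappa\) is equivalent to maximizing \(\Zfun_\kappa\), so for \(\kappa\le\kappa_0\),
\[
\Zfun_\kappa(\nu_\kappa)\ge c_{\rm ref}:=\tfrac12\Zfun_0(\nu_{\rm ref}).
\]
As in Step~1 of \Cref{thm:globalexistence}, some index \(i=i(\kappa)\), ranging over a finite set, then satisfies
\[
\frac{A_i^{(\kappa)}(\nu_\kappa)}{\kappa}\le C_i:=\bar s\log\frac{n\lambda_ia_i^\theta}{c_{\rm ref}}.
\]
Write \(p,q\) for the adjusted marginals of this pair. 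The normalized endpoint objective
\[
\frac{A_i^{(\kappa)}}{\kappa}=\frac1\kappa W_2^2(p,q)+\bar\tau_0\KL(p\mid\mu_i)+\bar\tau_1\KL(q\mid\nu_\kappa)
\]
is a sum of nonnegative terms. Hence, uniformly in \(\kappa\le\kappa_0\),
\[
\KL(p\mid\mu_i)\le C_i/\bar\tau_0,\qquad \KL(q\mid\nu_\kappa)\le C_i/\bar\tau_1,\qquad W_2^2(p,q)\le C_i\kappa .
\]

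\emph{Step 2: upper bounds.}
The three bounds in Step~1 are exactly the input to the proof of \Cref{lem:uppercompact}, and its constants now do not depend on \(\kappa\). \Cref{lem:KLsublevel} applied to the fixed source \(\mu_i\) gives \(\norm u\le C\) and \(\underline p_iI\preceq P\preceq\overline p_iI\). Since \(W_2^2(p,q)\le C_i\kappa_0\), the mean of \(q\) and \(\tr Q\) are bounded (the trace via the Bures inequality \(d_{\BW}^2\ge(\sqrt{\tr P}-\sqrt{\tr Q})^2\)). The bound on \(\KL(q\mid\nu_\kappa)\) confines the spectrum of \(\Sigma_\kappa^{-1/2}Q\Sigma_\kappa^{-1/2}\) to a fixed interval \([\ell,u]\Subset(0,\infty)\). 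Consequently \(\Sigma_\kappa\preceq\ell^{-1}Q\preceq\Lambda I\), and the mean term then bounds \(m_\kappa\). Taking the maximum over the finitely many possible indices yields common constants \(R,\Lambda\).

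\emph{Step 3: lower spectral bound (main step).}
Let \(t_\kappa=\lambda_{\min}(\Sigma_\kappa)\) with unit eigenvector \(e\). The spectral confinement from Step~2 gives \(Q\preceq u\Sigma_\kappa\), so \(e^\top Qe\le u\,t_\kappa\). Step~2 also gives \(e^\top Pe\ge\underline p_i\).
\begin{itemize}
\item Projecting onto the line spanned by \(e\) is \(1\)-Lipschitz, so it does not increase \(W_2\).
\item The projected measures are one-dimensional Gaussians, whose squared \(W_2\) distance is at least the squared difference of their standard deviations.
\end{itemize}
Together these give
\[
C_i\kappa\ge W_2^2(p,q)\ge\Bigl(\sqrt{e^\top Pe}-\sqrt{e^\top Qe}\Bigr)_+^2\ge\Bigl(\sqrt{\underline p_i}-\sqrt{u\,t_\kappa}\Bigr)_+^2 .
\]
Shrink \(\kappa_0\) so that \(C_i\kappa_0<\underline p_i/4\) for every admissible \(i\). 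The display then forces \(\sqrt{u\,t_\kappa}\ge\sqrt{\underline p_i}/2\), that is, \(t_\kappa\ge\underline\sigma:=\min_i\underline p_i/(4u)\). This bound is independent of \(\kappa\) and of which minimizer is chosen, which completes the localization claimed in \Cref{lem:smalllocalization}.

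The delicate point is that this argument needs only the single well-matched index from Step~1. Attempting instead the stationarity route of \Cref{lem:largelocalization} would require lower bounds on every \(P_j\), and for the other indices the available bound \(\KL(p_j\mid\mu_j)\lesssim|\log t_\kappa|\) is too weak. The transport obstruction avoids this, because the vanishing transport budget \(W_2^2(p,q)\le C\kappa\) cannot connect a source covariance bounded below to a target that collapses along \(e\).
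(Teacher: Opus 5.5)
Your proof is correct. Its overall structure matches the paper's:
\begin{itemize}
\item near-optimality against a fixed \(\nu_{\rm ref}\);
\item selection of one well-matched index with \(A_i^{(\kappa)}(\nu_\kappa)\le C\kappa\);
\item the three separated bounds;
\item KL-sublevel control of \(P\) and Bures trace control of \(Q\);
\item the KL spectral confinement \(\ell\Sigma_\kappa\preceq Q\preceq u\Sigma_\kappa\), which transfers bounds to \(\Sigma_\kappa\).
\end{itemize}
Like you, the paper does not use the stationarity argument of \Cref{lem:largelocalization} here; it also excludes collapse by a transport obstruction.

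The only real difference is in the noncollapse step. The paper argues qualitatively and first localizes the adjusted target itself. It supposes \(\lambda_{\min}(Q_{\kappa_j})\to0\) and extracts \(P_{\kappa_j}\to\overline P\succ0\) and \(Q_{\kappa_j}\to\overline Q\succeq0\). Continuity of \(d_{\BW}\) on \(\mathbb{S}_+^d\times\mathbb{S}_+^d\), together with \(W_2(p_{\kappa_j},q_{\kappa_j})\to0\), then forces \(\overline Q=\overline P\), a contradiction. Only afterwards does it transfer the two-sided bounds to \(\Sigma_\kappa\) via \(u^{-1}Q_\kappa\preceq\Sigma_\kappa\preceq\ell^{-1}Q_\kappa\).

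You instead project onto the minimal eigenvector \(e\) of \(\Sigma_\kappa\) and use \(W_2^2(p,q)\ge(\sqrt{e^\top Pe}-\sqrt{e^\top Qe})_+^2\). This merges the two steps into a single inequality that bounds \(\Sigma_\kappa\) directly. Your route gives explicit constants: \(\underline\sigma=\min_i\underline p_i/(4u)\) under the explicit threshold \(C_i\kappa_0<\underline p_i/4\). It also avoids the subsequence extraction, the contradiction bookkeeping, and the extension of the Bures distance to the semidefinite boundary. The paper's compactness argument is softer and needs no explicit one-dimensional Gaussian estimate, but it yields no quantitative floor.

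Two cosmetic points. Take \(u\) and \(\ell\) as the extremes over the finitely many admissible indices, so that \(\underline\sigma\) is genuinely uniform in the selected index. Also rename the spectral constant \(u\), since it clashes with the mean \(u\) of \(p\) in your Step~2.
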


\begin{proof}
	Fix \(\nu_{\rm ref}\in\mathcal G_d^1\).  By \Cref{thm:smalllimit} on the singleton \(\{\nu_{\rm ref}\}\),
	\[
	\Zfun_\kappa(\nu_{\rm ref})\longrightarrow
	\Zfun_0(\nu_{\rm ref})>0.
	\]
	For all sufficiently small \(\kappa\), global optimality implies \(\Zfun_\kappa(\nu_\kappa)\ge c>0\).  Since \(\Zfun_\kappa\) is a finite positive sum, for each \(\kappa\) there is an index \(i=i(\kappa)\) such that
	\[
	\lambda_i a_i^\theta
	\exp\!\left[-\frac{A_i^{(\kappa)}(\nu_\kappa)}{\kappa\bar s}\right]
	\ge \frac{c}{n}.
	\]
	The input family is finite, so this implies the uniform bound
	\begin{equation}
		A_i^{(\kappa)}(\nu_\kappa)\le C\kappa
	\end{equation}
	for the selected index.  Let \(p_\kappa,q_\kappa\) be its adjusted Gaussian marginals.  The three nonnegative terms in the rescaled endpoint objective then satisfy
	\begin{equation}
		W_2^2(p_\kappa,q_\kappa)\le C\kappa,
		\qquad
		\KL(p_\kappa\mid\mu_i)\le \frac{C}{\bar\tau_0},
		\qquad
		\KL(q_\kappa\mid\nu_\kappa)\le \frac{C}{\bar\tau_1}.
		\label{eq:smallthreeglobal}
	\end{equation}
	The KL bound for \(p_\kappa\), together with finiteness of the possible selected inputs, confines \(p_\kappa\) to a common compact subset of \(\mathcal G_d^1\).  Meanwhile, \(W_2(p_\kappa,q_\kappa)\to0\), so \(q_\kappa\) inherits uniform mean bounds and an upper covariance bound from the Bures trace estimate.  To obtain the missing lower spectral bound, suppose that a sequence \(\kappa_j\downarrow0\) satisfies \(\lambda_{\min}(Q_{\kappa_j})\to0\).  Compactness of the adjusted sources and the preceding upper bounds then allow a further subsequence with
	\[
	P_{\kappa_j}\to\overline P\succ0,
	\qquad
	Q_{\kappa_j}\to\overline Q\succeq0.
	\]
	By continuity of the matrix square root, the Bures formula \eqref{eq:Burescovdistance} extends continuously to \(\mathbb{S}_{+}^d\times\mathbb{S}_{+}^d\).  The convergence \(W_2(p_{\kappa_j},q_{\kappa_j})\to0\) then forces \(d_{\BW}(\overline P,\overline Q)=0\), so \(\overline Q=\overline P\succ0\), contradicting \(\lambda_{\min}(Q_{\kappa_j})\to0\).  Hence \(q_\kappa\) remains in a common compact subset of \(\mathcal G_d^1\).
	
	It remains to control the reference \(\nu_\kappa=\N(m_\kappa,\Sigma_\kappa)\) in the last KL bound in \eqref{eq:smallthreeglobal}.  Write \(q_\kappa=\N(v_\kappa,Q_\kappa)\) and \(B_\kappa:=\Sigma_\kappa^{-1/2}Q_\kappa\Sigma_\kappa^{-1/2}\).
	The Gaussian KL formula reads
	\[
	2\KL(q_\kappa\mid\nu_\kappa)
	=\norm{\Sigma_\kappa^{-1/2}(v_\kappa-m_\kappa)}^2
	+\tr B_\kappa-\log\det B_\kappa-d.
	\]
	The coercivity of \(t-\log t-1\) confines the spectrum to a fixed interval: there exist \(0<\ell\le u<\infty\) such that
	\[
	\ell I\preceq B_\kappa\preceq uI.
	\]
	Equivalently,
	\[
	\ell\Sigma_\kappa\preceq Q_\kappa\preceq u\Sigma_\kappa,
	\qquad\text{so}\qquad
	u^{-1}Q_\kappa\preceq\Sigma_\kappa\preceq\ell^{-1}Q_\kappa.
	\]
	The common two-sided spectral bounds for \(Q_\kappa\) therefore transfer directly to \(\Sigma_\kappa\).  The mean term then bounds \(m_\kappa\) because \(v_\kappa\) is bounded.  This proves \eqref{eq:smallgloballocalization}.
\end{proof}

Localization and uniform convergence on compact sets can now be combined to pass to global minimizers.  For \(\mathcal A\subset\mathcal G_d^1\), write \(d_{\rm par}(\nu,\mathcal A):=\inf_{\eta\in\mathcal A}d_{\rm par}(\nu,\eta)\).

\begin{theorem}[Small-penalty barycenter limit]\label{thm:smallglobal}
	Every positive superlevel of the limiting affinity functional \(\Zfun_0\) is compact in mean--covariance coordinates.  Consequently, \(\Zfun_0\) attains its global maximum on \(\mathcal G_d^1\), and its maximizer set is nonempty and compact.  Set
	\begin{equation}
		\mathcal A_0
		:=\operatorname*{argmax}_{\nu\in\mathcal G_d^1}\Zfun_0(\nu).
	\end{equation}
	Then the directed Hausdorff excess from the global minimizer set to \(\mathcal A_0\) vanishes:
	\begin{equation}
		\sup_{\nu\in\operatorname*{argmin}_{\eta\in\mathcal G_d^1}\Phi_\kappa(\eta)}
		d_{\rm par}(\nu,\mathcal A_0)
		\longrightarrow0
		\qquad(\kappa\downarrow0).
		\label{eq:smallsetwise}
	\end{equation}
	In particular, every cluster point of any selection of global minimizers belongs to \(\mathcal A_0\); if \(\mathcal A_0=\{\nu_0\}\), then every selection converges to \(\nu_0\).  Moreover,
	\begin{equation}
		\frac1\kappa
		\min_{\nu\in\mathcal G_d^1}\Phi_\kappa(\nu)
		\longrightarrow
		\bar\tau_0\left[
		\bar a-
		\left(\max_{\nu\in\mathcal G_d^1}\Zfun_0(\nu)\right)^{1/\theta}
		\right].
		\label{eq:smallglobalvalue}
	\end{equation}
	For every such selection of global minimizers, the profiled masses satisfy
	\begin{equation}
		b_\kappa(\nu_\kappa)
		\longrightarrow
		\left(\max_{\nu\in\mathcal G_d^1}\Zfun_0(\nu)\right)^{1/\theta}.
		\label{eq:smallglobalmass}
	\end{equation}
\end{theorem}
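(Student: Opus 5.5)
The plan has three parts: compactness of the superlevels of \(\Zfun_0\) from the closed form \eqref{eq:ChernoffGaussianGeneral}, a \(\Gamma\)-type argument on the common compact box from \Cref{lem:smalllocalization}, and a final passage to values and masses.

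\emph{Superlevel compactness.} Fix \(c>0\) and suppose \(\Zfun_0(\nu)\ge c\). Since \(\Zfun_0\) is a finite positive sum, some index \(i\) satisfies \(\rhoCher(\mu_i,\nu)\ge c/(n\max_j\lambda_ja_j^\theta)=:c'\). By \Cref{lem:weightedKL} this is equivalent to
\[
\bar\tau_0\KL(\zeta\mid\mu_i)+\bar\tau_1\KL(\zeta\mid\nu)\le -\bar s\log c',
\qquad \zeta=\zeta_{\mu_i,\nu}.
\]
Applying \Cref{lem:KLsublevel} with the fixed reference \(\mu_i\) confines \(\zeta\) to a compact set of nondegenerate Gaussians. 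Next, the bound \(\KL(\zeta\mid\nu)\le C\) gives, as in the last step of the proof of \Cref{lem:smalllocalization}, two-sided comparisons \(\ell\Sigma\preceq Z\preceq u\Sigma\), where \(Z\) is the covariance of \(\zeta\). The same bound then controls the mean. Taking the worst constants over the finitely many indices shows that the superlevel set lies in a box \(\norm m\le R\), \(\underline\sigma I\preceq\Sigma\preceq\Lambda I\). Continuity of \(\rhoCher\) makes the superlevel closed in this box, hence compact. Choosing \(c=\Zfun_0(\nu_{\rm ref})/2>0\) gives attainment, and \(\mathcal A_0=\Zfun_0^{-1}(\{\max\Zfun_0\})\) is then a closed subset of a compact set.

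\emph{Convergence of minimizers.} Let \(\mathcal K\) be the union of the localization box from \Cref{lem:smalllocalization} and a box containing \(\mathcal A_0\). On \(\mathcal K\), \Cref{thm:smalllimit} gives \(\Zfun_\kappa\to\Zfun_0\) uniformly. Let \(\nu_\kappa\) be any global minimizer, i.e.\ a maximizer of \(\Zfun_\kappa\), and pick \(\nu_0\in\mathcal A_0\). Then
\[
\Zfun_0(\nu_\kappa)\ge\Zfun_\kappa(\nu_\kappa)-o(1)\ge\Zfun_\kappa(\nu_0)-o(1)\ge\max\Zfun_0-o(1),
\]
with \(o(1)\) uniform over all selections. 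Consequently, \(\max\Zfun_\kappa\to\max\Zfun_0\).

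To prove \eqref{eq:smallsetwise}, argue by contradiction. Suppose there are \(\varepsilon>0\), \(\kappa_j\downarrow0\), and minimizers \(\nu_j\) with \(d_{\rm par}(\nu_j,\mathcal A_0)\ge\varepsilon\). By compactness of \(\mathcal K\), a subsequence converges to some \(\bar\nu\in\mathcal K\). Continuity gives \(\Zfun_0(\bar\nu)=\max\Zfun_0\), so \(\bar\nu\in\mathcal A_0\), which is a contradiction. The cluster-point and singleton statements follow directly.

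\emph{Values and masses.} Using the exact profile \(\Phi_\kappa/\kappa=\bar\tau_0(\bar a-\Zfun_\kappa^{1/\theta})\) together with \(\max\Zfun_\kappa\to\max\Zfun_0\) yields \eqref{eq:smallglobalvalue}. Similarly, \(b_\kappa(\nu_\kappa)=(\max\Zfun_\kappa)^{1/\theta}\) yields \eqref{eq:smallglobalmass}.

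\emph{Main obstacle.} I expect the main difficulty to be the first step, namely excluding covariance collapse or blow-up of \(\nu\) along superlevels of \(\Zfun_0\). This has to come from the entropic characterization of \(\rhoCher\) rather than from the explicit determinant formula. The key point is the transfer of spectral bounds from \(\zeta\) to \(\nu\) through the sublevel of \(\KL(\zeta\mid\nu)\).
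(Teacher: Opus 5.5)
Your proposal is correct. The convergence, value and mass parts follow the paper's proof essentially step by step: the localization box from \Cref{lem:smalllocalization} enlarged to contain $\mathcal A_0$, uniform convergence from \Cref{thm:smalllimit}, the comparison chain against a fixed $\nu_0\in\mathcal A_0$, the contradiction argument for the directed excess, and the exact profile. Proving $\max\Zfun_\kappa\to\max\Zfun_0$ first and then reading off $\min\Phi_\kappa=\kappa\bar\tau_0\bigl(\bar a-(\max\Zfun_\kappa)^{1/\theta}\bigr)$ and $b_\kappa(\nu_\kappa)=(\max\Zfun_\kappa)^{1/\theta}$ is only a slightly tidier packaging of the same idea.

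The genuine difference is in the superlevel compactness.

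The paper works directly from the closed form \eqref{eq:ChernoffGaussianGeneral}. With $B_i=\Sigma_i^{-1/2}\Sigma\Sigma_i^{-1/2}$, the determinant prefactor factors as $\prod_j\bigl[\lambda_j(B_i)^\theta/((1-\theta)+\theta\lambda_j(B_i))\bigr]^{1/2}$. Each factor is at most one by weighted AM--GM and tends to zero at both ends of $(0,\infty)$. So $\rhoCher(\mu_i,\nu)\ge\delta$ pins the spectrum of $B_i$, and hence of $\Sigma$, in a compact interval. The exponential factor then bounds $\norm{m-m_i}$.

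You instead use the variational identity of \Cref{lem:weightedKL}. The geometric-mean Gaussian $\zeta_{\mu_i,\nu}$ lies in a KL sublevel of the fixed reference $\mu_i$, so \Cref{lem:KLsublevel} localizes it. The second sublevel $\KL(\zeta\mid\nu)\le C$ then transfers two-sided spectral and mean bounds to $\nu$ through the coercivity of $t-\log t-1$, exactly as in the last step of \Cref{lem:smalllocalization}.

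Comparing the two: your route reuses the paper's own KL-sublevel localization template, so it would still work for a reference family with KL-sublevel compactness but no closed-form affinity. The paper's route is more elementary and gives an explicit spectral interval in terms of $\delta$ and $\theta$. Your closing remark that the bound ``has to come from the entropic characterization rather than from the explicit determinant formula'' is therefore not accurate: the determinant formula suffices, and it is what the paper uses.
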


\begin{proof}
	\emph{Step 1: compactness of the limiting maximizer set.}
	Fix \(\mu_i=\N(m_i,\Sigma_i)\), write \(\nu=\N(m,\Sigma)\), and set \(B_i=\Sigma_i^{-1/2}\Sigma\Sigma_i^{-1/2}\).  By \eqref{eq:ChernoffGaussianGeneral}, the determinant prefactor of \(\rhoCher(\mu_i,\nu)\) is
	\[
	\prod_{j=1}^d\left[\frac{\lambda_j(B_i)^\theta}{(1-\theta)+\theta\lambda_j(B_i)}\right]^{1/2}.
	\]
	Weighted AM--GM shows that every factor is at most one, while \(g(t)=t^\theta/((1-\theta)+\theta t)\) tends to zero as \(t\downarrow0\) or \(t\to\infty\).  Thus the lower bound \(\rhoCher(\mu_i,\nu)\ge\delta>0\) confines the spectrum of \(B_i\) to a compact subinterval of \((0,\infty)\), which also gives two-sided spectral bounds for \(\Sigma\).  On this spectral box, \(D_\theta=(1-\theta)\Sigma_i+\theta\Sigma\) has a uniform upper bound, and the exponential factor in \eqref{eq:ChernoffGaussianGeneral} bounds \(\norm{m-m_i}\).  This proves that \(\{\nu:\rhoCher(\mu_i,\nu)\ge\delta\}\) is compact.
	
	If \(\Zfun_0(\nu)\ge\eta>0\), then for some \(i\),
	\[
	\rhoCher(\mu_i,\nu)\ge\frac{\eta}{n\lambda_i a_i^\theta}.
	\]
	Every positive superlevel of \(\Zfun_0\) is contained in a finite union of such compact sets and is closed by continuity, so it is compact.  Since \(\Zfun_0\) is positive somewhere, every maximizing sequence eventually lies in a fixed positive superlevel and admits a convergent subsequence.  Continuity shows that the limit is a maximizer; \(\mathcal A_0\) is closed in the same compact superlevel.
	
	\emph{Step 2: convergence of global minimizers.}
	By \Cref{lem:smalllocalization}, all sufficiently small-penalty global minimizers lie in one compact Gaussian set \(\mathcal K_*\).  Let \(\kappa_j\downarrow0\) and choose arbitrary global minimizers \(\nu_j\).  A subsequence converges to some \(\nu_0\in\mathcal K_*\).  For any fixed \(\nu\in\mathcal G_d^1\), global maximality of \(\Zfun_{\kappa_j}\), combined with \Cref{thm:smalllimit}, implies
	\[
	\Zfun_0(\nu_0)\ge\Zfun_0(\nu).
	\]
	Every cluster point of every selection of global minimizers belongs to \(\mathcal A_0\).  If \eqref{eq:smallsetwise} failed, there would exist \(\varepsilon>0\), \(\kappa_j\downarrow0\), and global minimizers \(\nu_j\) with \(d_{\rm par}(\nu_j,\mathcal A_0)\ge\varepsilon\).  Compactness would yield a subsequence converging to some \(\overline\nu\in\mathcal A_0\), which forces \(d_{\rm par}(\nu_j,\mathcal A_0)\to0\), contrary to the lower bound \(\varepsilon\).  Hence \eqref{eq:smallsetwise} holds.  For the remaining limits, fix any selection of global minimizers \(\nu_\kappa\).  Let \(Z_0^*:=\max_{\nu\in\mathcal G_d^1}\Zfun_0(\nu)\).
	Fix any global maximizer \(\nu_0^*\in\mathcal A_0\) and enlarge \(\mathcal K_*\), if necessary, to the compact set \(\mathcal K_*\cup\{\nu_0^*\}\).  Since all minimizing shapes lie in this enlarged compact set, \Cref{thm:smalllimit} establishes uniform convergence \(\Zfun_\kappa\to\Zfun_0\) there.  Global maximality of \(\nu_\kappa\) then implies
	\[
	\Zfun_0(\nu_\kappa)
	\ge \Zfun_\kappa(\nu_\kappa)-o(1)
	\ge \Zfun_\kappa(\nu_0^*)-o(1)
	=Z_0^*+o(1).
	\]
	Since \(\Zfun_0(\nu_\kappa)\le Z_0^*\), it follows that \(\Zfun_0(\nu_\kappa)\to Z_0^*\).  The uniform mass limit on \(\mathcal K_*\) therefore establishes \eqref{eq:smallglobalmass}; the uniform profile limit establishes \eqref{eq:smallglobalvalue}.
\end{proof}

\begin{corollary}\label{cor:bhattacharyya}
	If \(\bar\tau_0=\bar\tau_1\), then \(\theta=1/2\) and \(\rhoCher\) becomes the Bhattacharyya affinity~\cite{Bhattacharyya1943}.  For \(\mu_i=\N(m_i,\Sigma_i)\) and \(\nu=\N(m,\Sigma)\), the affinity is
	\begin{equation}
		\rho_{1/2}(\mu_i,\nu)=\frac{\det(\Sigma_i)^{1/4}\det(\Sigma)^{1/4}}
		{\det\left((\Sigma_i+\Sigma)/2\right)^{1/2}}
		\exp\!\left[-\frac18(m_i-m)^\top\left(\frac{\Sigma_i+\Sigma}{2}\right)^{-1}(m_i-m)\right].
		\label{eq:BhattacharyyaGaussian}
	\end{equation}
\end{corollary}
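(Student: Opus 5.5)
The first claim is that \(\theta=1/2\) and that \(\rhoCher\) is then the Bhattacharyya affinity. Under the common scaling, \(\theta=\bar\tau_0/\bar s\), so \(\bar\tau_0=\bar\tau_1\) gives \(\theta=1/2\) at once. The definition then reads \(\rho_{1/2}(\mu,\nu)=\int f_\mu^{1/2}f_\nu^{1/2}\,dx\), which is exactly the Bhattacharyya coefficient. What remains is to verify \eqref{eq:BhattacharyyaGaussian}, and I will obtain it by substituting \(\theta=1/2\) into the general Gaussian Chernoff formula \eqref{eq:ChernoffGaussianGeneral}, which I take as established.

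Put \(\mu_0=\mu_i\) and \(\mu_1=\nu\), so that \(m_0=m_i\), \(\Sigma_0=\Sigma_i\), \(m_1=m\), \(\Sigma_1=\Sigma\). With \(\theta=1/2\) we get \(D_{1/2}=\tfrac12(\Sigma_i+\Sigma)\). The determinant prefactor becomes \(\det(\Sigma_i)^{1/4}\det(\Sigma)^{1/4}/\det((\Sigma_i+\Sigma)/2)^{1/2}\). The exponent coefficient is \(\theta(1-\theta)/2=1/8\), which gives \(-\tfrac18(m_i-m)^\top((\Sigma_i+\Sigma)/2)^{-1}(m_i-m)\). Together these reproduce \eqref{eq:BhattacharyyaGaussian} exactly.

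If a self-contained check of \eqref{eq:ChernoffGaussianGeneral} is wanted, I would complete the square in \(f_{\mu_i}^{1/2}f_\nu^{1/2}\). The precision is \(\Lambda_{1/2}=\tfrac12(\Sigma_i^{-1}+\Sigma^{-1})\). The identity \(\Sigma_i^{-1}+\Sigma^{-1}=\Sigma_i^{-1}(\Sigma_i+\Sigma)\Sigma^{-1}\) turns the Gaussian normalizing determinant into the displayed prefactor. The residual quadratic form in the means is the one from the parallel sum, \((\Sigma_i^{-1}+\Sigma^{-1})^{-1}\)-type terms, namely \(\tfrac14(m_i-m)^\top(\Sigma_i+\Sigma)^{-1}(m_i-m)\); this equals \(\tfrac18(m_i-m)^\top((\Sigma_i+\Sigma)/2)^{-1}(m_i-m)\). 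The only step that needs any care is this mean-term bookkeeping, and the matrix identity \((A^{-1}+B^{-1})^{-1}=A(A+B)^{-1}B\) handles it directly.
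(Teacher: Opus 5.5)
Your proposal is correct and follows essentially the paper's route. The paper proves the corollary by completing the square in $\int f_{\mu_i}^{1/2}f_\nu^{1/2}\,dx$; this is the same computation behind \eqref{eq:ChernoffGaussianGeneral}, so specializing that formula at $\theta=1/2$ (where $D_{1/2}=(\Sigma_i+\Sigma)/2$ and $\theta(1-\theta)/2=1/8$) is equivalent, and your self-contained determinant and mean-term bookkeeping, ending in $\tfrac14(m_i-m)^\top(\Sigma_i+\Sigma)^{-1}(m_i-m)$, checks out.
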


\begin{proof}
	Completing the square in the integral of the geometric mean of the two Gaussian densities evaluates it as \eqref{eq:BhattacharyyaGaussian}.
\end{proof}

\begin{remark}
	For equal penalties, the limiting shape problem is an overlap maximization rather than a quadratic-displacement averaging problem.  The compact limiting affinity maximizer set may contain multiple points.  The directed convergence in \Cref{thm:smallglobal} is one-sided: when \(\mathcal A_0\) is non-singleton, minimizers at positive \(\kappa\) need not approach every element of \(\mathcal A_0\).  If the affinity maximizer is unique, the conclusion reduces to full-sequence convergence.
\end{remark}

\FloatBarrier
\section{Numerical illustrations}

Synthetic Gaussian experiments illustrate the descent and noncollapse properties of the MM iteration, the attenuation and local-contraction mechanisms, and both penalty limits.  The plots distinguish computed series from fitted or asymptotic guides.

\subsection{Setup and MM descent}

The baseline uses \(n=8\), \(d=3\), equal external weights, masses sampled log-uniformly from \([0.55,1.8]\), means from \(\N(0,1.15^2I_3)\), and covariance eigenvalues log-uniformly from \([0.4,2.5]\) with random orthogonal eigenbases.  To probe noncollapse, we initialize with \(m^0=0\) and \(\Sigma^0=\operatorname{diag}(10^{-5},0.8,3)\).
Runs are accepted when the complete residual \eqref{eq:stoppingresidual} is at most \(3\times10^{-6}\).  For each nontrivial step we test \Cref{thm:descent} through
\[
C_k
=
\frac{\Phi(\nu^k)-\Phi(\nu^{k+1})}
{\tau_1b^k\KL(\nu^{k+1}\mid\nu^k)},
\]
which satisfies \(C_k\ge1\) by \Cref{thm:descent}.

\begin{table}[htbp]
	\centering
	\small
	\setlength{\tabcolsep}{3pt}
	\caption{MM descent and covariance noncollapse from the near-boundary initialization.  Here, \(K\) denotes the terminal accepted iteration, and \(\operatorname{res}_{K-1}\) denotes the final complete residual.}
	\label{tab:descent}
	\begin{tabular}{@{}crrrrr@{}}
		\toprule
		\((\tau_0,\tau_1)\) & \(K\) & \(\min_{k<K}C_k\) & \(\KL(\nu^K\mid\nu^{K-1})\) & \(\operatorname{res}_{K-1}\) & \([\lambda_{\min},\lambda_{\max}](\Sigma^K)\) \\
		\midrule
		\((1,1)\) & 596 & 1.5563 & \(7.475\times10^{-12}\) & \(2.73\times10^{-6}\) & \([1.007,2.284]\) \\
		\((1,4)\) & 2357 & 1.7603 & \(8.435\times10^{-12}\) & \(2.90\times10^{-6}\) & \([0.869,1.556]\) \\
		\((4,1)\) & 422 & 1.5183 & \(6.735\times10^{-12}\) & \(2.60\times10^{-6}\) & \([1.106,2.857]\) \\
		\bottomrule
	\end{tabular}
\end{table}

The terminal diagnostics in \Cref{tab:descent} show both that \(\min_{k<K}C_k>1\) for all three penalty pairs and that the smallest covariance eigenvalue rises from \(10^{-5}\) initially to an order-one value at termination.  These observations are consistent with \Cref{thm:descent,cor:trajectorynoncollapse}.  The trajectories in \Cref{fig:descent} display the corresponding objective gaps and descent certificates.  The unequal iteration counts for \((1,4)\) and \((4,1)\) also reflect the directional local linearization: on the same instance, the more asymmetric pairs \((1,8)\) and \((8,1)\) have finite-difference contraction estimates \(\AsymSlowRate\) and \(\AsymFastRate\), respectively.  These factors describe the eventual contraction rather than the full near-boundary transient.

\begin{figure}[tbp]
	\centering
	\begin{minipage}[t]{0.48\textwidth}
		\centering
		\includegraphics[width=\linewidth]{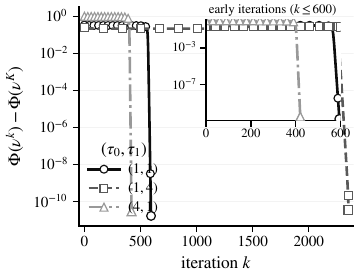}\\[-0.3em]
		\small (a) Profiled objective gap.
	\end{minipage}\hfill
	\begin{minipage}[t]{0.48\textwidth}
		\centering
		\includegraphics[width=\linewidth]{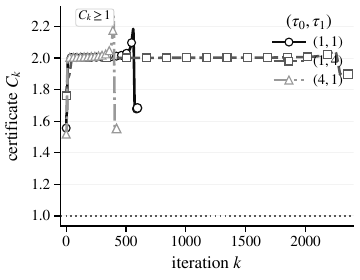}\\[-0.3em]
		\small (b) Descent certificate \(C_k\).
	\end{minipage}
	\caption{MM descent from the near-boundary initialization: profiled objective gaps (a) and certificates \(C_k\) (b), with the theoretical threshold \(C_k=1\) shown as a dotted line.}
	\label{fig:descent}
\end{figure}

\subsection{Remote-input attenuation and local contraction}

To probe \Cref{prop:redescending}, we hold the fixed point of seven inliers constant and move an eighth mean along \(Re_1\), leaving its mass and covariance unchanged.  For \(R\ge3\), the fit of \(\log\omega_o\) against \(R^2\) has slope \(-0.094\) and coefficient of determination \(R_{\rm fit}^2=0.999989\), consistent with the Gaussian-tail attenuation displayed in \Cref{fig:localmechanisms}(a).

For \Cref{thm:localrate}, we form a centered finite-difference Jacobian \(J_h\) at the fixed point and the similarity transform \(K_h=B_*^{1/2}J_hB_*^{-1/2}\).  With symmetric penalties \((1,1)\) and \(h=2\times10^{-6}\), the estimate \(\widehat\rho_*:=\|K_h\|_2=0.688\) agrees with the median observed ratio \(0.681\) over iterations 10--25, as shown in \Cref{fig:localmechanisms}(b).  It is stable to six decimals over the three scales in \Cref{tab:numericalvalidation}, with relative \(B_*\)-symmetry defect at most \(1.20\times10^{-8}\); the asymmetric values \(\AsymSlowRate\) and \(\AsymFastRate\) for \((1,8)\) and \((8,1)\) confirm the directional dependence.

\begin{figure}[tbp]
	\centering
	\begin{minipage}[t]{0.48\textwidth}
		\centering
		\includegraphics[width=\linewidth]{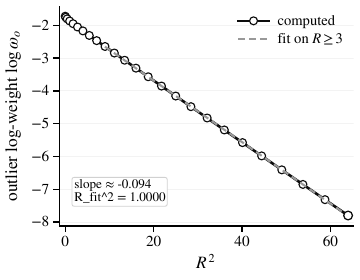}\\[-0.3em]
		\small (a) Remote-input attenuation.
	\end{minipage}\hfill
	\begin{minipage}[t]{0.48\textwidth}
		\centering
		\includegraphics[width=\linewidth]{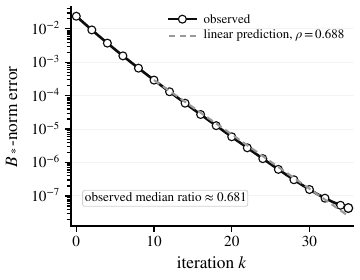}\\[-0.3em]
		\small (b) Local error contraction.
	\end{minipage}
	\caption{Remote-input attenuation (a) and local error contraction (b); dashed lines show the fitted tail law and linearized rate, respectively.}
	\label{fig:localmechanisms}
\end{figure}

\subsection{Large- and small-penalty asymptotics}

For large penalties we continue the stationary branch with \(\tau_0=\tau_1=\kappa\) and compare it with \(\nu_{\BW}\).  Both errors in \Cref{fig:penaltydiagnostics}(a) decrease with \(\kappa\), and the shape error is \(7.005\times10^{-2}\) at \(\kappa=128\); a separate grid-refinement calculation below probes global minimizers.

For small penalties we use a fixed panel \(\mathcal K_{\rm panel}\) of six interior Gaussian candidates and the ratios \((\bar\tau_0,\bar\tau_1)=(1,1),(1,3),(3,1)\), reporting
\[
\max_{\nu\in\mathcal K_{\rm panel}}\max_i
\left|
\frac{A_i^{(\kappa)}(\nu)}{\kappa(\bar\tau_0+\bar\tau_1)}
+\log\rho_{\theta}(\mu_i,\nu)
\right|,
\qquad
\theta=\frac{\bar\tau_0}{\bar\tau_0+\bar\tau_1}.
\]
The maximum error decreases for all three ratios as \(\kappa\downarrow0\), qualitatively illustrating \Cref{lem:smallendpoint}; see \Cref{fig:penaltydiagnostics}(b).

\begin{figure}[tbp]
	\centering
	\begin{minipage}[t]{0.48\textwidth}
		\centering
		\includegraphics[width=\linewidth]{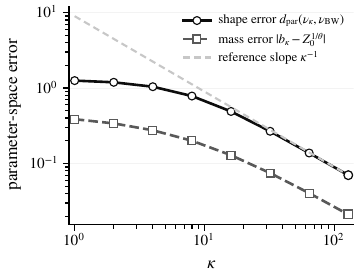}\\[-0.3em]
		\small (a) Large-penalty continuation.
	\end{minipage}\hfill
	\begin{minipage}[t]{0.48\textwidth}
		\centering
		\includegraphics[width=\linewidth]{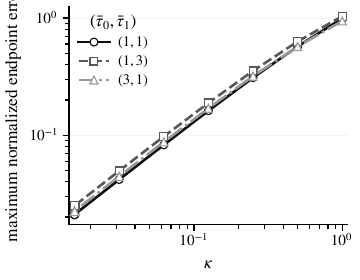}\\[-0.3em]
		\small (b) Small-penalty endpoint limit.
	\end{minipage}
	\caption{Penalty-limit diagnostics: large-penalty continuation errors (a) and small-penalty normalized endpoint errors (b).}
	\label{fig:penaltydiagnostics}
\end{figure}

A separate \(d=1\), three-input example with \((\bar\tau_0,\bar\tau_1)=(1,2)\) tests the refined limits in \(x=(m,\log\sigma^2)\) coordinates.  At large penalty, \(\kappa(x_\kappa-x_{\BW})\) approaches \((0.1061,2.7958)\), with Euclidean remainder \(3.13\times10^{-2}\) at \(\kappa=256\).  At small penalty, direct minimization gives distance \(1.25\times10^{-3}\) from a limiting Chernoff maximizer at \(\kappa=1/64\); see \Cref{fig:refinedlimits}.

\begin{figure}[tbp]
	\centering
	\begin{minipage}[t]{0.48\textwidth}
		\centering
		\includegraphics[width=\linewidth]{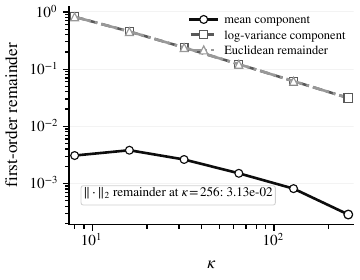}\\[-0.3em]
		\small (a) Large-penalty first-order remainder.
	\end{minipage}\hfill
	\begin{minipage}[t]{0.48\textwidth}
		\centering
		\includegraphics[width=\linewidth]{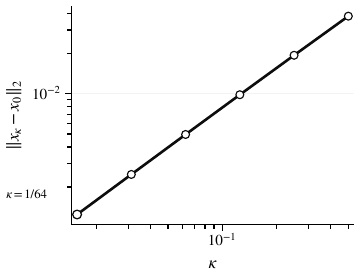}\\[-0.3em]
		\small (b) Small-penalty barycenter distance.
	\end{minipage}
	\caption{Refined one-dimensional limits: first-order large-penalty remainders (a) and distance to a limiting Chernoff maximizer (b).}
	\label{fig:refinedlimits}
\end{figure}

\subsection{Robustness and implementation checks}

For the baseline data with \((\tau_0,\tau_1)=(1,2)\), the complete residual in \eqref{eq:stoppingresidual} is at most \(10^{-5}\) for all 50 nondegenerate starts.  Relative to the best point, the largest parameter distance is \(1.42\times10^{-4}\) and the objective spread is \(1.63\times10^{-10}\).  No distinct attracting stationary point was observed, although global uniqueness at intermediate penalties remains open.

Table~\ref{tab:numericalvalidation} checks finite-difference stability of the local rate and, in a four-input \(d=1\) problem, a \(181\times181\) grid search over \((m,\log\sigma^2)\in[-4,5]\times[-3,2.5]\), followed by a \(161\times161\) local refinement around the best grid point.  Denote by \(\widehat\nu_\kappa\) the refined numerical minimizer obtained by this procedure.  Its distance \(d_{\rm par}(\widehat\nu_\kappa,\nu_{\BW})\) decreases from \(\LargeGridDistA\) at \(\kappa=4\) to \(\LargeGridDistC\) at \(\kappa=64\), directly probing \Cref{thm:largeglobal}.

\FloatBarrier
\begin{table}[htbp]
	\centering
	\small
	\caption{Finite-difference sensitivity and large-penalty minimizer checks.}
	\label{tab:numericalvalidation}
	\begin{minipage}[t]{0.58\textwidth}
		\centering
		\begin{tabular}{@{}rrr@{}}
			\toprule
			\multicolumn{3}{c}{Local-rate finite-difference sensitivity}\\
			\cmidrule(lr){1-3}
			\(h\) & \(\widehat\rho_*\) & \shortstack{relative \(B_*\)-symmetry\\ defect}\\
			\midrule
			\(10^{-5}\) & 0.687766 & \(1.79\times10^{-9}\)\\
			\(2\times10^{-6}\) & 0.687766 & \(3.83\times10^{-9}\)\\
			\(5\times10^{-7}\) & 0.687766 & \(1.20\times10^{-8}\)\\
			\bottomrule
		\end{tabular}
	\end{minipage}\hfill
	\begin{minipage}[t]{0.40\textwidth}
		\centering
		\begin{tabular}{@{}rr@{}}
			\toprule
			\multicolumn{2}{c}{Large-penalty grid comparison}\\
			\cmidrule(lr){1-2}
			\(\kappa\) & \(d_{\rm par}(\widehat\nu_\kappa,\nu_{\BW})\)\\
			\midrule
			4 & \LargeGridDistA\\
			16 & 0.1886\\
			64 & \LargeGridDistC\\
			\bottomrule
		\end{tabular}
	\end{minipage}
\end{table}

Finally, the 34 stress configurations span multiple random seeds, dimensions, covariance condition numbers, and asymmetric penalty pairs.  The complete residual in \eqref{eq:stoppingresidual} is at most \(10^{-5}\) for every trajectory, and the smallest terminal covariance eigenvalue is \(0.520\); detailed timings and solver diagnostics are archived with the code.

\section{Conclusion and perspectives}

Profiling the barycenter mass exactly turns the finite Gaussian KL-UOT barycenter problem into a smooth shape problem with endogenous Gibbs weights.  This formulation gives global attainment and explicit stationary moment equations, and it leads to a reverse-KL MM iteration.  Along every orbit, the covariance remains uniformly positive definite; combined with sufficient decrease, relative error, and the Kurdyka--\L{}ojasiewicz property, this yields finite-length convergence of the full sequence from any nondegenerate initialization to a stationary fixed point.  At the infinitesimal level, the diagonal second variation induces a BW--FR parallel-sum tensor whose finite-mass extension admits a radial cone representation.

Under common penalty scaling, global minimizers converge in the large-penalty regime to a Gaussian Wasserstein barycenter with effective weights, and the minimizer is unique and analytic in \(\kappa^{-1}\) once \(\kappa\) is sufficiently large.  At the opposite scale, global minimizers approach the compact maximizer set of a weighted Chernoff affinity functional.  The intermediate regime is less understood: neither limiting mechanism governs the problem directly, and even uniqueness of the minimizer remains open.  Extending the analysis from nondegenerate Gaussians to unrestricted finite measures would require new compactness and noncollapse arguments, while accommodating singular covariance limits would call for a suitable completion of the local geometry.  The expansions and cone structure obtained here also suggest higher-order corrections and minimizing-movement constructions adapted to this geometry.  Resolving these issues would clarify how much of the finite-dimensional Gaussian theory persists in broader measure spaces.

\appendix
\section{Verification of the pairwise Gaussian endpoint}
\label{app:pairwise}

We give a self-contained verification of the pairwise facts used in \Cref{thm:pairwiseinterface}.  The argument has three steps: reduction of the original probability problem to Gaussian moments, construction and uniqueness of the finite-dimensional minimizer, and recovery of the equality and analyticity statements.

\emph{Step 1: reduction without loss of minimizers.}
Let \((p,q)\in\mathcal P(\R^d)^2\) have finite objective in \eqref{eq:Adef}.  The argument following that definition, based on the entropy inequality, yields \(p,q\in\mathcal P_2(\R^d)\), so the moment-matched Gaussians \(G[p]\) and \(G[q]\) are well defined.  The information-projection identities established there give
\[
\KL(p\mid\mu_0)
=\KL(G[p]\mid\mu_0)+\KL(p\mid G[p]),
\qquad
\KL(q\mid\mu_1)
=\KL(G[q]\mid\mu_1)+\KL(q\mid G[q]),
\]
and Gelbrich's inequality gives \(W_2^2(p,q)\ge W_2^2(G[p],G[q])\).  Consequently, moment matching cannot increase any of the three terms in \eqref{eq:Adef}.  It follows that the unrestricted infimum is bounded below by the infimum over Gaussian pairs; the reverse inequality is immediate because Gaussian pairs are admissible.  Thus the two infima agree, and the Gaussian minimizer constructed below also minimizes the original probability problem.

\emph{Step 2: finite-dimensional existence and uniqueness.}
Write \(p=\N(u,P)\) and \(q=\N(v,Q)\) with \(P,Q\in\Spp^d\).  Up to constants independent of \((u,v,P,Q)\), the objective separates into mean and covariance blocks.  The mean block is strictly convex, and its first-order equations give exactly \eqref{eq:hstar}--\eqref{eq:uvstar}.

For the covariance variables, write
\[
\mathcal J_{\rm cov}(P,Q)
=d_{\BW}^2(P,Q)
+\frac{\tau_0}{2}\bigl[\tr(K_0P)-\log\det P\bigr]
+\frac{\tau_1}{2}\bigl[\tr(K_1Q)-\log\det Q\bigr],
\]
again up to an additive constant.  The squared Bures--Wasserstein distance is jointly convex in \((P,Q)\)~\cite{BhatiaJainLim2019}, while \(-\log\det\) is strictly convex on \(\Spp^d\), so \(\mathcal J_{\rm cov}\) is strictly convex.  To construct its stationary point, let \(L\succ0\) be the Gaussian Wasserstein map from \(P\) to \(Q\), so \(Q=LPL\).  The standard Bures differentials \(\nabla_Pd_{\BW}^2=I-L\) and \(\nabla_Qd_{\BW}^2=I-L^{-1}\) give the stationarity equations
\begin{equation}
	P^{-1}=K_0+r_0(I-L),
	\qquad
	Q^{-1}=K_1+r_1(I-L^{-1}).
	\label{eq:app-covstationarity}
\end{equation}
Since \(Q^{-1}=L^{-1}P^{-1}L^{-1}\), elimination yields the Riccati equation \(LC_1L-\delta L=C_0\).  With \(S=C_1^{1/2}LC_1^{1/2}\) and \(B=C_1^{1/2}C_0C_1^{1/2}\succ0\), it becomes \(S^2-\delta S=B\).  Any positive-definite solution commutes with \(B\), and spectral calculus gives \(S=S_*\) from \eqref{eq:Sstar} together with \(L=L_*\).

The corresponding covariance candidate is \((P_*,Q_*)\) from \eqref{eq:LPQ}.  Its positivity follows from
\[
(I-L_*)^2+\frac1{r_0}K_0+\frac1{r_1}L_*K_1L_*
=\frac{r_0+r_1}{r_0r_1}\bigl[K_0+r_0(I-L_*)\bigr].
\]
Since \(L_*\) is symmetric, the first term on the left is positive semidefinite and the remaining two are positive definite.  It follows that \(P_*^{-1}=K_0+r_0(I-L_*)\succ0\) and \(Q_*=L_*P_*L_*\succ0\).  The definition of \(P_*\) gives the first identity in \eqref{eq:app-covstationarity}, while the Riccati equation gives
\[
P_*^{-1}=L_*\bigl[K_1+r_1(I-L_*^{-1})\bigr]L_*,
\]
which is equivalent to the second identity because \(Q_*^{-1}=L_*^{-1}P_*^{-1}L_*^{-1}\).  The pair \((P_*,Q_*)\) is therefore a stationary point of \(\mathcal J_{\rm cov}\), and strict convexity makes it the unique global covariance minimizer.  Together with the unique mean minimizer, this constructs the unique Gaussian pair \((p_*,q_*)\) in \eqref{eq:Adef}.  By Step~1, this pair also attains the unrestricted probability infimum.

\emph{Step 3: equality conditions, optimal coupling, and analyticity.}
Let \((p,q)\) be any minimizer of the original probability problem.  Replacing it by \((G[p],G[q])\) cannot increase any objective term by Step~1.  Since \((p,q)\) already attains the common minimum, the total decrease is zero.  In particular, both nonnegative information-projection remainders vanish:
\[
\KL(p\mid G[p])=0,
\qquad
\KL(q\mid G[q])=0.
\]
Hence \(p=G[p]\) and \(q=G[q]\).  The finite-dimensional uniqueness from Step~2 then forces \((p,q)=(p_*,q_*)\), so every minimizer of the original problem is the same Gaussian pair.

Finally, \(L_*\succ0\) and \(Q_*=L_*P_*L_*\), so the standard Gaussian transport formula \eqref{eq:GaussianW2map} shows that \((\operatorname{Id},T_*)_\#p_*\) is the unique quadratic Wasserstein coupling of \((p_*,q_*)\).  Inversion and the principal square root are real analytic on \(\Spp^d\)~\cite{Higham2008}; the explicit formulas \eqref{eq:Sstar}--\eqref{eq:uvstar} then give the asserted joint analyticity of \(L_*,P_*,Q_*,u_*,v_*\) and \(\Ashape\).  This completes the proof of \Cref{thm:pairwiseinterface}.

\section*{Acknowledgements}
This research is supported by National Key R \& D Program of China (2024YFA1012401), the Science and Technology Commission of Shanghai Municipality (23JC1400501), and Natural Science Foundation of China (12241103).

\section*{Data Availability Statement}
Data and code are available on Zenodo: \href{https://doi.org/10.5281/zenodo.22256289}{10.5281/zenodo.22256289}.

\end{document}